%% file: main.tex
\pdfoutput=1
\documentclass[11pt]{article}

\input{preamble}

\ifpdf
\hypersetup{
	pdftitle={Distributed Gradient-Regularized Newton Method},
	pdfauthor={W. Hu, P. Xie, Y.-X. Yuan, and L. Zhang}
}
\fi

\begin{document}
	
	\maketitle
	
	\begin{abstract}
		We propose \textsc{DisGrem}, a fully decentralized second-order method
		for convex consensus optimization over networks.
		Each agent solves a local Newton system with vanishing
		gradient-norm regularization $\lambda_{i,k}=\sqrt{M\|\tilde g_{i,k}\|}$
		and an eigenvalue-shift stabilizer, communicating through a two-stage
		gossip-mixing mechanism.
		We introduce a reference-step framework that reduces the
		network-wide update to an inexact centralized regularized Newton step,
		replacing the static Hessian-heterogeneity assumptions of prior work
		with an increment-based dispersion analysis that imposes no
		irreducible accuracy floor.
		Under a bounded-iterates assumption, after a burn-in phase whose
		order is controlled by the scheduled consensus accuracy, the
		post-burn-in phase achieves
		$\|\nabla f(\bar x_k)\|\le\varepsilon$ in $\cO(\varepsilon^{-1})$
		iterations---matching the centralized regularized Newton
		rate---without line search or stepsize tuning.
		For a logarithmic schedule with $p\ge3$, the total iteration
		complexity remains $\cO(\varepsilon^{-1})$. For a fixed connected
		network, this yields $\cO(\varepsilon^{-1}\log(1/\varepsilon))$
		neighbor communication rounds; more explicitly, the dependence on
		the mixing rate is
		$\cO((1-\rho)^{-1}\varepsilon^{-1}\log(1/\varepsilon))$ as
		$\rho\to1$.
		Under strong convexity and a relative tracking-accuracy condition,
		we further establish conditional local superlinear convergence
		of order $3/2$.
		In our nine-problem benchmark suite, the \textsc{DisGrem} family attains
		$\mathrm{relF}\le 10^{-6}$ on every test instance, while the tested
		baselines stagnate or diverge on at least one problem.
	\end{abstract}

	\begin{keywords}
		decentralized optimization, second-order methods, gradient regularization, Newton method, consensus optimization, communication efficiency
	\end{keywords}
	
	\begin{AMS}
		90C25, 90C30, 65K05, 68W15
	\end{AMS}

    \section{Introduction}\label{sec:intro}
	We consider the decentralized consensus optimization problem
	\begin{equation}\label{eq:intro_prob}
		\min_{x\in\R^d}\; f(x)\;:=\;\frac{1}{N}\sum_{i=1}^N f_i(x),
	\end{equation}
	where each local cost $f_i$ is known only to agent~$i$ of a
	connected network and agents communicate exclusively with their
	immediate neighbors.
	Throughout the paper, unless otherwise stated, unqualified vector and
	stacked-vector norms are Euclidean norms; for matrices, $\|\cdot\|_2$
	and $\|\cdot\|_F$ denote the spectral and Frobenius norms.
	Each outer iteration involves one or more gossip rounds, so communication is a primary bottleneck in decentralized implementations; consequently, reducing the iteration count typically lowers the overall communication budget.
	First-order decentralized methods may need prohibitively many
	iterations on ill-conditioned problems.
	Second-order information can cut this count substantially, yet
	decentralized second-order methods face additional difficulties:
	curvature data are expensive to transmit, and global convergence
	typically requires stepsize tuning or line search.
	{\color{black}
	These issues are part of a broader theme in modern optimization: how to exploit curvature, local approximation structure, and model information while limiting expensive oracle calls, communication, or computation. Related perspectives appear in derivative-free trust-region modeling, underdetermined quadratic interpolation, least-norm and least-$H^2$ model updating, transformed-objective optimization, subspace optimization, and approximation-based acceleration; see, for example, \citet{xie2023linesearch,xie2023dfoto,xieyuannew,10.1093/imanum/drae106,xie2023twodimensional,xie2025remuregionalminimalupdating,he2025modeldrivensubspaceslargescaleoptimization}.}
	
	In the centralized setting, the regularized Newton method of
	\citet{mishchenko2023rn} takes a different approach, updating
	the iterate via an explicitly regularized system:
	\[
	x_{k+1} = x_k - \bigl(\nabla^2 f(x_k) + \lambda_k I\bigr)^{-1} \nabla f(x_k).
	\]
	With a vanishing regularization parameter
	$\lambda_k\asymp\sqrt{\|\nabla f(x_k)\|}$, this method achieves
	the optimal $\cO(1/k^2)$ functional rate under convexity
	and Lipschitz Hessians---equivalently,
	$\|\nabla f(x_k)\|\le\varepsilon$ in $\cO(\varepsilon^{-1})$
	iterations---requiring no line search or stepsize tuning.
	\citet{doikov2024gradient} show that gradient-norm regularization
	can replace the more expensive cubic model
	\citep{nesterov2006cubic,cartis2011arc} while preserving the
	same complexity guarantees;
	\citet{gratton2023nonconvex} extend the approach to nonconvex
	objectives via negative-curvature exploitation.
	{\color{black}
	The use of regularization and curvature modification is also connected to other numerical optimization mechanisms, including trust-region methods on non-Euclidean or constrained geometries \citep{xie2023ellipsoid,XIE2025116146}, and practical derivative-free or mixed-integer model-based solvers.}
	A natural question is whether this $\cO(\varepsilon^{-1})$ iteration complexity can be preserved in a fully decentralized setting.
	The difficulty is that the vanishing regularization
	$\lambda_k\asymp\sqrt{\|\nabla f(x_k)\|}$ relies on globally
	consistent gradient and curvature information.
	When each agent sees only a local approximation corrupted
	by consensus error, the balance between regularization and curvature
	breaks down, and existing methods resort to line search or static
	heterogeneity assumptions to recover convergence.
	
	Two specific challenges arise.
	First, each agent inverts a local regularized system whose
	average generally differs from the global Newton step; this gap
	depends on inter-agent agreement and must be controlled without
	static heterogeneity constants.
	Second, Hessian trackers can become transiently indefinite due to
	imperfect consensus, threatening the well-posedness of local solves.
	Our algorithm \textsc{DisGrem} (Distributed Gradient-Regularized
	Newton Method) addresses the first challenge through an increment-based
	dispersion recursion that bounds tracker mismatch via Lipschitz
	differences, and the second through an eigenvalue-shift
	stabilizer that ensures well-posedness at every iteration.
	\textsc{DisGrem} requires only a single scaling parameter $M\ge L_2$
	and does not need stepsize tuning or line search.
	To our knowledge, no prior fully decentralized method achieves the
	same post-burn-in centralized-order iteration bound to arbitrary
	accuracy with gradient-norm regularization under these conditions.
	{\color{black}
	In this sense, our work complements recent efforts that use carefully constructed local models or surrogate information to reduce expensive optimization costs, including regional minimal updating \citep{xie2025remuregionalminimalupdating}, local approximation strategies in large-scale subspaces \citep{he2025modeldrivensubspaceslargescaleoptimization}, and neural-network approximation diagnostics based on objective-value or shape changes \citep{xie2025objectivevaluechangeshapebased}.}
	
	We summarize our main contributions as follows:
	\begin{itemize}[leftmargin=1.5em]
		\item \textbf{A fully decentralized gradient-regularized Newton framework.}
		We propose \textsc{DisGrem}, in which each agent solves a local Newton system
		with vanishing regularization $\lambda_{i,k}=\sqrt{M\|\tilde g_{i,k}\|}$.
		A two-stage gossip mechanism and an eigenvalue-shift stabilizer ensure that
		all local systems remain well posed without line search or explicit positive-semidefinite (PSD) projection.
		
		\item \textbf{An analytical reduction to an inexact centralized Newton step.}
		We introduce a reference-step construction that quantifies the gap between the
		averaged local step and the ideal global Newton step, thereby reducing the
		decentralized dynamics to an inexact centralized regularized Newton iteration.
		This framework also replaces the usual static heterogeneity assumption by an
		increment-based dispersion recursion, avoiding an irreducible
		heterogeneity floor in the final accuracy bound.
		
		\item \textbf{Global guarantees and a conditional local superlinear result.}
		Under convexity, Lipschitz Hessians, and the bounded-iterates
		assumption (Assumption~\ref{ass:bounded_iterates}),
		after a burn-in phase controlled by the scheduled consensus accuracy,
		the post-burn-in phase requires $\cO(\varepsilon^{-1})$ iterations for
		achieving $\|\nabla f(\bar x_k)\|\le\varepsilon$.
		The $\cO(\varepsilon^{-1})$ bound is conditional on
		Assumption~\ref{ass:bounded_iterates}: this trajectory boundedness is
		not proved from the problem data but is empirically verified on all
		tested instances (Section~\ref{sec:experiments}).
		For a logarithmic mixing schedule with $p\ge3$, the burn-in estimate
		is compatible with the $\cO(\varepsilon^{-1})$ post-burn-in term,
		so the total iteration complexity matches the centralized regularized
		Newton rate. For a fixed connected
		network, the communication rounds scale as
		$\cO(\varepsilon^{-1}\log(1/\varepsilon))$; explicitly, the
		spectral-gap dependence is
		$\cO((1-\rho)^{-1}\varepsilon^{-1}\log(1/\varepsilon))$ as
		$\rho\to1$.
		Under strong convexity and a relative tracking-accuracy condition,
		we further establish conditional local $Q$-superlinear convergence
		of order $3/2$.
		
		\item \textbf{Practical variants and empirical robustness.}
		We develop communication-efficient and adaptive variants, namely
		\textsc{CeDisGrem} and \textsc{AdaDisGrem}. Across nine benchmark problems,
	the \textsc{DisGrem} family achieves a strong accuracy--robustness
	balance among the tested methods, while \textsc{AdaDisGrem} achieves the
		highest robustness in the multi-start experiments.
	\end{itemize}
	
	Section~\ref{sec:related} reviews related work.
	Sections~\ref{sec:prelim}--\ref{sec:theory} present the notation, assumptions, algorithmic variants, and convergence theory.
	Section~\ref{sec:experiments} reports numerical results.
	Appendices~\ref{app:dispersion} and~\ref{app:proofs} collect the longer proofs, while Appendix~\ref{app:supp_exp} reports supplementary experiments.
	
	\section{Related work}\label{sec:related}
	
	\noindent\textbf{First-order decentralized methods.}
	Decentralized (sub)gradient descent originates with
	\citet{nedic2009subgradient}.
	EXTRA~\citep{shi2015extra} and exact
	diffusion~\citep{yuan2019exactdiffusion} remove the bias of
	constant-stepsize methods through correction steps or primal-dual
	reformulations.
	Gradient tracking
	\citep{nedic2017diging,xin2020distributed,pu2021distributed}
	achieves exact convergence with a single doubly stochastic matrix
	by accumulating gradient increments;
	\citet{alghunaim2021decentralized} unify these two perspectives.
	These methods converge at $\cO(1/k)$ or linearly for strongly convex
	objectives, but the iteration complexity scales with
	$(1-\rho)^{-1}$ or $(1-\rho)^{-2}$, a severe penalty on poorly
	connected graphs.
	Acceleration \citep{jakovetic2014fast,li2024decentralized} and
	communication compression
	\citep{koloskova2019decentralized,beznosikov2022biased} are
	orthogonal improvements; we adopt the latter for Hessian data
	in Section~\ref{ssec:variants}.
	
	\noindent\textbf{Second-order and quasi-Newton decentralized methods.}
	Network Newton~\citep{mokhtari2017networknewton} approximates
	the global Newton direction via a truncated Hessian power series;
	Newton tracking~\citep{zhang2021newtontracking} embeds Newton-type
	updates into gradient tracking.
	DQM~\citep{eisen2017dqn} replaces exact Hessians with BFGS
	surrogates; \citet{bajovic2017newton} studies distributed
	Newton-type corrections with diagonal approximations, and
	\citet{li2020communication} propose communication-efficient
	approximate Newton and variance-reduced methods for networked optimization.
	ESOM~\citep{mokhtari2016esom} combines exact second-order information
	with an alternating direction method of multipliers (ADMM) consensus step.
	SONATA~\citep{sun2022sonata} solves a sequence of strongly convex
	local surrogates and handles nonconvex objectives, while
	Network-GIANT~\citep{maritan2023networkgiant} constructs a global Newton
	direction through harmonic-mean Hessian consensus.
	All of these methods require explicit stepsize tuning, penalty
	parameter selection, or line search.
	Furthermore, to the best of our knowledge, none attains the optimal
	centralized Newton iteration complexity of $\cO(1/k^2)$ for general
	convex problems in a rigorous global sense.
	While exact methods like ESOM and Newton tracking can reach
	arbitrary precision for strongly convex objectives, their
	convergence heavily relies on static Hessian-heterogeneity
	constants (e.g., $\sigma_H$) to dictate conservative stepsizes.
	Other approximate decentralized Newton methods simply ignore this
	heterogeneity or suffer from an irreducible accuracy floor
	$\|\nabla f(\bar x)\|\le\cO(\sigma_H)$.
	\citet{daneshmand2021newton} pursue a different approach, combining
	gradient tracking with cubic regularization and local Hessian
	subsampling to achieve an iteration count comparable to centralized
	cubic Newton.
	However, their convergence guarantee holds only up to the statistical
	precision of the Hessian estimator---an inherent accuracy floor from
	subsampling---and the method still requires a stepsize parameter.
	Our increment-based dispersion analysis, combined with
	exact Hessian tracking and gradient-norm regularization, eliminates
	both the heterogeneity floor and the statistical precision floor,
	yielding convergence to arbitrary $\varepsilon$ without any stepsize.
	
	\noindent\textbf{Communication-efficient and inversion-free approaches.}
	The $\cO(d^2)$ per-round cost of transmitting Hessian data and
	the $\cO(d^3)$ cost of solving Newton systems are the two main
	bottlenecks of decentralized second-order methods.
	\citet{zhang2024cubic} combine lazy Hessian updates with
	compression for distributed cubic Newton but rely on a central
	parameter server; our \textsc{CeDisGrem} achieves analogous
	savings in a fully peer-to-peer topology.
	On the computation side, DINAS~\citep{jakovetic2025dinas} avoids
	Hessian inversion through iterative linear solvers, and
	INDO~\citep{yuan2023indo} proposes an inversion-free method for
	consensus optimization.
	Incorporating such inexact solvers into the \textsc{DisGrem}
	framework is a promising direction, as discussed in
	Section~\ref{sec:conclusion}.
	
	\noindent\textbf{Regularized Newton and cubic regularization.}
	Our starting point is the centralized regularized Newton method of
	\citet{mishchenko2023rn}: setting
	$\lambda_k\asymp\sqrt{\|\nabla f(x_k)\|}$ yields the tight
	$\cO(1/k^2)$ functional rate under convexity and Lipschitz Hessians,
	with no stepsize to choose.
	The idea has roots in cubic regularization
	\citep{nesterov2006cubic,cartis2011arc}, which achieves the same rate
	through an adaptive cubic model.
	\citet{doikov2024gradient} establish that gradient-norm
	regularization can replace the cubic model across a broad class of
	problems.
	\citet{gratton2023nonconvex} extend it to nonconvex objectives via
	negative-curvature exploitation.
	\citet{doikov2022tensor} study local convergence of higher-order
	tensor methods, and \citet{doikov2023lazy} analyze lazy Hessian
	updates in the centralized setting, an idea we also explore in
	Section~\ref{ssec:commexp}.
	As far as we know, there is no prior work that combines gradient-norm
	regularization (as opposed to cubic regularization) with full
	gradient and Hessian tracking in a decentralized setting,
	achieving a centralized-order post-burn-in iteration bound without
	stepsize tuning, line search, or static heterogeneity constants.
	
	{\color{black}
	\noindent\textbf{Model-based and approximation-driven optimization.} 
    In some (derivative-free) trust-region methods, quadratic models built from interpolation or underdetermined interpolation play a central role; see, for example, the line-search/trust-region hybrid method of \citet{xie2023linesearch}, transformed-objective derivative-free optimization \citep{xie2023dfoto}, the optimality-aware underdetermined interpolation model of \citet{xieyuannew}, and least-$H^2$ norm updating of quadratic interpolation models \citep{10.1093/imanum/drae106}. Related model-update and model-selection ideas include barycentric weight-region analysis \citep{xie2024bary}, regional minimal updating \citep{xie2025remuregionalminimalupdating}, and the relationship between geometric poisedness and outlier detection \citep{zhang2024relationshiplambdapoisednessderivativefreeoptimization}. Large-scale settings motivate subspace and local-approximation strategies, including two-dimensional model-based subspace methods \citep{xie2023twodimensional}, model-driven subspaces \citep{he2025modeldrivensubspaceslargescaleoptimization}, and numerical methods tailored for unconstrained optimization \citep{li2025novelnumericalmethodtailored}. Further related applications and extensions include privacy-preserving black-box optimization \citep{xie2025privacypreservingblackboxoptimizationpbbo}, neural-network approximation and objective-shape diagnostics \citep{xie2024lchange,xie2025objectivevaluechangeshapebased}, inverse problems under uncertainty \citep{postionpaper2025optimization}. These works differ from the decentralized Newton framework studied here, but they share the common goal of designing optimization algorithms whose local models, regularization mechanisms, or surrogate information improve robustness and reduce the dominant computational or communication cost.}

	\section{Preliminaries and assumptions}\label{sec:prelim}
	\subsection{Network model and notation}
	Unless otherwise stated, $\|\cdot\|$ denotes the Euclidean norm for vectors
	and stacked vectors. For matrices, $\|\cdot\|_2$ and $\|\cdot\|_F$ denote
	the spectral and Frobenius norms, respectively.
	
	We model the communication network as an undirected, connected
	graph $\mathcal{G} = (\mathcal{V}, \mathcal{E})$, where
	$\mathcal{V} = \{1, \dots, N\}$ is the set of agents and
	$\mathcal{E}$ is the set of communication links.
	Agents $i$ and $j$ can exchange information if and only if
	$(i,j) \in \mathcal{E}$.
	Information mixing over this network is represented by a symmetric,
	doubly stochastic matrix $W \in \R^{N \times N}$ that respects
	the graph topology, meaning $W_{ij} > 0$ only if
	$(i,j) \in \mathcal{E}$ or $i=j$.
	We assume that $W$ has positive diagonal entries and satisfies
	$W_{ij}\ge0$, $W\one=\one$, $\one^\top W=\one^\top$.
	Let $\J:=\frac{1}{N}\one\one^\top$ denote the averaging projector,
	$\Pmat:=I-\J$ the complementary projector, and
	\[
	\rho := \normtwo{W-\J}\in[0,1).
	\]
	The positive diagonal condition is satisfied by the
	Metropolis--Hastings weights used in our experiments and rules out
	the periodic case in which an eigenvalue~$-1$ would give $\rho=1$.
	For stacked vectors $Z=[z_1;\dots;z_N]\in\R^{Nd}$ we use the
	Euclidean norm $\|Z\|$ and write $\bar z:=\frac1N\sum_{i=1}^N z_i$
	for the average.
	Stacked primal variables are denoted
	$X=[x_1;\dots;x_N]\in\R^{Nd}$, and we define the separable
	network objective
	\[
	F(X):=\sum_{i=1}^N f_i(x_i).
	\]
	Then
	\begin{align*}
		\nabla F(X)&=[\nabla f_1(x_1);\dots;\nabla f_N(x_N)]\in\R^{Nd},\\
		\nabla^2 F(X)&=\blkdiag\bigl(\nabla^2 f_1(x_1),\dots,\nabla^2 f_N(x_N)\bigr),
	\end{align*}
	where $\blkdiag(\cdot)$ denotes the block-diagonal matrix.
	We also introduce the stacked Hessian vectorization
	\[
	\mathcal H(X):=\bigl[\operatorname{vec}(\nabla^2 f_1(x_1));\dots;\operatorname{vec}(\nabla^2 f_N(x_N))\bigr]\in\R^{Nd^2},
	\]
	where $\operatorname{vec}(\cdot)$ stacks the columns of a matrix into a single vector.
	When evaluated at a consensus point $x$, we write $\mathcal H(\one\otimes x)$.
	
	Table~\ref{tab:notation} collects the main symbols used
	throughout the paper.
	
	\begin{table}[htbp]
		\centering
		\caption{Notation summary.}
		\label{tab:notation}
		\setlength{\tabcolsep}{4pt}
		\footnotesize
		\begin{tabular}{@{}p{0.23\textwidth}p{0.68\textwidth}@{}}
			\toprule
			Symbol & Meaning \\
			\midrule
			$N$, $d$ & number of agents; problem dimension \\
			$x_{i,k}$ & primal iterate of agent~$i$ at iteration~$k$ \\
			$\bar x_k$ & average iterate $\frac1N\sum_{i=1}^N x_{i,k}$ \\
			$X_k$ & stacked vector $[x_{1,k};\dots;x_{N,k}]\in\R^{Nd}$ \\
			$\tilde x_{i,k}$, $\tilde g_{i,k}$, $\tilde H_{i,k}$ & pre-mixed quantities (after $\tau_k$ gossip rounds) \\
			$g_{i,k}$, $H_{i,k}$ & gradient and Hessian trackers \\
			$\tilde G_k$ & stacked pre-mixed gradient trackers $[\tilde g_{1,k};\dots;\tilde g_{N,k}]$ \\
			$W$ & doubly stochastic mixing matrix \\
			$\rho:=\|W-\J\|_2$ & mixing rate ($1{-}\rho$ is the spectral gap) \\
			$\tau_k$, $t_k$ & pre-mixing and post-mixing depths \\
			$D(Z)$ & RMS dispersion $\bigl(\frac1N\sum_{i=1}^N\|z_i-\bar z\|^2\bigr)^{1/2}$ \\
			$M$ & regularization scaling ($M\ge L_2$) \\
			$\lambda_{i,k}$ & $\sqrt{M\|\tilde g_{i,k}\|}$ (vanishing regularizer) \\
			$\delta_{i,k}$ & eigenvalue-shift stabilizer \\
			$L_1$, $L_2$ & gradient and Hessian Lipschitz constants \\
			$\|\cdot\|$, $\|\cdot\|_2$, $\|\cdot\|_F$
			& Euclidean norm for vectors; spectral and Frobenius norms for matrices \\
			\bottomrule
		\end{tabular}
	\end{table}
	
	To quantify disagreement among agents we define the root-mean-square (RMS) dispersion
	(also called consensus error or disagreement in the
	decentralized optimization literature, see e.g.\
	\citet{nedic2009subgradient,nedic2017diging})
	\(
	D(Z):=\bigl(\tfrac1N\sum_{i=1}^N\|z_i-\bar z\|^2\bigr)^{1/2}.
	\)
	In particular, writing $X_k:=[x_{1,k};\dots;x_{N,k}]$ for the stacked iterates and
	$\tilde G_k:=[\tilde g_{1,k};\dots;\tilde g_{N,k}]$ for the stacked gradient trackers,
	$D(X_k)$ and $D(\tilde G_k)$ measure the primal and gradient-tracker disagreements at iteration $k$.
	
	\subsection{Assumptions}
	\begin{assumption}\label{ass:problem}
		\leavevmode\par\nopagebreak\vspace{-0.6\baselineskip}
		\begin{enumerate}[label=(\roman*),leftmargin=2em,itemsep=4pt,topsep=4pt]
			\item Each $f_i$ is convex and twice continuously differentiable
			with $L_1$-Lipschitz gradient and $L_2$-Lipschitz Hessian:
			\[
			\norm{\nabla f_i(x)-\nabla f_i(y)}\le L_1\norm{x-y},
			\qquad
			\normtwo{\nabla^2 f_i(x)-\nabla^2 f_i(y)}\le L_2\norm{x-y}.
			\]
			\item $W$ is symmetric, doubly stochastic, has positive diagonal
			entries, and $\rho=\normtwo{W-\J}<1$.
			\item The average objective $f$ is coercive: $f(x)\to+\infty$ as $\|x\|\to\infty$.
			Equivalently, for every $c\in\R$ the sublevel set $\{x:f(x)\le c\}$ is bounded.
		\end{enumerate}
	\end{assumption}
	
	\begin{remark}\label{rem:coercivity}
		Coercivity guarantees existence of minimizers and bounded sublevel
		sets. It is satisfied, for instance, whenever the objective contains
		an~$\ell_2$ regularizer.
	\end{remark}
	
	
	\section{The \textsc{DisGrem} algorithm}\label{sec:alg}
	
	\subsection{Design motivation}\label{ssec:design}
	
	Lifting the regularized Newton method to a decentralized setting
	requires mimicking the global Newton step without a central
	coordinator.
	This presents three coupled mathematical and algorithmic
	difficulties, which motivate the design of \textsc{DisGrem}.
	
	\noindent\textbf{1.\ The gap between averaging and inversion.}
	The ideal centralized step solves the global average system
	\[
	s_k^{\star}
	\;=\;
	-\Bigl(\frac{1}{N}\sum_{i=1}^{N}\nabla^2 f_i(\bar x_k)
	+\lambda_k I\Bigr)^{\!-1}
	\frac{1}{N}\sum_{i=1}^{N}\nabla f_i(\bar x_k).
	\]
	In a decentralized network, each agent~$i$ solves a local regularized
	system and produces a local step
	$s_{i,k}=-(\tilde H_{i,k}+\tilde\lambda_{i,k}I)^{-1}\tilde g_{i,k}$.
	Due to the non-commutativity of averaging and matrix inversion,
	the average step $\bar s_k$ generally differs from the step obtained
	by inverting the averaged system:
	\[
	\bar s_k
	=
	-\frac{1}{N}\sum_{i=1}^{N}
	(\tilde H_{i,k}+\tilde\lambda_{i,k}I)^{-1}\tilde g_{i,k}
	\neq
	-\Bigl(\frac{1}{N}\sum_{i=1}^{N}
	(\tilde H_{i,k}+\tilde\lambda_{i,k}I)\Bigr)^{-1}
	\frac{1}{N}\sum_{i=1}^{N}\tilde g_{i,k},
	\]
	in general.
	This discrepancy is governed by the inter-agent disagreement in
	$(x_{i,k},g_{i,k},H_{i,k})$.
	\textsc{DisGrem} mitigates it by introducing a multi-round
	pre-mixing stage that drives the local inputs closer to their
	network-wide means, tightly controlling the step dispersion.
	
	\noindent\textbf{2.\ Transient indefiniteness from imperfect consensus.}
	Decentralized tracking propagates Hessian increments
	$\nabla^2 f_j(x_{j,k+1})-\nabla^2 f_j(x_{j,k})$ across
	the network.
	Although the true local Hessians satisfy $\nabla^2 f_i(x)\succeq 0$
	for convex objectives, the tracked surrogate decomposes as
	\[
	\tilde H_{i,k}
	\;=\;
	\underbrace{\nabla^2 f(\bar x_k)}_{\succeq\,0}
	\;+\;
	\underbrace{\bigl(\tilde H_{i,k}-\nabla^2 f(\bar x_k)\bigr)}_{\text{tracking error}},
	\]
	where the tracking error accumulates gossip-mixed Hessian increments
	and can have negative eigenvalues.
	During the transient phase before consensus is reached,
	$\lambda_{\min}(\tilde H_{i,k})$ may be negative, rendering
	the local system ill-posed.
	We introduce an eigenvalue-shift stabilizer
	$\delta_{i,k}:=\max\{0,-\lambda_{\min}(\tilde H_{i,k})\}$.
	It makes the local coefficient matrix positive definite whenever
	$\tilde g_{i,k}\ne0$. When $\tilde g_{i,k}=0$, the local step is
	set to zero.
	
	\noindent\textbf{3.\ Stepsize-free local regularization.}
	The vanishing choice
	$\lambda_k\asymp\sqrt{\|\nabla f(\bar x_k)\|}$ is responsible
	for the optimal $\cO(1/k^2)$ centralized rate.
	In the absence of global gradient knowledge, \textsc{DisGrem}
	sets a local vanishing regularizer
	$\lambda_{i,k}=\sqrt{M\|\tilde g_{i,k}\|}$.
	The scaling parameter $M\ge L_2$ acts as a damping
	mechanism: it bounds the local step magnitude
	($\|s_{i,k}\|\le\sqrt{\|\tilde g_{i,k}\|/M}$) analogous to a
	trust-region radius, removing the need for
	per-iteration stepsize tuning or line search.
	
	\subsection{\textsc{DisGrem}: full algorithm}\label{ssec:disgrem}
	
	Each node $i$ stores a primal variable $x_{i,k}\in\R^d$, a gradient tracker $g_{i,k}\in\R^d$,
	and a Hessian tracker $H_{i,k}\in\R^{d\times d}$.
	For compactness, the result of $t$ successive neighbor-gossip rounds is written as
	$\sum_{j=1}^N [W^t]_{ij}z_j$. Operationally, with
	$\mathcal N_i^+:=\mathcal N_i\cup\{i\}$, this quantity is obtained by
	$t$ local updates $z_i^{(\ell+1)}=\sum_{j\in\mathcal N_i^+}W_{ij}z_j^{(\ell)}$;
	no direct all-to-all communication is required.
	
	\begin{algorithm}[H]
		\caption{\textsc{DisGrem}}
		\label{alg:disgrem}
		\small
		\begin{algorithmic}[1]
			\State \textbf{Input:} $\{x_{i,0}\}$, $W$, $M>0$, mixing schedule $\{\tau_k, t_k\}$.
			\State \textbf{Initialize:} $g_{i,0}\gets\nabla f_i(x_{i,0})$, $H_{i,0}\gets\nabla^2 f_i(x_{i,0})$ for all $i$.
			\For{$k=0,1,2,\dots$}
			\State \textbf{(A) Pre-mixing} ($\tau_k\ge 1$ neighbor-gossip rounds):
			\Statex\qquad Initialize $z_i^{(0)}\gets(x_{i,k},g_{i,k},H_{i,k})$.
			\Statex\qquad For $\ell=0,\ldots,\tau_k-1$, update
			\[
			z_i^{(\ell+1)}
			\gets
			\sum_{j\in\mathcal N_i^+}W_{ij}z_j^{(\ell)}.
			\]
			\Statex\qquad Set $(\tilde x_{i,k},\tilde g_{i,k},\tilde H_{i,k})\gets z_i^{(\tau_k)}$.
			\State \textbf{(B) Local Newton step} (each agent $i$ in parallel):
			\Statex\qquad $\lambda_{i,k}\gets\sqrt{M\|\tilde g_{i,k}\|}$,\quad
			$\delta_{i,k}\gets\max\{0,\,-\lambda_{\min}(\tilde H_{i,k})\}$.
			\Statex\qquad If $\tilde g_{i,k}=0$, set $s_{i,k}\gets0$; otherwise solve
			\[
			\bigl(\tilde H_{i,k}+(\lambda_{i,k}+\delta_{i,k})I\bigr)s_{i,k}
			=-\tilde g_{i,k}.
			\]
			\Statex\qquad $y_{i,k+1}\gets\tilde x_{i,k}+s_{i,k}$.
			\State \textbf{(C) Post-mixing} ($t_k\ge 1$ neighbor-gossip rounds):
			\Statex\qquad Initialize $u_i^{(0)}\gets y_{i,k+1}$.
			\Statex\qquad For $\ell=0,\ldots,t_k-1$, update
			\[
			u_i^{(\ell+1)}
			\gets
			\sum_{j\in\mathcal N_i^+}W_{ij}u_j^{(\ell)}.
			\]
			\Statex\qquad Set $x_{i,k+1}\gets u_i^{(t_k)}$.
			\State \textbf{(D) Tracker updates:}
			\Statex\qquad Initialize $v_i^{(0)}\gets\tilde g_{i,k}+\nabla f_i(x_{i,k+1})-\nabla f_i(x_{i,k})$.
			\Statex\qquad Initialize $R_i^{(0)}\gets\tilde H_{i,k}+\nabla^2 f_i(x_{i,k+1})-\nabla^2 f_i(x_{i,k})$.
			\Statex\qquad For $\ell=0,\ldots,t_k-1$, update
			\[
			\begin{aligned}
			v_i^{(\ell+1)}
			&\gets
			\sum_{j\in\mathcal N_i^+}W_{ij}v_j^{(\ell)},\\
			R_i^{(\ell+1)}
			&\gets
			\sum_{j\in\mathcal N_i^+}W_{ij}R_j^{(\ell)}.
			\end{aligned}
			\]
			\Statex\qquad Set $g_{i,k+1}\gets v_i^{(t_k)}$ and $H_{i,k+1}\gets R_i^{(t_k)}$.
			\EndFor
		\end{algorithmic}
	\end{algorithm}
	
	\subsection{Communication cost per iteration}\label{ssec:comm}
	
	Each outer iteration of \textsc{DisGrem} involves $(\tau_k + 2t_k)$ rounds of neighbor communication.
	In each round every agent $i$ sends messages to and receives messages from its $|\mathcal N_i|$ neighbors.
	The payload differs across the three communication stages.
	Pre-mixing exchanges $(x,g,H)$, namely two $d$-dimensional
	vectors and one symmetric Hessian matrix.
	Post-mixing exchanges only the trial variable $y$.
	The tracker update exchanges one gradient-tracker input and one
	Hessian-tracker input.
	The post-mixing step~(C) and the tracker-update step~(D) must be performed sequentially.
	Agent $i$ must first receive the mixed $y$ variables to compute $x_{i,k+1}$; only then can it evaluate the exact gradients and Hessians at $x_{i,k+1}$ to form the tracker inputs.
	Consequently, step~(C) requires $t_k$ rounds and step~(D) requires an additional $t_k$ rounds, giving a total of $(\tau_k+2t_k)$ communication rounds per outer iteration.
	
	With double precision (8 bytes per float), the sent message volume
	per agent per outer iteration in the uncompressed algorithm is
	\begin{equation}\label{eq:comm_cost_iter}
	\begin{aligned}
		C_{\mathrm{iter}}^{\mathrm{send}}(k)
		&=8|\mathcal N_i|\Bigl[
		\tau_k\Bigl(2d+\frac{d(d+1)}{2}\Bigr)
		+t_kd
		+t_k\Bigl(d+\frac{d(d+1)}{2}\Bigr)
		\Bigr] \\
		&=8|\mathcal N_i|(\tau_k+t_k)
		\Bigl(2d+\frac{d(d+1)}{2}\Bigr)
		\quad \text{bytes}.
	\end{aligned}
	\end{equation}
	All communication figures reported in Section~\ref{sec:experiments}
	use the same stage-wise payload accounting, summed over the directed
	off-diagonal entries of the mixing matrix and reported in cumulative MB.
	
	\subsection{Practical variants}\label{ssec:variants}
	
	We develop three practical variants of Algorithm~\ref{alg:disgrem}:
	two address communication cost and parameter selection individually,
	while a third combines both mechanisms.
	All variants share the same four-step structure and differ only in the
	Hessian-tracker and regularization-parameter modules
	(see Table~\ref{tab:family} at the end of this section for a summary).
	
	\textsc{CeDisGrem} (the prefix ``Ce'' stands for communication-efficient)
	reduces communication by compressing Hessian data.
	The dominant communication cost of \textsc{DisGrem} is the $\frac{d(d+1)}{2}$-dimensional
	Hessian increment $\nabla^2 f_j(x_{j,k+1})-\nabla^2 f_j(x_{j,k})$.
	\textsc{CeDisGrem} replaces each symmetric Hessian increment
	$\Delta H_{j,k}:=\nabla^2 f_j(x_{j,k+1})-\nabla^2 f_j(x_{j,k})$
	by a compressed approximation $\mathcal C(\Delta H_{j,k})$.
	The implementation supports both element-wise Top-$k$ sparsification
	and low-rank symmetric truncation; the main experiments use Top-$k$
	with a 10\% element budget, while Appendix~\ref{app:supp_exp}
	compares the two choices.
	For the low-rank option, let
	$\Delta H_{j,k}=V\operatorname{diag}(\mu_1,\dots,\mu_d)V^\top$ with
	$|\mu_1|\ge \cdots \ge |\mu_d|$ and $V=[v_1,\dots,v_d]$ orthogonal.
	Define the truncated approximation
	\[
	\hat\Delta H_{j,k}:=\sum_{\ell=1}^r \mu_\ell v_\ell v_\ell^\top
	= V_r\Lambda_r V_r^\top,
	\]
	where $V_r=[v_1,\dots,v_r]\in\R^{d\times r}$ and $\Lambda_r=\operatorname{diag}(\mu_1,\dots,\mu_r)\in\R^{r\times r}$.
	This $\hat\Delta H_{j,k}$ is the best rank-$r$ symmetric approximation of $\Delta H_{j,k}$ in Frobenius norm,
	since the best rank-$r$ approximation of a symmetric matrix retains the $r$ eigenpairs with largest absolute eigenvalues.
	Thus, transmitting $(V_r,\Lambda_r)$ costs $r(d+1)$ floats instead of $\frac{d(d+1)}{2}$ floats per neighbor per round.
	
	The truncation error satisfies
	\begin{align*}
		\normF{\Delta H_{j,k}\!-\!\hat\Delta H_{j,k}}
		&=\Bigl(\textstyle\sum_{\ell=r+1}^d \mu_\ell^2\Bigr)^{\!1/2}\\
		&\le |\mu_{r+1}(\Delta H_{j,k})|\sqrt{d\!-\!r},
	\end{align*}
	where $\mu_{r+1}(\Delta H_{j,k})$ denotes the $(r+1)$-th largest absolute eigenvalue.
	For Top-$k$, $\mathcal C(\Delta H_{j,k})$ retains the largest-magnitude
	entries and symmetrizes the result.
	The compression error can be viewed as an additional Hessian-tracker
	perturbation. In the experiments, both Top-$k$ and low-rank compression reduce communication
	with modest iteration overhead when the compression level is not too
	aggressive (Section~\ref{ssec:commexp}).
	
	\begin{remark}
		The exact eigendecomposition of $\Delta H_{j,k}\in\R^{d\times d}$
		costs $\cO(d^3)$ per agent per iteration---the same order as the
		Newton-system solve.
		When $d$ is large, randomized singular value decomposition (SVD)
		(cost $\cO(d^2 r)$) or Nystr\"om-type approximations can replace
		the exact decomposition.
		The additional approximation error enters the tracker dispersion
	additively.
	\end{remark}
	
	\textsc{AdaDisGrem} introduces an adaptive scaling factor.
	When $M$ is fixed, choosing a good value typically relies on
	problem-specific curvature information (see Section~\ref{ssec:practical}).
	\textsc{AdaDisGrem} instead updates a local scaling value
	$\hat M_{i,k}$ from a secant ratio:
	\begin{gather*}
		\hat L_{i,k}\gets
		\frac{\normtwo{\nabla^2 f_i(x_{i,k})-\nabla^2 f_i(x_{i,k-1})}}
		{\norm{x_{i,k}-x_{i,k-1}}},\\
		\hat M_{i,k}\gets \max\!\bigl(\gamma\,\hat M_{i,k-1},\;
		\zeta\,\min(\hat L_{i,k},\,\eta_c\,\hat M_{i,0})\bigr),
	\end{gather*}
	where $\zeta\ge1$ is a safety inflation factor, $\gamma\in(0,1)$ is
	the decay factor, and $\eta_c>0$ is the upper-bound parameter.
	If $x_{i,k}=x_{i,k-1}$, we set $\hat L_{i,k}=0$.
	The vanishing regularizer becomes $\lambda_{i,k}=\sqrt{\hat M_{i,k}\|\tilde g_{i,k}\|}$.
	The decay factor allows $\hat M_{i,k}$ to decrease when the local
	curvature scale is moderate, enabling larger Newton steps near the
	solution, while the upper bound damps isolated large secant ratios.
	The quantity $\hat L_{i,k}$ is a per-iterate secant
	indicator, distinct from the static proxy
	$H_{\max}^0 := \max_i\|\nabla^2 f_i(x_0)\|_2$ used to
	scale the baseline $M$ (Section~\ref{ssec:practical}).
	In the experiments, \textsc{AdaDisGrem} serves as an empirical
	parameter-selection variant of the fixed-$M$ method.
	Section~\ref{ssec:ada} studies its robustness across a $100\times$
	range of initial
	$\hat M_{i,0}$.
	
	\textsc{CeAdaDisGrem} applies both Hessian compression (Ce) and
	adaptive $M$ (Ada) simultaneously.
	Because the compressed Hessian increments perturb the secant-based
	scaling rule, Section~\ref{ssec:commexp} reports this combined variant
	separately from the fixed-$M$ compression results.
	
	Table~\ref{tab:family} summarizes the full \textsc{DisGrem} family.
	All members share the core structure of Algorithm~\ref{alg:disgrem};
	differences are confined to the Hessian-tracker and
	regularization-parameter modules.
	
	\begin{table}[htbp]
		\centering
		\caption{The \textsc{DisGrem} algorithm family.
			All variants use the same four-step structure (Algorithm~\ref{alg:disgrem});
			differences are confined to the Hessian communication and the choice of~$M$.}
		\label{tab:family}
		\setlength{\tabcolsep}{3pt}
		\footnotesize
		\begin{tabular}{lccc}
			\toprule
			Variant & Hessian & $M$ & Extra parameter(s) \\
			\midrule
			\textsc{DisGrem}       & exact tracking       & fixed  & $M$ \\
			\textsc{CeDisGrem}     & compressed + lazy     & fixed  & $M$, comp., budget, $K_{\mathrm{lazy}}$ \\
		\textsc{AdaDisGrem}    & exact tracking       & adaptive & $\hat M_{i,0}$, $\gamma$, $\zeta$, $\eta_c$ \\
		\textsc{CeAdaDisGrem}  & compressed + lazy     & adaptive & Ada params, comp., budget, $K_{\mathrm{lazy}}$ \\
			\bottomrule
		\end{tabular}
	\end{table}
	
	\subsection{Practical guidelines for parameter selection}\label{ssec:practical}
	
	The theoretical convergence requires $M\ge L_2$
	(Theorem~\ref{thm:main}).
	When the Hessian Lipschitz constant $L_2$ is unknown, $M$ acts as a
	robustness parameter: larger values increase damping and ensure
	well-conditioning, though potentially slowing asymptotic convergence.
	A practical baseline choice is
	$M = M_{\mathrm{fac}} \cdot H_{\max}^{0}$, where
	$H_{\max}^{0} := \max_i\|\nabla^2 f_i(x_0)\|_2$ is a readily
	computable baseline curvature proxy and
	$M_{\mathrm{fac}} \in [0.1, 15]$ is a tuning factor that scales
	with the problem's ill-conditioning.
	\textsc{AdaDisGrem} replaces this fixed choice by the online secant
	rule described above.
	
	The communication depths are chosen by the logarithmic schedule
	\[
	\tau_k=t_k=
	\Bigl\lceil \frac{p\log(k+2)+c_{\mathrm{mix}}}{-\log\rho}\Bigr\rceil,
	\]
	with a fixed constant $c_{\mathrm{mix}}\ge0$.
	In the numerical experiments we impose a maximum communication depth
	once the tested accuracy range has been reached.
	
	For Hessian compression in \textsc{CeDisGrem}, Top-$k$ with a 10\%
	element budget is used in the main experiments.
	For low-rank compression, $r=\lceil d/5\rceil$ serves as a robust
	nominal value, while $r=1$ often suffices for diagonally dominant objectives.
	To further reduce communication, the Hessian tracker update
	(step~(D) of Algorithm~\ref{alg:disgrem}) can be executed every
	$K_{\mathrm{lazy}}$-th iteration.
	Reusing the previous Hessian for
	$K_{\mathrm{lazy}}\in\{5,10\}$ consecutive steps reduces
	payload bytes with minimal impact on convergence for slowly varying
	problems.
	
	
	\section{Convergence analysis}\label{sec:theory}
	Throughout this section we assume Assumptions~\ref{ass:problem}
	and~\ref{ass:bounded_iterates}, the parameter requirement $M\ge L_2$,
	and that
	Algorithm~\ref{alg:disgrem} is run with the initialization
	$g_{i,0}=\nabla f_i(x_{i,0})$, $H_{i,0}=\nabla^2 f_i(x_{i,0})$.
	Complete proofs are collected in Appendix~\ref{app:proofs}, and the
	dispersion-decay and burn-in construction is detailed in
	Appendix~\ref{app:dispersion}.
	
	The convergence argument proceeds through the following chain of
	reductions.
	Averaging identities (\S\ref{ssec:avg}) show that the average iterate evolves as
	$\bar x_{k+1}=\bar x_k+\bar s_k$ and that tracked averages
	equal the averages of exact local gradients and Hessians evaluated
	at the current local iterates.
	Reference step and bridge bounds
	(\S\ref{ssec:refstep}--\S\ref{ssec:bridge}) introduce a
	``centralized reference'' step $s_k^{\mathrm{ref}}$ and bound
	its gap from $\bar s_k$ in terms of tracker dispersions.
	Stabilizer and step-dispersion control
	(\S\ref{ssec:stab}--\S\ref{ssec:stepdisp}) relate the
	eigenvalue shift $\delta_{i,k}$ and the step dispersion to
	gradient/Hessian tracker dispersions.
	These three ingredients are combined in \S\ref{ssec:inexact}
	to verify that the
	average iterate satisfies an inexact Newton residual bound,
	from which a $3/2$-recursion on the
	optimality gap (\S\ref{ssec:descent}) yields the $\cO(\varepsilon^{-1})$ rate.
	
	In summary, the logical dependency chain is:
	averaging identities $\to$ reference step $\to$ stabilizer/dispersion bounds
	$\to$ inexact Newton condition (Proposition~\ref{prop:inexact-rn})
	$\to$ $3/2$-recursion on the steady subsequence
	(Lemma~\ref{lem:solve32})
	$\to$ global complexity (Theorem~\ref{thm:main}).
	Appendix~\ref{app:dispersion} supplies the burn-in index~$K_0(\varepsilon)$.
	
	The proof combines two independent stages (detailed in
	Remark~\ref{rem:two_stages} after the main theorem).
	The reader interested only in the final result may skip directly to
	Theorem~\ref{thm:main}.
	
	\subsection{Bounded trajectory and local constants}\label{ssec:iterate_bound}
	
	\begin{assumption}[Bounded iterates]\label{ass:bounded_iterates}
		There exists a solution $x_\star\in\arg\min f$ and a finite constant
		$D>0$ such that the iterates generated by Algorithm~\ref{alg:disgrem}
		satisfy
		\begin{equation}\label{eq:bounded_levelset}
			\norm{\bar x_k\!-\!x_\star}\!\le\! D,\;
			\norm{x_{i,k}\!-\!x_\star}\!\le\! D,\;
			\norm{\tilde x_{i,k}\!-\!x_\star}\!\le\! D,
		\end{equation}
		for all $k\ge 0$ and all $i\in\{1,\dots,N\}$.
	\end{assumption}
	
	Under Assumption~\ref{ass:bounded_iterates}, we define all subsequent
	constants on the compact set $\{x:\|x-x_\star\|\le D\}$.
	Since each $\nabla f_i$ is globally $L_1$-Lipschitz, the Hessian bound
	\begin{equation}\label{eq:MHmax_def}
		M_{H,\max}:=L_1
	\end{equation}
	is valid for all iterates and is independent of the compact set.
	
	\begin{remark}
		Since each $f_i$ is twice continuously differentiable (Assumption~\ref{ass:problem}(i))
		and the iterates are confined to a compact set by
		Assumption~\ref{ass:bounded_iterates},
		the quantity $\sup_{x\in\mathcal B}\max_{1\le i\le N}\|\nabla^2 f_i(x)\|_2$
		is automatically finite for any bounded $\mathcal B\subset\R^d$.
		No separate assumption is needed.
	\end{remark}
	
	\begin{remark}
		Prior decentralized second-order analyses
		\citep{mokhtari2016esom,sun2022sonata,maritan2023networkgiant} posit a bounded
		static heterogeneity constant
		$\sigma_H:=\sup_{\|x-x_\star\|\le D}
		\frac{1}{\sqrt N}\|(\Pmat\otimes I_{d^2})\mathcal H(\one\otimes x)\|$.
		We avoid this entirely: Hessian-tracker dispersion is controlled
		by an increment-based recursion
		(Appendix~\ref{app:dispersion}, Lemma~\ref{lem:DH_rec})
		that tracks mismatch through Lipschitz-continuous
		differences $\nabla^2 F(X_{k+1})-\nabla^2 F(X_k)$,
		so no accuracy floor linked to~$\sigma_H$ appears in the
		final bounds.
	\end{remark}
	
	\subsection{Averaging identities}\label{ssec:avg}
	The starting point of the analysis is that doubly stochastic mixing
	preserves averages, so the mean iterate and mean trackers evolve as if
	they were computed by a single ``virtual agent.''
	
	Define averages $\bar x_k:=\frac1N\sum_{i=1}^N x_{i,k}$ and similarly for other variables.
	Define pre-mixed averages
	\begin{gather*}
		\tilde{\bar x}_k:=\tfrac1N\!\sum_{i=1}^N \tilde x_{i,k},\quad
		\tilde{\bar g}_k:=\tfrac1N\!\sum_{i=1}^N \tilde g_{i,k},\\
		\tilde{\bar H}_k:=\tfrac1N\!\sum_{i=1}^N \tilde H_{i,k}.
	\end{gather*}
	Define the average step $\bar s_k:=\frac1N\sum_{i=1}^N s_{i,k}$.
	
	\begin{lemma}\label{lem:avg-pres}
		For all $k\ge0$, $\tilde{\bar x}_k=\bar x_k$ and $\bar x_{k+1}=\bar x_k+\bar s_k$.
	\end{lemma}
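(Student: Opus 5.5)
The plan is to exploit column-stochasticity of $W$, namely $\one^\top W=\one^\top$, which implies $\one^\top W^t=\one^\top$ for every integer $t\ge0$. Then any number of gossip rounds preserves the network average of whatever quantity is being mixed. First, we write the pre-mixing stage (A) in compact form. Since each round is $z_i^{(\ell+1)}=\sum_{j\in\mathcal N_i^+}W_{ij}z_j^{(\ell)}=\sum_{j=1}^N W_{ij}z_j^{(\ell)}$, the zero pattern of $W$ means that summing over the neighbourhood is the same as summing over all $j$. Induction on $\ell$ then gives $\tilde x_{i,k}=\sum_{j=1}^N [W^{\tau_k}]_{ij}x_{j,k}$, or in stacked form $\tilde X_k=(W^{\tau_k}\otimes I_d)X_k$.

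Averaging over $i$ and swapping the order of summation gives
\[
\tilde{\bar x}_k=\frac1N\sum_{j=1}^N\Bigl(\sum_{i=1}^N [W^{\tau_k}]_{ij}\Bigr)x_{j,k}=\frac1N\sum_{j=1}^N x_{j,k}=\bar x_k .
\]
Here we use that the column sums of $W^{\tau_k}$ equal one. This proves the first identity.

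For the second identity, the same argument applied to the post-mixing stage (C) gives $x_{i,k+1}=\sum_j[W^{t_k}]_{ij}y_{j,k+1}$, and hence $\bar x_{k+1}=\bar y_{k+1}$. By step (B), $y_{i,k+1}=\tilde x_{i,k}+s_{i,k}$ for every $i$. This holds in both branches: when $\tilde g_{i,k}=0$ we have $s_{i,k}=0$, and otherwise $s_{i,k}$ is the well-defined solution of the shifted system. Averaging yields $\bar y_{k+1}=\tilde{\bar x}_k+\bar s_k=\bar x_k+\bar s_k$, using the first part.

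There is no real obstacle here. The only point needing care is that $s_{i,k}$ is well defined in the nonzero-gradient branch, so that the average step is meaningful. This holds because $\tilde H_{i,k}+\delta_{i,k}I\succeq0$ by the definition of $\delta_{i,k}$, and $\lambda_{i,k}=\sqrt{M\|\tilde g_{i,k}\|}>0$ whenever $\tilde g_{i,k}\neq0$ and $M>0$. The coefficient matrix is therefore positive definite.
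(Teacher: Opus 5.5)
Your proposal is correct and follows essentially the same argument as the paper: column sums of $W^{\tau_k}$ and $W^{t_k}$ equal one, so swapping the order of summation preserves the average through the pre-mixing and post-mixing stages, and averaging $y_{i,k+1}=\tilde x_{i,k}+s_{i,k}$ gives the second identity. Your extra remarks on the neighbourhood-sum reduction and on the well-posedness of $s_{i,k}$ are correct but not needed for the averaging identity itself.
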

	\begin{proof}
		Immediate from $W\one=\one$ (doubly stochastic),
		which gives $\frac1N\one^\top(W^t\otimes I_d)=\frac1N\one^\top\otimes I_d$
		for every integer $t\ge 1$.
	\end{proof}
	
	\begin{lemma}\label{lem:avg-track-g}
		If Algorithm~\ref{alg:disgrem} is initialized by
		$g_{i,0}=\nabla f_i(x_{i,0})$, then for all $k\ge0$,
		\[
		\bar g_k:=\frac1N\sum_{i=1}^N g_{i,k}=\frac1N\sum_{i=1}^N\nabla f_i(x_{i,k}),
		\qquad
		\tilde{\bar g}_k=\bar g_k.
		\]
	\end{lemma}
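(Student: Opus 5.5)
The plan is to prove both identities together by induction on $k$, using only that doubly stochastic mixing preserves averages. This is the same fact behind Lemma~\ref{lem:avg-pres}: for every integer $t\ge1$,
\[
\tfrac1N(\one^\top\otimes I_d)(W^t\otimes I_d)=\tfrac1N\one^\top\otimes I_d .
\]

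\textbf{Second identity.} The claim $\tilde{\bar g}_k=\bar g_k$ holds for every $k$ directly, with no induction needed. In step~(A), $\tilde g_{i,k}=\sum_j[W^{\tau_k}]_{ij}g_{j,k}$, so averaging over $i$ and using $\one^\top W^{\tau_k}=\one^\top$ gives $\tilde{\bar g}_k=\bar g_k$.

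\textbf{Tracking identity.} I will show $\bar g_k=\frac1N\sum_i\nabla f_i(x_{i,k})$ by induction.

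The base case $k=0$ follows from the initialization $g_{i,0}=\nabla f_i(x_{i,0})$.

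For the inductive step, assume the identity holds at $k$. In step~(D), $g_{i,k+1}=\sum_j[W^{t_k}]_{ij}v_j^{(0)}$, so average preservation gives
\[
\bar g_{k+1}=\frac1N\sum_i v_i^{(0)}=\tilde{\bar g}_k+\frac1N\sum_i\nabla f_i(x_{i,k+1})-\frac1N\sum_i\nabla f_i(x_{i,k}).
\]
Substituting $\tilde{\bar g}_k=\bar g_k=\frac1N\sum_i\nabla f_i(x_{i,k})$ (the second identity plus the induction hypothesis), the two sums at $x_{i,k}$ cancel. This leaves $\bar g_{k+1}=\frac1N\sum_i\nabla f_i(x_{i,k+1})$.

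\textbf{Main point of care.} There is no real obstacle. The one thing to get right is the order of operations: the tracker input uses the pre-mixed $\tilde g_{i,k}$ rather than $g_{i,k}$, so the identity $\tilde{\bar g}_k=\bar g_k$ must be established first, as an ingredient of the inductive step. The mixing depths $\tau_k,t_k\ge1$ play no role beyond $W^t$ being doubly stochastic.
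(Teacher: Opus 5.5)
Your proof is correct and follows essentially the same route as the paper. Average preservation under $W^{\tau_k}$ gives $\tilde{\bar g}_k=\bar g_k$ directly; averaging the step-(D) update and using column-stochasticity of $W^{t_k}$ gives $\bar g_{k+1}=\tilde{\bar g}_k+\frac1N\sum_j(\nabla f_j(x_{j,k+1})-\nabla f_j(x_{j,k}))$, and the initialization then closes the induction, in the same order the paper uses.
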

	\begin{proof}
		By induction using $W\one=\one$ and the telescoping form of the tracker update (step~(D)).
	\end{proof}
	
	\begin{lemma}\label{lem:avg-track-h}
		If Algorithm~\ref{alg:disgrem} is initialized by
		$H_{i,0}=\nabla^2 f_i(x_{i,0})$, then for all $k\ge0$,
		\[
		\bar H_k:=\frac1N\sum_{i=1}^N H_{i,k}=\frac1N\sum_{i=1}^N\nabla^2 f_i(x_{i,k}),
		\qquad
		\tilde{\bar H}_k=\bar H_k.
		\]
	\end{lemma}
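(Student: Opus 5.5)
We argue by induction on $k$, in the same way as for the gradient tracker in Lemma~\ref{lem:avg-track-g}. The key fact is that every gossip operation preserves averages of matrix-valued quantities. For a stacked collection of symmetric matrices $\{Z_j\}_{j=1}^N$ and any integer $t\ge1$, double stochasticity ($\one^\top W=\one^\top$, hence $\one^\top W^t=\one^\top$) gives
\[
\frac1N\sum_{i=1}^N\sum_{j=1}^N[W^t]_{ij}Z_j=\frac1N\sum_{j=1}^N\Bigl(\sum_{i=1}^N[W^t]_{ij}\Bigr)Z_j=\frac1N\sum_{j=1}^N Z_j .
\]
Equivalently, we apply Lemma~\ref{lem:avg-pres}'s identity $\frac1N\one^\top(W^t\otimes I)=\frac1N\one^\top\otimes I$ to the vectorizations $\operatorname{vec}(H_{i,k})$ with $I=I_{d^2}$. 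Since the rounds in steps~(A) and~(D) are exactly such linear mixings, applied entrywise to the Hessian trackers, the average is unchanged by any number of rounds.

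The second identity follows immediately. Step~(A) sets $\tilde H_{i,k}=\sum_j[W^{\tau_k}]_{ij}H_{j,k}$, so the display above gives $\tilde{\bar H}_k=\bar H_k$ for every $k$, regardless of the first identity.

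For the first identity, the base case $k=0$ holds by the initialization $H_{i,0}=\nabla^2 f_i(x_{i,0})$. For the inductive step, assume $\bar H_k=\frac1N\sum_i\nabla^2 f_i(x_{i,k})$. By step~(D), $H_{i,k+1}=\sum_j[W^{t_k}]_{ij}R_j^{(0)}$, where $R_j^{(0)}=\tilde H_{j,k}+\nabla^2 f_j(x_{j,k+1})-\nabla^2 f_j(x_{j,k})$. Averaging, and using mixing invariance once more,
\[
\bar H_{k+1}=\tilde{\bar H}_k+\frac1N\sum_j\nabla^2 f_j(x_{j,k+1})-\frac1N\sum_j\nabla^2 f_j(x_{j,k}).
\]
Substituting $\tilde{\bar H}_k=\bar H_k$ and the induction hypothesis, the last term cancels, which telescopes to $\bar H_{k+1}=\frac1N\sum_j\nabla^2 f_j(x_{j,k+1})$.

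There is no real obstacle here. The only points to be careful about are that the increment uses the Hessians at $x_{j,k}$, which are the unmixed iterates rather than $\tilde x_{j,k}$, exactly as the induction hypothesis requires, and that the average of $\tilde H_{i,k}$ is taken after pre-mixing. Symmetry of $W$ is not needed; only $\one^\top W=\one^\top$ is used.
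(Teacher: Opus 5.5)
Your proof is correct and follows essentially the same route as the paper. The average-preserving property of doubly stochastic mixing gives $\tilde{\bar H}_k=\bar H_k$, and averaging the step~(D) update yields the telescoping relation $\bar H_{k+1}=\tilde{\bar H}_k+\frac1N\sum_j\bigl(\nabla^2 f_j(x_{j,k+1})-\nabla^2 f_j(x_{j,k})\bigr)$, which closes the induction.
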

	\begin{proof}
		The Hessian tracker has the same telescoping form as the
		gradient tracker in Lemma~\ref{lem:avg-track-g}: multiplication
		by $W$ preserves the block average, and the Hessian increment
		$\nabla^2 f_i(x_{i,k+1})-\nabla^2 f_i(x_{i,k})$ telescopes in
		the averaged update.
	\end{proof}
	
	\subsection{Well-posed local solves and a reference step}\label{ssec:refstep}
	With the averaging identities in hand, we next introduce the
	reference step $s_k^{\mathrm{ref}}$: the Newton step that a
	centralized agent would compute using the averaged Hessian and gradient.
	The gap $\|\bar s_k - s_k^{\mathrm{ref}}\|$ then quantifies how much the
	decentralized updates deviate from this ideal step.
	
	Algorithm~\ref{alg:disgrem} sets
	\begin{align*}
		\lambda_{i,k}&:=\sqrt{M\|\tilde g_{i,k}\|},\\
		\delta_{i,k}&:=\max\{0,-\lambda_{\min}(\tilde H_{i,k})\},\\
		\tilde\lambda_{i,k}&:=\lambda_{i,k}+\delta_{i,k}.
	\end{align*}
	Define
	\[
	A_{i,k}:=\tilde H_{i,k}+\tilde\lambda_{i,k}I,
	\]
	When $\tilde g_{i,k}\ne0$, the coefficient matrix is positive
	definite and Algorithm~\ref{alg:disgrem} sets
	$s_{i,k}:=-A_{i,k}^{-1}\tilde g_{i,k}$.
	When $\tilde g_{i,k}=0$, Algorithm~\ref{alg:disgrem} uses the
	convention $s_{i,k}=0$.
	All inverse-based estimates below are stated on indices where the
	corresponding regularization lower bound is positive; in particular,
	Lemma~\ref{lem:step-disp} assumes $\underline\lambda_k>0$.
	Define averaged quantities
	\begin{gather*}
		\tilde{\bar\lambda}_k:=\tfrac1N\textstyle\sum_{i=1}^N \tilde\lambda_{i,k},\quad
		A_k^{\mathrm{ref}}:=\tilde{\bar H}_k+\tilde{\bar\lambda}_k I.
	\end{gather*}
	Whenever $A_k^{\mathrm{ref}}$ is nonsingular, the reference step is
	defined by $s_k^{\mathrm{ref}}=-(A_k^{\mathrm{ref}})^{-1}\tilde{\bar g}_k$.
	In the analysis below this definition is used only on indices where
	the regularization lower bound is positive.
	
	\begin{lemma}\label{lem:M-vs-s}
		For all $i,k$, the local step satisfies
		\[
		M\norm{s_{i,k}}\le \lambda_{i,k}
		\qquad\text{and hence}\qquad
		L_2\norm{s_{i,k}}\le \lambda_{i,k}.
		\]
	\end{lemma}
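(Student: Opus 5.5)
The argument splits into two cases according to whether $\tilde g_{i,k}$ vanishes. If $\tilde g_{i,k}=0$, the algorithm sets $s_{i,k}=0$ and $\lambda_{i,k}=0$, so both sides of the inequality are zero and there is nothing to prove. For the rest, assume $\tilde g_{i,k}\ne0$, which makes $\lambda_{i,k}=\sqrt{M\|\tilde g_{i,k}\|}>0$.

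In that case we first bound the smallest eigenvalue of the coefficient matrix $A_{i,k}=\tilde H_{i,k}+(\lambda_{i,k}+\delta_{i,k})I$. By definition $\delta_{i,k}\ge-\lambda_{\min}(\tilde H_{i,k})$, so $\lambda_{\min}(\tilde H_{i,k})+\delta_{i,k}\ge0$. Hence
\[
\lambda_{\min}(A_{i,k})=\lambda_{\min}(\tilde H_{i,k})+\delta_{i,k}+\lambda_{i,k}\ge\lambda_{i,k}>0.
\]
Here we use that $\tilde H_{i,k}$ is symmetric. This holds because symmetric mixing of symmetric matrices with symmetric Hessian increments preserves symmetry, starting from $H_{i,0}=\nabla^2 f_i(x_{i,0})$. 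Consequently $A_{i,k}$ is symmetric positive definite and $\|A_{i,k}^{-1}\|_2\le 1/\lambda_{i,k}$.

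This gives
\[
\|s_{i,k}\|\le\|A_{i,k}^{-1}\|_2\,\|\tilde g_{i,k}\|\le\frac{\|\tilde g_{i,k}\|}{\lambda_{i,k}}.
\]
Since $\lambda_{i,k}^2=M\|\tilde g_{i,k}\|$, we have $\|\tilde g_{i,k}\|=\lambda_{i,k}^2/M$, and therefore $\|s_{i,k}\|\le\lambda_{i,k}/M$, that is, $M\|s_{i,k}\|\le\lambda_{i,k}$. The second inequality then follows from the standing assumption $M\ge L_2$:
\[
L_2\|s_{i,k}\|\le M\|s_{i,k}\|\le\lambda_{i,k}.
\]

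The main step to check is the symmetry of $\tilde H_{i,k}$, which is needed for the eigenvalue shift to control the whole spectrum of $A_{i,k}$. It follows by induction through steps (A) and (D) of Algorithm~\ref{alg:disgrem}, since these steps only form linear combinations of symmetric matrices.
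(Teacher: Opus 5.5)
Your proof is correct and follows the paper's argument: the same case split on $\tilde g_{i,k}=0$, the same bound $A_{i,k}\succeq\lambda_{i,k}I$ from the eigenvalue shift, and then $\|s_{i,k}\|\le\|\tilde g_{i,k}\|/\lambda_{i,k}=\lambda_{i,k}/M$ combined with $M\ge L_2$. Your explicit induction showing that $\tilde H_{i,k}$ stays symmetric is a detail the paper leaves implicit, and that induction uses only linearity of the mixing, not symmetry of $W$.
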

	
	\begin{lemma}\label{lem:avg-regime}
		For all $k$,
		\[
		L_2\|\bar s_k\|\le \frac1N\sum_{i=1}^N \lambda_{i,k}\le \tilde{\bar\lambda}_k.
		\]
	\end{lemma}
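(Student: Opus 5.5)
The bound follows from Lemma~\ref{lem:M-vs-s} by averaging, plus a term-by-term comparison of the regularizers; no new spectral estimate is needed.

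\textbf{Step 1 (first inequality).} Use the triangle inequality on the average step:
\[
\|\bar s_k\|=\Bigl\|\tfrac1N\textstyle\sum_{i=1}^N s_{i,k}\Bigr\|\le \tfrac1N\textstyle\sum_{i=1}^N\|s_{i,k}\|.
\]
Lemma~\ref{lem:M-vs-s} gives $L_2\|s_{i,k}\|\le\lambda_{i,k}$ for every $i$. This holds in both branches of the algorithm:
\begin{itemize}
\item when $\tilde g_{i,k}=0$, we have $s_{i,k}=0$ and $\lambda_{i,k}=0$;
\item when $\tilde g_{i,k}\neq0$, we have $\lambda_{i,k}>0$, and the bound is the one stated in that lemma.
\end{itemize}
Multiplying the displayed inequality by $L_2$ and substituting these per-agent bounds yields $L_2\|\bar s_k\|\le\frac1N\sum_i\lambda_{i,k}$.

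\textbf{Step 2 (second inequality).} By definition $\tilde\lambda_{i,k}=\lambda_{i,k}+\delta_{i,k}$. The stabilizer satisfies $\delta_{i,k}=\max\{0,-\lambda_{\min}(\tilde H_{i,k})\}\ge0$, so $\lambda_{i,k}\le\tilde\lambda_{i,k}$ for each $i$. Averaging over $i$ gives $\frac1N\sum_i\lambda_{i,k}\le\tilde{\bar\lambda}_k$.

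\textbf{Main obstacle.} There is none of substance, provided Lemma~\ref{lem:M-vs-s} may be used as stated. If one wants it to be self-contained, the point to check is that its proof covers every $i$ regardless of indefiniteness, and it does. Since $A_{i,k}\succeq\lambda_{i,k}I$ with $\lambda_{i,k}>0$ whenever $\tilde g_{i,k}\ne0$, we get
\[
\|s_{i,k}\|\le\frac{\|\tilde g_{i,k}\|}{\lambda_{i,k}}=\sqrt{\frac{\|\tilde g_{i,k}\|}{M}}=\frac{\lambda_{i,k}}{M}.
\]
Combined with $L_2\le M$, this gives $L_2\|s_{i,k}\|\le\lambda_{i,k}$.
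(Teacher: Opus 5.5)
Your proposal is correct and follows the paper's proof. The paper averages $M\|s_{i,k}\|\le\lambda_{i,k}$ and then invokes $M\ge L_2$, while you average the $L_2$-form of the per-agent bound directly; this is a purely cosmetic difference. The second inequality uses $\delta_{i,k}\ge0$ exactly as the paper does.
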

	
	\subsection{Bridge bounds: tracked averages versus true quantities}\label{ssec:bridge}
	The reference step uses the tracked averages
	$\tilde{\bar g}_k,\tilde{\bar H}_k$ rather than the true gradient
	$g_k:=\nabla f(\bar x_k)$ and true Hessian
	$\nabla^2 f(\bar x_k)$.
	By Lemma~\ref{lem:avg-track-g},
	$\tilde{\bar g}_k = \frac{1}{N}\sum_{i=1}^N\nabla f_i(x_{i,k})$,
	which coincides with $\nabla f(\bar x_k)$ only at exact
	consensus ($x_{i,k}=\bar x_k$ for all~$i$).
	Away from consensus the discrepancy is controlled by the
	bridge lemmas below, which quantify it in terms of the
	post-mixing spatial disagreement $D(X_k)$.
	All constants depending on iterate boundedness
	(e.g., $L_1,L_2$) are evaluated on the compact set fixed by
	Assumption~\ref{ass:bounded_iterates}.
	
	The post-mixing disagreement (RMS) is
	$D(X_k)=\bigl(\frac1N\sum_{i=1}^N \|x_{i,k}-\bar x_k\|^2\bigr)^{1/2}$,
	i.e., the dispersion $D(\cdot)$ defined in Section~\ref{sec:prelim}
	applied to the stacked iterate $X_k$.
	
	\begin{lemma}\label{lem:bridge-grad}
		For every $k$,
		\[
		\|\nabla f(\bar x_k)-\tilde{\bar g}_k\|\le L_1\,D(X_k).
		\]
	\end{lemma}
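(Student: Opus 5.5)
The proof is a direct computation that combines the averaging identity for the gradient tracker with the Lipschitz continuity of each $\nabla f_i$. By Lemma~\ref{lem:avg-track-g}, the pre-mixed tracker average equals the average of exact local gradients at the current local iterates:
\[
\tilde{\bar g}_k=\bar g_k=\frac1N\sum_{i=1}^N\nabla f_i(x_{i,k}).
\]
By definition of $f$ we have $\nabla f(\bar x_k)=\frac1N\sum_{i=1}^N\nabla f_i(\bar x_k)$. Subtracting the two expressions gives
\[
\nabla f(\bar x_k)-\tilde{\bar g}_k=\frac1N\sum_{i=1}^N\bigl(\nabla f_i(\bar x_k)-\nabla f_i(x_{i,k})\bigr).
\]

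Next, I apply the triangle inequality together with the $L_1$-Lipschitz continuity of each $\nabla f_i$ from Assumption~\ref{ass:problem}(i). This yields
\[
\|\nabla f(\bar x_k)-\tilde{\bar g}_k\|\le\frac{L_1}{N}\sum_{i=1}^N\|x_{i,k}-\bar x_k\|.
\]
The Lipschitz bound is global, so no compactness argument is needed here.

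Finally, I convert the arithmetic mean of the deviations into the RMS dispersion. By Cauchy--Schwarz (equivalently, Jensen's inequality for the square root),
\[
\frac1N\sum_{i=1}^N\|x_{i,k}-\bar x_k\|\le\Bigl(\frac1N\sum_{i=1}^N\|x_{i,k}-\bar x_k\|^2\Bigr)^{1/2}=D(X_k),
\]
which gives the claimed inequality.

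None of these steps is a real obstacle. The only point that needs care is using the post-mixing iterates $x_{i,k}$ rather than the pre-mixed $\tilde x_{i,k}$. This is exactly what Lemma~\ref{lem:avg-track-g} provides, since the tracker average equals the average of gradients at $x_{i,k}$ regardless of the pre-mixing.
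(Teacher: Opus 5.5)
Your proposal is correct and follows the paper's proof step for step. It uses the averaging identity of Lemma~\ref{lem:avg-track-g} to write $\tilde{\bar g}_k=\frac1N\sum_{i=1}^N\nabla f_i(x_{i,k})$, then the triangle inequality with the global $L_1$-Lipschitz bound, and finally Cauchy--Schwarz to pass from the mean deviation to $D(X_k)$.
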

	
	\begin{lemma}\label{lem:bridge-hess}
		For every $k$,
		\[
		\|\nabla^2 f(\bar x_k)-\tilde{\bar H}_k\|_2\le L_2\,D(X_k).
		\]
	\end{lemma}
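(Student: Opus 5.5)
The bound follows from the Hessian averaging identity together with the Lipschitz continuity of each local Hessian. By Lemma~\ref{lem:avg-track-h}, the pre-mixed average Hessian tracker equals the average of the exact local Hessians evaluated at the current local iterates:
\[
\tilde{\bar H}_k=\bar H_k=\frac1N\sum_{i=1}^N\nabla^2 f_i(x_{i,k}).
\]
By definition of $f$ we also have $\nabla^2 f(\bar x_k)=\frac1N\sum_{i=1}^N\nabla^2 f_i(\bar x_k)$. Subtracting the two gives
\[
\nabla^2 f(\bar x_k)-\tilde{\bar H}_k=\frac1N\sum_{i=1}^N\bigl(\nabla^2 f_i(\bar x_k)-\nabla^2 f_i(x_{i,k})\bigr).
\]

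Next we apply the triangle inequality for the spectral norm. Combined with the $L_2$-Lipschitz property of each $\nabla^2 f_i$ from Assumption~\ref{ass:problem}(i), this yields
\[
\|\nabla^2 f(\bar x_k)-\tilde{\bar H}_k\|_2\le\frac{L_2}{N}\sum_{i=1}^N\|x_{i,k}-\bar x_k\|.
\]
The Lipschitz bound is global, so Assumption~\ref{ass:bounded_iterates} is not actually needed here. Finally, Cauchy--Schwarz (equivalently, the arithmetic-mean/quadratic-mean inequality) gives $\frac1N\sum_i\|x_{i,k}-\bar x_k\|\le\bigl(\frac1N\sum_i\|x_{i,k}-\bar x_k\|^2\bigr)^{1/2}=D(X_k)$, which is the claim.

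There is no real obstacle. The only point needing care is that the average is anchored at the post-mixing iterates $x_{i,k}$, not the pre-mixed $\tilde x_{i,k}$. This is exactly why the bound involves $D(X_k)$, and it is guaranteed by the tracker initialization $H_{i,0}=\nabla^2 f_i(x_{i,0})$ used in Lemma~\ref{lem:avg-track-h}. The argument is identical in structure to that of Lemma~\ref{lem:bridge-grad}, with $L_1$ and gradients replaced by $L_2$ and Hessians.
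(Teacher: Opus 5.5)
Your proof is correct and matches the paper's argument step for step: the averaging identity of Lemma~\ref{lem:avg-track-h}, then the triangle inequality with the $L_2$-Lipschitz Hessian bound, then Cauchy--Schwarz to replace the mean deviation by $D(X_k)$. Your side remark that Assumption~\ref{ass:bounded_iterates} is not needed is also accurate, because the Hessian Lipschitz constant in Assumption~\ref{ass:problem}(i) is global.
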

	
	\subsection{The eigenvalue-shift stabilizer \texorpdfstring{$\delta_{i,k}$}{delta\_ik}}\label{ssec:stab}
	The stabilizer $\delta_{i,k}$ makes the local coefficient matrix
	positive definite whenever $\tilde g_{i,k}\ne0$. When
	$\tilde g_{i,k}=0$, Algorithm~\ref{alg:disgrem} uses the convention
	$s_{i,k}=0$. Therefore the local step is well posed in all cases.
	Because it acts as an additive bias in $\tilde\lambda_{i,k}$, we need
	to show that its average $\bar\delta_k$ is controlled by quantities
	that vanish as consensus improves.
	
	Define the stabilizer average
	\[
	\bar\delta_k:=\frac1N\sum_{i=1}^N \delta_{i,k}.
	\]
	
	\begin{lemma}\label{lem:delta_control}
		For every $k$ and every $i$,
		\[
		0\le \delta_{i,k}\le \big\|\tilde H_{i,k}-\nabla^2 f(\bar x_k)\big\|_2.
		\]
		Consequently,
		\begin{gather*}
			\bar\delta_k\le \Delta_k^H + L_2D(X_k),\\
			\Delta_k^H:=\tfrac1N\!\sum_{i=1}^N\big\|\tilde H_{i,k}-\tilde{\bar H}_k\big\|_2 .
		\end{gather*}
	\end{lemma}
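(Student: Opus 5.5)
The plan is to prove the pointwise bound first and then average it. The pointwise bound will come from Weyl's inequality together with convexity of $f$. The averaged bound will then follow from the triangle inequality and the bridge estimate of Lemma~\ref{lem:bridge-hess}.

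\textbf{Pointwise bound.} Nonnegativity of $\delta_{i,k}$ holds by definition, since it is a maximum with $0$. For the upper bound, set $E_{i,k}:=\tilde H_{i,k}-\nabla^2 f(\bar x_k)$, which is symmetric. Each $f_i$ is convex and $C^2$ by Assumption~\ref{ass:problem}(i), so $\nabla^2 f(\bar x_k)=\frac1N\sum_i\nabla^2 f_i(\bar x_k)\succeq0$. Weyl's inequality then gives
\[
\lambda_{\min}(\tilde H_{i,k})\ge\lambda_{\min}\bigl(\nabla^2 f(\bar x_k)\bigr)+\lambda_{\min}(E_{i,k})\ge -\|E_{i,k}\|_2 .
\]
Hence $-\lambda_{\min}(\tilde H_{i,k})\le\|E_{i,k}\|_2$, and since $\|E_{i,k}\|_2\ge0$, taking the maximum with $0$ yields $\delta_{i,k}\le\|E_{i,k}\|_2$. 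Symmetry of $\tilde H_{i,k}$ holds because it is a $W^{\tau_k}$-mixture of trackers. Those trackers are built from symmetric Hessians and Hessian increments, and both the initialization and step~(D) preserve symmetry. So $\lambda_{\min}$ is well defined and Weyl applies.

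\textbf{Averaged bound.} For each $i$, split the error through the tracked average:
\[
\|E_{i,k}\|_2\le\|\tilde H_{i,k}-\tilde{\bar H}_k\|_2+\|\tilde{\bar H}_k-\nabla^2 f(\bar x_k)\|_2 .
\]
Lemma~\ref{lem:bridge-hess} bounds the second term by $L_2D(X_k)$, and this bound is independent of $i$. Averaging over $i=1,\dots,N$ therefore gives
\[
\bar\delta_k\le\frac1N\sum_{i=1}^N\|\tilde H_{i,k}-\tilde{\bar H}_k\|_2+L_2D(X_k)=\Delta_k^H+L_2D(X_k).
\]

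\textbf{Expected obstacle.} There is essentially none. The only points needing care are the positive semidefiniteness of the true averaged Hessian at the \emph{average} point $\bar x_k$, which follows from convexity, and the symmetry of the trackers, which Weyl's inequality requires. The rest is bookkeeping.
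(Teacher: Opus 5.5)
Your proof is correct and follows the paper's argument essentially line for line. You use Weyl's inequality with $\nabla^2 f(\bar x_k)\succeq 0$ for the pointwise bound, then average the triangle inequality through $\tilde{\bar H}_k$ over $i$ and apply Lemma~\ref{lem:bridge-hess}; your remark on the symmetry of the trackers fills in a detail the paper leaves implicit but that Weyl's inequality indeed requires.
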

	
	\subsection{Step dispersion via a resolvent identity}\label{ssec:stepdisp}
	The inexact Newton condition (Proposition~\ref{prop:inexact-rn} below)
	requires three quantities to be small: the step dispersion
	$\|\bar s_k - s_k^{\mathrm{ref}}\|$, the gradient bridge error
	$L_1 D(X_k)$, and the Hessian bridge error $L_2 D(X_k)$.
	The latter two are already controlled by
	Lemmas~\ref{lem:bridge-grad}--\ref{lem:bridge-hess};
	this subsection handles the first.
	The key tool is a resolvent identity that factorizes the
	difference of two linear-system solutions through their
	coefficient matrices.
	
	Define dispersions
	\begin{align*}
		\Delta_k^g&:=\tfrac1N\textstyle\sum_{i=1}^N \|\tilde g_{i,k}-\tilde{\bar g}_k\|,\\
		\Delta_k^\lambda&:=\tfrac1N\textstyle\sum_{i=1}^N |\tilde\lambda_{i,k}-\tilde{\bar\lambda}_k|,\\
		\underline\lambda_k&:=\min_{1\le i\le N}\tilde\lambda_{i,k}.
	\end{align*}
	
	\begin{lemma}\label{lem:step-disp}
		For every $k$ with $\underline\lambda_k>0$,
		\[
		\|\bar s_k-s_k^{\mathrm{ref}}\|
		\le \frac{1}{\underline\lambda_k}\Delta_k^g
		+\frac{\|\tilde{\bar g}_k\|}{\underline\lambda_k\,\tilde{\bar\lambda}_k}\big(\Delta_k^H+\Delta_k^\lambda\big).
		\]
	\end{lemma}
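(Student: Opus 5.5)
We write each local step against the reference step and split the difference into a gradient part and a matrix part via the resolvent identity $A^{-1}-B^{-1}=A^{-1}(B-A)B^{-1}$.

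\textbf{Well-posedness.} Since $\underline\lambda_k>0$, every $\tilde\lambda_{i,k}>0$. The shift gives $\tilde H_{i,k}+\delta_{i,k}I\succeq0$, so $A_{i,k}\succeq\tilde\lambda_{i,k}I\succeq\underline\lambda_k I$ and $\|A_{i,k}^{-1}\|_2\le 1/\underline\lambda_k$. Note that $\tilde\lambda_{i,k}>0$ holds even when $\tilde g_{i,k}=0$, provided $\delta_{i,k}>0$.

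\textbf{Reference matrix.} Write $A_{i,k}=\tilde H_{i,k}+\lambda_{i,k}I+\delta_{i,k}I$. Averaging gives $\frac1N\sum_i A_{i,k}=\tilde{\bar H}_k+\tilde{\bar\lambda}_kI=A_k^{\mathrm{ref}}$. Averaging the bounds $A_{i,k}\succeq\tilde\lambda_{i,k}I$ gives $A_k^{\mathrm{ref}}\succeq\tilde{\bar\lambda}_kI$, hence $\|(A_k^{\mathrm{ref}})^{-1}\|_2\le1/\tilde{\bar\lambda}_k$. This is why the second term of the claimed bound carries $\tilde{\bar\lambda}_k$ rather than $\underline\lambda_k$.

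\textbf{Convention $s_{i,k}=0$.} When $\tilde g_{i,k}=0$ the algorithm sets $s_{i,k}=0$, and $-A_{i,k}^{-1}\tilde g_{i,k}=0$ as well. So under $\underline\lambda_k>0$ we may use $s_{i,k}=-A_{i,k}^{-1}\tilde g_{i,k}$ uniformly in $i$.

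\textbf{Decomposition.} For each $i$,
\[
s_{i,k}-s_k^{\mathrm{ref}}
=-A_{i,k}^{-1}(\tilde g_{i,k}-\tilde{\bar g}_k)
-\bigl(A_{i,k}^{-1}-(A_k^{\mathrm{ref}})^{-1}\bigr)\tilde{\bar g}_k .
\]
Apply the resolvent identity to the second term:
\[
A_{i,k}^{-1}-(A_k^{\mathrm{ref}})^{-1}=A_{i,k}^{-1}(A_k^{\mathrm{ref}}-A_{i,k})(A_k^{\mathrm{ref}})^{-1}.
\]
The middle factor is
\[
A_k^{\mathrm{ref}}-A_{i,k}=(\tilde{\bar H}_k-\tilde H_{i,k})+(\tilde{\bar\lambda}_k-\tilde\lambda_{i,k})I,
\]
so its spectral norm is at most $\|\tilde H_{i,k}-\tilde{\bar H}_k\|_2+|\tilde\lambda_{i,k}-\tilde{\bar\lambda}_k|$.

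\textbf{Bounding and averaging.} Using the two resolvent bounds,
\[
\|s_{i,k}-s_k^{\mathrm{ref}}\|\le\frac{\|\tilde g_{i,k}-\tilde{\bar g}_k\|}{\underline\lambda_k}
+\frac{\|\tilde{\bar g}_k\|}{\underline\lambda_k\tilde{\bar\lambda}_k}\Bigl(\|\tilde H_{i,k}-\tilde{\bar H}_k\|_2+|\tilde\lambda_{i,k}-\tilde{\bar\lambda}_k|\Bigr).
\]
Finally apply the triangle inequality $\|\bar s_k-s_k^{\mathrm{ref}}\|\le\frac1N\sum_i\|s_{i,k}-s_k^{\mathrm{ref}}\|$. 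Averaging the per-agent bound reproduces exactly $\Delta_k^g$, $\Delta_k^H$ and $\Delta_k^\lambda$, which gives the statement.

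\textbf{Main obstacle.} The only delicate point is the bound $\|(A_k^{\mathrm{ref}})^{-1}\|_2\le1/\tilde{\bar\lambda}_k$. The averaged tracker $\tilde{\bar H}_k$ need not be PSD by itself; what saves the argument is averaging the per-agent shifted inequalities $\tilde H_{i,k}+\delta_{i,k}I\succeq0$. This works because $\tilde{\bar\lambda}_k$ includes $\bar\delta_k$ by definition.
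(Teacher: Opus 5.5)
Your decomposition and resolvent identity are exactly those of the paper's proof. The inequality that carries the argument, however, is false.

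\textbf{The per-agent bound.} From $\tilde H_{i,k}+\delta_{i,k}I\succeq0$ you only get
\[
A_{i,k}=(\tilde H_{i,k}+\delta_{i,k}I)+\lambda_{i,k}I\succeq\lambda_{i,k}I,
\]
not $A_{i,k}\succeq\tilde\lambda_{i,k}I$. The shift $\delta_{i,k}$ is used up cancelling the negative part of $\tilde H_{i,k}$, so it cannot be counted a second time. Whenever $\delta_{i,k}>0$ one has $\lambda_{\min}(A_{i,k})=\lambda_{\min}(\tilde H_{i,k})+\lambda_{i,k}+\delta_{i,k}=\lambda_{i,k}$ exactly. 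Hence $\|A_{i,k}^{-1}\|_2=1/\lambda_{i,k}$, which can be far larger than $1/\underline\lambda_k$.

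The case you deliberately include ($\tilde g_{i,k}=0$, $\delta_{i,k}>0$, so $\tilde\lambda_{i,k}>0$) exposes this. There $A_{i,k}=\tilde H_{i,k}+\delta_{i,k}I$ is singular, so $A_{i,k}^{-1}$ does not exist, let alone satisfy $\|A_{i,k}^{-1}\|_2\le1/\underline\lambda_k$. The paper's proof asserts the same inequality $A_i\succeq\tilde\lambda_{i,k}I$, so you have reproduced its gap rather than closed it.

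\textbf{The reference matrix.} Your derivation of $\|(A_k^{\mathrm{ref}})^{-1}\|_2\le1/\tilde{\bar\lambda}_k$ inherits the same error. Averaging the correct bounds $A_{i,k}\succeq\lambda_{i,k}I$ gives only $A_k^{\mathrm{ref}}\succeq\frac1N\sum_i\lambda_{i,k}\,I$. The bound you want is nevertheless true, for the reason your ``main obstacle'' paragraph rules out. By Lemma~\ref{lem:avg-track-h} and convexity, $\tilde{\bar H}_k=\frac1N\sum_i\nabla^2 f_i(x_{i,k})\succeq0$, hence $A_k^{\mathrm{ref}}\succeq\tilde{\bar\lambda}_kI$ directly.

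\textbf{The displayed inequality itself fails.} No better argument can repair the per-agent step, as the following data show.
\begin{itemize}
\item Take $N=d=2$, $\tilde H_{1,k}=\operatorname{diag}(-1,1)$ and $\tilde H_{2,k}=\operatorname{diag}(1,-1)$. Then $\tilde{\bar H}_k=0\succeq0$, consistent with convexity.
\item Take $\tilde g_{1,k}=\tilde g_{2,k}=g\ne0$ and set $\lambda:=\sqrt{M\|g\|}$.
\item Then $\delta_{i,k}=1$, $\underline\lambda_k=\tilde{\bar\lambda}_k=1+\lambda$, $\Delta_k^g=\Delta_k^\lambda=0$ and $\Delta_k^H=1$. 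The right-hand side therefore equals $\|g\|/(1+\lambda)^2$.
\item On the other hand, $A_{1,k}=\operatorname{diag}(\lambda,2+\lambda)$, $A_{2,k}=\operatorname{diag}(2+\lambda,\lambda)$ and $A_k^{\mathrm{ref}}=(1+\lambda)I$. These give $\bar s_k=-\frac{1+\lambda}{\lambda(2+\lambda)}g$ and $s_k^{\mathrm{ref}}=-\frac{1}{1+\lambda}g$.
\item Hence $\|\bar s_k-s_k^{\mathrm{ref}}\|=\|g\|/\bigl(\lambda(1+\lambda)(2+\lambda)\bigr)$. This exceeds the right-hand side by the factor $(1+\lambda)/(\lambda(2+\lambda))$, which tends to $\infty$ as $\lambda\to0$.
\end{itemize}

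\textbf{What your argument does prove.} Once corrected, it gives the same bound with every occurrence of $\underline\lambda_k$ replaced by $\min_i\lambda_{i,k}$, assumed positive. This uses $\|A_{i,k}^{-1}\|_2\le1/\lambda_{i,k}$ together with the convexity bound on $A_k^{\mathrm{ref}}$. That weaker form is all Proposition~\ref{prop:burnin} needs, because there every $\lambda_{i,k}\ge\frac12\sqrt{M\varepsilon}$.
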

	
	\begin{lemma}\label{lem:lambda_dispersion}
		If $\tilde{\bar g}_k\neq 0$ and the relative gradient dispersion condition
		\begin{equation}\label{eq:rel_grad_disp}
			\max_{1\le i\le N}\|\tilde g_{i,k}-\tilde{\bar g}_k\|\le \alpha_d\|\tilde{\bar g}_k\|,
			\qquad\text{for some }\alpha_d\in(0,1),
		\end{equation}
		then
		\begin{equation}\label{eq:lambda_disp_bound}
			\Delta_k^\lambda
			\le
			\frac{\sqrt{M}}{\sqrt{1-\alpha_d}}\cdot\frac{\Delta_k^g}{\sqrt{\|\tilde{\bar g}_k\|}}
			+2\bar\delta_k.
		\end{equation}
	\end{lemma}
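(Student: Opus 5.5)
Since $\tilde\lambda_{i,k}=\lambda_{i,k}+\delta_{i,k}$, I will split the $\lambda$-dispersion into a regularizer part and a stabilizer part, writing $\bar\lambda_k:=\frac1N\sum_i\lambda_{i,k}$. Then $\tilde\lambda_{i,k}-\tilde{\bar\lambda}_k=(\lambda_{i,k}-\bar\lambda_k)+(\delta_{i,k}-\bar\delta_k)$, and the triangle inequality gives
\[
\Delta_k^\lambda\le \tfrac1N\textstyle\sum_i|\lambda_{i,k}-\bar\lambda_k|+\tfrac1N\sum_i|\delta_{i,k}-\bar\delta_k|.
\]
Because $\delta_{i,k}\ge0$, the second average is at most $\frac1N\sum_i(\delta_{i,k}+\bar\delta_k)=2\bar\delta_k$. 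This produces the additive $2\bar\delta_k$ term in \eqref{eq:lambda_disp_bound}.

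For the regularizer part I first replace the empirical mean by a fixed center. For any scalar $c$, $|\lambda_{i,k}-\bar\lambda_k|\le|\lambda_{i,k}-c|+|c-\bar\lambda_k|$, and $|c-\bar\lambda_k|\le\frac1N\sum_j|\lambda_{j,k}-c|$. Hence $\frac1N\sum_i|\lambda_{i,k}-\bar\lambda_k|\le 2\cdot\frac1N\sum_i|\lambda_{i,k}-c|$. I take $c=\sqrt{M\|\tilde{\bar g}_k\|}$.

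Now I use the square-root difference identity $|\sqrt a-\sqrt b|=|a-b|/(\sqrt a+\sqrt b)$ with $a=\|\tilde g_{i,k}\|$ and $b=\|\tilde{\bar g}_k\|$. The reverse triangle inequality gives $|a-b|\le\|\tilde g_{i,k}-\tilde{\bar g}_k\|$. Condition \eqref{eq:rel_grad_disp} gives $a\ge(1-\alpha_d)b>0$. So the denominator satisfies $\sqrt a+\sqrt b\ge(1+\sqrt{1-\alpha_d})\sqrt b$, and
\[
|\lambda_{i,k}-c|\le \frac{\sqrt M\,\|\tilde g_{i,k}-\tilde{\bar g}_k\|}{(1+\sqrt{1-\alpha_d})\sqrt{\|\tilde{\bar g}_k\|}}.
\]
Averaging over $i$ turns the numerator into $\Delta_k^g$. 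Together with the factor $2$ from recentering, the coefficient becomes $2/(1+\sqrt{1-\alpha_d})$. Since $\sqrt{1-\alpha_d}\le1$, we have $\sqrt{1-\alpha_d}(1+\sqrt{1-\alpha_d})\le2$, so this coefficient exceeds the claimed $1/\sqrt{1-\alpha_d}$ and the crude route does not suffice.

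The main obstacle is therefore the constant, and I would remove the recentering loss. The function $c\mapsto\frac1N\sum_i|\lambda_{i,k}-c|$ is minimized at a median of the $\lambda_{i,k}$, not at $\bar\lambda_k$, so the factor $2$ cannot simply be dropped. Instead I would apply the square-root identity directly to pairs. Using $\lambda_{i,k}-\bar\lambda_k=\frac1N\sum_j(\lambda_{i,k}-\lambda_{j,k})$, it suffices to bound $|\sqrt{a_i}-\sqrt{a_j}|\le\|\tilde g_{i,k}-\tilde g_{j,k}\|/(2\sqrt{(1-\alpha_d)b})$. Here both $a_i,a_j\ge(1-\alpha_d)b$, so the denominator $\sqrt{a_i}+\sqrt{a_j}$ is at least $2\sqrt{(1-\alpha_d)b}$. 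Then $\|\tilde g_{i,k}-\tilde g_{j,k}\|\le\|\tilde g_{i,k}-\tilde{\bar g}_k\|+\|\tilde g_{j,k}-\tilde{\bar g}_k\|$. Averaging over $i$ and $j$ yields $2\Delta_k^g/(2\sqrt{(1-\alpha_d)b})=\Delta_k^g/\sqrt{(1-\alpha_d)\|\tilde{\bar g}_k\|}$. Multiplying by $\sqrt M$ gives exactly the first term in \eqref{eq:lambda_disp_bound}.
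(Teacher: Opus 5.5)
Your final pairwise argument is correct and is essentially the paper's proof. As in the paper, you split off the stabilizer part, write $\lambda_{i,k}-\bar\lambda_k=\frac1N\sum_j(\lambda_{i,k}-\lambda_{j,k})$, lower-bound $\sqrt{a_i}+\sqrt{a_j}\ge 2\sqrt{(1-\alpha_d)\|\tilde{\bar g}_k\|}$, and average $\|\tilde g_{i,k}-\tilde g_{j,k}\|$ to get $2\Delta_k^g$; your bound $|\delta_{i,k}-\bar\delta_k|\le\delta_{i,k}+\bar\delta_k$ is a shorter version of the paper's counting argument for the $2\bar\delta_k$ term.

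One correction to your aside: the recentered route does not fail. The inequality $2/(1+\sqrt{1-\alpha_d})\le 1/\sqrt{1-\alpha_d}$ is equivalent to $\sqrt{1-\alpha_d}\le 1$, so that route also proves the lemma, with a slightly better constant. Your inequality $\sqrt{1-\alpha_d}\,(1+\sqrt{1-\alpha_d})\le 2$ only compares the coefficient with $\sqrt{1-\alpha_d}$, not with $1/\sqrt{1-\alpha_d}$.
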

	
	\subsection{Inexact regularized Newton condition for the average step}\label{ssec:inexact}
	Collecting the bridge, stabilizer, and step-dispersion bounds, we
	verify that the average iterate $\bar x_k$ satisfies a standard
	inexact regularized Newton condition once the burn-in phase
	is complete, that is, for $k\ge K_0(\varepsilon)$, where $K_0$ is the
	burn-in index from Proposition~\ref{prop:burnin}.
	This is the key step, as it permits invoking the one-step descent
	lemma from the centralized analysis.
	
	Specifically, we define $g_k:=\nabla f(\bar x_k)$, $\lambda_k:=\tilde{\bar\lambda}_k$, and $r_k:=(\nabla^2 f(\bar x_k)+\lambda_k I)\bar s_k+g_k$.
	
	\begin{proposition}\label{prop:inexact-rn}
		If the following hold at iteration $k$ for some $\eta\in(0,1)$:
		\begin{enumerate}[leftmargin=1.2em,itemsep=2pt]
			\item Dispersion control:
			\[
			\|\bar s_k-s_k^{\mathrm{ref}}\|\le \frac{\eta}{8}\cdot \frac{\lambda_k}{M_{H,\max}+\lambda_k}\,\|\bar s_k\|.
			\]
			\item Gradient bridge accuracy:
			\[
			L_1D(X_k)\le \frac{\eta}{8}\lambda_k\|\bar s_k\|.
			\]
			\item Hessian bridge accuracy:
			\[
			L_2D(X_k)\le \frac{\eta}{8}\lambda_k.
			\]
		\end{enumerate}
		then
		\[
		\|r_k\|\le \eta\,\lambda_k\|\bar s_k\|,
		\qquad
		L_2\|\bar s_k\|\le \lambda_k.
		\]
	\end{proposition}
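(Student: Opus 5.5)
The proof rests on one algebraic decomposition of the residual $r_k$ around the reference step $s_k^{\mathrm{ref}}$, after which each of the three hypotheses controls exactly one term. The second claim, $L_2\|\bar s_k\|\le\lambda_k$, is Lemma~\ref{lem:avg-regime} with $\lambda_k=\tilde{\bar\lambda}_k$, so only the residual bound needs work.

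\textbf{Degenerate case.} First I dispose of $\lambda_k=0$. Every $\tilde\lambda_{i,k}\ge0$, so $\lambda_k=0$ forces all $\lambda_{i,k}=0$, hence all $\tilde g_{i,k}=0$, hence $s_{i,k}=0$ and $\bar s_k=0$. Hypothesis~3 then gives $D(X_k)=0$, so Lemma~\ref{lem:bridge-grad} yields $g_k=\tilde{\bar g}_k=0$, and therefore $r_k=0$.

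\textbf{Well-posedness of the reference step.} So assume $\lambda_k>0$. By Lemma~\ref{lem:avg-track-h}, $\tilde{\bar H}_k=\frac1N\sum_i\nabla^2 f_i(x_{i,k})$ is an average of positive semidefinite matrices, since each $f_i$ is convex. Hence $A_k^{\mathrm{ref}}=\tilde{\bar H}_k+\lambda_k I\succ0$, and $s_k^{\mathrm{ref}}$ is well defined with $A_k^{\mathrm{ref}}s_k^{\mathrm{ref}}=-\tilde{\bar g}_k$.

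\textbf{Decomposition of the residual.} Adding and subtracting $\tilde{\bar H}_k\bar s_k$ and $A_k^{\mathrm{ref}}s_k^{\mathrm{ref}}+\tilde{\bar g}_k=0$ gives
\[
r_k=\bigl(\nabla^2 f(\bar x_k)-\tilde{\bar H}_k\bigr)\bar s_k
+A_k^{\mathrm{ref}}\bigl(\bar s_k-s_k^{\mathrm{ref}}\bigr)
+\bigl(g_k-\tilde{\bar g}_k\bigr).
\]
I bound the three terms in turn.
\begin{itemize}
\item \emph{Hessian term.} Lemma~\ref{lem:bridge-hess} and hypothesis~3 give $\|(\nabla^2 f(\bar x_k)-\tilde{\bar H}_k)\bar s_k\|\le L_2D(X_k)\|\bar s_k\|\le\frac{\eta}{8}\lambda_k\|\bar s_k\|$.
\item \emph{Gradient term.} Lemma~\ref{lem:bridge-grad} and hypothesis~2 give $\|g_k-\tilde{\bar g}_k\|\le L_1D(X_k)\le\frac{\eta}{8}\lambda_k\|\bar s_k\|$.
\item \emph{Middle term.} Each $\nabla f_i$ is $L_1$-Lipschitz, so $\|\nabla^2 f_i(x)\|_2\le L_1=M_{H,\max}$ by \eqref{eq:MHmax_def}. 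Therefore $\|\tilde{\bar H}_k\|_2\le M_{H,\max}$ and $\|A_k^{\mathrm{ref}}\|_2\le M_{H,\max}+\lambda_k$. Hypothesis~1 then gives $\|A_k^{\mathrm{ref}}(\bar s_k-s_k^{\mathrm{ref}})\|\le\frac{\eta}{8}\lambda_k\|\bar s_k\|$.
\end{itemize}
Summing, $\|r_k\|\le\frac{3\eta}{8}\lambda_k\|\bar s_k\|\le\eta\lambda_k\|\bar s_k\|$.

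\textbf{Expected obstacle.} The only delicate point is the middle term. The factor $\lambda_k/(M_{H,\max}+\lambda_k)$ in hypothesis~1 is built to cancel $\|A_k^{\mathrm{ref}}\|_2$. That cancellation needs the uniform bound $\|\tilde{\bar H}_k\|_2\le L_1$, which comes from the tracked-average identity of Lemma~\ref{lem:avg-track-h} rather than from any bound on the individual trackers $\tilde H_{i,k}$, which may be indefinite.
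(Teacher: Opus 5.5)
Your proof is correct. It follows the paper's overall strategy: add and subtract the reference step, then control each of three terms by one hypothesis. The difference is in how the terms are paired.

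The paper writes
\[
r_k=(\nabla^2 f(\bar x_k)+\lambda_k I)(\bar s_k-s_k^{\mathrm{ref}})+(\nabla^2 f(\bar x_k)-\tilde{\bar H}_k)\,s_k^{\mathrm{ref}}+(g_k-\tilde{\bar g}_k).
\]
Here the dispersion error is multiplied by the true regularized Hessian, bounded via $\|\nabla^2 f(\bar x_k)\|_2\le M_{H,\max}$. The Hessian bridge error multiplies $s_k^{\mathrm{ref}}$, which forces a second use of Item~1 to get $\|s_k^{\mathrm{ref}}\|\le(1+\eta/8)\|\bar s_k\|\le\frac98\|\bar s_k\|$, and the final constant is $\frac{25\eta}{64}$.

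Your pairing instead puts $A_k^{\mathrm{ref}}$ on the dispersion error and $\bar s_k$ on the bridge error. As a result, Item~1 is used only once, you never compare $\|s_k^{\mathrm{ref}}\|$ with $\|\bar s_k\|$, and you get the slightly sharper $\frac{3\eta}{8}$. The price is that you need an operator bound on the tracked average $\tilde{\bar H}_k$ rather than on $\nabla^2 f(\bar x_k)$. Lemma~\ref{lem:avg-track-h} together with $\|\nabla^2 f_i\|_2\le L_1$ supplies this just as easily, and the paper itself uses $\|\tilde{\bar H}_k\|_2\le M_{H,\max}$ in Lemma~\ref{lem:tilde_hess_bounded}.

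You also make explicit two points the paper leaves implicit. First, $A_k^{\mathrm{ref}}\succeq\lambda_k I\succ0$ when $\lambda_k>0$, so $s_k^{\mathrm{ref}}$ is well defined. Second, you handle the degenerate case $\lambda_k=0$. In that case, derive $g_k=0$ from Item~2 rather than Item~3: $\|g_k\|=\|g_k-\tilde{\bar g}_k\|\le L_1D(X_k)\le\frac{\eta}{8}\lambda_k\|\bar s_k\|=0$. Item~3 is vacuous when $L_2=0$, so it does not by itself force $D(X_k)=0$.
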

	
	\subsection{Descent and global rates}\label{ssec:descent}
	The preceding subsections have established the four building blocks
	of the proof pipeline: the average
	iterate satisfies an inexact regularized Newton condition
	(Proposition~\ref{prop:inexact-rn}) once the burn-in phase ends.
	The remaining descent step follows the centralized analysis of
	\citet{mishchenko2023rn}: a one-step descent lemma yields a
	$3/2$-power recursion on the optimality gap, from which the global
	complexity bound follows.
	
	We introduce the optimality gap $\Phi_k:=f(\bar x_k)-f_\star$.
	
	\begin{lemma}\label{lem:descent}
		Assume
		\[
		\|(\nabla^2 f(\bar x_k)+\lambda_k I)\bar s_k+g_k\|\le \eta\lambda_k\|\bar s_k\|,
		\qquad
		L_2\|\bar s_k\|\le \lambda_k,
		\]
		for some $\eta\in(0,5/6)$.
		Then
		\[
		f(\bar x_{k+1})\le f(\bar x_k) - \Big(1-\eta-\frac{1}{6}\Big)\lambda_k\|\bar s_k\|^2.
		\]
	\end{lemma}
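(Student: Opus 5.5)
The plan is the standard cubic-upper-bound argument from the centralized analysis of \citet{mishchenko2023rn}, applied to the average iterate. Write $s:=\bar s_k$, $H:=\nabla^2 f(\bar x_k)$, $g:=g_k$, $\lambda:=\lambda_k$ and $r:=r_k=(H+\lambda I)s+g$. By Lemma~\ref{lem:avg-pres}, $\bar x_{k+1}=\bar x_k+s$. Since $f=\frac1N\sum_i f_i$ and each $\nabla^2 f_i$ is $L_2$-Lipschitz (Assumption~\ref{ass:problem}(i)), $\nabla^2 f$ is also $L_2$-Lipschitz. Taylor's theorem with integral remainder then gives the cubic upper bound
\[
f(\bar x_k+s)\le f(\bar x_k)+\langle g,s\rangle+\tfrac12\langle Hs,s\rangle+\tfrac{L_2}{6}\|s\|^3 .
\]

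Next I substitute $g=r-(H+\lambda I)s$, which yields
\[
\langle g,s\rangle+\tfrac12\langle Hs,s\rangle=\langle r,s\rangle-\tfrac12\langle Hs,s\rangle-\lambda\|s\|^2 .
\]
Three bounds then finish the argument:
\begin{itemize}
\item Convexity of each $f_i$ gives $H\succeq0$, so $-\tfrac12\langle Hs,s\rangle\le 0$ and this term is dropped.
\item Cauchy--Schwarz and the residual hypothesis give $\langle r,s\rangle\le\|r\|\|s\|\le\eta\lambda\|s\|^2$.
\item The hypothesis $L_2\|s\|\le\lambda$ gives $\tfrac{L_2}{6}\|s\|^3\le\tfrac{\lambda}{6}\|s\|^2$.
\end{itemize}
Summing, $f(\bar x_{k+1})\le f(\bar x_k)-(1-\eta-\tfrac16)\lambda\|s\|^2$. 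The condition $\eta<5/6$ only ensures the coefficient is positive, so the bound is a genuine descent.

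There is no real obstacle; the only points needing care are routine. The Hessian in the residual must be the exact $\nabla^2 f(\bar x_k)$, which holds by the definition of $r_k$, so its PSD property follows from convexity rather than from the (possibly indefinite) tracked $\tilde H_{i,k}$. The Lipschitz constant $L_2$ must transfer to the average $f$, which is immediate by the triangle inequality.
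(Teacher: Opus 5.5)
Your proposal is correct and follows essentially the same route as the paper's proof: apply the cubic Taylor bound (Lemma~\ref{lem:taylor3}) to $f$ at $(\bar x_k,\bar s_k)$, rewrite $\langle g_k,\bar s_k\rangle$ through the residual, drop the term involving the PSD Hessian $\nabla^2 f(\bar x_k)$, and bound the residual and cubic terms with the two hypotheses. Your two side remarks, that $\bar x_{k+1}=\bar x_k+\bar s_k$ by Lemma~\ref{lem:avg-pres} and that the constant $L_2$ carries over to the average $f$, are points the paper uses without stating them.
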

	
	\begin{lemma}\label{lem:grad_bound}
		Under the conditions of Lemma~\ref{lem:descent},
		\[
		\|g_{k+1}\|\le \Big(1+\eta+\frac{1}{2}\Big)\lambda_k\|\bar s_k\|.
		\]
	\end{lemma}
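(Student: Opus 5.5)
We bound $g_{k+1}=\nabla f(\bar x_{k+1})$ by Taylor-expanding the gradient around $\bar x_k$ and inserting the residual $r_k$. By Lemma~\ref{lem:avg-pres}, $\bar x_{k+1}=\bar x_k+\bar s_k$. Write
\[
g_{k+1}=\bigl[\nabla f(\bar x_k+\bar s_k)-\nabla f(\bar x_k)-\nabla^2 f(\bar x_k)\bar s_k\bigr]+\bigl[(\nabla^2 f(\bar x_k)+\lambda_k I)\bar s_k+g_k\bigr]-\lambda_k\bar s_k .
\]
The second bracket is exactly $r_k$, and the decomposition is an identity.

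We then bound the three terms separately.

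\emph{First bracket.} Each $\nabla^2 f_i$ is $L_2$-Lipschitz by Assumption~\ref{ass:problem}(i), so the average $\nabla^2 f$ is as well. The standard integral form of Taylor's remainder then gives
\[
\bigl\|\nabla f(\bar x_k+\bar s_k)-\nabla f(\bar x_k)-\nabla^2 f(\bar x_k)\bar s_k\bigr\|\le \tfrac{L_2}{2}\|\bar s_k\|^2 .
\]
Using the hypothesis $L_2\|\bar s_k\|\le\lambda_k$, this is at most $\tfrac12\lambda_k\|\bar s_k\|$.

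\emph{Residual.} By the first hypothesis of Lemma~\ref{lem:descent}, $\|r_k\|\le\eta\lambda_k\|\bar s_k\|$.

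\emph{Last term.} Its norm is exactly $\lambda_k\|\bar s_k\|$.

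The triangle inequality yields $\|g_{k+1}\|\le(1+\eta+\tfrac12)\lambda_k\|\bar s_k\|$, which is the claim. The restriction $\eta<5/6$ plays no role here; it is inherited from Lemma~\ref{lem:descent}.

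No step is a real obstacle. The only point to check is that the Taylor remainder uses the Lipschitz constant of the \emph{average} Hessian. This follows directly from the triangle inequality, since $\nabla^2 f=\frac1N\sum_i\nabla^2 f_i$, so it holds globally and bounded iterates are not needed.
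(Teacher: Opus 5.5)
Your proof is correct and follows the paper's argument essentially verbatim. You split $g_{k+1}$ into the Taylor remainder, bounded by $\tfrac{L_2}{2}\|\bar s_k\|^2\le\tfrac12\lambda_k\|\bar s_k\|$, the residual $r_k$ (called $d_k$ in the paper's proof), and $-\lambda_k\bar s_k$, then apply the triangle inequality; your remarks that $\bar x_{k+1}=\bar x_k+\bar s_k$ (Lemma~\ref{lem:avg-pres}) and that $\nabla^2 f$ inherits the $L_2$-Lipschitz constant only make explicit what the paper uses implicitly.
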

	
	We partition the iteration indices into \emph{steady} iterations
	(where the gradient norm does not drop sharply) and
	\emph{super-descent} iterations (where it drops by at least a factor of 4):
	\begin{gather*}
		\mathcal I:=\{k\ge0:\ \|g_{k+1}\|\ge \tfrac14\|g_k\|\},\\
		\mathcal S:=\{k\ge0:\ \|g_{k+1}\|< \tfrac14\|g_k\|\}.
	\end{gather*}
	
	The following lemma establishes a sharp decrease in the optimality gap
	during steady-descent iterations.
	The auxiliary requirement $\lambda_k\le C_\lambda\sqrt{\|g_k\|}$ is
	rigorously verified in Appendix~\ref{app:proofs}
	(Lemma~\ref{lem:lambda_scaling}).
	
	\begin{lemma}\label{lem:steady}
		If Proposition~\ref{prop:inexact-rn} holds for all $k$ with some $\eta\le 1/12$
		and there exists $C_\lambda>0$ such that
		\[
		\lambda_k\le C_\lambda\sqrt{\|g_k\|}\qquad\text{for all $k$ with $g_k\neq 0$}.
		\]
		Then there exists $\nu>0$ such that for all $k\in\mathcal I$,
		\[
		\Phi_k-\Phi_{k+1}\ge \nu\,\Phi_k^{3/2}.
		\]
	\end{lemma}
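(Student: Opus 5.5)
For $k\in\mathcal I$ I will combine Lemmas~\ref{lem:descent} and~\ref{lem:grad_bound}, the auxiliary bound $\lambda_k\le C_\lambda\sqrt{\|g_k\|}$, and convexity on the bounded set from Assumption~\ref{ass:bounded_iterates}. The plan is to produce a lower bound on the decrease $\Phi_k-\Phi_{k+1}$ of the form $c\|g_k\|^{3/2}$, and then an upper bound $\Phi_k\le D\|g_k\|$.

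\textbf{Step 1 (decrease versus $\lambda_k\|\bar s_k\|$).} Since $\eta\le1/12$, Lemma~\ref{lem:descent} applies with constant $1-\eta-\tfrac16\ge\tfrac34$. This gives
\[
\Phi_k-\Phi_{k+1}\ge\tfrac34\,\lambda_k\|\bar s_k\|^2=\tfrac34\,\frac{(\lambda_k\|\bar s_k\|)^2}{\lambda_k}.
\]

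\textbf{Step 2 (lower bound on $\lambda_k\|\bar s_k\|$ using $k\in\mathcal I$).} Lemma~\ref{lem:grad_bound} gives $\|g_{k+1}\|\le(\tfrac32+\eta)\lambda_k\|\bar s_k\|\le\tfrac{19}{12}\lambda_k\|\bar s_k\|$. For $k\in\mathcal I$ we have $\|g_{k+1}\|\ge\tfrac14\|g_k\|$, and therefore $\lambda_k\|\bar s_k\|\ge\tfrac{3}{19}\|g_k\|$.

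\textbf{Step 3 (combine with the $\lambda$ scaling).} Substituting Step 2 and $\lambda_k\le C_\lambda\sqrt{\|g_k\|}$ into Step 1 gives
\[
\Phi_k-\Phi_{k+1}\ge\tfrac34\cdot\bigl(\tfrac{3}{19}\bigr)^2\frac{\|g_k\|^2}{C_\lambda\sqrt{\|g_k\|}}=c_0\,\|g_k\|^{3/2}.
\]
This requires $g_k\ne0$. If $g_k=0$, then $\bar x_k$ is a minimizer, so $\Phi_k=0$ and the claim holds trivially. The case $\lambda_k=0$ with $g_k\neq0$ is excluded because Step 2 forces $\lambda_k\|\bar s_k\|>0$.

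\textbf{Step 4 (gradient dominates the gap).} By convexity of $f$ and Assumption~\ref{ass:bounded_iterates},
\[
\Phi_k\le\langle g_k,\bar x_k-x_\star\rangle\le D\|g_k\|,
\]
so $\|g_k\|\ge\Phi_k/D$. Combined with Step 3 this gives the claim with $\nu=c_0D^{-3/2}=\tfrac{27}{1444}\,C_\lambda^{-1}D^{-3/2}$.

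The main obstacle is to make sure Steps 1 and 2 are legitimate at the same index. Both need the inexact Newton conditions at iteration $k$. The hypothesis that Proposition~\ref{prop:inexact-rn} holds for all $k$ supplies exactly the premises of Lemmas~\ref{lem:descent} and~\ref{lem:grad_bound}, with $\eta<5/6$. The second point to check is that the membership condition defining $\mathcal I$ is used in the correct direction, as a lower bound on $\|g_{k+1}\|$. The rest is constant bookkeeping.
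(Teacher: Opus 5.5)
Your proposal is correct and follows essentially the same route as the paper: Lemma~\ref{lem:descent} with $c_d\ge\tfrac34$, then the steady-index bound $\|g_{k+1}\|\ge\tfrac14\|g_k\|$ combined with Lemma~\ref{lem:grad_bound} to lower-bound $\lambda_k\|\bar s_k\|$, then $\lambda_k\le C_\lambda\sqrt{\|g_k\|}$, and finally $\Phi_k\le D\|g_k\|$. The only differences are cosmetic. You use the sharper constant $\tfrac{19}{12}$ where the paper uses $C_g\le2$. You also treat the degenerate case $g_k=0$ explicitly, but there ``trivially'' should read ``by Lemma~\ref{lem:descent}'', since $\Phi_{k+1}\le\Phi_k=0$ relies on the descent inequality.
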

	
	\begin{lemma}\label{lem:solve32}
		If $\Phi_{k+1}\le \Phi_k-\nu\Phi_k^{3/2}$ for some $\nu>0$ and all $k$, then
		\[
		\Phi_k\le \frac{4}{\nu^2(k+2)^2}.
		\]
	\end{lemma}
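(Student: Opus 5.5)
The plan is to pass to the reciprocal square root $a_k:=\Phi_k^{-1/2}$, show that it grows by at least $\nu/2$ per iteration, and obtain the base case from nonnegativity of the gap. Throughout I use that $\Phi_k=f(\bar x_k)-f_\star\ge0$ for all $k$. I also use that once some $\Phi_k=0$, the hypothesis together with nonnegativity forces $\Phi_j=0$ for all $j\ge k$. Those indices satisfy the claimed bound trivially, so I can restrict attention to indices with $\Phi_k>0$.

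\emph{Base case.} Nonnegativity of $\Phi_1$ combined with the hypothesis at $k=0$ gives $0\le\Phi_1\le\Phi_0-\nu\Phi_0^{3/2}$. Hence $\nu\Phi_0^{1/2}\le1$, that is, $\Phi_0\le 1/\nu^2=4/(\nu^2(0+2)^2)$. The same argument applied at every $k$ yields the auxiliary fact
\[
t_k:=\nu\sqrt{\Phi_k}\in[0,1]\quad\text{for all }k.
\]

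\emph{Increment step.} Suppose $\Phi_k>0$ and $\Phi_{k+1}>0$. Then $t_k<1$, and the hypothesis reads $\Phi_{k+1}\le\Phi_k(1-t_k)$. Taking reciprocal square roots gives
\[
a_{k+1}\ge a_k(1-t_k)^{-1/2}.
\]
The elementary inequality $(1-t)^{-1/2}\ge 1+t/2$ holds for $t\in[0,1)$, either by convexity of $t\mapsto(1-t)^{-1/2}$ (its tangent line at $0$ is $1+t/2$) or by Bernoulli's inequality. Applying it, I get
\[
a_{k+1}\ge a_k+\tfrac{t_k}{2}a_k=a_k+\tfrac{\nu}{2},
\]
because $t_k a_k=\nu$.

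\emph{Summation.} Telescoping the increment step from $0$ to $k$ and using $a_0\ge\nu$ from the base case gives
\[
a_k\ge \nu+\tfrac{k\nu}{2}=\tfrac{\nu(k+2)}{2}.
\]
Squaring and inverting yields $\Phi_k\le 4/(\nu^2(k+2)^2)$.

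There is no real obstacle here. The only points needing care are the degenerate case $\Phi_k=0$, handled above, and the observation that the bound at $k=0$ is not assumed but must be extracted from $\Phi_1\ge0$. That observation is exactly what fixes the constant $4$ and the shift $k+2$.
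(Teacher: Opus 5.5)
Your proof is correct and follows essentially the same route as the paper. Both pass to $\Phi_k^{-1/2}$, use $(1-u)^{-1/2}\ge 1+u/2$ to gain at least $\nu/2$ per step, and extract $\Phi_0^{-1/2}\ge\nu$ from $\Phi_1\ge 0$ to fix the shift $k+2$; your handling of the degenerate case, restricting to the prefix of positive indices where the telescoping still applies, is slightly more careful than the paper's, which simply calls the claim trivial once some $\Phi_k=0$.
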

	
	Throughout we use the logarithmic mixing schedule
	\begin{equation}\label{eq:log_schedule_main}
		\tau_k \;=\; t_k \;=\;
		\Bigl\lceil \frac{p\,\log(k+2)+c_{\mathrm{mix}}}{-\log\rho} \Bigr\rceil,
		\qquad k\ge 0,
	\end{equation}
	where $c_{\mathrm{mix}}\ge0$ is a fixed constant. Then
	$\rho^{\tau_k}\le e^{-c_{\mathrm{mix}}}(k+2)^{-p}$; details and
	motivation are given in Appendix~\ref{app:dispersion}.
	
	\begin{remark}\label{rem:rho_knowledge}
		The schedule~\eqref{eq:log_schedule_main} requires the spectral
		gap $1{-}\rho$, or equivalently, $\rho=\|W{-}\J\|_2$.
		Computing $\rho$ exactly is a centralized operation; in a fully
		decentralized implementation one may use any certified upper
		bound $\hat\rho\ge\rho$ available for the chosen weight matrix.
		Overestimating $\rho$ increases each $\tau_k$ by a constant
		factor but does not affect the $\cO(\varepsilon^{-1})$ iteration
		complexity.
		If only a rough network description is available, one may use a
		conservative upper bound; the asymptotic statement is unchanged.
	\end{remark}
	
	\begin{theorem}\label{thm:main}
		If Assumptions~\ref{ass:problem} and~\ref{ass:bounded_iterates}
		hold and $M\ge L_2$, then the following is true.
		Fix $p\ge3$ and run \textsc{DisGrem} with the logarithmic mixing schedule~\eqref{eq:log_schedule_main}.
		Then for every target accuracy $0<\varepsilon\le1$, there exist a finite burn-in index $K_0(\varepsilon)$
		(depending on the spectral gap, Lipschitz constants, initial dispersions, and $\varepsilon$;
		see Proposition~\ref{prop:burnin})
		and a constant $C_K>0$ (independent of $\varepsilon$) such that the total number of
		outer iterations to achieve $\|\nabla f(\bar x_k)\|\le \varepsilon$ satisfies
		\[
		K(\varepsilon)\;\le\; K_0(\varepsilon) + C_K\,\varepsilon^{-1}.
		\]
		In particular, the post-burn-in phase requires at most
		$\cO(\varepsilon^{-1})$ iterations. Since $p\ge3$ implies
		$K_0(\varepsilon)=\cO(\varepsilon^{-1})$ by
		Remark~\ref{rem:burnin_order}, the total iteration complexity
		is $K(\varepsilon)=\cO(\varepsilon^{-1})$, matching the
		centralized regularized Newton complexity of \citet{mishchenko2023rn},
		and under the schedule~\eqref{eq:log_schedule_main}, for a fixed
		connected network, the total number of neighbor communication
		rounds is $\cO(\varepsilon^{-1}\log(1/\varepsilon))$.
		More explicitly, its dependence on the mixing rate is
		$\cO((1-\rho)^{-1}\varepsilon^{-1}\log(1/\varepsilon))$ as
		$\rho\to1$.
	\end{theorem}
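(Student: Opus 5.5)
The argument has two stages. First, a burn-in stage certifies that for all $k\ge K_0(\varepsilon)$ the hypotheses of Proposition~\ref{prop:inexact-rn} hold with $\eta=1/12$. Second, a centralized-style analysis of the $3/2$-recursion runs on the remaining indices.

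\textbf{Stage 1: verifying the inexact Newton condition after burn-in.} I would invoke Proposition~\ref{prop:burnin} (Appendix~\ref{app:dispersion}). Under the schedule~\eqref{eq:log_schedule_main}, $\rho^{\tau_k}\le e^{-c_{\mathrm{mix}}}(k+2)^{-p}$, and the increment-based recursions give bounds of the form $D(X_k),\Delta_k^g,\Delta_k^H=\cO((k+2)^{-p})$, up to constants depending on $L_1,L_2,D$ and the initial dispersions.

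As long as $\|g_k\|>\varepsilon$, Lemma~\ref{lem:bridge-grad} and the dispersion bounds give $\|\tilde{\bar g}_k\|\gtrsim\varepsilon$. Hence $\lambda_k\ge\underline\lambda_k\gtrsim\sqrt{M\varepsilon}$ once the gradient dispersion is below $\varepsilon/2$, which also gives \eqref{eq:rel_grad_disp}.

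I would then combine the following bounds:
\begin{itemize}
\item Lemma~\ref{lem:delta_control} bounds $\bar\delta_k$.
\item Lemma~\ref{lem:lambda_dispersion} bounds $\Delta_k^\lambda$.
\item Lemma~\ref{lem:step-disp} bounds $\|\bar s_k-s_k^{\mathrm{ref}}\|$.
\end{itemize}
Together they show that conditions~1--3 of Proposition~\ref{prop:inexact-rn} reduce to polynomial requirements of the form $(k+2)^{-p}\lesssim\varepsilon^{a}$ for fixed exponents $a$. These requirements hold for $k\ge K_0(\varepsilon)$.

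One subtlety is that conditions~1--2 need a lower bound on $\|\bar s_k\|$. I would obtain it from $\|\bar s_k\|\ge\|s_k^{\mathrm{ref}}\|-\|\bar s_k-s_k^{\mathrm{ref}}\|$ together with $\|s_k^{\mathrm{ref}}\|\ge\|\tilde{\bar g}_k\|/(M_{H,\max}+\lambda_k)$.

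Remark~\ref{rem:burnin_order} then supplies the statement that $p\ge3$ makes $K_0(\varepsilon)=\cO(\varepsilon^{-1})$.

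\textbf{Stage 2: complexity count after burn-in.} For $k\ge K_0$ with $\|g_k\|>\varepsilon$, Proposition~\ref{prop:inexact-rn} feeds into the following lemmas:
\begin{itemize}
\item Lemma~\ref{lem:descent} shows $\Phi_k$ is monotone.
\item Lemma~\ref{lem:grad_bound} bounds $\|g_{k+1}\|$.
\item Lemma~\ref{lem:lambda_scaling} gives $\lambda_k\le C_\lambda\sqrt{\|g_k\|}$.
\item Lemma~\ref{lem:steady} then gives $\Phi_k-\Phi_{k+1}\ge\nu\Phi_k^{3/2}$ on steady indices $k\in\mathcal I$.
\end{itemize}
Restricted to the steady subsequence, and using monotonicity on $\mathcal S$, Lemma~\ref{lem:solve32} yields $\Phi\le 4/(\nu^2 m^2)$ after $m$ steady steps.

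Super-descent steps each divide $\|g\|$ by $4$. Between steady steps the gradient is controlled by Lemma~\ref{lem:grad_bound}, Lemma~\ref{lem:descent} and convexity: $\|g\|\le\|\bar x-x_\star\|^{-1}$ times the gap is replaced by the bound $\Phi_k\le D\|g_k\|$ and the reverse estimate from the descent, giving $\|g_{k+1}\|\lesssim\lambda_k\|\bar s_k\|\lesssim\Phi_k^{1/2}$-type relations. From these I expect $\|g\|\lesssim 1/m$. Therefore $\cO(\varepsilon^{-1})$ steady steps suffice, plus $\cO(\log(1/\varepsilon))$ super-descent steps, which gives $C_K\varepsilon^{-1}$.

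\textbf{Communication count.} Summing $\tau_k+2t_k=\cO\big((p\log(k+2)+c_{\mathrm{mix}})/(-\log\rho)\big)$ over $k\le K(\varepsilon)=\cO(\varepsilon^{-1})$ gives $\cO(\varepsilon^{-1}\log(1/\varepsilon)/(-\log\rho))$. Using $-\log\rho\ge1-\rho$ yields the $(1-\rho)^{-1}$ dependence.

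\textbf{Main obstacle.} I expect the hardest step to be Stage~1's self-consistency. The lower bound on $\lambda_k$ and $\|\bar s_k\|$ depends on $\|g_k\|>\varepsilon$, while the dispersion recursions depend on step sizes bounded via Lemma~\ref{lem:M-vs-s} by $\sqrt{\|\tilde g_{i,k}\|/M}$, which is bounded on the compact set. My first attempt would be to use Assumption~\ref{ass:bounded_iterates} to make all increments uniformly bounded, so that the dispersion decay is driven purely by $\rho^{\tau_k}$ and no circularity arises.
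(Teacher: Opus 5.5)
Your Stage~1, the steady-subsequence step (Lemma~\ref{lem:steady} with Lemma~\ref{lem:solve32}, using monotonicity of $\Phi_k$ on $\mathcal S$), and the communication count all match the paper.

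The gap is at the end of Stage~2. You claim that only $\cO(\log(1/\varepsilon))$ super-descent iterations occur, and this is not justified. Membership in $\mathcal I$ only says $\|g_{k+1}\|\ge\frac14\|g_k\|$. Nothing you state prevents $\|g_k\|$ from growing again during steady iterations. So a run of super-descent steps, each dividing $\|g_k\|$ by $4$, can be followed by a recovery and then another run. Your cap $\|g_k\|\lesssim 1/m$, with $m$ the number of steady steps so far, limits how high the gradient can climb, not how often it can fall by a factor $4$. If you bound each run by $1+\log_4(G_\star/\varepsilon)$ and the number of runs by the number of steady steps plus one, you get only $\cO(\varepsilon^{-1}\log(1/\varepsilon))$. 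That does not give $K(\varepsilon)\le K_0(\varepsilon)+C_K\varepsilon^{-1}$. Separately, the convexity inequality $\Phi_k\le D\|g_k\|$ that you invoke bounds $\|g_k\|$ from below, not above. The clean route to $\|g_k\|\lesssim 1/m$ is Lemma~\ref{lem:gap-grad}, $\|g_k\|^2\le 2L_1\Phi_k$.

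The missing ingredient is the paper's one-step growth bound.
\begin{itemize}
\item Since $\nabla^2 f(\bar x_k)\succeq0$ and $\|r_k\|\le\eta\lambda_k\|\bar s_k\|$, you have $\lambda_k\|\bar s_k\|\le\|g_k-r_k\|\le\|g_k\|+\eta\lambda_k\|\bar s_k\|$. Hence $\lambda_k\|\bar s_k\|\le\|g_k\|/(1-\eta)$.
\item Lemma~\ref{lem:grad_bound} then gives $\|g_{k+1}\|\le\kappa_{\mathrm{inc}}\|g_k\|$ with $\kappa_{\mathrm{inc}}=(\frac32+\eta)/(1-\eta)<4$ for $\eta\le1/12$.
\item Take an index $k$ before the first hitting time, with $n$ steady and $m$ super-descent indices among $K_0(\varepsilon),\dots,k-1$. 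Then $\varepsilon<\|g_k\|\le\kappa_{\mathrm{inc}}^{\,n}4^{-m}G_\star$, where $G_\star:=\sup_\ell\|g_\ell\|<\infty$. So $m<n\log_4\kappa_{\mathrm{inc}}+\log_4(G_\star/\varepsilon)$.
\item Your steady-step bound gives $n=\cO(\varepsilon^{-1})$. Hence $m=\cO(\varepsilon^{-1})$ too (not $\cO(\log(1/\varepsilon))$), and the post-burn-in total is $C_K\varepsilon^{-1}$.
\end{itemize}
The paper packages the same ingredient as a two-case argument over the first $k$ shifted indices. If at least half of them are steady, then $\Phi_k\le 16/(\nu^2(k+2)^2)$. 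Otherwise $\|g_k\|\le(\kappa_{\mathrm{inc}}/4)^{k/2}\|g_{K_0}\|$, which is where $\kappa_{\mathrm{inc}}<4$ is needed, and $\Phi_k\le D\|g_k\|$ makes $\Phi_k$ decay geometrically. Either way $\Phi_k=\cO(k^{-2})$ before the hitting time, and Lemma~\ref{lem:gap-grad} converts this into $\|g_k\|=\cO(1/k)$.
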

	
	\begin{remark}\label{rem:burnin_order}
		For $0<\varepsilon\le1$, the direct verification of
		Proposition~\ref{prop:inexact-rn}
		uses the explicit step-dispersion estimate in
		Proposition~\ref{prop:burnin}. Because
		$\|\tilde{\bar g}_k\|\gtrsim\varepsilon$,
		$\|s_k^{\mathrm{ref}}\|\gtrsim\varepsilon$, and
		$\lambda_k\gtrsim\sqrt{\varepsilon}$ on the burn-in tail, a
		conservative sufficient set of conditions for Item~1 is
		\[
		\Delta_k^g=\cO(\varepsilon^2),\qquad
		\Delta_k^H=\cO(\varepsilon^{3/2}),\qquad
		D(X_k)=\cO(\varepsilon^{3/2}),
		\]
		up to constants. Since
		$\Delta_k^g,\Delta_k^H=\cO(k^{-(p-1)})$ by
		Proposition~\ref{prop:dispersion_decay}, this gives the
		conservative burn-in estimate
		\[
		K_0(\varepsilon)=\cO\!\bigl(\varepsilon^{-2/(p-1)}\bigr).
		\]
		If one keeps only $p>2$, the same burn-in estimate gives the
		more general bound
		\[
		K(\varepsilon)
		=\cO\!\left(\varepsilon^{-1}+\varepsilon^{-2/(p-1)}\right).
		\]
		In particular, for $p\ge3$, the burn-in estimate is no larger
		than the $\cO(\varepsilon^{-1})$ post-burn-in term, and hence
		$K(\varepsilon)=\cO(\varepsilon^{-1})$.
		
		The dependence on the spectral gap enters through the mixing schedule:
		$\tau_k=\lceil (p\log(k+2)+c_{\mathrm{mix}})/(-\log\rho)\rceil$
		implies that each
		outer iteration uses $\cO\bigl((1-\rho)^{-1}\log k\bigr)$
		communication rounds; the constant $c_{\mathrm{mix}}$ affects only
		the constant. Here $-1/\log\rho\approx(1-\rho)^{-1}$ for $\rho$ near~1.
		The total communication-round budget is therefore
		\[
		\cO\bigl((1-\rho)^{-1}\,\varepsilon^{-1}\log(1/\varepsilon)\bigr),
		\]
		exhibiting the same linear dependence on the inverse spectral gap
		as repeated gossip mixing.
		On very sparse graphs ($\rho\to 1$), the per-iteration communication
		overhead grows, but the iteration count remains network-independent.
	\end{remark}

	\begin{remark}[$\varepsilon$-independence of the algorithm]\label{rem:eps-independence}
		The algorithm is not restarted or retuned for a prescribed~$\varepsilon$;
		the logarithmic mixing schedule~\eqref{eq:log_schedule_main} is fixed
		independently of the target accuracy.  The index $K_0(\varepsilon)$ enters
		only in the complexity proof as the first index after which the scheduled
		consensus errors fall below accuracy-dependent thresholds.
	\end{remark}

	\begin{remark}[Dimension dependence of the constants]\label{rem:dim-constants}
		The constants $C_K$ and $K_0(\varepsilon)$ in
		Theorem~\ref{thm:main} depend on $d$, $N$, $L_1$, $L_2$, $D$,
		$M$, and the spectral gap $1-\rho$.  In particular, the
		Frobenius--spectral norm conversion in the dispersion bounds
		introduces a factor $\sqrt{d}$; hence the iteration bound, while
		independent of~$\varepsilon$ up to $\cO(\varepsilon^{-1})$, is not
		dimension-free.
	\end{remark}

	\begin{remark}\label{rem:two_stages}
		The proof of Theorem~\ref{thm:main} combines two independent
		analyses, distinct from the per-iteration reduction chain in
		Sections~\ref{ssec:avg}--\ref{ssec:descent} (which is used in Stage~2).
		In Stage~1 (dispersion decay;
		Proposition~\ref{prop:dispersion_decay},
		Appendix~\ref{app:dispersion}), the logarithmic schedule
		makes tracker dispersions decay polynomially on the bounded
		trajectory of Assumption~\ref{ass:bounded_iterates}.
		In Stage~2 (descent), once the consensus errors meet the
		burn-in requirements
		(after $K_0(\varepsilon)$ iterations), the per-iteration chain
		verifies an inexact regularized Newton recursion
		(Proposition~\ref{prop:inexact-rn}), and the $3/2$-recursion
		(Lemma~\ref{lem:solve32}) gives $\cO(\varepsilon^{-1})$ further
		iterations.
		The two stages are self-contained and do not depend on each other's conclusions.
	\end{remark}
	
	\subsection{Strong convexity and local superlinear convergence}\label{ssec:superlinear}
	The preceding results require only convexity.
	When the objective is locally strongly convex near the optimum,
	the stabilizer term $\bar\delta_k$ becomes negligible relative to
	$\|g_k\|$, recovering the exact Newton regime and, with it, local
	$Q$-superlinear convergence.
	
	\begin{assumption}\label{ass:strong}
		The average objective $f$ is $\mu$-strongly convex on the bounded set
		$\{x:\|x-x_\star\|\le D\}$ (where $D$ is given by
		Assumption~\ref{ass:bounded_iterates}):
		$\nabla^2 f(x)\succeq \mu I$ for all $\|x-x_\star\|\le D$.
	\end{assumption}
	
	\begin{lemma}\label{lem:delta_o_g}
		If Assumptions~\ref{ass:problem}, \ref{ass:bounded_iterates},
		and~\ref{ass:strong} hold, then the following is true.
		Fix any exponent $\gamma>0$.
		Let
		\[
		G_g:=\sup_{k\ge 0}\ \max_{1\le i\le N}\ \|\tilde g_{i,k}\|\in[0,\infty),
		\]
		which is finite by Lemma~\ref{lem:tilde_track_bounded}.
		Suppose that for all $k$ large enough (and $k\ge 1$) the mixing depths satisfy
		\begin{equation}\label{eq:delta_schedule}
			\max\{\rho^{\tau_k},\rho^{t_{k-1}}\}\le \tfrac{1}{C_\delta}\|g_k\|^{1+\gamma},
		\end{equation}
		where $C_\delta:=\hat C_H+L_2(2D+\sqrt{G_g/M})$ and $\hat C_H:=B_H+2\sqrt{d}\,L_2D$,
		where $B_H:=\sup_{k\ge0}D(\tilde{\mathcal H}_k)<\infty$
		is the uniform Hessian-tracker dispersion bound from Lemma~\ref{lem:tilde_hess_bounded}.
		Then, for all sufficiently large $k$,
		\[
		\bar\delta_k = o(\|g_k\|)\qquad\text{and more precisely}\qquad \bar\delta_k\le \|g_k\|^{1+\gamma}.
		\]
	\end{lemma}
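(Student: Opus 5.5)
We start from Lemma~\ref{lem:delta_control}, which gives $\bar\delta_k\le \Delta_k^H+L_2D(X_k)$. It therefore suffices to show
\[
\Delta_k^H+L_2D(X_k)\le C_\delta\max\{\rho^{\tau_k},\rho^{t_{k-1}}\}.
\]
Once this holds, the schedule condition~\eqref{eq:delta_schedule} yields $\bar\delta_k\le\|g_k\|^{1+\gamma}$ for all large $k$. Since $g_k\to0$ in the regime of interest (or simply $\|g_k\|^{\gamma}\to0$ along the tail), this also gives $\bar\delta_k=o(\|g_k\|)$. Strong convexity (Assumption~\ref{ass:strong}) plays only a background role. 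It guarantees that $g_k$ vanishes only at $x_\star$, so the relative statement is meaningful. The real work is a one-step contraction bound for each dispersion term in terms of $\rho^{\tau_k}$ or $\rho^{t_{k-1}}$.

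\textbf{Hessian term.} The pre-mixed trackers are $\tilde{\mathcal H}_k=(W^{\tau_k}\otimes I)\mathcal H_k$. By $\|W^{\tau}-\J\|_2=\rho^{\tau}$ we get $D(\tilde{\mathcal H}_k)\le\rho^{\tau_k}D(\mathcal H_k)$. We then bound $D(\mathcal H_k)$ uniformly. Here $\mathcal H_k$ is obtained by $t_{k-1}$ mixing rounds of $\tilde{\mathcal H}_{k-1}$ plus the increments $\nabla^2F(X_k)-\nabla^2F(X_{k-1})$. Using $D(\tilde{\mathcal H}_{k-1})\le B_H$ (Lemma~\ref{lem:tilde_hess_bounded}) and bounding the increment dispersion by $\sqrt d\,L_2$ times the iterate displacement, each displacement is at most $2D$ by Assumption~\ref{ass:bounded_iterates}. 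This gives $D(\mathcal H_k)\le B_H+2\sqrt d L_2D=\hat C_H$. The $\sqrt d$ comes from passing from spectral to Frobenius norms on the vectorized blocks. Finally, Jensen/Cauchy--Schwarz converts RMS to mean and spectral to Frobenius norm, giving $\Delta_k^H\le D(\tilde{\mathcal H}_k)\le \hat C_H\rho^{\tau_k}$.

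\textbf{Primal term.} We write $X_k=(W^{t_{k-1}}\otimes I_d)Y_k$ with $Y_k=\tilde X_{k-1}+S_{k-1}$, so $D(X_k)\le\rho^{t_{k-1}}D(Y_k)$. We have $D(\tilde X_{k-1})\le 2D$, since every $\tilde x_{i,k-1}$ and $\bar x_{k-1}$ lie in the ball of radius $D$ around $x_\star$. We also have $D(S_{k-1})\le\max_i\|s_{i,k-1}\|\le\sqrt{G_g/M}$ by Lemma~\ref{lem:M-vs-s}, since $M\|s\|\le\sqrt{M\|\tilde g\|}$. Hence $L_2D(X_k)\le L_2(2D+\sqrt{G_g/M})\rho^{t_{k-1}}$. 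Summing the two terms gives exactly $C_\delta\max\{\rho^{\tau_k},\rho^{t_{k-1}}\}$.

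\textbf{Main obstacle.} The delicate step is the uniform bound $D(\mathcal H_k)\le\hat C_H$. It must use the increment structure rather than a static heterogeneity constant, and it must get the constants to match $\hat C_H$, including the $\sqrt d$ factor and the requirement that the increment dispersion be bounded by displacements of at most $2D$. If the bookkeeping in Lemma~\ref{lem:tilde_hess_bounded} is stated for the post-mixed rather than the pre-mixed tracker, I would adjust by one extra contraction, since $\rho\le1$ can only help. A secondary subtlety is indices with $\tilde g_{i,k}=0$. There $s_{i,k}=0$ and the step bound holds trivially, so the argument is unaffected.
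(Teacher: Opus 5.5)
Your proposal is correct and is essentially the paper's proof. Both start from Lemma~\ref{lem:delta_control} and bound $\Delta_k^H\le D(\tilde{\mathcal H}_k)\le\rho^{\tau_k}(B_H+2\sqrt d\,L_2D)$ by one pre-mixing contraction plus the increment recursion, then $D(X_k)\le\rho^{t_{k-1}}(2D+\sqrt{G_g/M})$ by one post-mixing contraction, and finally apply \eqref{eq:delta_schedule}. The only differences are cosmetic: you drop the harmless extra factor $\rho^{t_{k-1}}$ in the Hessian term and bound the step dispersion by $\max_i\|s_{i,k-1}\|$ instead of $\|S_{k-1}\|/\sqrt N$. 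As in the paper, the $o(\|g_k\|)$ reading relies on $\|g_k\|\to0$ along the tail rather than on anything the lemma itself proves.
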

	
	\begin{theorem}\label{thm:superlinear}
		Under the hypotheses of Theorem~\ref{thm:main}, suppose in addition
		that Assumption~\ref{ass:strong} holds,
		Proposition~\ref{prop:inexact-rn} holds for all sufficiently large $k$ with some $\eta\in(0,1/12]$,
		and there exists a post-burn-in tail on which $\|g_k\|\to0$ and
		a constant $\gamma>0$ such that, for all sufficiently large $k$,
		\begin{equation}\label{eq:local_accuracy}
			\max\{\rho^{\tau_k},\rho^{t_{k-1}}\}\le
			\tfrac{1}{C_\delta}\|g_k\|^{1+\gamma},
			\qquad
			\Delta_k^g\le C_g\|g_k\|^{1+\gamma}
		\end{equation}
		for some constant $C_g>0$,
		then there exist constants $C_{\mathrm{sc}}>0$ and $k_{\mathrm{sc}}$ such that, for all $k\ge k_{\mathrm{sc}}$,
		\[
		\|\nabla f(\bar x_{k+1})\|\le C_{\mathrm{sc}}\,\|\nabla f(\bar x_k)\|^{3/2}.
		\]
	\end{theorem}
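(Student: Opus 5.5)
Write $g_k:=\nabla f(\bar x_k)$, $\lambda_k=\tilde{\bar\lambda}_k$, and $r_k=(\nabla^2 f(\bar x_k)+\lambda_k I)\bar s_k+g_k$. The plan is to bound $\|g_{k+1}\|$ in terms of $\lambda_k\|\bar s_k\|$, exactly as in Lemma~\ref{lem:grad_bound}, and then show that on the tail both $\lambda_k$ and $\|\bar s_k\|$ are of order $\|g_k\|^{1/2}$ and $\|g_k\|$ respectively. This would give $\|g_{k+1}\|\lesssim \|g_k\|^{3/2}$.

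\textbf{Step 1 (gradient bound).} Since Proposition~\ref{prop:inexact-rn} holds for large $k$ with $\eta\le1/12$, Lemma~\ref{lem:grad_bound} gives
\[
\|g_{k+1}\|\le \tfrac{19}{12}\,\lambda_k\|\bar s_k\|.
\]
It remains to show $\lambda_k\le C_1\|g_k\|^{1/2}$ and $\|\bar s_k\|\le C_2\|g_k\|$.

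\textbf{Step 2 (upper bound on $\lambda_k$).} By definition,
\[
\lambda_k=\tfrac1N\sum_i\sqrt{M\|\tilde g_{i,k}\|}+\bar\delta_k\le \sqrt{M}\bigl(\|\tilde{\bar g}_k\|+\Delta_k^g\bigr)^{1/2}+\bar\delta_k ,
\]
using Jensen and the triangle inequality $\|\tilde g_{i,k}\|\le\|\tilde{\bar g}_k\|+\|\tilde g_{i,k}-\tilde{\bar g}_k\|$. Lemma~\ref{lem:bridge-grad} gives $\|\tilde{\bar g}_k\|\le\|g_k\|+L_1D(X_k)$. The term $D(X_k)$ is controlled by $\rho^{t_{k-1}}$ times a uniform bound on the pre-post-mixing dispersion, from the bounded-iterates assumption and the post-mixing contraction. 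By \eqref{eq:local_accuracy} this is $\cO(\|g_k\|^{1+\gamma})$. The hypothesis gives $\Delta_k^g\le C_g\|g_k\|^{1+\gamma}$, and Lemma~\ref{lem:delta_o_g} gives $\bar\delta_k\le\|g_k\|^{1+\gamma}$. Since $\|g_k\|\to0$, all these perturbations are $o(\|g_k\|)$, hence $\lambda_k\le 2\sqrt{M\|g_k\|}$ eventually.

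\textbf{Step 3 (upper bound on $\|\bar s_k\|$).} This step uses strong convexity. From the residual bound $\|r_k\|\le\eta\lambda_k\|\bar s_k\|$ we get $(\nabla^2 f(\bar x_k)+\lambda_kI)\bar s_k=-g_k+r_k$. Assumption~\ref{ass:strong} gives $\nabla^2 f(\bar x_k)+\lambda_k I\succeq(\mu+\lambda_k)I$, so
\[
(\mu+\lambda_k)\|\bar s_k\|\le\|g_k\|+\eta\lambda_k\|\bar s_k\|.
\]
Since $\eta<1$, this yields $\|\bar s_k\|\le\|g_k\|/\mu$. Combining Steps 1--3 gives $\|g_{k+1}\|\le \tfrac{19}{12}\cdot 2\sqrt M\,\mu^{-1}\|g_k\|^{3/2}$, so one can take $C_{\mathrm{sc}}=\tfrac{19}{6}\sqrt M/\mu$.

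\textbf{Main obstacle.} The delicate point is Step 2, specifically the bound $D(X_k)=\cO(\|g_k\|^{1+\gamma})$. This requires a uniform bound on the dispersion of $Y_k$, which I would obtain from Assumption~\ref{ass:bounded_iterates} together with the step bound $\|s_{i,k}\|\le\sqrt{G_g/M}$ from Lemma~\ref{lem:M-vs-s}, giving $D(Y_k)\le 2D+\sqrt{G_g/M}$. Then $D(X_k)\le\rho^{t_{k-1}}D(Y_{k-1}')$, using that gossip contracts the disagreement. Note that this is precisely the constant appearing in $C_\delta$, which suggests this route is the intended one. If it fails, Step 2 only needs $D(X_k)=o(\|g_k\|)$, and even a cruder dependence would suffice. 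Note also that $\gamma$ enters only through vanishing lower-order terms, so the order is exactly $3/2$.
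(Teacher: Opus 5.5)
Your proposal is correct and follows essentially the paper's route. Like the paper, you bound $\lambda_k=\cO(\sqrt{\|g_k\|})$ via Jensen, the bridge bound, the $\rho^{t_{k-1}}$ control of $D(X_k)$, \eqref{eq:local_accuracy} and Lemma~\ref{lem:delta_o_g}; you get $\|\bar s_k\|=\cO(\|g_k\|)$ from strong convexity; and you finish with the Taylor estimate for $\|g_{k+1}\|$. The differences are cosmetic: in Step~3 you keep $\lambda_k$ on the left, so $\|\bar s_k\|\le\|g_k\|/\mu$ holds without the paper's detour through $\eta\lambda_k\le\mu/2$, and you absorb the quadratic Taylor remainder via Lemma~\ref{lem:grad_bound} instead of carrying a separate $\cO(\|g_k\|^2)$ term.
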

	
	\begin{remark}
		The logarithmic schedule in Theorem~\ref{thm:main} is sufficient for
		the global $\cO(\varepsilon^{-1})$ rate. The local superlinear result
		uses the relative accuracy condition~\eqref{eq:local_accuracy}, which
		requires the consensus and tracking errors to decay relative to the
		current gradient norm along the local tail.
	\end{remark}
	

	
	\section{Numerical experiments}\label{sec:experiments}
	
	This section evaluates the practical performance of the
	\textsc{DisGrem} family using the logarithmic communication rule
	motivated by the analysis.
	All main convergence experiments report statistics over
	20 independent Monte~Carlo (MC) trials with
	randomized starting points and random Erd\H{o}s--R\'{e}nyi graphs.
	Four \textsc{DisGrem} variants are compared against six first- and
	second-order baselines on nine objectives spanning well-conditioned
	and ill-conditioned convex problems, real-data logistic regression,
	and four nonconvex objectives.
	
	\subsection{Experimental setup}\label{ssec:setup}
	
	We use $N{=}10$ agents communicating over an Erd\H{o}s--R\'{e}nyi (ER) random graph
	with edge probability $p_{\mathrm{er}}{=}0.5$ (regenerated per trial).
	The Metropolis--Hastings doubly stochastic mixing matrix $W$ yields
	spectral gap $1{-}\rho\approx 0.08$
	($\rho\approx 0.92$; typical range $\rho\in[0.88,0.96]$ across 20 trials),
	representing a moderately sparse topology.
	The analysis uses the same logarithmic depth for all tracked quantities.
	In the experiments, the vector mixing depths in
	the \textsc{DisGrem} family are chosen as
	\[
	\tau_k=t_k=
	\min\left\{10,\,
	\Bigl\lceil \frac{3\log(k+2)+2}{-\log\rho}\Bigr\rceil
	\right\},
	\]
	which is the logarithmic rule~\eqref{eq:log_schedule_main} with
	$p=3$ and $c_{\mathrm{mix}}=2$, implemented with a maximum depth of
	10 over the tested tolerance range.
	This choice matches the smallest exponent covered by the
	$\cO(\varepsilon^{-1})$ total-complexity statement in
	Theorem~\ref{thm:main}; the maximum depth is sufficient for all
	tolerances reported below.
	For the full-matrix variants, Hessian pre-mixing is limited to three
	matrix rounds. For the communication-efficient variants, Hessian-matrix
	mixing is limited to two matrix rounds while vector mixing uses the
	full $\tau_k$ and $t_k$ depths. These matrix-round limits reduce
	payload while preserving the vector-mixing schedule used by the
	analysis.
	Baselines retain their standard consensus settings, and all comparisons
	report cumulative communication cost so that different per-iteration
	communication patterns are accounted for explicitly.
	
	We compare ten methods in three groups.
	The proposed \textsc{DisGrem} family comprises four variants:
	\textsc{DisGrem} (Algorithm~\ref{alg:disgrem}),
	\textsc{CeDisGrem} (communication-efficient Hessian tracking using
	Top-$k$ sparsification with a 10\% nominal budget, with low-rank
	alternatives studied in Appendix~\ref{app:supp_exp}),
	\textsc{AdaDisGrem} (adaptive $M$ via secant-based Lipschitz estimation),
	and \textsc{CeAdaDisGrem} (adaptive + compressed).
	In the Ce variants, the lazy Hessian-update period and compression
	budget are adjusted with the current communication depth so that
	communication is reduced early while more Hessian information is
	retained as the requested consensus depth increases.
	The regularization scaling is $M = M_{\mathrm{fac}}\cdot H_{\max}^{0}$ where
	$H_{\max}^{0} = \max_i \|\nabla^2 f_i(x_0)\|_2$;
	per-function $M_{\mathrm{fac}}$ values are listed in Table~\ref{tab:config}.
	The adaptive variant \textsc{AdaDisGrem} replaces this fixed scaling
	by online secant-based updates (Section~\ref{ssec:ada}).
	
	Two first-order baselines are included:
	EXTRA~\citep{shi2015extra} and
	DIGing~\citep{nedic2017diging}.
	Four second-order baselines complete the comparison:
	DQM~\citep{eisen2017dqn},
	ESOM~\citep{mokhtari2016esom},
	SONATA~\citep{sun2022sonata}, and
	Network-GIANT~\citep{maritan2023networkgiant}.
	First-order baselines use a Lipschitz-scaled stepsize
	$\alpha=\alpha_{\mathrm{base}}/H_{\max}^{0}$;
	for each function, $\alpha_{\mathrm{base}}$ is selected from
	$\{0.01, 0.1, 0.5, 1.0\}$ as the value giving the fastest
	convergence without divergence over 5 preliminary runs
	(Table~\ref{tab:config}).
	Second-order baselines (DQM, ESOM, SONATA, Network-GIANT) use their
	published parameters; SONATA and Network-GIANT solve local
	subproblems to machine precision via a direct solver.
	The baselines use their standard consensus settings.
	Because the \textsc{DisGrem} family has a three-stage gossip
	structure and carries Hessian payloads, its per-iteration byte cost
	can exceed that of single-stage baselines; cross-method fairness is
	therefore assessed through the cumulative communication cost (MB)
	reported in Figure~\ref{fig:profiles_comm}.
	We use \emph{light, problem-class-level tuning}:
	the per-function $M_{\mathrm{fac}}$ for the proposed family and the
	per-function $\alpha_{\mathrm{base}}$ for first-order baselines are
	both selected from small discrete grids via a handful of preliminary
	runs, while second-order baselines retain their published settings.
	Sensitivity sweeps in Appendix~\ref{app:supp_exp}
	(Figure~\ref{fig:sweep_second}) show that broad changes in the
	second-order baselines' hyperparameters do not remove their stagnation
	on hard instances such as \textsc{LogSumExp}.
	Code to reproduce all experiments is available at
	\url{https://github.com/huwei0121/DisGRem}.
	
	We test on nine objective functions
	(Table~\ref{tab:functions});
	all synthetic functions use $d{=}30$, while the two logistic regression
	problems inherit $d{=}22$ from the \textsc{svmguide3} dataset.
	See Table~\ref{tab:config} for per-function algorithmic parameters.
	Five objectives are convex:
	(i)~\textsc{Ridge} ($\ell_2$-regularized least squares, $\lambda{=}10^{-3}$);
	(ii)~\textsc{QuadBad} (heterogeneous ill-conditioned quadratic, $\kappa{=}10^3$);
	(iii)~\textsc{LogSumExp}
	(smooth approximation of the max function~\citep{nesterov2005smooth},
	$\sigma{=}0.5$);
	(iv)~\textsc{Huber}
	(pseudo-Huber loss~\citep{charbonnier1997huber}, $\delta{=}1$);
	(v)~\textsc{LogReg-real} ($\ell_2$-regularized logistic regression on the
	\textsc{svmguide3} LibSVM dataset~\citep{chang2011libsvm},
	$m{=}1243$, $d{=}22$;
	a standard benchmark in decentralized
	optimization~\citep{shi2015extra,sun2022sonata}).
	To assess behavior beyond the convex setting, we also include four
	nonconvex objectives:
	(vi)~\textsc{LinLog} (piecewise quadratic-logarithmic loss with flat curvature regions);
	(vii)~\textsc{Rosenbrock}~\citep{rosenbrock1960automatic};
	(viii)~\textsc{Styblinski--Tang}~\citep{styblinski1990experiments} (multimodal);
	(ix)~\textsc{LogReg-NCVR} (logistic regression with
	bounded nonconvex penalty $\alpha\sum_k x_k^2/(1{+}x_k^2)$~\citep{geman1995nonlinear}
	on \textsc{svmguide3}).
	On these nonconvex objectives, the eigenvalue-shift stabilizer
	$\delta_{i,k}$ makes the local coefficient matrix positive definite whenever
	$\tilde g_{i,k}\ne0$. When $\tilde g_{i,k}=0$, Algorithm~\ref{alg:disgrem}
	uses the convention $s_{i,k}=0$. Thus the algorithm remains well posed and operates as a
	damped Newton-like method; however, the convergence
	guarantees of Theorems~\ref{thm:main}--\ref{thm:superlinear} do
	not apply.
	The shift $\delta_{i,k}$ plays two structurally similar but
	theoretically distinct roles:
	in the convex analysis it compensates for transient indefiniteness of
	tracked Hessian matrices (which are PSD in the exact case);
	on nonconvex objectives it additionally absorbs true negative
	curvature, acting as a Levenberg--Marquardt damping term.
	For convex objectives, $f_{\mathrm{ref}}:=f^\star$ (the global minimum,
	whose existence is guaranteed by coercivity).
	For nonconvex objectives, $f_{\mathrm{ref}}$ denotes the best value
	found by multi-start L-BFGS-B (50 restarts, tolerance $10^{-15}$);
	it is not a certified global optimum.
	
	\begin{table}[htbp]
		\centering
		\caption{Test function definitions.
			Each $f_i\colon\R^d\to\R$ is the local objective of agent~$i$.
			For synthetic problems (Ridge--LinLog), data $(A_i, b_i)$
			are independently generated per agent with i.i.d.\ Gaussian entries;
			for logistic regression, the \textsc{svmguide3}
			dataset~\citep{chang2011libsvm}
			($m{=}1243$, $d{=}22$) is randomly partitioned across agents.
			Rosenbrock and Styblinski--Tang are homogeneous
			(all agents share the same $f_i$).}
		\label{tab:functions}
		\setlength{\tabcolsep}{3pt}
		\footnotesize
		\begin{tabular}{@{}llp{4.5cm}@{}}
			\toprule
			Name & Local objective $f_i(x)$ & Data / parameters \\
			\midrule
			\multicolumn{3}{@{}l}{\textit{Convex}} \\[2pt]
			Ridge
			& $\frac{1}{2}\|A_i x{-}y_i\|^2{+}\frac{\lambda}{2}\|x\|^2$
			& $A_i{\in}\R^{150\times d}$,
			$y_i{=}A_i x_{\mathrm{true}}{+}0.05\varepsilon_i$,
			$\lambda{=}10^{-3}$ \\[4pt]
			QuadBad
			& $\frac{1}{2}x^\top Q_i x + b_i^\top x$
			& $Q_i$ diagonal, eigenvalues log-spaced
			in $[1,\chi_i]$,
			$\chi_i{\approx}\kappa$;
			$b_i{\sim}\mathcal{N}(0,I)$ \\[4pt]
			LogSumExp
			& $\sigma\log\!\bigl(\sum_{j=1}^{p}
			e^{(A_i^\top x-b_i)_j/\sigma}\bigr)$
			& $A_i{\in}\R^{d\times p}$,
			$p{=}\max(d{+}2,12)$,
			$\sigma{=}0.5$ \\[4pt]
			Huber
			& $\sum_{j=1}^{p}\delta^2
			\bigl(\sqrt{1{+}(r_j/\delta)^2}{-}1\bigr)$,
			$r{=}A_ix{-}b_i$
			& $A_i{\in}\R^{5\times d}$,
			$\delta{=}1$ \\[4pt]
			LogReg-real
			& $\frac{\iota}{2}\|x\|^2
			{+}\frac{1}{m_i}\sum_{j=1}^{m_i}
			\log(1{+}e^{-b_j a_j^\top x})$
			& \textsc{svmguide3} data split;
			$\iota{=}10^{-2}$ \\[4pt]
			\midrule
			\multicolumn{3}{@{}l}{\textit{Nonconvex}} \\[2pt]
			LinLog
			& $\sum_{j=1}^{d}\ell\bigl((A_ix{-}b_i)_j\bigr)$;
			see ${}^\dagger$ below
			& $A_i{\in}\R^{d\times d}$ \\[4pt]
			Rosenbrock
			& $\sum_{j=1}^{d/2}\bigl[100(x_{2j}
			{-}x_{2j-1}^2)^2{+}(x_{2j-1}{-}1)^2\bigr]$
			& shared ($f_i{=}f$\;\;$\forall\,i$) \\[4pt]
			Styblinski
			& $\sum_{j=1}^{d}
			(x_j^4{-}16x_j^2{+}5x_j)$
			& shared ($f_i{=}f$\;\;$\forall\,i$) \\[4pt]
			LogReg-NCVR
			& $\frac{1}{m_i}\sum_{j=1}^{m_i}
			\log(1{+}e^{-b_ja_j^\top x})
			{+}\alpha\sum_{k=1}^{d}\frac{x_k^2}{1{+}x_k^2}$
			& \textsc{svmguide3} data split;
			$\alpha{=}0.05$ \\
			\bottomrule
		\end{tabular}
		\\[4pt]
		{\footnotesize ${}^\dagger$LinLog:
			$\ell(r)=r^2/2$ for $|r|\le 1$;\;
			$\ell(r)=\ln|r|+\tfrac{1}{2}$ for $|r|>1$.}
	\end{table}
	
	\begin{table}[htbp]
		\centering
		\caption{Per-function experimental configuration.
			$M_{\mathrm{fac}}$: regularization scaling for \textsc{DisGrem}
			($M = M_{\mathrm{fac}}\cdot H_{\max}^{0}$);
			$\alpha_{\mathrm{base}}$: stepsize coefficient for first-order
			baselines (actual step $\alpha_{\mathrm{base}}/H_{\max}^{0}$);
			$K_{\max}$: iteration budget (shared by all algorithms);
			Decay: whether a $1/\sqrt{k}$ stepsize decay is applied to
			first-order baselines (relevant only for nonconvex problems).}
		\label{tab:config}
		\setlength{\tabcolsep}{5pt}
		\small
		\begin{tabular}{lccccl}
			\toprule
			Function & $M_{\mathrm{fac}}$ & $\alpha_{\mathrm{base}}$ & $K_{\max}$ & Decay & Notes \\
			\midrule
			Ridge       & 0.1  & 0.20 &  200 & No  & $\lambda{=}10^{-3}$ \\
			QuadBad     & 0.1  & 0.10 & 1500 & No  & $\kappa{=}10^3$ \\
			LogSumExp   & 5.0  & 0.30 &  400 & No  & \\
			Huber       & 1.5  & 0.30 &  800 & No  & Pseudo-Huber, $\delta{=}1$ \\
			LogReg-real & 3.0  & 1.00 &  600 & No  & svmguide3, $d{=}22$ \\
			LinLog      & 1.0  & 0.20 & 1500 & No  & Nonconvex \\
			Rosenbrock  & 3.0  & 0.10 &  300 & Yes & Nonconvex \\
			Styblinski  & 15.0 & 0.05 &  100 & Yes & Multimodal \\
			LogReg-NCVR & 3.0  & 1.00 & 1000 & Yes & svmguide3, $d{=}22$ \\
			\bottomrule
		\end{tabular}
	\end{table}
	
	We define the consensus residual
	\[
	\mathrm{cons}_k
	\;:=\;
	D(X_k)
	\;=\;
	\sqrt{\tfrac{1}{N}\textstyle\sum_{i=1}^{N}\|x_{i,k}-\bar x_k\|^{2}},
	\]
	and the combo stopping criterion
	\[
	\mathrm{combo}_k
	\;:=\;
	\|\nabla f(\bar x_k)\| + \mathrm{cons}_k,
	\]
	where $\nabla f(\bar x_k)=\frac{1}{N}\sum_{i=1}^N\nabla f_i(\bar x_k)$
	is the true gradient at the average iterate (computed offline,
	not from the tracker), ensuring a fair comparison across algorithms
	with different internal tracking mechanisms.
	Each algorithm runs to $K_{\max}$ iterations or until
	$\mathrm{combo}_k < 10^{-12}$.
	We report four metrics:
	(a)~$\mathrm{combo}=\min_k\,\mathrm{combo}_k$;
	(b)~$\mathrm{relF}=\min_k |f(\bar x_k)-f_{\mathrm{ref}}|/|f(\bar x_0)-f_{\mathrm{ref}}|$;
	(c)~wall-clock time (seconds; for reference only---all implementations
	use Python/NumPy with comparable vectorization);
	(d)~cumulative communication cost (MB), defined by summing the
	stage-wise directed neighbor-message payloads over all outer iterations
	(consistent with the accounting in Section~\ref{ssec:comm}).
	All convergence curves show the 20-run median with interquartile
	shading, after applying the standard running-best envelope to monotone
	accuracy metrics.
	Analytical gradients and Hessians are used for all functions.
	A run is deemed successful at threshold $\varepsilon$ if it reaches
	$\mathrm{relF}\le\varepsilon$ within $K_{\max}$ iterations without
	encountering NaN or overflow; for nonconvex problems, $\mathrm{relF}$ is
	computed relative to $f_{\mathrm{ref}}$.
	Unless otherwise noted, the Hessian is updated every iteration
	($K_{\mathrm{lazy}}{=}1$); Section~\ref{ssec:commexp} studies the
	effect of lazy updates.
	
	The theoretical results correspond to the logarithmic rule without a
	maximum-depth restriction; the reported experiments use the displayed
	maximum depth because the
	tolerances in the figures and tables are reached within that range.
	
	\FloatBarrier
	\subsection{Convergence benchmarks}\label{ssec:convergence}
	
	
	\begin{figure}[htbp]
		\centering
		\includegraphics[width=\textwidth]{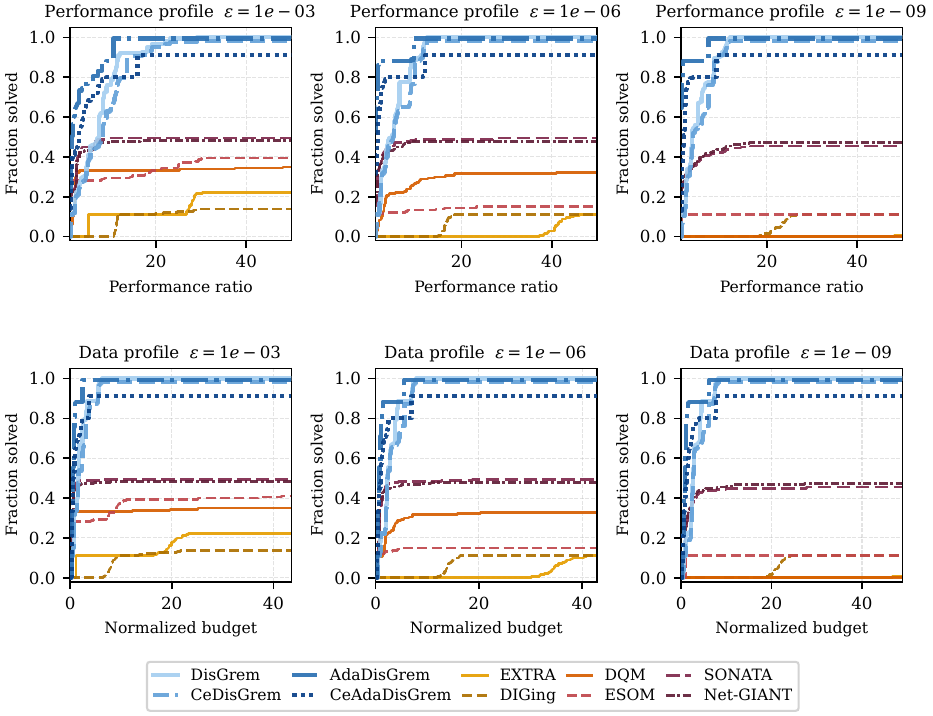}
		\caption{Iteration-budget profiles at three precision levels
			$\varepsilon\in\{10^{-3},10^{-6},10^{-9}\}$.
			Top: performance profiles; bottom: data profiles.
			A solver's curve at ordinate~$\pi$ indicates it solves a
			$\pi$-fraction of the problem--MC instances within the given budget.}
		\label{fig:profiles}
	\end{figure}
	
	\begin{figure}[htbp]
		\centering
		\includegraphics[width=\textwidth]{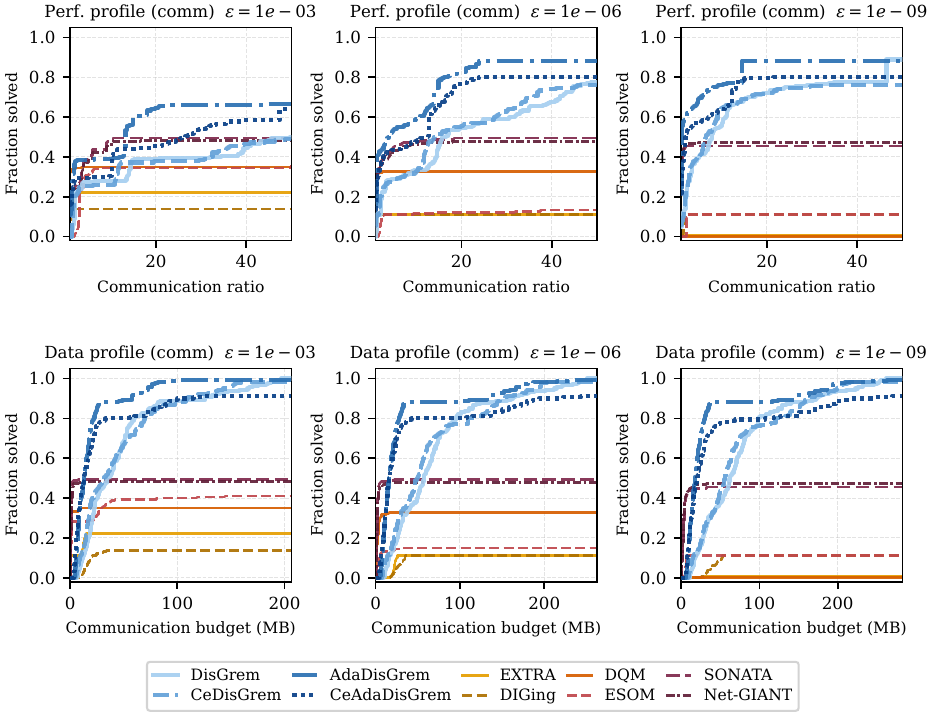}
		\caption{Communication-budget profiles at three precision levels
			$\varepsilon\in\{10^{-3},10^{-6},10^{-9}\}$.
			Top: performance profiles with cumulative communication cost (MB)
			as the ratio axis.
			Bottom: data profiles with communication cost on the $\kappa$-axis.}
		\label{fig:profiles_comm}
	\end{figure}
	The main empirical finding is that the \textsc{DisGrem} family is
	consistently accurate across the full benchmark suite:
	it attains high accuracy on all nine test problems, whereas the
	baselines exhibit stagnation, divergence, or strong problem dependence
	on at least one instance
	(Figures~\ref{fig:profiles}--\ref{fig:profiles_comm}).
	
	Table~\ref{tab:summary} reports key numerical results on four
	representative functions covering both favorable and challenging
	cases; per-problem details follow.
	
	\begin{table}[htbp]
		\centering
		\caption{Summary on four representative functions
			($N{=}10$, 20 MC runs; synthetic functions use $d{=}30$,
			and \textsc{LogReg-real} uses $d{=}22$).
			Column keys: Time (s), $K$ (iterations), min(relF), Comm (MB).}
		\label{tab:summary}
		\setlength{\tabcolsep}{4pt}
		\small
		\resizebox{\textwidth}{!}{%
			\begin{tabular}{l rrrr rrrr rrrr rrrr}
				\toprule
				& \multicolumn{4}{c}{\textsc{Huber} ($K_{\max}{=}800$)}
				& \multicolumn{4}{c}{\textsc{LogSumExp} ($K_{\max}{=}400$)}
				& \multicolumn{4}{c}{\textsc{LinLog} ($K_{\max}{=}1500$)}
				& \multicolumn{4}{c}{\textsc{Styblinski--Tang} ($K_{\max}{=}100$)} \\
				\cmidrule(lr){2-5}\cmidrule(lr){6-9}\cmidrule(lr){10-13}\cmidrule(lr){14-17}
				Algorithm & Time & $K$ & min(relF) & MB
				& Time & $K$ & min(relF) & MB
				& Time & $K$ & min(relF) & MB
				& Time & $K$ & min(relF) & MB \\
				\midrule
				\multicolumn{17}{l}{Proposed (\textsc{DisGrem} family)} \\
				\textsc{DisGrem}      & 0.16 & 85  & $3\mathrm{e}^{-13}$ & 39
				& 0.57 & 259 & $1\mathrm{e}^{-13}$ & 121
				& 0.08 & 41  & $2\mathrm{e}^{-13}$ & 19
				& 0.14 & 94  & $7\mathrm{e}^{-16}$ & 44 \\
				\textsc{CeDisGrem}    & 1.13 & 800 & $2\mathrm{e}^{-12}$ & 260
				& 0.43 & 259 & $1\mathrm{e}^{-13}$ & 98
				& 0.23 & 140 & $1\mathrm{e}^{-12}$ & 57
				& 0.10 & 93  & $1\mathrm{e}^{-13}$ & 37 \\
				\textsc{AdaDisGrem}   & 1.33 & 800 & $4\mathrm{e}^{-12}$ & 426
				& 0.07 & 33  & $9\mathrm{e}^{-13}$ & 15
				& 0.08 & 42  & $6\mathrm{e}^{-13}$ & 19
				& 0.02 & 17  & $5\mathrm{e}^{-16}$ & 8 \\
				\textsc{CeAdaDisGrem} & 1.21 & 800 & $5\mathrm{e}^{-10}$ & 313
				& 0.07 & 35  & $3\mathrm{e}^{-13}$ & 13
				& 0.26 & 156 & $6\mathrm{e}^{-13}$ & 64
				& 0.02 & 18  & $5\mathrm{e}^{-16}$ & 7 \\
				\midrule
				\multicolumn{17}{l}{First-order baselines} \\
				EXTRA  & 5.9 & 800 & $5\mathrm{e}^{-3}$  & 8
				& 3.5 & 400 & $4\mathrm{e}^{-1}$  & 4
				& 16.9 & 1500 & $7\mathrm{e}^{-9}$  & 15
				& 0.70 & 100 & $7\mathrm{e}^{-1}$  & 1 \\
				DIGing & 6.0 & 800 & $3\mathrm{e}^{-4}$  & 25
				& 3.3 & 400 & $2\mathrm{e}^{-1}$  & 12
				& 10.3 & 989 & $1\mathrm{e}^{-12}$ & 31
				& 0.72 & 100 & $9\mathrm{e}^{-1}$  & 3 \\
				\midrule
				\multicolumn{17}{l}{Second-order baselines} \\
				DQM    & 0.8 & 800 & $2\mathrm{e}^{-1}$  & 12
				& 0.5 & 400 & $3\mathrm{e}^{-3}$  & 6
				& 1.7 & 1500 & $2\mathrm{e}^{0}$   & 23
				& 0.09 & 100 & $2\mathrm{e}^{-1}$  & 2 \\
				ESOM   & 1.4 & 800 & $1\mathrm{e}^{-1}$  & 74
				& 0.7 & 400 & $2\mathrm{e}^{-3}$  & 37
				& 3.5 & 1500 & $1\mathrm{e}^{1}$   & 139
				& 0.19 & 100 & $1\mathrm{e}^{1}$   & 9 \\
				SONATA & 1.0 & 800 & $1\mathrm{e}^{-1}$  & 37
				& 0.5 & 400 & $3\mathrm{e}^{-2}$  & 19
				& 2.3 & 1500 & $1\mathrm{e}^{0}$   & 69
				& 0.03 & 12  & $5\mathrm{e}^{-16}$ & 1 \\
				Net-GIANT & 1.0 & 800 & $4\mathrm{e}^{-2}$ & 37
				& 0.6 & 400 & $1\mathrm{e}^{-2}$  & 19
				& 2.5 & 1500 & $1\mathrm{e}^{0}$   & 69
				& 0.01 & 12  & $5\mathrm{e}^{-16}$ & 1 \\
				\bottomrule
		\end{tabular}}
	\end{table}
	
	Figure~\ref{fig:relF_steps_all} shows $\mathrm{relF}$ versus iteration for
	all nine benchmark functions; additional metrics (combo, wall-clock
	time, communication cost) are collected in
	Appendix~\ref{app:supp_exp},
	Figures~\ref{fig:app_combo_steps}--\ref{fig:app_relF_comm}.
	
	\begin{figure}[htbp]
		\centering
		\includegraphics[width=\textwidth]{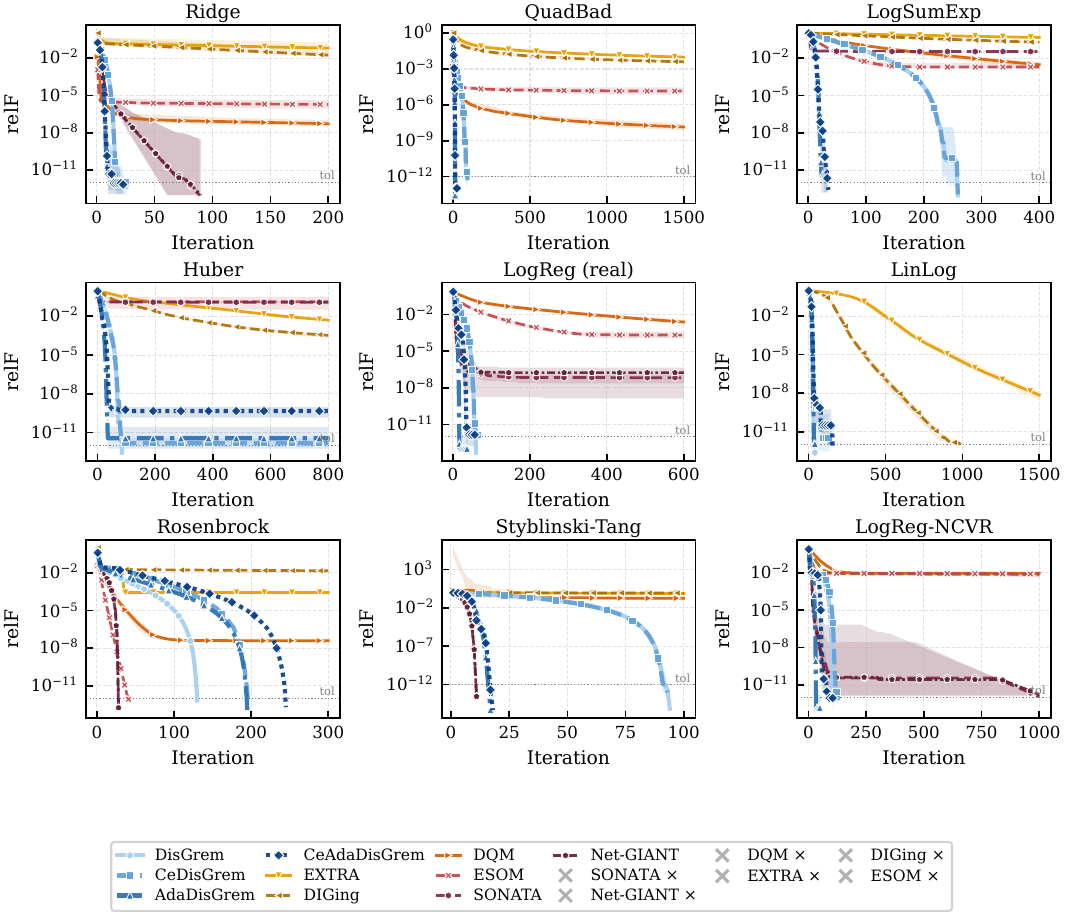}
		\caption{$\mathrm{relF}$ vs.\ iteration on all nine test functions
			($N{=}10$, 20 MC runs; synthetic objectives use $d{=}30$,
			whereas the two logistic-regression objectives use the
			$d{=}22$ \textsc{svmguide3} feature dimension; logarithmic
			communication implementation).
			Solid curves: \textsc{DisGrem} family; dashed: baselines.
			Shaded bands: interquartile range.
			Legend entries marked by $\times$ indicate algorithms whose
			median curve is not drawn because the corresponding runs
			terminated by NaN, overflow, or divergence.}
		\label{fig:relF_steps_all}
	\end{figure}
	
	We now detail the per-problem behavior.
	On the harder convex problems the accuracy gap is significant.
	On LogSumExp, the \textsc{DisGrem} family reaches
	$\mathrm{relF}\lesssim10^{-12}$, whereas every baseline
	stagnates above $10^{-3}$ (first-order) or $10^{-2}$
	(SONATA, Net-GIANT).
	On Huber the proposed methods attain $10^{-12}$--$10^{-10}$, well
	below the $>10^{-1}$ plateau of DQM/ESOM/SONATA.
	On ill-conditioned problems (QuadBad, $\kappa{=}10^3$), the $\mathrm{relF}$
	curve exhibits a clear two-phase pattern: an initial plateau of
	$\sim$50--100 iterations during which the tracker dispersions
	contract but the objective barely decreases, followed by a rapid
	descent phase consistent with the $\cO(1/k^2)$ theoretical rate.
	This matches the predicted burn-in/descent structure of
	Theorem~\ref{thm:main}.
	
	We note two exceptions where individual baselines outperform.
	DIGing on LinLog reaches $\mathrm{relF}\approx10^{-12}$, comparable
	to the \textsc{DisGrem} family:
	LinLog exhibits very flat curvature regions where
	the Hessian-based regularization overestimates curvature,
	whereas DIGing's simpler dynamics are less affected.
	However, DIGing is much slower in wall-clock time
	and fails on the majority of other functions.
	SONATA and Net-GIANT reach $\sim10^{-16}$ on
	Styblinski--Tang in 12 steps, but stagnate or diverge on
	Huber and LinLog ($\mathrm{relF}>1$).
	
	Regarding the adaptive versus fixed-$M$ trade-off,
	\textsc{AdaDisGrem} is fastest on LogSumExp
	(33 versus 259 iterations for fixed-$M$ \textsc{DisGrem}) and
	Styblinski--Tang (17 versus 94 iterations), where the secant estimate
	quickly reduces the effective regularization scale.
	In summary, adaptation reduces the need for manually selecting a fixed $M$
	(Section~\ref{ssec:ada}) and often reduces the iteration count,
	although fixed $M$ may still use less communication on some problems.
	Section~\ref{ssec:commexp} compares total communication
	volume to a target precision, the more informative metric
	given the $\cO(d^2)$ per-iteration Hessian payload of \textsc{DisGrem}.
	
	Theorem~\ref{thm:main} assumes convexity, yet the \textsc{DisGrem}
	family converges on all four nonconvex benchmarks in our tests:
	all four proposed variants reach $\mathrm{relF}\lesssim10^{-12}$ on
	LinLog; all four variants achieve
	$\mathrm{relF}<10^{-12}$ on Rosenbrock within 300 iterations;
	\textsc{AdaDisGrem} attains $\sim10^{-16}$ on Styblinski--Tang in
	17 steps; and the family converges to $\sim10^{-12}$ on
	LogReg-NCVR within 128 steps for the fixed-$M$ pair and within
	114 steps for \textsc{CeAdaDisGrem}.
	These tests indicate that the regularization and eigenvalue shift provide
	useful damping beyond the convex regime analyzed in the theory.
	
	\FloatBarrier
	\subsection{Communication cost}\label{ssec:commexp}
	
	The profiles in Section~\ref{ssec:convergence} use iteration count
	as the budget axis; here we take a communication-volume perspective.
	We focus on four representative functions (Ridge, LogSumExp, Huber,
	and LogReg-real) with 5 MC runs; the synthetic problems use $d=30$,
	while LogReg-real uses the \textsc{svmguide3} dimension $d=22$.
	
	\subsubsection{Benefit of communication-efficient (Ce) variants}
	
	\begin{figure}[htbp]
		\centering
		\includegraphics[width=\textwidth]{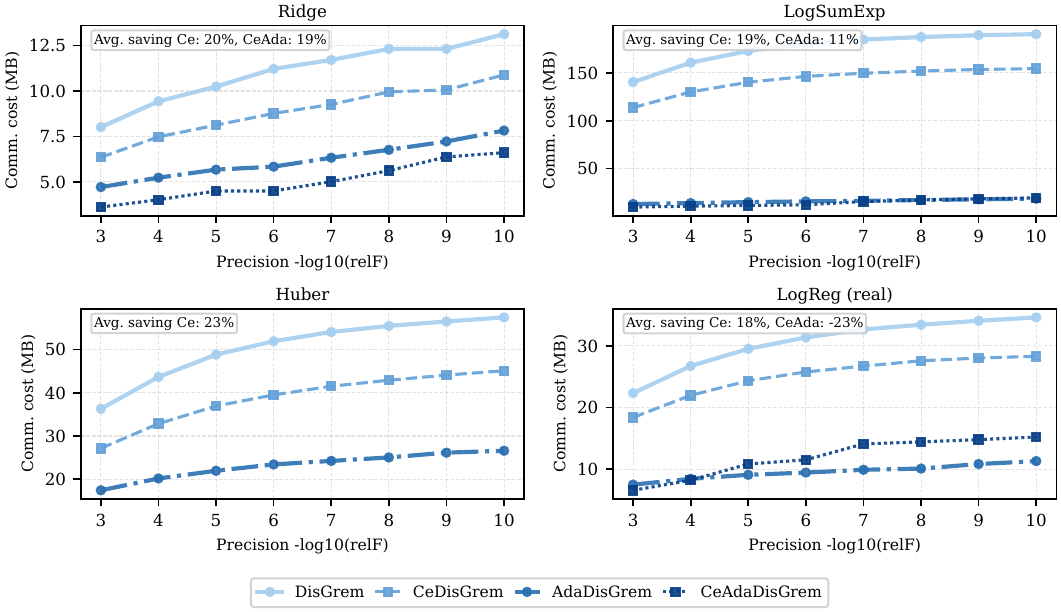}
		\caption{Communication cost (MB) to reach target precision
			$\varepsilon\in\{10^{-3},\ldots,10^{-10}\}$ for full vs.\ Ce variants.
			Missing markers: target not reached.}
		\label{fig:ce_benefit}
	\end{figure}
	
	\begin{table}[htbp]
		\centering
		\caption{Communication savings (\%) of Ce variants over their full
			counterparts at selected precision levels.
			Positive values indicate savings; negative values indicate that
			Ce requires more communication (due to slower convergence
			erasing per-iteration byte savings).}
		\label{tab:ce_savings}
		\setlength{\tabcolsep}{5pt}
		\small
		\begin{tabular}{llrrrr}
			\toprule
			Function & Pair & $\varepsilon{=}10^{-3}$ & $\varepsilon{=}10^{-6}$ & $\varepsilon{=}10^{-8}$ & $\varepsilon{=}10^{-10}$ \\
			\midrule
			\multirow{2}{*}{Ridge}
			& \textsc{DisGrem} $\to$ Ce       & $+21\%$ & $+22\%$  & $+19\%$ & $+17\%$ \\
			& \textsc{AdaDisGrem} $\to$ Ce    & $+23\%$ & $+23\%$  & $+17\%$ & $+16\%$ \\
			\midrule
			\multirow{2}{*}{LogSumExp}
			& \textsc{DisGrem} $\to$ Ce       & $+19\%$ & $+19\%$  & $+19\%$ & $+19\%$ \\
			& \textsc{AdaDisGrem} $\to$ Ce    & $+24\%$ & $+22\%$  & $-1\%$  & $-4\%$  \\
			\midrule
			\multirow{2}{*}{Huber}
			& \textsc{DisGrem} $\to$ Ce       & $+25\%$ & $+24\%$  & $+23\%$ & $+22\%$ \\
			& \textsc{AdaDisGrem} $\to$ Ce    & ---     & ---      & ---     & ---     \\
			\midrule
			\multirow{2}{*}{LogReg-real}
			& \textsc{DisGrem} $\to$ Ce       & $+18\%$ & $+18\%$  & $+18\%$ & $+18\%$ \\
			& \textsc{AdaDisGrem} $\to$ Ce    & $+12\%$ & $-22\%$  & $-43\%$ & $-35\%$ \\
			\bottomrule
		\end{tabular}
	\end{table}
	
	Figure~\ref{fig:ce_benefit} and Table~\ref{tab:ce_savings} compare
	the total communication cost needed to reach precision levels $\varepsilon\in\{10^{-3},10^{-6},10^{-8},10^{-10}\}$.
	For the fixed-$M$ pair (\textsc{DisGrem} vs.\ \textsc{CeDisGrem}), the Ce variant
	saves about 18--25\% communication across the four representative
	functions and the tested precision range.
	
	For the adaptive pair (\textsc{AdaDisGrem} vs.\ \textsc{CeAdaDisGrem}),
	the savings are more problem-dependent.
	At low precision, \textsc{CeAdaDisGrem} provides savings on Ridge,
	LogSumExp, and LogReg-real, but at high precision it can require more
	communication on LogReg-real; on Huber it does not reach the selected
	precision levels in this study.
	The fixed-$M$ pair therefore gives the clearest communication benefit,
	while compression can interfere with the adaptive scaling rule at high
	precision.
	
	In summary, the Ce mechanism reliably reduces
	per-iteration payload (by compressing the $\cO(d^2)$
	Hessian exchange), but total communication savings depend on
	the convergence-speed trade-off: with fixed $M$, savings of
	18--25\% are typical in the fixed-$M$ setting; with adaptive $M$,
	compression noise can still slow convergence enough to increase
	total bytes at high precision on some problems
	(Table~\ref{tab:ce_savings}).
	
	\subsubsection{\texorpdfstring{$K_{\mathrm{lazy}}$}{K-lazy} and compression ablation}
	
	Appendix~\ref{app:supp_exp} (Figures~\ref{fig:klazy}--\ref{fig:compress})
	reports a full sweep of $K_{\mathrm{lazy}}\in\{1,5,10,20,40,80\}$
	and compression methods (Top-$k$, Low-Rank).
	Moderate values $K_{\mathrm{lazy}}\in\{5,10\}$ reduce
	communication by 40--60\% on well-conditioned problems with
	negligible precision loss.
	Low-rank compression with $r{=}d/5$ provides a good balance
	across all tested functions; aggressive compression ($r{=}1$ or
	Top-$k$ at 5\%) degrades convergence on Huber and LogSumExp.
	
	\FloatBarrier
	\subsection{Adaptive mechanism}\label{ssec:ada}
	
	We study the adaptive $M$ mechanism of \textsc{AdaDisGrem} on four
	representative functions (Ridge, LogSumExp, LogReg-real, LogReg-NCVR)
	with 5 Monte Carlo runs each.
	
	\begin{figure}[htbp]
		\centering
		\includegraphics[width=\textwidth]{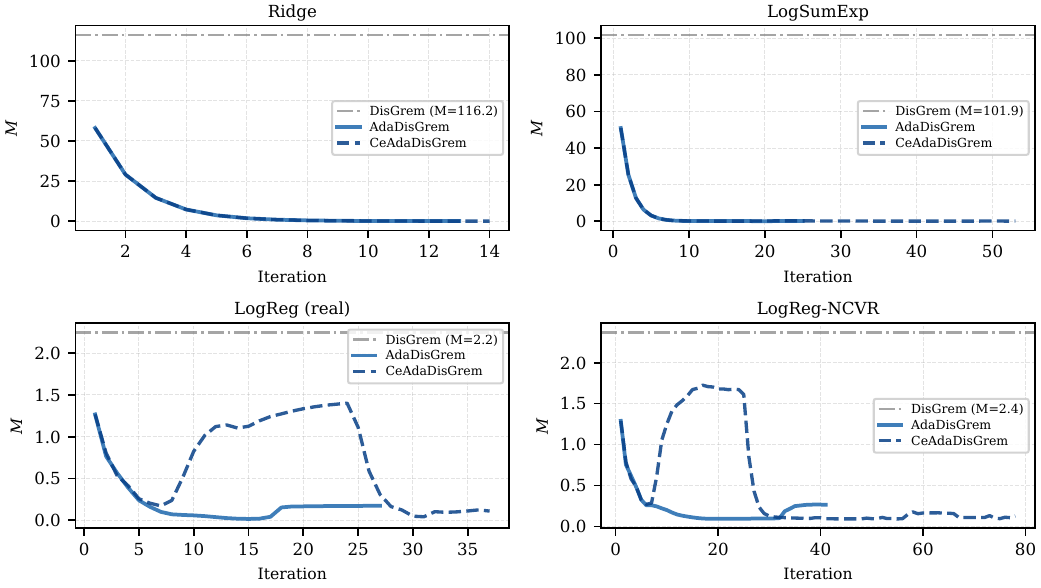}
		\caption{Regularization parameter $M$ trajectory for \textsc{AdaDisGrem}
			(solid) vs.\ the fixed $M$ of \textsc{DisGrem} (dashed).
			Curves averaged over 5 MC runs.}
		\label{fig:ada_trajectory}
	\end{figure}
	
	\begin{figure}[htbp]
		\centering
		\includegraphics[width=\textwidth]{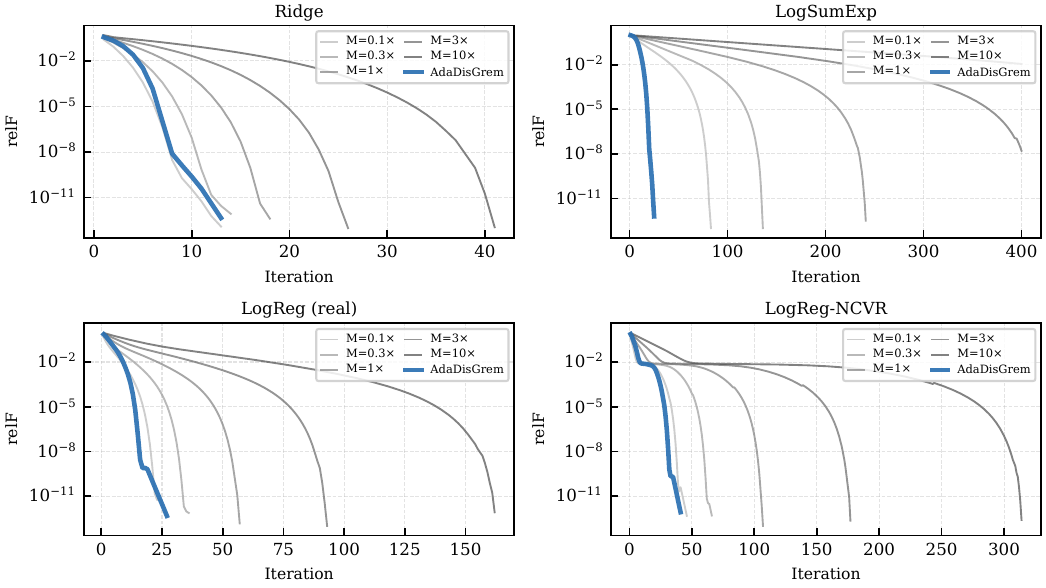}
		\caption{\textsc{AdaDisGrem} (bold blue) vs.\ \textsc{DisGrem}
			at five fixed-$M$ values
			($0.1\times$--$10\times$ the baseline $M^*$; gray curves
			light-to-dark).
			The adaptive variant is competitive with the best fixed
			choice on most functions and avoids failures at aggressive
			fixed-$M$ choices.}
		\label{fig:ada_vs_fixed}
	\end{figure}
	
	Figure~\ref{fig:ada_trajectory} shows the $M$ trajectory.
	The adaptive estimate responds to the local curvature: it starts near
	or above the fixed baseline value, then decays as the iterate approaches
	stationarity, often below the conservative fixed values used for the
	fixed-$M$ method.
	Figure~\ref{fig:ada_vs_fixed} compares \textsc{AdaDisGrem} against
	five manually chosen fixed-$M$ settings spanning a $100\times$ range:
	\textsc{AdaDisGrem} is competitive with the best fixed choice on
	most functions while avoiding failures at aggressive fixed-$M$ choices.
	Furthermore, all initializations in $[0.1M^*,10M^*]$
	converge to a common $M$ trajectory within 50--100 iterations
	(Appendix~\ref{app:supp_exp}, Figure~\ref{fig:ada_init_m}),
	showing that the adaptive dynamics, rather than the initial scale alone,
	drive its empirical performance.
	
	\FloatBarrier
	\subsection{Robustness}\label{ssec:robust}
	
	We investigate robustness along two complementary axes:
	(i)~sensitivity to the starting point, and
	(ii)~sensitivity to the key algorithmic parameter ($M_{\mathrm{fac}}$ for
	the \textsc{DisGrem} family, stepsize $\alpha$ for first-order methods,
	penalty or regularization for second-order baselines).
	All runs use $N{=}10$ and the same communication settings as the
	convergence benchmarks. All synthetic objectives use $d{=}30$,
	whereas the two logistic-regression objectives use the $d{=}22$
	\textsc{svmguide3} feature dimension.
	
	\subsubsection{Starting-point robustness}
	
	Each algorithm is run from 100 independent random initial points
	sampled uniformly on a ball of radius $r$ centered at the reference
	initialization.
	We test two regimes: \textit{near} ($r{=}1$) and \textit{far}
	($r{=}3$).
	A run is counted as successful if it reaches $\mathrm{relF}<10^{-6}$
	within the iteration budget $K_{\max}$.
	(This threshold is deliberately less stringent than the combo~$<10^{-12}$
	criterion used in the convergence benchmarks, meaning that iteration counts
	between the two sections are not directly comparable.)
	Table~\ref{tab:robust} reports
	success rates (\%) across all nine functions and ten algorithms.
	
	\begin{table}[htbp]
		\centering
		\caption{Starting-point robustness: success rate (\%)
			at two radii ($r{=}1$ near, $r{=}3$ far; 100 MC runs).
			Success: $\mathrm{relF}<10^{-6}$.
			Bold: best value in each row; ``---'': 0\%.}
		\label{tab:robust}
		\setlength{\tabcolsep}{3pt}
		\footnotesize
		\begin{tabular}{l rrrr rr rrrr}
			\toprule
			& \multicolumn{4}{c}{\textsc{DisGrem} family}
			& \multicolumn{2}{c}{1st-order}
			& \multicolumn{4}{c}{2nd-order baselines} \\
			\cmidrule(lr){2-5}\cmidrule(lr){6-7}\cmidrule(lr){8-11}
			Function & DG & CeDG & AdaDG & CeAdaDG
			& EXTRA & DIGing & DQM & ESOM & SON & N-GI \\
			\midrule
			\multicolumn{11}{l}{Near initialization ($r{=}1$)} \\
			\midrule
			Ridge       & \textbf{100} & \textbf{100} & \textbf{100} & \textbf{100} & --- & --- & \textbf{100} & 59 & \textbf{100} & \textbf{100} \\
			QuadBad     & \textbf{100} & \textbf{100} & \textbf{100} & \textbf{100} & --- & --- & 98 & --- & 1 & 1 \\
			LogSumExp   & \textbf{100} & \textbf{100} & 98 & \textbf{100} & --- & --- & --- & --- & --- & --- \\
			Huber       & \textbf{100} & 68 & 96 & 17 & --- & --- & --- & --- & --- & --- \\
			LinLog      & 99 & \textbf{100} & 99 & \textbf{100} & \textbf{100} & \textbf{100} & --- & --- & --- & --- \\
			LogReg-real & \textbf{100} & \textbf{100} & \textbf{100} & \textbf{100} & --- & --- & --- & --- & 53 & 49 \\
			Rosenbrock  & \textbf{100} & \textbf{100} & \textbf{100} & \textbf{100} & --- & --- & 97 & \textbf{100} & \textbf{100} & \textbf{100} \\
			Styblinski  & \textbf{100} & \textbf{100} & \textbf{100} & \textbf{100} & --- & --- & --- & --- & \textbf{100} & \textbf{100} \\
			LogReg-NCVR & \textbf{100} & \textbf{100} & \textbf{100} & 99 & --- & --- & --- & --- & 98 & 96 \\
			\textit{Avg} & \textbf{\textit{100}} & \textit{96} & \textit{99} & \textit{91}
			& \textit{11} & \textit{11} & \textit{33} & \textit{18}
			& \textit{50} & \textit{50} \\
			\midrule
			\multicolumn{11}{l}{Far initialization ($r{=}3$)} \\
			\midrule
			Ridge       & \textbf{100} & \textbf{100} & \textbf{100} & \textbf{100} & --- & --- & \textbf{100} & 59 & \textbf{100} & \textbf{100} \\
			QuadBad     & \textbf{100} & \textbf{100} & \textbf{100} & \textbf{100} & --- & --- & 98 & 1 & 1 & 1 \\
			LogSumExp   & \textbf{100} & \textbf{100} & 98 & \textbf{100} & --- & --- & --- & --- & --- & --- \\
			Huber       & \textbf{100} & 80 & 96 & 24 & --- & --- & --- & --- & --- & --- \\
			LinLog      & 99 & \textbf{100} & 99 & \textbf{100} & 89 & \textbf{100} & --- & --- & --- & --- \\
			LogReg-real & \textbf{100} & \textbf{100} & \textbf{100} & \textbf{100} & --- & --- & --- & --- & 51 & 48 \\
			Rosenbrock  & \textbf{100} & \textbf{100} & \textbf{100} & \textbf{100} & --- & --- & 82 & 96 & \textbf{100} & \textbf{100} \\
			Styblinski  & 57 & 61 & \textbf{100} & \textbf{100} & --- & --- & --- & --- & \textbf{100} & \textbf{100} \\
			LogReg-NCVR & \textbf{100} & \textbf{100} & \textbf{100} & \textbf{100} & --- & 1 & --- & --- & 99 & 97 \\
			\textit{Avg} & \textit{95} & \textit{93} & \textbf{\textit{99}} & \textit{92}
			& \textit{10} & \textit{11} & \textit{31} & \textit{17}
			& \textit{50} & \textit{50} \\
			\bottomrule
		\end{tabular}
	\end{table}
	
	\textsc{AdaDisGrem} is the most robust variant
	(about 99\,\% average success, near and far), followed closely by
	\textsc{DisGrem}.
	Both maintain nearly identical rates across the two initialization
	radii, showing stable regions of convergence.
	The adaptive mechanism is especially valuable on
	Styblinski--Tang, where \textsc{AdaDisGrem} and
	\textsc{CeAdaDisGrem} maintain 100\% success even from far starts.
	Hessian compression has mild effects on most functions but reduces robustness on Huber,
	where \textsc{CeAdaDisGrem} succeeds on only 18--24\% of trials.
	The drop is consistent with compression noise accumulating in the
	Hessian tracker and, for \textsc{CeAdaDisGrem}, perturbing the
	$\hat M$ estimation.
	The baselines are function-specific:
	first-order methods succeed only on LinLog; DQM only on
	Ridge/QuadBad/Rosenbrock; SONATA/Net-GIANT only on
	Ridge/Rosenbrock/Styblinski--Tang.
	Huber remains the hardest function for the compressed adaptive variant,
	due to its near-linear tails and flat curvature regions.
	
	\subsubsection{Parameter sensitivity}
	
	We sweep the key parameter ($M_{\mathrm{fac}}$ for the
	\textsc{DisGrem} family; stepsize~$\alpha$ for first-order methods;
	penalty/regularization for second-order baselines) over 10 values
	on four representative functions.
	Full curves are in Appendix~\ref{app:supp_exp},
	Figures~\ref{fig:sweep_disgrem}--\ref{fig:sweep_second}.
	
	The \textsc{DisGrem} family converges across a wide range of
	$M_{\mathrm{fac}}$ on well-conditioned problems; only below a
	problem-dependent threshold does it diverge (e.g., $M_{\mathrm{fac}}{<}1$
	on LogSumExp).
	First-order baselines are highly sensitive to~$\alpha$: too large
	triggers divergence, too small causes stagnation, and even the
	best $\alpha$ fails to break the $\mathrm{relF}>10^{-1}$ barrier on
	LogSumExp.
	Second-order baselines exhibit moderate sensitivity; DQM and ESOM
	diverge for aggressive penalty values, while SONATA and Net-GIANT are
	more stable but stagnate on LogSumExp.
	
	\FloatBarrier
	
	\subsection{Scalability with problem dimension}\label{ssec:scalability}
	
	To assess how the \textsc{DisGrem} family scales with problem
	dimension, we repeat the benchmark on three representative functions
	(Ridge, LogSumExp, Rosenbrock) at $d=30$, $100$, and $200$, keeping
	$N=10$ agents and the same logarithmic communication
	implementation as in Section~\ref{ssec:setup}, with
	5~Monte Carlo runs
	(reduced from the 20 runs in Section~\ref{ssec:convergence}
	due to the higher per-run cost at $d{=}200$;
	the deterministic objective functions ensure low inter-run variance,
	so the median curves are highly stable).
	We compare \textsc{DisGrem}, \textsc{AdaDisGrem}, EXTRA, and SONATA.
	
	Figure~\ref{fig:scalability} reports $\mathrm{relF}$ versus iteration.
	On Ridge and Rosenbrock, the \textsc{DisGrem} family reaches
	$\mathrm{relF}\le 10^{-12}$ within a similar iteration count across
	all three dimensions, suggesting an empirically near dimension-insensitive
	iteration count, consistent with the network-independent iteration bound
	of Theorem~\ref{thm:main}.
	On LogSumExp, the final accuracy degrades slightly at $d=200$, yet
	\textsc{DisGrem} still outperforms both baselines by 3--5 orders
	of magnitude in $\mathrm{relF}$.
	EXTRA stagnates above $\mathrm{relF}\sim 1$ on all problems
	regardless of dimension, while SONATA converges but requires
	substantially more iterations.
	
	The per-iteration wall-clock time scales as $\cO(d^3)$, because each
	agent solves a dense $d\times d$ linear system
	(e.g., on \textsc{Ridge}, the cumulative
	time for all 5 MC runs increases from 2.3\,s at $d{=}30$ to 118\,s at $d{=}200$).
	Since per-iteration communication volume is $\cO(d^2)$
	(already analyzed in \S\ref{ssec:comm}), the dominant bottleneck
	at high dimensions shifts from communication to local computation.
	
	\begin{figure}[htbp]
		\centering
		\includegraphics[width=\textwidth]{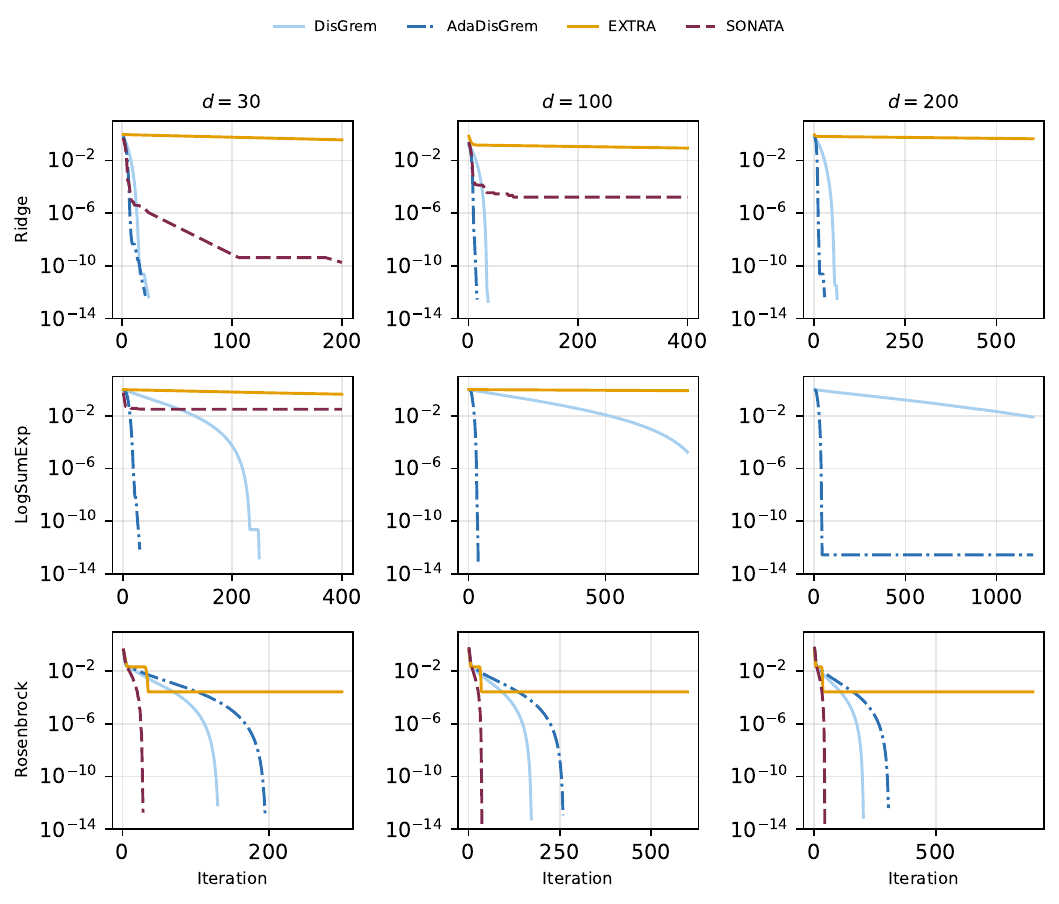}
		\caption{Dimension scalability: $\mathrm{relF}$ vs.\ iteration for
			$d\in\{30,100,200\}$ on three functions (5 MC runs, median).}
		\label{fig:scalability}
	\end{figure}
	
	
	\section{Conclusion and future directions}\label{sec:conclusion}
	
	Under the bounded-trajectory and smoothness assumptions used in the
	analysis, \textsc{DisGrem} retains the
	centralized regularized Newton post-burn-in rate, with the burn-in
	order controlled by the scheduled consensus accuracy,
	in a fully decentralized setting without line search, stepsize tuning,
	or static Hessian-heterogeneity constants.
	Algorithmic stability is achieved by combining a local eigenvalue-shift
	stabilizer with a two-stage mixing protocol.
	Analytically, the key ingredients are a virtual reference-step
	construction---which reduces the decentralized dynamics to an inexact
	centralized update---and an increment-based dispersion analysis.
	By bounding tracker mismatch through Lipschitz differences rather than
	fixed heterogeneity constants, the consensus error becomes transient and
	imposes no accuracy floor.
	Under the logarithmic mixing schedule used in the analysis,
	with $p\ge3$, the total communication cost for a fixed connected
	network is $\cO(\varepsilon^{-1}\log(1/\varepsilon))$ rounds.
	More explicitly, the dependence on the mixing rate is
	$\cO((1-\rho)^{-1}\varepsilon^{-1}\log(1/\varepsilon))$ as
	$\rho\to1$.
	The experiments use the corresponding implementation over the
	precision range reported in the figures and tables.
	
	The experiments are consistent with the theoretical picture and provide
	three further findings. First, \textsc{AdaDisGrem} attains the highest success rate
	(about 99\% average over nine functions and two initialization radii)
	without choosing a separate fixed $M$ for each problem. Second, Hessian compression
	(\textsc{CeDisGrem}) can reduce
	total communication in the fixed-$M$ regime, saving 18--25\% at
	moderate precision in our tests, though compression noise can erode
	these savings at extremely high precision. Third, dimension-scalability
	experiments ($d\in\{30,100,200\}$) demonstrate a nearly dimension-insensitive
	iteration count, consistent with the network-independent theoretical bound.
	
	Future directions include a nonconvex extension, lower bounds for
	decentralized second-order methods with explicit spectral-gap dependence,
	and extensions to directed or time-varying graphs.
	The $\cO(d^3)$ cost of solving the local Newton system also motivates
	inversion-free variants based on iterative linear solvers~\citep{jakovetic2025dinas}
	or inversion-free tracking~\citep{yuan2023indo}, as well as stochastic
	or variance-reduced gradient and Hessian estimators for large-scale settings.
	
	
	\section*{Acknowledgments}
	The work of W.~Hu, Y.-X.~Yuan, and L.~Zhang was supported in part by NSFC and the Chinese Academy of Sciences.
	The work of L.~Zhang was also supported by the China Postdoctoral Science Foundation under Grant Nos.~2023T160670 and 2023M743720.
	The work of P.~Xie was supported in part by the U.S.\ Department of Energy, Office of Science.
	
	\bibliographystyle{plain}

	\appendix
	
	\section{Dispersion recursions, burn-in analysis, and logarithmic mixing}\label{app:dispersion}
	
	\subsection{Basic inequalities}\label{ssec:basic}
	\begin{lemma}\label{lem:taylor3}
		Let $h:\R^d\to\R$ have $L_2$-Lipschitz Hessian. Then for all $x,s$,
		\[
		h(x+s)\le h(x)+\ip{\nabla h(x)}{s}+\frac12 s^\top\nabla^2 h(x)s+\frac{L_2}{6}\norm{s}^3.
		\]
	\end{lemma}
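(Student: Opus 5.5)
The plan is to prove Lemma~\ref{lem:taylor3} by integrating the Hessian along the segment from $x$ to $x+s$. The Lipschitz bound on $\nabla^2 h$ then controls how far the exact second-order remainder can deviate from the quadratic model. Define $\phi(t):=h(x+ts)$ for $t\in[0,1]$. Since $h$ is twice continuously differentiable, $\phi$ is $C^2$, with
\[
\phi'(t)=\ip{\nabla h(x+ts)}{s},
\qquad
\phi''(t)=s^\top\nabla^2 h(x+ts)s .
\]

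First, Taylor's formula with integral remainder gives
\[
\phi(1)=\phi(0)+\phi'(0)+\int_0^1(1-t)\,\phi''(t)\,dt .
\]
Because $\int_0^1(1-t)\,dt=\tfrac12$, we may add and subtract $\tfrac12 s^\top\nabla^2 h(x)s$. This yields
\[
h(x+s)=h(x)+\ip{\nabla h(x)}{s}+\tfrac12 s^\top\nabla^2 h(x)s
+\int_0^1(1-t)\,s^\top\bigl(\nabla^2 h(x+ts)-\nabla^2 h(x)\bigr)s\,dt .
\]

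Second, bound the remainder integrand using the spectral-norm Lipschitz condition:
\[
s^\top\bigl(\nabla^2 h(x+ts)-\nabla^2 h(x)\bigr)s
\le \normtwo{\nabla^2 h(x+ts)-\nabla^2 h(x)}\,\norm{s}^2
\le L_2\,t\,\norm{s}^3 .
\]
Integrating against the nonnegative weight $1-t$ gives
\[
L_2\norm{s}^3\int_0^1 t(1-t)\,dt=\frac{L_2}{6}\norm{s}^3,
\]
which is exactly the claimed constant.

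There is no real obstacle here. The only points needing care are that the integral remainder form requires $\phi\in C^2$, which follows from twice continuous differentiability of $h$, and that the weight $1-t$ is nonnegative, so the pointwise upper bound passes through the integral. One could instead use the mean-value form, but it gives only an intermediate-point estimate. The integral form yields the sharp factor $1/6$ directly.
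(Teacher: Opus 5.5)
Your proposal is correct and follows essentially the same route as the paper's proof: restrict $h$ to the segment via $\phi(t)=h(x+ts)$, use the integral-remainder Taylor formula, bound $\phi''(t)-\phi''(0)\le L_2 t\norm{s}^3$, and integrate against the weight $1-t$ to obtain the constant $\frac{L_2}{6}$. Your remark that the Lipschitz Hessian already makes $\phi$ a $C^2$ function, so the integral form applies, is the only regularity point needed.
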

	
	\begin{lemma}\label{lem:consensus}
		For any $z\in\R^N$ and integer $t\ge 1$,
		\[
		\normtwo{(W^t-\J)z}\le \rho^t\normtwo{z}.
		\]
		Consequently, for any stacked $Z\in\R^{Nd}$,
		\[
		\norm{((W^t-\J)\otimes I_d)Z}\le \rho^t\norm{Z}.
		\]
	\end{lemma}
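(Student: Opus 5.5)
The statement to prove is Lemma~\ref{lem:consensus}. The plan is to exploit the spectral structure of the symmetric, doubly stochastic matrix $W$.

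First, I will establish the algebraic identity $W^t-\J=(W-\J)^t$ for every integer $t\ge1$. Since $W\one=\one$ and $\one^\top W=\one^\top$, we have $W\J=\J W=\J$, and also $\J^2=\J$. Expanding $(W-\J)^2=W^2-W\J-\J W+\J^2=W^2-\J$ gives the case $t=2$. An induction then gives the general case:
\[
(W-\J)^{t+1}=(W^t-\J)(W-\J)=W^{t+1}-W^t\J-\J W+\J=W^{t+1}-\J,
\]
where we use $W^t\J=\J$.

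Next, submultiplicativity of the spectral norm yields
\[
\normtwo{W^t-\J}=\normtwo{(W-\J)^t}\le\normtwo{W-\J}^t=\rho^t.
\]
Because $W-\J$ is symmetric, this step is in fact an equality, though the inequality is all we need. Applying this bound to a vector $z\in\R^N$ gives the first claim $\normtwo{(W^t-\J)z}\le\rho^t\normtwo{z}$.

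For the stacked version, I will use the Kronecker-product identity $\normtwo{A\otimes I_d}=\normtwo{A}$. This holds because the singular values of $A\otimes I_d$ are exactly those of $A$, each repeated $d$ times. Alternatively, one can reshape $Z$ into the $N\times d$ matrix $[z_1,\dots,z_N]^\top$ and apply the vector bound column by column, summing the squared norms. Either route gives $\norm{((W^t-\J)\otimes I_d)Z}\le\rho^t\norm{Z}$.

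None of these steps is a genuine obstacle. The only point requiring care is the commutation relation $W\J=\J W=\J$, which relies on $W$ being both row and column stochastic; this is guaranteed by Assumption~\ref{ass:problem}(ii).
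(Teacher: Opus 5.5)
Your proof is correct and follows the paper's argument essentially verbatim. Both use the inductive identity $W^t-\J=(W-\J)^t$ (via $W\J=\J W=\J$ and $\J^2=\J$), submultiplicativity of the spectral norm, and $\|A\otimes I_d\|_2=\|A\|_2$ for the stacked case; your side remarks, that equality holds because $W-\J$ is symmetric and that only double stochasticity is needed for the inequality, are also accurate.
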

	
	\begin{lemma}\label{lem:gap-grad}
		Let $h$ be convex with $L$-Lipschitz gradient. Then for any minimizer $x_\star$,
		\[
		\norm{\nabla h(x)}^2\le 2L\big(h(x)-h(x_\star)\big).
		\]
	\end{lemma}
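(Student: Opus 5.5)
The statement is the classical bound $\|\nabla h(x)\|^2\le 2L(h(x)-h(x_\star))$. I plan to prove it from the descent lemma for $L$-smooth functions applied along a single gradient step. Convexity is used in only one place: because $x_\star$ minimizes $h$ globally, $h(x_\star)$ lies below every value of $h$.

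\emph{Step 1: quadratic upper bound.} For any $x,y$ I will write
\[
h(y)-h(x)-\ip{\nabla h(x)}{y-x}=\int_0^1\ip{\nabla h(x+\theta(y-x))-\nabla h(x)}{y-x}\,d\theta .
\]
Bounding the integrand by Cauchy--Schwarz and the $L$-Lipschitz property gives $L\theta\|y-x\|^2$. Integrating over $\theta\in[0,1]$ yields
\[
h(y)\le h(x)+\ip{\nabla h(x)}{y-x}+\frac L2\|y-x\|^2 .
\]

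\emph{Step 2: one gradient step.} I will apply Step 1 with $y=x-\frac1L\nabla h(x)$. This gives
\[
h(y)\le h(x)-\frac1{2L}\|\nabla h(x)\|^2 .
\]

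\emph{Step 3: compare with the minimizer.} Since $x_\star$ is a global minimizer, $h(x_\star)\le h(y)$. Combining this with Step 2 gives $h(x_\star)\le h(x)-\frac1{2L}\|\nabla h(x)\|^2$, and rearranging yields the claim.

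There is no real obstacle here. The only point to check is that $\arg\min h$ is nonempty, which the statement takes as given by fixing a minimizer $x_\star$. In the paper's intended application, $h=f$ with $L=L_1$, and existence of a minimizer follows from the coercivity in Assumption~\ref{ass:problem}(iii).
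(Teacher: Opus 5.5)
Your proof is correct and matches the paper's argument: the $L$-smoothness upper bound evaluated at $y=x-\frac1L\nabla h(x)$, then $h(x_\star)\le h(y)$, then rearranging. The only difference is that you also derive the quadratic upper bound from the integral form, while the paper cites it as standard.
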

	
	This appendix develops the tracker-dispersion analysis in full detail,
	eliminating the non-vanishing heterogeneity constants present in prior
	work.
	The central idea is to bound tracker increments using Lipschitz
	continuity of differences
	$\nabla F(X_{k+1})-\nabla F(X_k)$ and
	$\nabla^2F(X_{k+1})-\nabla^2F(X_k)$.
	
	\subsection{Dispersion operators and a contraction identity}
	
	For $Z=[z_1;\dots;z_N]\in\R^{Nd}$ define the (RMS) dispersion
	\begin{gather*}
		D(Z):=\sqrt{\tfrac1N\textstyle\sum_{i=1}^N\|z_i\!-\!\bar z\|^2}
		=\tfrac{1}{\sqrt N}\|(\Pmat\!\otimes\! I_d)Z\|,\\
		\bar z:=\tfrac1N\textstyle\sum_{i=1}^N z_i.
	\end{gather*}
	Since $\|A\|_F = \|\operatorname{vec}(A)\|_2$, the standard vector dispersion $D(\cdot)$ naturally applies to the stacked vectorized Hessians $\mathcal{H} \in \R^{Nd^2}$.
	
	\begin{lemma}\label{lem:mix_contract}
		For any stacked vectors $Z\in\R^{Nd}$ and any integer $t\ge1$,
		\[
		D\big((W^t\otimes I_d)Z\big)\le \rho^t\,D(Z).
		\]
		Likewise, for any stacked matrices $\mathcal H\in\R^{Nd^2}$,
		\[
		D\big((W^t\otimes I_{d^2})\mathcal H\big)\le \rho^t\,D(\mathcal H).
		\]
	\end{lemma}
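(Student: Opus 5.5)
The plan is to reduce everything to the scalar contraction of Lemma~\ref{lem:consensus}, using the projector form of the dispersion, $D(Z)=\tfrac{1}{\sqrt N}\|(\Pmat\otimes I_d)Z\|$.

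\emph{Step 1 (commutation).} Because $W$ is doubly stochastic, $W\J=\J W=\J$, and hence $W^t\J=\J W^t=\J$. It follows that
\[
\Pmat W^t=W^t-\J=(W^t-\J)\Pmat,
\]
where the last equality uses $(W^t-\J)\J=\J-\J=0$.

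\emph{Step 2 (contraction of the projected vector).} Applying Step~1 with Kronecker products,
\[
(\Pmat\otimes I_d)(W^t\otimes I_d)Z=\bigl((W^t-\J)\otimes I_d\bigr)(\Pmat\otimes I_d)Z .
\]
Lemma~\ref{lem:consensus} then gives
\[
\|(\Pmat\otimes I_d)(W^t\otimes I_d)Z\|\le\rho^t\|(\Pmat\otimes I_d)Z\|.
\]
Dividing both sides by $\sqrt N$ yields $D((W^t\otimes I_d)Z)\le\rho^t D(Z)$. If one prefers not to rely on the Kronecker form of Lemma~\ref{lem:consensus}, the same bound follows from $\|(W^t-\J)\otimes I_d\|_2=\|W^t-\J\|_2$, together with $W^t-\J=(W-\J)^t$, which holds by the binomial expansion since $\J$ is idempotent and commutes with $W$. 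Consequently $\|W^t-\J\|_2\le\rho^t$.

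\emph{Step 3 (Hessian case).} The Hessian statement is the identical argument with $d$ replaced by $d^2$, applied to the vectorized stack $\mathcal H\in\R^{Nd^2}$. This is legitimate because gossip mixing acts blockwise and linearly on $\operatorname{vec}$ of each matrix block, and $\|\cdot\|_F=\|\operatorname{vec}(\cdot)\|_2$.

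There is no real obstacle in this argument. The only point requiring care is the commutation identity $\Pmat W^t=(W^t-\J)\Pmat$, because it is what lets us bound the dispersion of the mixed vector by the dispersion of the \emph{original} vector $D(Z)$ rather than by the full norm $\|Z\|$.
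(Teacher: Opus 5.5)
Your proof is correct and matches the paper's argument: the paper also uses $(I-\J)W^t=W^t-\J=(W^t-\J)(I-\J)$ to write the projected mixed vector as $((W^t-\J)\otimes I_d)$ applied to $Z^\perp=((I-\J)\otimes I_d)Z$, then invokes Lemma~\ref{lem:consensus}, divides by $\sqrt N$, and handles the Hessian case by replacing $d$ with $d^2$. Your binomial-expansion proof of $W^t-\J=(W-\J)^t$ is a valid alternative to the induction the paper uses in its proof of Lemma~\ref{lem:consensus}.
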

	\begin{proof}
		We prove the vector case; the matrix case follows identically with $I_d$ replaced by $I_{d^2}$.
		Let $Z^\perp:=((\Pmat)\otimes I_d)Z$. Since $W$ is doubly stochastic, $W^t\J=\J$ and $\J W^t=\J$, hence
		$\Pmat W^t=(W^t-\J)$. Therefore
		\begin{align*}
			&(\Pmat\!\otimes\! I_d)(W^t\!\otimes\! I_d)Z\\
			&\quad=((W^t\!-\!\J)\!\otimes\! I_d)Z=((W^t\!-\!\J)\!\otimes\! I_d)Z^\perp,
		\end{align*}
		because $(W^t-\J)\J=0$ implies $(W^t-\J)=(W^t-\J)\Pmat$.
		Taking norms and using Lemma~\ref{lem:consensus} gives
		\begin{align*}
			\|(\Pmat\!\otimes\! I_d)(W^t\!\otimes\! I_d)Z\|
			&=\|((W^t\!-\!\J)\!\otimes\! I_d)Z^\perp\|\\
			&\le \|W^t\!-\!\J\|_2\,\|Z^\perp\|
			\le \rho^t\|Z^\perp\|.
		\end{align*}
		Divide by $\sqrt N$ to obtain the claim.
	\end{proof}
	
	\subsection{A uniform step bound}
	
	\begin{lemma}\label{lem:step_bound_app}
		For every $k\ge0$ and $i\in\{1,\dots,N\}$,
		\[
		\|s_{i,k}\|\le \sqrt{\frac{\|\tilde g_{i,k}\|}{M}}.
		\]
		Consequently, using the boundedness from Assumption~\ref{ass:bounded_iterates},
		\[
		\|x_{i,k+1}-x_{i,k}\|\le 2D
		\qquad\text{and}\qquad
		\|X_{k+1}-X_k\|\le 2\sqrt N\,D,
		\]
		where $X_k=[x_{1,k};\dots;x_{N,k}]$.
	\end{lemma}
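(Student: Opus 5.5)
The plan has two parts, matching the statement: a per-agent step bound, and then the displacement bounds.

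For the step bound, fix $i,k$. If $\tilde g_{i,k}=0$, the algorithm sets $s_{i,k}=0$ and the bound is trivial. Otherwise, the coefficient matrix $A_{i,k}=\tilde H_{i,k}+(\lambda_{i,k}+\delta_{i,k})I$ has smallest eigenvalue
\[
\lambda_{\min}(\tilde H_{i,k})+\delta_{i,k}+\lambda_{i,k}\;\ge\;\lambda_{i,k}>0,
\]
because $\delta_{i,k}=\max\{0,-\lambda_{\min}(\tilde H_{i,k})\}$ makes $\tilde H_{i,k}+\delta_{i,k}I\succeq 0$. Since $\tilde H_{i,k}$ is symmetric (it is a mixture of symmetric Hessians), $\|A_{i,k}^{-1}\|_2\le 1/\lambda_{i,k}$. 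Hence
\[
\|s_{i,k}\|\le \frac{\|\tilde g_{i,k}\|}{\lambda_{i,k}}=\frac{\|\tilde g_{i,k}\|}{\sqrt{M\|\tilde g_{i,k}\|}}=\sqrt{\frac{\|\tilde g_{i,k}\|}{M}}.
\]
This is the same computation that underlies Lemma~\ref{lem:M-vs-s}. The only point needing care is symmetry of $\tilde H_{i,k}$. It follows by induction: $H_{i,0}$ is symmetric, the Hessian increments are symmetric, and gossip mixing preserves symmetry.

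For the displacement bounds, I would not route through the step bound. Instead, I would invoke Assumption~\ref{ass:bounded_iterates} directly. Since $\|x_{i,k}-x_\star\|\le D$ for all $i$ and $k$, the triangle inequality gives
\[
\|x_{i,k+1}-x_{i,k}\|\le\|x_{i,k+1}-x_\star\|+\|x_\star-x_{i,k}\|\le 2D.
\]
Summing the squares over $i$ then gives
\[
\|X_{k+1}-X_k\|^2=\sum_{i=1}^N\|x_{i,k+1}-x_{i,k}\|^2\le 4ND^2,
\]
so $\|X_{k+1}-X_k\|\le 2\sqrt N D$.

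No step is a real obstacle. The only subtlety is that "consequently" here refers to the use of boundedness, not to a chain from the step bound, because $x_{i,k+1}$ involves post-mixing of $\tilde x_{i,k}+s_{i,k}$ and not just $x_{i,k}+s_{i,k}$.
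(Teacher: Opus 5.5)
Your proposal is correct and follows essentially the same route as the paper: the eigenvalue shift gives $\tilde H_{i,k}+(\lambda_{i,k}+\delta_{i,k})I\succeq\lambda_{i,k}I$, hence $\lambda_{i,k}\|s_{i,k}\|\le\|\tilde g_{i,k}\|$. The displacement bounds then come directly from Assumption~\ref{ass:bounded_iterates} by the triangle inequality through $x_\star$ and summing squares, and your explicit check that $\tilde H_{i,k}$ stays symmetric under tracking and gossip fills in a point the paper leaves implicit.
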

	\begin{proof}
		Fix $i,k$. The local linear system is
		\[
		(\tilde H_{i,k}+(\lambda_{i,k}+\delta_{i,k})I)s_{i,k}=-\tilde g_{i,k}.
		\]
		By definition of $\delta_{i,k}$, the matrix $\tilde H_{i,k}+\delta_{i,k}I$ is PSD, hence
		\[
		\tilde H_{i,k}+(\lambda_{i,k}+\delta_{i,k})I \succeq \lambda_{i,k}I.
		\]
		Taking norms, we obtain
		\[
		\lambda_{i,k}\|s_{i,k}\|\le \|(\tilde H_{i,k}+(\lambda_{i,k}+\delta_{i,k})I)s_{i,k}\|=\|\tilde g_{i,k}\|.
		\]
		If $\tilde g_{i,k}=0$, then $s_{i,k}=0$ and the bound holds. Otherwise divide by $\lambda_{i,k}>0$:
		\[
		\|s_{i,k}\|\le \frac{\|\tilde g_{i,k}\|}{\lambda_{i,k}}
		=\frac{\|\tilde g_{i,k}\|}{\sqrt{M\|\tilde g_{i,k}\|}}
		=\sqrt{\frac{\|\tilde g_{i,k}\|}{M}}.
		\]
		For the second part, Assumption~\ref{ass:bounded_iterates} gives $\|x_{i,k}\|\le \|x_\star\|+D$ and in particular
		$\|x_{i,k+1}-x_{i,k}\|\le \|x_{i,k+1}-x_\star\|+\|x_{i,k}-x_\star\|\le 2D$.
		Summing squares yields $\|X_{k+1}-X_k\|\le 2\sqrt N\,D$.
	\end{proof}
	
	\subsection{Primal dispersion recursion (explicit)}
	
	Recall the stacked post-mixing update is
	\[
	X_{k+1}=(W^{t_k}\otimes I_d)(\tilde X_k+S_k),
	\qquad
	\tilde X_{k+1}=(W^{\tau_{k+1}}\otimes I_d)X_{k+1}.
	\]
	\begin{lemma}\label{lem:DX_rec}
		For all $k\ge0$,
		\[
		D(\tilde X_{k+1})
		\le \rho^{\tau_{k+1}}\rho^{t_k}\Big(D(\tilde X_k)+\frac{\|S_k\|}{\sqrt N}\Big).
		\]
		Also,
		\[
		D(X_{k+1})
		\le \rho^{t_k}\Big(D(\tilde X_k)+\frac{\|S_k\|}{\sqrt N}\Big).
		\]
	\end{lemma}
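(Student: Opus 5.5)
The plan is to chain the stacked update relations with the contraction property of Lemma~\ref{lem:mix_contract}. I prove the second inequality first, because the first one follows from it by one more mixing step.

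\emph{Step 1 (post-mixing).} By definition $X_{k+1}=(W^{t_k}\otimes I_d)(\tilde X_k+S_k)$, where $S_k=[s_{1,k};\dots;s_{N,k}]$. Since $t_k\ge1$, Lemma~\ref{lem:mix_contract} gives
\[
D(X_{k+1})\le \rho^{t_k}\,D(\tilde X_k+S_k).
\]
The dispersion is a scaled seminorm, $D(Z)=\tfrac1{\sqrt N}\|(\Pmat\otimes I_d)Z\|$, so it is subadditive. Hence
\[
D(\tilde X_k+S_k)\le D(\tilde X_k)+D(S_k).
\]
To bound $D(S_k)$, note that $\Pmat\otimes I_d$ is an orthogonal projector and therefore has spectral norm at most one. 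This gives $D(S_k)\le \|S_k\|/\sqrt N$, which yields the second claimed inequality.

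\emph{Step 2 (pre-mixing of the next iterate).} Next, $\tilde X_{k+1}=(W^{\tau_{k+1}}\otimes I_d)X_{k+1}$ with $\tau_{k+1}\ge1$. Applying Lemma~\ref{lem:mix_contract} once more gives
\[
D(\tilde X_{k+1})\le\rho^{\tau_{k+1}}D(X_{k+1}).
\]
Substituting the bound from Step~1 produces the first inequality with the factor $\rho^{\tau_{k+1}}\rho^{t_k}$.

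\emph{Main obstacle.} There is little real difficulty here. The only point needing care is the stacking convention. Step~(B) of Algorithm~\ref{alg:disgrem} sets $y_{i,k+1}=\tilde x_{i,k}+s_{i,k}$, so I must confirm that $\tilde X_k+S_k$ is exactly the stacked vector that is post-mixed. This includes indices with $\tilde g_{i,k}=0$: there $s_{i,k}=0$, which is consistent with the stacking and leaves the argument unchanged.
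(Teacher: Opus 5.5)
Your proposal is correct and matches the paper's proof essentially step for step. Both arguments apply Lemma~\ref{lem:mix_contract} to the post-mixing update, bound $D(\tilde X_k+S_k)$ by subadditivity of the dispersion together with $D(S_k)\le\|S_k\|/\sqrt N$ (the projector has norm at most one), and then apply Lemma~\ref{lem:mix_contract} once more to the pre-mixing step to obtain the factor $\rho^{\tau_{k+1}}\rho^{t_k}$.
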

	\begin{proof}
		By Lemma~\ref{lem:mix_contract} applied to $Z=\tilde X_k+S_k$,
		\[
		D(X_{k+1})=D((W^{t_k}\otimes I_d)(\tilde X_k+S_k))\le \rho^{t_k}D(\tilde X_k+S_k).
		\]
		Next we bound $D(\tilde X_k+S_k)$ by the triangle inequality in the Hilbert space:
		\begin{align*}
			D(\tilde X_k\!+\!S_k)
			&=\tfrac1{\sqrt N}\|(\Pmat\!\otimes\! I_d)(\tilde X_k\!+\!S_k)\|\\
			&\le \tfrac1{\sqrt N}\|(\Pmat\!\otimes\! I_d)\tilde X_k\|\\
			&\quad+\tfrac1{\sqrt N}\|(\Pmat\!\otimes\! I_d)S_k\|.
		\end{align*}
		The first term equals $D(\tilde X_k)$.
		For the second, $\|((\Pmat)\otimes I_d)S_k\|\le \|S_k\|$, hence it is bounded by $\|S_k\|/\sqrt N$.
		This proves the post-mixing inequality.
		Finally, apply Lemma~\ref{lem:mix_contract} to the pre-mixing step:
		$D(\tilde X_{k+1})\le \rho^{\tau_{k+1}}D(X_{k+1})$ and combine.
	\end{proof}
	
	\subsection{Gradient-tracker dispersion recursion (increment-based, fully expanded)}
	
	Write stacked variables
	\begin{gather*}
		G_k:=[g_{1,k};\dots;g_{N,k}],\;
		\tilde G_k:=[\tilde g_{1,k};\dots;\tilde g_{N,k}],\\
		\Delta\nabla F_k:=\nabla F(X_{k+1})-\nabla F(X_k).
	\end{gather*}
	Algorithm~\ref{alg:disgrem} implies the stacked recursion
	\begin{gather}
		G_{k+1}\!=\!(W^{t_k}\!\otimes\! I_d)(\tilde G_k\!+\!\Delta\nabla F_k),\label{eq:G_update_stack}\\
		\tilde G_{k+1}\!=\!(W^{\tau_{k+1}}\!\otimes\! I_d)G_{k+1}.\notag
	\end{gather}
	
	\begin{lemma}\label{lem:DG_rec}
		For all $k\ge0$,
		\begin{equation}\label{eq:DG_rec}
			D(\tilde G_{k+1})
			\le \rho^{\tau_{k+1}}\rho^{t_k}\Big(D(\tilde G_k)+\frac{L_1}{\sqrt N}\|X_{k+1}-X_k\|\Big).
		\end{equation}
		In particular, using Lemma~\ref{lem:step_bound_app},
		\begin{equation}\label{eq:DG_rec_const}
			D(\tilde G_{k+1})
			\le \rho^{\tau_{k+1}}\rho^{t_k}\Big(D(\tilde G_k)+2L_1D\Big).
		\end{equation}
	\end{lemma}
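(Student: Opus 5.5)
The plan mirrors the proof of Lemma~\ref{lem:DX_rec}, applied to the stacked gradient-tracker recursion~\eqref{eq:G_update_stack}.

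\textbf{Step 1 (post-mixing contraction).} Apply Lemma~\ref{lem:mix_contract} to $Z=\tilde G_k+\Delta\nabla F_k$. This gives
\[
D(G_{k+1})\le \rho^{t_k}\,D(\tilde G_k+\Delta\nabla F_k).
\]

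\textbf{Step 2 (split the dispersion).} Write $D(Z)=\frac{1}{\sqrt N}\|(\Pmat\otimes I_d)Z\|$ and use the triangle inequality. This bounds $D(\tilde G_k+\Delta\nabla F_k)$ by $D(\tilde G_k)+\frac{1}{\sqrt N}\|(\Pmat\otimes I_d)\Delta\nabla F_k\|$. Since $\Pmat$ is an orthogonal projector, $\|\Pmat\otimes I_d\|_2\le1$, so the second term is at most $\frac{1}{\sqrt N}\|\Delta\nabla F_k\|$.

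\textbf{Step 3 (Lipschitz bound on the increment).} Bound the increment blockwise using Assumption~\ref{ass:problem}(i):
\[
\|\Delta\nabla F_k\|^2=\sum_{i=1}^N\|\nabla f_i(x_{i,k+1})-\nabla f_i(x_{i,k})\|^2\le L_1^2\sum_{i=1}^N\|x_{i,k+1}-x_{i,k}\|^2=L_1^2\|X_{k+1}-X_k\|^2 .
\]
This is the increment-based step. It replaces a static heterogeneity constant by a Lipschitz difference.

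\textbf{Step 4 (pre-mixing and constant bound).} Apply Lemma~\ref{lem:mix_contract} to the pre-mixing step $\tilde G_{k+1}=(W^{\tau_{k+1}}\otimes I_d)G_{k+1}$. This gives $D(\tilde G_{k+1})\le\rho^{\tau_{k+1}}D(G_{k+1})$, and combining with Steps~1--3 yields~\eqref{eq:DG_rec}. For~\eqref{eq:DG_rec_const}, substitute $\|X_{k+1}-X_k\|\le 2\sqrt N D$ from Lemma~\ref{lem:step_bound_app}, so that $\frac{L_1}{\sqrt N}\cdot 2\sqrt N D=2L_1D$.

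There is no real obstacle. The only points needing care are that the tracker input is $\tilde G_k$ rather than $G_k$, which matches step~(D), and that the projector bound $\|\Pmat\otimes I_d\|_2\le 1$ holds.
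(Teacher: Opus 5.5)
Your proof is correct and follows the paper's route: contraction under mixing (Lemma~\ref{lem:mix_contract}), the triangle inequality for the projected dispersion, the blockwise $L_1$-Lipschitz bound on $\Delta\nabla F_k$, and then Lemma~\ref{lem:step_bound_app}. The only difference is cosmetic: the paper merges post- and pre-mixing into the single power $W^{\tau_{k+1}+t_k}$ and contracts once, while you contract twice (as the paper itself does in Lemma~\ref{lem:DH_rec}); both give the same factor $\rho^{\tau_{k+1}}\rho^{t_k}$.
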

	\begin{proof}
		Starting from \eqref{eq:G_update_stack}, substitute $G_{k+1}$ into $\tilde G_{k+1}$:
		\begin{align*}
			\tilde G_{k+1}
			&=(W^{\tau_{k+1}}\!\otimes\! I_d)(W^{t_k}\!\otimes\! I_d)(\tilde G_k\!+\!\Delta\nabla F_k)\\
			&=(W^{\tau_{k+1}+t_k}\!\otimes\! I_d)(\tilde G_k\!+\!\Delta\nabla F_k).
		\end{align*}
		Let $m:=\tau_{k+1}+t_k$ and $Z:=\tilde G_k+\Delta\nabla F_k$.
		Apply $(\Pmat\otimes I_d)$ and use $\Pmat W^m=W^m-\J$:
		\[
		(\Pmat\!\otimes\! I_d)\tilde G_{k+1}
		=((W^m-\J)\!\otimes\! I_d)Z.
		\]
		The projection entering the contraction is explicit because
		\[
		W^m-\J=(W^m-\J)(I-\J),
		\]
		and therefore
		\begin{align*}
			D(\tilde G_{k+1})
			&=\tfrac1{\sqrt N}\|((W^m-\J)\!\otimes\! I_d)
			((\Pmat\!\otimes\! I_d)Z)\|\\
			&\le \rho^m\,D(Z).
		\end{align*}
		Next, the triangle inequality for projected RMS dispersions gives
		\[
		D(Z)\le D(\tilde G_k)+D(\Delta\nabla F_k)
		\le D(\tilde G_k)+\tfrac1{\sqrt N}\|\Delta\nabla F_k\|,
		\]
		where the last inequality uses
		$\|(\Pmat\otimes I_d)\Delta\nabla F_k\|\le\|\Delta\nabla F_k\|$.
		It remains to bound $\|\Delta\nabla F_k\|$.
		By block structure,
		\begin{align*}
			\|\Delta\nabla F_k\|^2
			&=\textstyle\sum_{i=1}^N\|\nabla f_i(x_{i,k+1})\!-\!\nabla f_i(x_{i,k})\|^2\\
			&\le \textstyle\sum_{i=1}^N (L_1\|x_{i,k+1}\!-\!x_{i,k}\|)^2
			=L_1^2\|X_{k+1}\!-\!X_k\|^2.
		\end{align*}
		Taking square roots gives $\|\Delta\nabla F_k\|\le L_1\|X_{k+1}-X_k\|$ and yields \eqref{eq:DG_rec}.
		Finally, Lemma~\ref{lem:step_bound_app} gives
		$\|X_{k+1}-X_k\|\le 2\sqrt N D$, which proves
		\eqref{eq:DG_rec_const}.
	\end{proof}
	
	\subsection{Hessian-tracker dispersion recursion (increment-based, fully expanded)}
	
	Define stacked Hessian trackers $\mathcal H_k:=[H_{1,k};\dots;H_{N,k}]\in\R^{Nd^2}$ and
	$\tilde{\mathcal H}_k:=[\tilde H_{1,k};\dots;\tilde H_{N,k}]$.
	Define the stacked Hessian increment
	\[
	\Delta\nabla^2 F_k:=\nabla^2 F(X_{k+1})-\nabla^2 F(X_k),
	\]
	viewed as a block vector in $\R^{Nd^2}$ with Frobenius norm per block.
	The Hessian-tracker recursion is
	\begin{gather}
		\mathcal H_{k+1}\!=\!(W^{t_k}\!\otimes\! I_{d^2})(\tilde{\mathcal H}_k\!+\!\Delta\nabla^2 F_k),\label{eq:H_update_stack}\\
		\tilde{\mathcal H}_{k+1}\!=\!(W^{\tau_{k+1}}\!\otimes\! I_{d^2})\mathcal H_{k+1}.\notag
	\end{gather}
	
	\begin{lemma}\label{lem:DH_rec}
		For all $k\ge0$,
		\begin{equation}\label{eq:DH_rec}
			D(\tilde{\mathcal H}_{k+1})
			\le \rho^{\tau_{k+1}}\rho^{t_k}\Big(D(\tilde{\mathcal H}_k)+\frac{\sqrt d\,L_2}{\sqrt N}\|X_{k+1}-X_k\|\Big).
		\end{equation}
		In particular, using Lemma~\ref{lem:step_bound_app},
		\begin{equation}\label{eq:DH_rec_const}
			D(\tilde{\mathcal H}_{k+1})
			\le \rho^{\tau_{k+1}}\rho^{t_k}\Big(D(\tilde{\mathcal H}_k)+2\sqrt d\,L_2D\Big).
		\end{equation}
	\end{lemma}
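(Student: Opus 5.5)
The argument mirrors the proof of Lemma~\ref{lem:DG_rec}, with vectors in $\R^{Nd}$ replaced by vectorized Hessians in $\R^{Nd^2}$. The one new ingredient is the $\sqrt d$ factor, which comes from converting spectral-norm Lipschitz continuity into a Frobenius-norm bound.

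\textbf{Collapse the recursion.} Substituting \eqref{eq:H_update_stack} gives
\[
\tilde{\mathcal H}_{k+1}=(W^{m}\otimes I_{d^2})Z,\qquad m:=\tau_{k+1}+t_k,\quad Z:=\tilde{\mathcal H}_k+\Delta\nabla^2F_k .
\]
By the matrix case of Lemma~\ref{lem:mix_contract}, which uses $\Pmat W^m=(W^m-\J)\Pmat$ and Lemma~\ref{lem:consensus} with $I_{d^2}$, we get $D(\tilde{\mathcal H}_{k+1})\le\rho^{m}D(Z)=\rho^{\tau_{k+1}}\rho^{t_k}D(Z)$.

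\textbf{Split $D(Z)$.} Since $D(\cdot)=N^{-1/2}\|(\Pmat\otimes I_{d^2})\cdot\|$ is a seminorm, the triangle inequality and $\|\Pmat\otimes I_{d^2}\|_2\le1$ give
\[
D(Z)\le D(\tilde{\mathcal H}_k)+N^{-1/2}\|\Delta\nabla^2F_k\|,
\]
where $\|\cdot\|$ is the Euclidean norm of the stacked vectorization, i.e. $\|\Delta\nabla^2F_k\|^2=\sum_i\|\nabla^2 f_i(x_{i,k+1})-\nabla^2 f_i(x_{i,k})\|_F^2$.

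\textbf{Bound the increment.} This is the only step that differs from the gradient case. Assumption~\ref{ass:problem}(i) gives Lipschitz continuity in the spectral norm only. For any $d\times d$ matrix, $\|A\|_F\le\sqrt{\operatorname{rank}A}\,\|A\|_2\le\sqrt d\,\|A\|_2$, so each block satisfies
\[
\|\nabla^2 f_i(x_{i,k+1})-\nabla^2 f_i(x_{i,k})\|_F\le\sqrt d\,L_2\|x_{i,k+1}-x_{i,k}\|.
\]
Summing squares over $i$ and taking square roots gives $\|\Delta\nabla^2F_k\|\le\sqrt d\,L_2\|X_{k+1}-X_k\|$. Inserting this into the bound on $D(Z)$ yields \eqref{eq:DH_rec}.

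\textbf{Specialize.} Lemma~\ref{lem:step_bound_app} gives $\|X_{k+1}-X_k\|\le2\sqrt N D$ under Assumption~\ref{ass:bounded_iterates}. Substituting this makes the second term equal to $2\sqrt d\,L_2D$, which is \eqref{eq:DH_rec_const}.

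I expect no real obstacle. The only point needing care is the Frobenius/spectral conversion in the increment bound: the dispersion for matrix trackers is measured in Frobenius norm while the assumption is spectral, and this mismatch is exactly where the dimension factor noted in Remark~\ref{rem:dim-constants} enters.
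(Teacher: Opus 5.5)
Your proof is correct and follows the paper's argument: contraction under mixing, then the triangle inequality for the projected dispersion, then the blockwise bound $\|A\|_F\le\sqrt d\,\|A\|_2$ combined with $L_2$-Lipschitzness, and finally Lemma~\ref{lem:step_bound_app}. The only cosmetic difference is that you merge the two gossip stages into $W^{\tau_{k+1}+t_k}$ and contract once, as the paper does for the gradient tracker in Lemma~\ref{lem:DG_rec}, whereas the paper's proof here applies Lemma~\ref{lem:mix_contract} separately to the post- and pre-mixing stages; both give the same factor $\rho^{\tau_{k+1}}\rho^{t_k}$.
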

	\begin{proof}
		Applying the projected mixing identity used in
		Lemma~\ref{lem:DG_rec}, now with block dimension $d^2$, gives
		\[
		\tilde{\mathcal H}_{k+1} = (W^{\tau_{k+1}}\!\otimes\! I_{d^2})\mathcal H_{k+1} \implies
		D(\tilde{\mathcal H}_{k+1}) \le \rho^{\tau_{k+1}} D(\mathcal H_{k+1}),
		\]
		where we used Lemma~\ref{lem:mix_contract}. From the linear tracker update,
		\[
		\mathcal H_{k+1} = (W^{t_k}\!\otimes\! I_{d^2})(\tilde{\mathcal H}_k\!+\!\Delta\nabla^2 F_k),
		\]
		hence Lemma~\ref{lem:mix_contract} yields
		\begin{align*}
			D(\mathcal H_{k+1})
			&\le \rho^{t_k}D(\tilde{\mathcal H}_k\!+\!\Delta\nabla^2 F_k)\\
			&\le \rho^{t_k}\!\Big(D(\tilde{\mathcal H}_k)\!+\!\tfrac1{\sqrt N}\|\Delta\nabla^2 F_k\|_F\Big).
		\end{align*}
		Combining the last two displays yields \eqref{eq:DH_rec} once
		$\|\Delta\nabla^2F_k\|_F$ is bounded.
		By block structure and the inequality $\|A\|_F\le \sqrt d\,\|A\|_2$,
		\begin{align*}
			\|\Delta\nabla^2 F_k\|_F^2
			&=\textstyle\sum_{i=1}^N\|\nabla^2 f_i(x_{i,k+1})\!-\!\nabla^2 f_i(x_{i,k})\|_F^2\\
			&\le \textstyle\sum_{i=1}^N d\,\|\nabla^2 f_i(x_{i,k+1})\!-\!\nabla^2 f_i(x_{i,k})\|_2^2.
		\end{align*}
		Using $L_2$-Lipschitzness of $\nabla^2 f_i$,
		\[
		\|\nabla^2 f_i(x_{i,k+1})\!-\!\nabla^2 f_i(x_{i,k})\|_2\le L_2\|x_{i,k+1}\!-\!x_{i,k}\|,
		\]
		hence
		\begin{align*}
			\|\Delta\nabla^2 F_k\|_F^2
			&\le \textstyle\sum_{i=1}^N d L_2^2\|x_{i,k+1}\!-\!x_{i,k}\|^2\\
			&= dL_2^2\|X_{k+1}\!-\!X_k\|^2.
		\end{align*}
		Taking square roots gives $\|\Delta\nabla^2 F_k\|_F\le \sqrt d\,L_2\,\|X_{k+1}-X_k\|$,
		which yields \eqref{eq:DH_rec}. Finally use Lemma~\ref{lem:step_bound_app} to obtain \eqref{eq:DH_rec_const}.
	\end{proof}
	
	\subsection{Uniform tracker bounds and post-mixing primal decay}
	
	\begin{lemma}\label{lem:tilde_track_bounded}
		If Assumptions~\ref{ass:problem} and~\ref{ass:bounded_iterates}
		hold and $\tau_k,t_k\ge 1$ for all $k$,
		then there exists a finite constant $B_G>0$ such that
		\[
		\sup_{k\ge 0} D(\tilde G_k)\le B_G,
		\]
		and consequently there exists a finite constant $G_g>0$ such that
		\[
		\sup_{k\ge 0}\max_{1\le i\le N}\|\tilde g_{i,k}\|\le G_g.
		\]
	\end{lemma}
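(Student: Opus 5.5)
The plan is to unroll the constant-input recursion \eqref{eq:DG_rec_const} from Lemma~\ref{lem:DG_rec}. Write $a_k:=D(\tilde G_k)$ and $q_k:=\rho^{\tau_{k+1}}\rho^{t_k}$. Because $\tau_k,t_k\ge1$, we have $q_k\le\rho^2<1$ for every $k$. The recursion then reads $a_{k+1}\le q_k(a_k+2L_1D)\le \rho^2 a_k+2\rho^2L_1D$.

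An induction on $k$ then gives
\[
a_k\le \max\Bigl\{a_0,\ \frac{2\rho^2L_1D}{1-\rho^2}\Bigr\}.
\]
Indeed, if $a_k\le B$ with $B\ge 2\rho^2L_1D/(1-\rho^2)$, then $a_{k+1}\le\rho^2B+2\rho^2L_1D\le B$. Alternatively, summing the geometric series gives the same bound. The initial value is finite: $a_0=D(\tilde G_0)\le \rho^{\tau_0}D(G_0)$ by Lemma~\ref{lem:mix_contract}. Moreover $G_0=\nabla F(X_0)$ is finite, since the $x_{i,0}$ lie in the ball of Assumption~\ref{ass:bounded_iterates}. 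This defines $B_G$. The only ingredient from the bounded-iterates assumption is the uniform increment bound $\|X_{k+1}-X_k\|\le2\sqrt N D$ of Lemma~\ref{lem:step_bound_app}, which already appears in \eqref{eq:DG_rec_const}.

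For the second claim, split $\|\tilde g_{i,k}\|\le\|\tilde{\bar g}_k\|+\|\tilde g_{i,k}-\tilde{\bar g}_k\|$. For the deviation, use $\|\tilde g_{i,k}-\tilde{\bar g}_k\|\le\sqrt N\,D(\tilde G_k)\le\sqrt N B_G$, since a single block is bounded by the full sum of squares. For the average, Lemma~\ref{lem:avg-track-g} gives $\tilde{\bar g}_k=\frac1N\sum_i\nabla f_i(x_{i,k})$. Each $x_{i,k}$ satisfies $\|x_{i,k}-x_\star\|\le D$ by Assumption~\ref{ass:bounded_iterates}, so $\|\nabla f_i(x_{i,k})\|\le\|\nabla f_i(x_\star)\|+L_1D$. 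Hence
\[
\|\tilde{\bar g}_k\|\le \max_i\|\nabla f_i(x_\star)\|+L_1D.
\]
Note that the individual $\nabla f_i(x_\star)$ need not vanish; only their average does, but the bound above is finite regardless. Setting $G_g:=\max_i\|\nabla f_i(x_\star)\|+L_1D+\sqrt N B_G$ completes the argument.

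There is no real obstacle. The only point needing care is to use the uniform contraction $q_k\le\rho^2$ rather than the schedule-dependent factors, so that the bound holds for any depths $\tau_k,t_k\ge1$ as the lemma states. One should also note that $D>0$ and $L_1$ are the fixed constants on the compact ball, which is why $B_G$ and $G_g$ are independent of $k$.
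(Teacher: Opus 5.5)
Your proposal is correct and follows essentially the same route as the paper: you unroll \eqref{eq:DG_rec_const} with the uniform contraction factor $\rho^2$ to bound $D(\tilde G_k)$ geometrically. You then split $\|\tilde g_{i,k}\|$ into the tracked average, bounded via Lemma~\ref{lem:avg-track-g} and Assumption~\ref{ass:bounded_iterates}, plus the deviation $\sqrt N\,D(\tilde G_k)$. The differences are only cosmetic: you state the bound in max form rather than the paper's $D(\tilde G_0)+qb/(1-q)$, and you give the explicit constant $\max_i\|\nabla f_i(x_\star)\|+L_1D$ for the average in place of the paper's $G_{\mathrm{avg}}$.
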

	\begin{proof}
		From \eqref{eq:DG_rec_const} we have
		\[
		D(\tilde G_{k+1})\le \rho^{\tau_{k+1}}\rho^{t_k}\big(D(\tilde G_k)+2L_1D\big)\le \rho^2\big(D(\tilde G_k)+2L_1D\big),
		\]
		since $\tau_{k+1},t_k\ge 1$ and $\rho\in[0,1)$.
		Set $q:=\rho^2\in[0,1)$ and $b:=2L_1D$. Unrolling yields
		\[
		D(\tilde G_k)\le q^k D(\tilde G_0)+\frac{qb}{1-q}\quad\forall k\ge 0,
		\]
		so we may take $B_G:=D(\tilde G_0)+\frac{qb}{1-q}$.
		Next, Lemma~\ref{lem:avg-track-g} and the iterate boundedness from Assumption~\ref{ass:bounded_iterates} imply that
		$\|\tilde{\bar g}_k\|=\|\bar g_k\|=\big\|\frac1N\sum_{i=1}^N \nabla f_i(x_{i,k})\big\|$ is uniformly bounded on the level set;
		let $G_{\mathrm{avg}}:=\sup_{k\ge 0}\|\tilde{\bar g}_k\|<\infty$.
		For any $i$,
		\[
		\|\tilde g_{i,k}\|\le \|\tilde{\bar g}_k\|+\|\tilde g_{i,k}-\tilde{\bar g}_k\|
		\le G_{\mathrm{avg}}+\sqrt N\,D(\tilde G_k)\le G_{\mathrm{avg}}+\sqrt N\,B_G.
		\]
		Thus we may take $G_g:=G_{\mathrm{avg}}+\sqrt N\,B_G$.
	\end{proof}
	
	For later reference, we record the uniform gradient bound from Lemma~\ref{lem:tilde_track_bounded}:
	\begin{equation}\label{eq:step_unif_app}
		G_g:=\sup_{k\ge 0}\ \max_{1\le i\le N}\ \|\tilde g_{i,k}\| < \infty,
	\end{equation}
	where finiteness follows from Lemma~\ref{lem:tilde_track_bounded}.
	
	\begin{lemma}\label{lem:tilde_hess_bounded}
		If Assumptions~\ref{ass:problem} and~\ref{ass:bounded_iterates}
		hold and $\tau_k,t_k\ge 1$ for all $k$,
		then there exists a finite constant $B_H>0$ such that
		\[
		\sup_{k\ge 0} D(\tilde{\mathcal H}_k)\le B_H.
		\]
		Consequently, there exists a finite constant $G_H>0$ such that
		\[
		\sup_{k\ge 0}\max_{1\le i\le N}\big\|\tilde H_{i,k}\big\|_2 \le G_H,
		\qquad
		\sup_{k\ge 0}\bar\delta_k \le G_H.
		\]
	\end{lemma}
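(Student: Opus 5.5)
The argument mirrors Lemma~\ref{lem:tilde_track_bounded}, with the Hessian-tracker recursion \eqref{eq:DH_rec_const} in place of the gradient one, followed by a conversion from RMS dispersion to per-agent spectral norms.

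\textbf{Step 1 (dispersion bound).} Since $\tau_{k+1},t_k\ge1$ and $\rho\in[0,1)$, \eqref{eq:DH_rec_const} gives
\[
D(\tilde{\mathcal H}_{k+1})\le q\bigl(D(\tilde{\mathcal H}_k)+b_H\bigr),
\qquad q:=\rho^2,\quad b_H:=2\sqrt d\,L_2D.
\]
Unrolling this geometric recursion yields
\[
D(\tilde{\mathcal H}_k)\le q^kD(\tilde{\mathcal H}_0)+\frac{qb_H}{1-q}.
\]
We may therefore take $B_H:=D(\tilde{\mathcal H}_0)+qb_H/(1-q)$. This is finite because $\tilde{\mathcal H}_0$ is a mixing of the initial exact Hessians $\nabla^2 f_i(x_{i,0})$.

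\textbf{Step 2 (per-agent bound).} Split $\tilde H_{i,k}=\tilde{\bar H}_k+(\tilde H_{i,k}-\tilde{\bar H}_k)$.

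For the average term, Lemma~\ref{lem:avg-track-h} gives $\tilde{\bar H}_k=\frac1N\sum_i\nabla^2 f_i(x_{i,k})$. Since each $f_i$ has $L_1$-Lipschitz gradient, $\|\nabla^2 f_i(x)\|_2\le L_1=M_{H,\max}$ for every $x$, so $\|\tilde{\bar H}_k\|_2\le L_1$.

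For the deviation term, use $\|\cdot\|_2\le\|\cdot\|_F$ and the fact that a single block is bounded by the whole stacked projected vector:
\[
\|\tilde H_{i,k}-\tilde{\bar H}_k\|_2\le\|\tilde H_{i,k}-\tilde{\bar H}_k\|_F\le\sqrt N\,D(\tilde{\mathcal H}_k)\le\sqrt N\,B_H.
\]
Hence $G_H:=L_1+\sqrt N\,B_H$ works for the first claim.

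\textbf{Step 3 (stabilizer bound).} By definition, $\delta_{i,k}=\max\{0,-\lambda_{\min}(\tilde H_{i,k})\}\le\|\tilde H_{i,k}\|_2\le G_H$. Averaging over $i$ gives $\bar\delta_k\le G_H$.

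The only point needing care is the norm bookkeeping: $D(\cdot)$ on $\R^{Nd^2}$ is defined through Frobenius norms of the blocks, while the claim is stated in spectral norm. The inequality $\|\cdot\|_2\le\|\cdot\|_F$ resolves this in the needed direction, so no $\sqrt d$ factor is lost here. The $\sqrt d$ enters only through $b_H$, as in Lemma~\ref{lem:DH_rec}.
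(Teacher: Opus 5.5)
Your proposal is correct and follows the paper's proof essentially step for step: the same geometric unrolling of \eqref{eq:DH_rec_const} with $q=\rho^2$ and $B_H=D(\tilde{\mathcal H}_0)+qb_H/(1-q)$, the same split of $\tilde H_{i,k}$ around $\tilde{\bar H}_k$ using $\|\tilde{\bar H}_k\|_2\le M_{H,\max}=L_1$, and the same bound $\delta_{i,k}\le\|\tilde H_{i,k}\|_2\le G_H$. Your explicit chain $\|\tilde H_{i,k}-\tilde{\bar H}_k\|_2\le\|\tilde H_{i,k}-\tilde{\bar H}_k\|_F\le\sqrt N\,D(\tilde{\mathcal H}_k)$ spells out a step the paper leaves implicit.
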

	
	\begin{proof}
		From Lemma~\ref{lem:DH_rec} and $\tau_{k+1},t_k\ge 1$, we have
		\[
		D(\tilde{\mathcal H}_{k+1})
		\le \rho^{\tau_{k+1}}\rho^{t_k}\Big(D(\tilde{\mathcal H}_k)+2\sqrt d\,L_2D\Big)
		\le \rho^2\Big(D(\tilde{\mathcal H}_k)+2\sqrt d\,L_2D\Big).
		\]
		Let $q:=\rho^2\in[0,1)$ and $b_H:=2\sqrt d\,L_2D$. Unrolling yields
		\[
		D(\tilde{\mathcal H}_k)\le q^k D(\tilde{\mathcal H}_0)+\frac{qb_H}{1-q}\quad \forall k\ge 0,
		\]
		so we may take $B_H:=D(\tilde{\mathcal H}_0)+qb_H/(1-q)$.
		
		Next, by Lemma~\ref{lem:avg-track-h},
		$\tilde{\bar H}_k=\bar H_k=\frac1N\sum_{i=1}^N\nabla^2 f_i(x_{i,k})$,
		hence $\|\tilde{\bar H}_k\|_2\le M_{H,\max}$.
		For any $i$,
		\[
		\|\tilde H_{i,k}\|_2
		\le \|\tilde{\bar H}_k\|_2
		+ \|\tilde H_{i,k}\!-\!\tilde{\bar H}_k\|_2
		\le M_{H,\max}\!+\!\sqrt{N}\,B_H.
		\]
		Define $G_H:=M_{H,\max} + \sqrt{N}\,B_H$.
		
		Finally, $\delta_{i,k}=\max\{0,-\lambda_{\min}(\tilde H_{i,k})\}
		\le \|\tilde H_{i,k}\|_2\le G_H$,
		and averaging yields $\bar\delta_k\le G_H$.
	\end{proof}
	
	\begin{lemma}\label{lem:DX_post}
		For all $k\ge 0$,
		\[
		D(X_{k+1})\le \rho^{t_k}\Big(D(\tilde X_k)+\frac{\|S_k\|}{\sqrt N}\Big).
		\]
	\end{lemma}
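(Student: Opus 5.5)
This bound is exactly the second inequality of Lemma~\ref{lem:DX_rec}, so I could simply cite it. For completeness, the plan is to rederive it in two moves: first apply the mixing contraction, then apply a triangle inequality on the dispersion.

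\emph{Contraction.} By step~(C) of Algorithm~\ref{alg:disgrem}, the stacked post-mixing update is
\[
X_{k+1}=(W^{t_k}\otimes I_d)(\tilde X_k+S_k),\qquad S_k:=[s_{1,k};\dots;s_{N,k}].
\]
Lemma~\ref{lem:mix_contract} applies with $Z=\tilde X_k+S_k$ and $t=t_k\ge1$, which gives
\[
D(X_{k+1})\le\rho^{t_k}D(\tilde X_k+S_k).
\]

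\emph{Splitting the dispersion.} Write $D(Z)=N^{-1/2}\|(\Pmat\otimes I_d)Z\|$. The map $Z\mapsto(\Pmat\otimes I_d)Z$ is linear, so the Euclidean triangle inequality yields
\[
D(\tilde X_k+S_k)\le D(\tilde X_k)+N^{-1/2}\|(\Pmat\otimes I_d)S_k\|.
\]
Since $\Pmat$ is an orthogonal projector, $\|\Pmat\otimes I_d\|_2\le1$. Hence the second term is at most $\|S_k\|/\sqrt N$.

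Substituting this into the contraction bound gives the claim. No step is a real obstacle. The only point to check is that the post-mixing matrix power is $W^{t_k}$ with $t_k\ge1$, so that Lemma~\ref{lem:mix_contract} applies. The bound holds for every $k\ge0$ and uses no assumption beyond the properties of $W$ in Assumption~\ref{ass:problem}(ii).
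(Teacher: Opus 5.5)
Your proof is correct and follows essentially the same route as the paper. The paper bounds the disagreement component $((W^{t_k}-\J)\otimes I_d)(\tilde X_k+S_k)$ directly via Lemma~\ref{lem:consensus}, then applies the triangle inequality with $\|S_k^\perp\|\le\|S_k\|$; your citation of Lemma~\ref{lem:mix_contract} packages that same contraction, and you are right that the statement duplicates the second inequality of Lemma~\ref{lem:DX_rec}.
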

	\begin{proof}
		Recall $X_{k+1}=(W^{t_k}\otimes I_d)(\tilde X_k+S_k)$ and that $D(Z)=\|Z^\perp\|/\sqrt N$ with $Z^\perp:=((I-\J)\otimes I_d)Z$.
		Since $(W^{t_k}-\J)$ annihilates consensus components, we have
		\[
		(X_{k+1})^\perp = ((W^{t_k}-\J)\otimes I_d)(\tilde X_k+S_k),
		\]
		and by Lemma~\ref{lem:consensus} (applied in stacked form) and the triangle inequality,
		\[
		\| (X_{k+1})^\perp\|
		\le \rho^{t_k}\|(\tilde X_k+S_k)^\perp\|
		\le \rho^{t_k}\big(\|\tilde X_k^\perp\|+\|S_k^\perp\|\big)
		\le \rho^{t_k}\big(\|\tilde X_k^\perp\|+\|S_k\|\big).
		\]
		Divide by $\sqrt N$ to obtain the claim.
	\end{proof}
	
	\begin{lemma}\label{lem:DX_post_decay}
		Under Assumptions~\ref{ass:problem} and~\ref{ass:bounded_iterates}
		and schedule~\eqref{eq:log_schedule}, there exists a constant $C_X^{+}>0$ such that for all $k\ge 0$,
		\[
		D(X_k)\le \frac{C_X^{+}}{(k+1)^{p}}.
		\]
	\end{lemma}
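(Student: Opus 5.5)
We use the post-mixing recursion of Lemma~\ref{lem:DX_post},
\[
D(X_{k+1})\le \rho^{t_k}\Big(D(\tilde X_k)+\frac{\|S_k\|}{\sqrt N}\Big),
\]
and bound each term in the parenthesis by a constant independent of $k$. The decay then comes entirely from the factor $\rho^{t_k}$. Under the logarithmic schedule~\eqref{eq:log_schedule_main} this factor satisfies
\[
\rho^{t_k}\le e^{-c_{\mathrm{mix}}}(k+2)^{-p}.
\]

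\textbf{Step 1: the pre-mixed dispersion.} Assumption~\ref{ass:bounded_iterates} gives $\|\tilde x_{i,k}-x_\star\|\le D$ for all $i$. Since $\tilde{\bar x}_k=\bar x_k$ (Lemma~\ref{lem:avg-pres}) and $\|\bar x_k-x_\star\|\le D$, we get $\|\tilde x_{i,k}-\tilde{\bar x}_k\|\le 2D$, and hence $D(\tilde X_k)\le 2D$. Alternatively, Lemma~\ref{lem:mix_contract} gives the sharper $D(\tilde X_k)\le\rho^{\tau_k}D(X_k)\le 2D$, but the crude bound suffices.

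\textbf{Step 2: the step term.} Lemma~\ref{lem:step_bound_app} gives $\|s_{i,k}\|\le\sqrt{\|\tilde g_{i,k}\|/M}$. Lemma~\ref{lem:tilde_track_bounded} gives $\|\tilde g_{i,k}\|\le G_g$ uniformly, since every $\tau_k,t_k\ge1$ under the schedule. Therefore
\[
\frac{\|S_k\|}{\sqrt N}\le\sqrt{G_g/M}.
\]

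\textbf{Step 3: combining.} For every $k\ge0$,
\[
D(X_{k+1})\le e^{-c_{\mathrm{mix}}}\big(2D+\sqrt{G_g/M}\big)(k+2)^{-p}.
\]
Reindexing with $k+1\mapsto k$, this reads $D(X_k)\le C\,(k+1)^{-p}$ for all $k\ge1$, with $C:=e^{-c_{\mathrm{mix}}}\big(2D+\sqrt{G_g/M}\big)$. For $k=0$ we use $D(X_0)\le 2D$ directly. We therefore take
\[
C_X^{+}:=\max\{D(X_0),\,C\}.
\]
Any bound valid for all $k\ge0$ works here, e.g.\ $\max\{2D,C\}$.

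There is no real obstacle. The only delicate point is checking that the earlier uniform bounds hold independently of the decay being proved: $G_g$ comes from Lemma~\ref{lem:tilde_track_bounded}, which uses only $\tau_k,t_k\ge1$ and bounded iterates, so the argument is not circular. The other point to get right is the index shift between $t_k$ and $(k+1)^{-p}$, which the factor $(k+2)^{-p}$ produced by the schedule handles exactly.
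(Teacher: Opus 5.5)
Your proposal is correct and matches the paper's proof: both bound $D(\tilde X_k)\le 2D$ using the bounded iterates and $\|S_k\|/\sqrt N\le\sqrt{G_g/M}$ via Lemmas~\ref{lem:step_bound_app} and~\ref{lem:tilde_track_bounded}, then let $\rho^{t_k}\le (k+2)^{-p}$ supply the decay and shift the index. Your explicit treatment of $k=0$, and keeping the factor $e^{-c_{\mathrm{mix}}}$, is slightly more careful than the paper, which covers $k=0$ only implicitly because its constant $2D+\sqrt{G_g/M}$ already dominates $D(X_0)\le 2D$.
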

	\begin{proof}
		From Lemma~\ref{lem:DX_post} and
		$\rho^{t_k}\le e^{-c_{\mathrm{mix}}}(k+2)^{-p}\le (k+2)^{-p}$
		under \eqref{eq:log_schedule}, it remains to bound
		$D(\tilde X_k)$ and $\|S_k\|/\sqrt N$ uniformly.
		By Assumption~\ref{ass:bounded_iterates}, each $\tilde x_{i,k}$ is a convex combination of $\{x_{j,k}\}$, hence $\|\tilde x_{i,k}-x_\star\|\le D$ and
		\[
		D(\tilde X_k)\le \max_{1\le i\le N}\|\tilde x_{i,k}-\bar x_k\|
		\le \max_i \|\tilde x_{i,k}-x_\star\|+\|\bar x_k-x_\star\|\le 2D.
		\]
		Moreover, Lemma~\ref{lem:step_bound_app} yields $\|S_k\|/\sqrt N\le \sqrt{G_g/M}$, where $G_g$ is defined in \eqref{eq:step_unif_app}.
		Therefore,
		\[
		D(X_{k+1})\le (k+2)^{-p}\Big(2D+\sqrt{\frac{G_g}{M}}\Big),
		\]
		and the claim follows with $C_X^{+}:=2D+\sqrt{G_g/M}$ after shifting indices.
	\end{proof}
	
	\subsection{Polynomial decay under a logarithmic schedule}
	
	Fix $p>2$ and set the logarithmic schedule
	\begin{equation}\label{eq:log_schedule}
		\tau_k=t_k=
		\left\lceil \frac{p\log(k+2)+c_{\mathrm{mix}}}{-\log\rho}\right\rceil,\qquad k\ge 0,
	\end{equation}
	so that $\rho^{\tau_k}=\rho^{t_k}\le e^{-c_{\mathrm{mix}}}(k+2)^{-p}$.
	
	\begin{lemma}\label{lem:compare}
		Let $\{u_k\}_{k\ge0}$ satisfy $u_{k+1}\le a_k(u_k+b)$ with $b\ge0$, where
		$a_k\le q<1$ for all $k$ and $a_k\le (k+2)^{-p}$ for all $k$ with $p>2$.
		Then there exists $C>0$ such that $u_k\le C/(k+2)^{p-1}$ for all $k\ge0$.
	\end{lemma}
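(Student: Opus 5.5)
The plan is to establish first that $\{u_k\}$ is uniformly bounded, and then to feed this bound back into the recursion to obtain the polynomial rate.

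\emph{Step 1 (uniform bound).} Since $a_k\le q<1$, we have $u_{k+1}\le q u_k+qb$. Unrolling exactly as in the proof of Lemma~\ref{lem:tilde_track_bounded} gives
\[
u_k\le q^k u_0+\frac{qb}{1-q}\le B:=u_0+\frac{qb}{1-q}
\]
for all $k\ge0$. We take $u_0\ge0$; if $u_0<0$, replace $u_0$ by $\max\{u_0,0\}$. In the applications the $u_k$ are dispersions and hence nonnegative. If some $u_k$ is negative, the bound $u_k\le C/(k+2)^{p-1}$ holds trivially for that index.

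\emph{Step 2 (rate).} For $k\ge0$ we use $a_k\le(k+2)^{-p}$ together with Step 1:
\[
u_{k+1}\le (k+2)^{-p}(B+b).
\]
Since $(k+3)^{p-1}\le (k+2)^{p}$ for all $k\ge0$ (because $k+3\le (k+2)^{p/(p-1)}$ when $k+2\ge2$ and $p/(p-1)\ge1$; more crudely, $(k+3)^{p-1}\le 2^{p-1}(k+2)^{p-1}\le 2^{p-1}(k+2)^p$), we obtain
\[
u_{k+1}\le \frac{2^{p-1}(B+b)}{(k+3)^{p-1}}.
\]
This is the claimed decay at every index $k+1\ge1$. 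At $k=0$ the bound holds with constant $u_0\,2^{p-1}$.

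We therefore set $C:=\max\{2^{p-1}u_0,\;2^{p-1}(B+b)\}$. In fact the argument yields the stronger rate $(k+2)^{-p}$ up to index shift, so the stated exponent $p-1$ has slack. The restriction $p>2$ is not needed for this argument; it only matters downstream, where summability of such rates is used.

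The main obstacle is modest. The one point needing care is that the trivial bounded-then-decay argument requires boundedness, and this comes from the uniform contraction $a_k\le q$. Without that hypothesis, the first few factors $(k+2)^{-p}$ would already be less than $1$ anyway, since $2^{-p}<1$, so even that is automatic. The remaining work is checking the index shift constant, which I would handle with the crude factor $2^{p-1}$ as above rather than optimizing.
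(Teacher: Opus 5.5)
Your proof is correct and is essentially the paper's argument: a uniform bound $u_k\le u_0+qb/(1-q)$ from the contraction $a_k\le q$, then a single application of $a_k\le(k+2)^{-p}$ with the index-shift factor $2^{p-1}$, and the base case $k=0$ absorbed into $C$. One small slip does not affect the result: your parenthetical claim that $(k+3)^{p-1}\le(k+2)^{p}$ for all $k\ge0$ is false (at $k=0$, $p=3$ it reads $9\le 8$), but your displayed bound only uses the crude inequality $(k+3)^{p-1}\le 2^{p-1}(k+2)^{p}$, which is valid.
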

	\begin{proof}
		Since $a_k\le q<1$, we have $u_{k+1}\le q(u_k+b)$ and hence by induction
		\[
		u_k\le q^k u_0+\sum_{j=0}^{k-1} q^{k-j}qb \le u_0+\frac{qb}{1-q}=:u_{\max},\qquad \forall k\ge 0.
		\]
		Since $a_k\le (k+2)^{-p}$, for every $k\ge 0$ we have
		\[
		u_{k+1}\le a_k(u_k+b)\le \frac{u_{\max}+b}{(k+2)^{p}}\le \frac{u_{\max}+b}{(k+2)^{p-1}}.
		\]
		Since $(k+3)/(k+2)\le 2$ for all $k\ge 0$, we obtain
		$u_{k+1}\le 2^{p-1}(u_{\max}+b)/(k+3)^{p-1}$.
		For $k=0$, the base case $u_0\le u_{\max}\le C/2^{p-1}$ holds by definition.
		The claim follows with $C:=2^{p-1}(u_{\max}+b)$.
	\end{proof}
	
	\begin{proposition}\label{prop:dispersion_decay}
		Under Assumptions~\ref{ass:problem} and~\ref{ass:bounded_iterates}
		and schedule~\eqref{eq:log_schedule},
		there exist constants $C_X,C_G,C_H>0$ such that for all $k\ge0$,
		\[
		D(\tilde X_k)\le \frac{C_X}{(k+2)^{p-1}},
		\qquad
		D(\tilde G_k)\le \frac{C_G}{(k+2)^{p-1}},
		\qquad
		D(\tilde{\mathcal H}_k)\le \frac{C_H}{(k+2)^{p-1}}.
		\]
	\end{proposition}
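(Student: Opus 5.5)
The plan is to apply the comparison Lemma~\ref{lem:compare} separately to each of the three pre-mixed dispersion sequences, after putting each into the form $u_{k+1}\le a_k(u_k+b)$. The common contraction factor is
\[
a_k:=\rho^{\tau_{k+1}}\rho^{t_k}.
\]
Under schedule~\eqref{eq:log_schedule}, $\tau_{k+1},t_k\ge1$ gives $a_k\le\rho^2=:q<1$. Also $\rho^{t_k}\le e^{-c_{\mathrm{mix}}}(k+2)^{-p}\le(k+2)^{-p}$, and $\rho^{\tau_{k+1}}\le1$, so $a_k\le(k+2)^{-p}$. Hence both hypotheses of Lemma~\ref{lem:compare} on $a_k$ hold with the fixed $p>2$. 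In fact $a_k$ is of order $(k+2)^{-2p}$, but only the weaker bound is needed.

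For the trackers the recursions are already in place. Take $u_k:=D(\tilde G_k)$ and $b:=2L_1D$; then \eqref{eq:DG_rec_const} is exactly $u_{k+1}\le a_k(u_k+b)$, and Lemma~\ref{lem:compare} yields $D(\tilde G_k)\le C_G/(k+2)^{p-1}$. For the Hessian tracker, take $u_k:=D(\tilde{\mathcal H}_k)$ and $b:=2\sqrt d\,L_2D$; then \eqref{eq:DH_rec_const} is of the same form and gives $C_H$. Both constants are finite because $u_0=D(\tilde G_0)$ and $D(\tilde{\mathcal H}_0)$ are finite initial dispersions. The constant produced by Lemma~\ref{lem:compare} then depends only on $u_0$, $b$, $q$ and $p$.

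For the primal variable I use the first inequality of Lemma~\ref{lem:DX_rec}:
\[
D(\tilde X_{k+1})\le a_k\bigl(D(\tilde X_k)+\|S_k\|/\sqrt N\bigr).
\]
The one extra step is to replace $\|S_k\|/\sqrt N$ by a constant. Lemma~\ref{lem:step_bound_app} gives $\|s_{i,k}\|\le\sqrt{\|\tilde g_{i,k}\|/M}$, and combined with the uniform bound $G_g$ from Lemma~\ref{lem:tilde_track_bounded}/\eqref{eq:step_unif_app} this yields $\|S_k\|/\sqrt N\le\sqrt{G_g/M}=:b$. Lemma~\ref{lem:tilde_track_bounded} needs only $\tau_k,t_k\ge1$, so this argument is not circular. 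Alternatively one could use $b:=2D$ from Assumption~\ref{ass:bounded_iterates}. Lemma~\ref{lem:compare} then gives $C_X$, with $u_0=D(\tilde X_0)\le 2D$.

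The main obstacle is making sure the inhomogeneous term $b$ is a genuine constant that does not depend on $k$. This is the only place where Assumption~\ref{ass:bounded_iterates} and the uniform bounds (through Lemma~\ref{lem:step_bound_app}) enter. I also need to confirm the exponent bookkeeping in Lemma~\ref{lem:compare}: the decay is $(k+2)^{-(p-1)}$ rather than $(k+2)^{-p}$, because the index shift is absorbed by the factor $2^{p-1}$. Once this is checked, all three bounds follow uniformly in $k\ge0$, with the $k=0$ case covered by $u_0\le u_{\max}$.
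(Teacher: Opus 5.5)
Your proposal is correct and is essentially the paper's own proof: Lemma~\ref{lem:compare} applied with $a_k=\rho^{\tau_{k+1}}\rho^{t_k}$ to the recursion of Lemma~\ref{lem:DX_rec} and to \eqref{eq:DG_rec_const}--\eqref{eq:DH_rec_const}, with forcing constants $\sup_k\|S_k\|/\sqrt N\le\sqrt{G_g/M}$, $2L_1D$, and $2\sqrt d\,L_2D$. Drop the aside that $b:=2D$ would also work for the primal recursion: Assumption~\ref{ass:bounded_iterates} bounds $x_{i,k}$ and $\tilde x_{i,k}$ but not $y_{i,k+1}=\tilde x_{i,k}+s_{i,k}$, so $\|s_{i,k}\|\le 2D$ does not follow (the $2D$ bound in Lemma~\ref{lem:step_bound_app} concerns $x_{i,k+1}-x_{i,k}$, not $s_{i,k}$), whereas your main route via $\sqrt{G_g/M}$ is valid.
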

	\begin{proof}
		Apply Lemma~\ref{lem:compare} to the recursion in Lemma~\ref{lem:DX_rec} and the bounds \eqref{eq:DG_rec_const} and \eqref{eq:DH_rec_const},
		using the bounded forcing constants $b_X:=\sup_k\|S_k\|/\sqrt N<\infty$ (Lemma~\ref{lem:step_bound_app}),
		$b_G:=2L_1D$, and $b_H:=2\sqrt d\,L_2D$.
	\end{proof}
	
	\subsection{Burn-in conditions for Proposition~\ref{prop:inexact-rn}}
	
	We now establish that for every target accuracy $0<\varepsilon\le1$, there
	exists a finite index $K_0(\varepsilon)$ such that whenever
	$\|g_k\|\ge\varepsilon$ and $k\ge K_0(\varepsilon)$,
	the three hypotheses of Proposition~\ref{prop:inexact-rn} are
	satisfied with $\eta=1/12$.
	\begin{proposition}\label{prop:burnin}
		Fix $p>2$ and the schedule~\eqref{eq:log_schedule}.
		For every $0<\varepsilon\le1$ there exists an integer $K_0(\varepsilon)\ge0$ such that
		for every $k\ge K_0(\varepsilon)$ with $\|g_k\|=\|\nabla f(\bar x_k)\|\ge \varepsilon$,
		all three items of Proposition~\ref{prop:inexact-rn} hold with $\eta:=1/12$.
		
		\smallskip
		\noindent\textbf{Moreover (auxiliary conditions).}
		With the same choice of $K_0(\varepsilon)$, we may further guarantee that for all $k\ge K_0(\varepsilon)$,
		\begin{equation}\label{eq:burnin_aux_enforce}
			\Delta_k^g \le \varepsilon^2,\qquad
			\Delta_k^H \le \varepsilon^{3/2},\qquad
			D(X_k)\le \varepsilon^{3/2},\qquad
			\bar\delta_k \le (1+L_2)\varepsilon^{3/2},
		\end{equation}
		where $\Delta_k^g:=\frac1N\sum_{i=1}^N\|\tilde g_{i,k}-\tilde{\bar g}_k\|$,
		$\Delta_k^H:=\frac1N\sum_{i=1}^N\|\tilde H_{i,k}-\tilde{\bar H}_k\|_2$,
		and $\bar\delta_k:=\frac1N\sum_{i=1}^N\delta_{i,k}$.
	\end{proposition}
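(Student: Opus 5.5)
The plan is to first choose $K_0(\varepsilon)$ large enough that the auxiliary bounds \eqref{eq:burnin_aux_enforce} hold, and then to deduce the three items of Proposition~\ref{prop:inexact-rn} from them whenever $\|g_k\|\ge\varepsilon$.

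\textbf{Step 1: the auxiliary bounds.} By Cauchy--Schwarz, $\Delta_k^g\le D(\tilde G_k)$ and $\Delta_k^H\le D(\tilde{\mathcal H}_k)$, since $\|\cdot\|_2\le\|\cdot\|_F$. Proposition~\ref{prop:dispersion_decay} then gives
\[
\Delta_k^g,\ \Delta_k^H=\cO\bigl((k+2)^{-(p-1)}\bigr),
\]
and Lemma~\ref{lem:DX_post_decay} gives $D(X_k)=\cO((k+1)^{-p})$. Taking $k$ larger than the corresponding inverse powers of $\varepsilon$ yields the first three bounds in \eqref{eq:burnin_aux_enforce}. The last one follows from Lemma~\ref{lem:delta_control}:
\[
\bar\delta_k\le\Delta_k^H+L_2D(X_k)\le(1+L_2)\varepsilon^{3/2}.
\]

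\textbf{Step 2: lower bounds on the tail.} Lemma~\ref{lem:bridge-grad} gives
\[
\|\tilde{\bar g}_k\|\ge\varepsilon-L_1\varepsilon^{3/2}\ge\varepsilon/2
\]
once $\varepsilon$-dependent thresholds are tightened (I would replace $\varepsilon^{3/2}$ by $c\,\varepsilon^{3/2}$ with small $c$ where needed; the statement's constants suggest absorbing this into $K_0$). Similarly $\max_i\|\tilde g_{i,k}-\tilde{\bar g}_k\|\le N\Delta_k^g\le N\varepsilon^2$, which is at most $\tfrac12\|\tilde{\bar g}_k\|$. Hence each $\|\tilde g_{i,k}\|\ge\varepsilon/4$, so
\[
\underline\lambda_k\ge\sqrt{M\varepsilon}/2,\qquad \tilde{\bar\lambda}_k=\lambda_k\ge\sqrt{M\varepsilon}/2,
\]
and $\lambda_k\le\sqrt{MG_g}+G_H$ is bounded above. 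For the step, $\|s_k^{\mathrm{ref}}\|\ge\|\tilde{\bar g}_k\|/(M_{H,\max}+\lambda_k)\gtrsim\varepsilon$. Once the dispersion bound of Step 3 is established, this gives $\|\bar s_k\|\ge\tfrac12\|s_k^{\mathrm{ref}}\|\gtrsim\varepsilon$.

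\textbf{Step 3: verifying the items.} For Item~1, I combine Lemma~\ref{lem:step-disp} with Lemma~\ref{lem:lambda_dispersion}, whose hypothesis \eqref{eq:rel_grad_disp} holds with $\alpha_d=1/2$ by Step 2. This gives
\[
\|\bar s_k-s_k^{\mathrm{ref}}\|\lesssim \varepsilon^{-1/2}\Delta_k^g+\varepsilon^{-1/2}\bigl(\Delta_k^H+\varepsilon^{-1/2}\Delta_k^g+\bar\delta_k\bigr)\lesssim\varepsilon^{3/2}+\varepsilon.
\]
The required right-hand side is $\gtrsim\sqrt\varepsilon\cdot\varepsilon=\varepsilon^{3/2}$. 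The $\Delta^H$ and $\bar\delta$ contributions, of size $\varepsilon^{-1/2}\varepsilon^{3/2}=\varepsilon$, are therefore too large by a factor $\varepsilon^{-1/2}$. This is the main obstacle. My first attempt would be to sharpen the Hessian-side thresholds inside $K_0$ to $\Delta_k^H,\,D(X_k)\le c\,\varepsilon^{2}$, which is still finite $K_0$ because the decay is polynomial. Only the stated auxiliary bounds then need to hold as weaker consequences. Alternatively, I would exploit the factor $\|\tilde{\bar g}_k\|/\tilde{\bar\lambda}_k\approx\sqrt{\|\tilde{\bar g}_k\|/M}$, which makes the coefficient of $\Delta_k^H+\Delta_k^\lambda$ in Lemma~\ref{lem:step-disp} of order $1$ rather than $\varepsilon^{-1/2}$. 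Rechecking, it is $\varepsilon^{1/2}/\varepsilon^{1/2}\cdot\varepsilon^{-1/2}$, so the sharper thresholds are needed.

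Items~2 and~3 are direct. For Item~2, $L_1D(X_k)\le L_1c\,\varepsilon^{2}$, while $\tfrac{\eta}{8}\lambda_k\|\bar s_k\|\gtrsim\varepsilon^{3/2}$. For Item~3, $L_2D(X_k)\le L_2c\,\varepsilon^{3/2}\le\tfrac{\eta}{8}\sqrt{M\varepsilon}/2$ for small $c$, using $\varepsilon\le1$. In each case I fix $K_0(\varepsilon)$ as the maximum of the finitely many indices at which the polynomially decaying dispersion bounds fall below these thresholds.
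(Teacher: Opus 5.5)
Your skeleton matches the paper's proof: decay of $\Delta_k^g,\Delta_k^H,D(X_k)$, relative dispersion with $\alpha_d=1/2$, lower bounds on $\lambda_k$ and $\|s_k^{\mathrm{ref}}\|$, Lemmas~\ref{lem:step-disp} and~\ref{lem:lambda_dispersion} for Item~1, and the bridge bounds for Items~2--3. The gap is in Item~1, where you mis-bound the coefficient of $\Delta_k^H+\Delta_k^\lambda$ in Lemma~\ref{lem:step-disp}. Once $\max_i\|\tilde g_{i,k}-\tilde{\bar g}_k\|\le\frac12\|\tilde{\bar g}_k\|$, every agent has $\|\tilde g_{i,k}\|\ge\frac12\|\tilde{\bar g}_k\|$. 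Hence \emph{both} $\underline\lambda_k$ and $\tilde{\bar\lambda}_k$ are at least $\sqrt{M\|\tilde{\bar g}_k\|/2}$, so
\[
\frac{\|\tilde{\bar g}_k\|}{\underline\lambda_k\,\tilde{\bar\lambda}_k}\le\frac{2}{M}
\]
uniformly in $\varepsilon$. In your ``rechecking'' you replaced $1/\underline\lambda_k$ by its worst case $2/\sqrt{M\varepsilon}$ but kept the actual $\|\tilde{\bar g}_k\|$ in the numerator. These two quantities scale together and cannot be decoupled. With the correct coefficient, Lemma~\ref{lem:lambda_dispersion} gives
\[
\|\bar s_k-s_k^{\mathrm{ref}}\|\le\frac{6}{\sqrt{M\varepsilon}}\Delta_k^g+\frac2M\Delta_k^H+\frac4M\bar\delta_k .
\]
Against the target $\theta\varepsilon^{3/2}$, the thresholds $\Delta_k^g\le a_g\varepsilon^2$, $\Delta_k^H\le a_H\varepsilon^{3/2}$, $D(X_k)\le a_X\varepsilon^{3/2}$ with small constants then suffice. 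That is why the auxiliary bounds carry exponents $2$ and $3/2$, and it is what the paper does. So your abandoned ``alternative'' was the right one, and the ``obstacle'' is an artifact of the loose bound.

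Your repair also fails under your own pessimistic bookkeeping. The $\Delta_k^\lambda$ term contributes $\varepsilon^{-1/2}\cdot\varepsilon^{-1/2}\Delta_k^g$, which with $\Delta_k^g\le\varepsilon^2$ is still of order $\varepsilon$. Sharpening only $\Delta_k^H$ and $D(X_k)$ to $c\,\varepsilon^2$ therefore leaves Item~1 unverified; you would also need $\Delta_k^g\le c\,\varepsilon^{5/2}$. With all three sharpened, a finite $K_0(\varepsilon)$ still exists, so the bare existence claim would survive. But $K_0(\varepsilon)$ would then grow like $\varepsilon^{-5/(2(p-1))}$, which for $p=3$ exceeds $\varepsilon^{-1}$. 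That breaks the order $K_0(\varepsilon)=\cO(\varepsilon^{-2/(p-1)})$ on which Remark~\ref{rem:burnin_order} and Theorem~\ref{thm:main} rely. Finally, since Item~1 is stated relative to $\|\bar s_k\|$, avoid circularity by first proving the dispersion bound with factor $\eta/16$ relative to $\|s_k^{\mathrm{ref}}\|$, then deducing $\|s_k^{\mathrm{ref}}\|\le 2\|\bar s_k\|$.
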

	
	\begin{proof}
		Fix $0<\varepsilon\le1$ and set $\eta:=1/12$.
		We will produce an explicit index $K_0(\varepsilon)$ such that for every $k\ge K_0(\varepsilon)$ with $\|g_k\|\ge \varepsilon$,
		all three conditions of Proposition~\ref{prop:inexact-rn} hold.
		
		\smallskip
		\noindent\textbf{Step 1: post-mixing disagreement and bridge terms.}
		Recall $D(X_k)=\|X_k^\perp\|/\sqrt N$.
		By Lemma~\ref{lem:DX_post_decay}, there exists $C_X^{+}>0$ such that for all $k\ge 0$,
		\begin{equation}\label{eq:DX_post_decay_app}
			D(X_k)\le \frac{C_X^{+}}{(k+1)^p}.
		\end{equation}
		Therefore, for all $k\ge 0$,
		\begin{equation}\label{eq:bridge_targets_app}
			L_1D(X_k)\le \frac{L_1C_X^{+}}{(k+1)^p},
			\qquad
			L_2D(X_k)\le \frac{L_2C_X^{+}}{(k+1)^p}.
		\end{equation}
		
		\smallskip
		\noindent\textbf{Step 2: tracker dispersions.}
		From Proposition~\ref{prop:dispersion_decay}, there exist constants $C_G,C_H>0$ such that for all $k\ge 0$,
		\begin{equation}\label{eq:GH_decay_app}
			D(\tilde G_k)\le \frac{C_G}{(k+2)^{p-1}},
			\qquad
			D(\tilde{\mathcal H}_k)\le \frac{C_H}{(k+2)^{p-1}}.
		\end{equation}
		By Cauchy--Schwarz and $\|\cdot\|_2\le \|\cdot\|_F$,
		\[
		\Delta_k^g
		=\frac1N\sum_{i=1}^N\|\tilde g_{i,k}-\tilde{\bar g}_k\|
		\le \sqrt{\frac1N\sum_{i=1}^N\|\tilde g_{i,k}-\tilde{\bar g}_k\|^2}
		= D(\tilde G_k),
		\]
		and
		\begin{align*}
			\Delta_k^H
			&=\tfrac1N\textstyle\sum_{i=1}^N\big\|\tilde H_{i,k}\!-\!\tilde{\bar H}_k\big\|_2\\
			&\le \tfrac1N\textstyle\sum_{i=1}^N\big\|\tilde H_{i,k}\!-\!\tilde{\bar H}_k\big\|_F\\
			&\le D(\tilde{\mathcal H}_k).
		\end{align*}
		Hence, using Proposition~\ref{prop:dispersion_decay},
		\begin{equation}\label{eq:Delta_decay_app}
			\Delta_k^g\le \frac{C_G}{(k+2)^{p-1}},
			\qquad
			\Delta_k^H\le \frac{C_H}{(k+2)^{p-1}}.
		\end{equation}
		
		\smallskip
		\noindent\textbf{Step 3: a usable lower bound on the reference step.}
		Assume $\|g_k\|\ge \varepsilon$ and suppose $k$ is large enough such that $L_1D(X_k)\le \varepsilon/2$.
		Then Lemma~\ref{lem:bridge-grad} implies
		$\|\tilde{\bar g}_k\|\ge \varepsilon/2$.
		Moreover, since $A_k^{\mathrm{ref}}\preceq (M_{H,\max}+\tilde{\bar\lambda}_k)I$,
		\[
		\|s_k^{\mathrm{ref}}\|
		\ge \frac{\|\tilde{\bar g}_k\|}{M_{H,\max}+\tilde{\bar\lambda}_k}
		\ge \frac{\varepsilon/2}{M_{H,\max}+\sqrt{MG_g}+G_H}
		=:\underline s(\varepsilon),
		\]
		where we used $\tilde{\bar\lambda}_k=\frac{1}{N}\sum_{i=1}^N(\lambda_{i,k}+\delta_{i,k})\le \sqrt{MG_g}+G_H$, with $G_g$ from~\eqref{eq:step_unif_app} and $G_H$ from Lemma~\ref{lem:tilde_hess_bounded}.
		
		\smallskip
		\noindent\textbf{Step 4: ensure dispersion control (Item~1).}
		We first verify the relative dispersion condition \eqref{eq:rel_grad_disp} with $\alpha_d=1/2$.
		Since $D(\tilde G_k)=\sqrt{\frac1N\sum_{i=1}^N\|\tilde g_{i,k}-\tilde{\bar g}_k\|^2}$, we have
		\[
		\max_{1\le i\le N}\|\tilde g_{i,k}-\tilde{\bar g}_k\|
		\le \sqrt{N}\,D(\tilde G_k).
		\]
		From Step 3, for all $k$ large enough with $\|g_k\|\ge \varepsilon$ we have $\|\tilde{\bar g}_k\|\ge \varepsilon/2$.
		Thus, choosing $k$ large enough so that $\sqrt N\,D(\tilde G_k)\le \frac12\|\tilde{\bar g}_k\|$,
		condition \eqref{eq:rel_grad_disp} holds with $\alpha_d=1/2$.
		Using Lemma~\ref{lem:step-disp} and Lemma~\ref{lem:lambda_dispersion} with $\alpha_d:=1/2$,
		for all sufficiently large $k$ with $\|g_k\|\ge \varepsilon$ we have
		\[
		\|\bar s_k-s_k^{\mathrm{ref}}\|
		\le \frac{6}{\sqrt{M\varepsilon}}\Delta_k^g+\frac{2}{M}\Delta_k^H+\frac{4}{M}\bar\delta_k.
		\]
		To make this estimate quantitative, set
		\[
		B_\lambda:=M_{H,\max}+\sqrt{MG_g}+G_H,\qquad
		\theta:=\frac{\eta\sqrt M}{64B_\lambda^2}.
		\]
		Once the relative dispersion condition holds, Step~5 gives
		$\lambda_k\ge\frac12\sqrt{M\varepsilon}$; Step~3 gives
		$\|s_k^{\mathrm{ref}}\|\ge\varepsilon/(2B_\lambda)$, and
		$\lambda_k\le \sqrt{MG_g}+G_H$ gives
		$M_{H,\max}+\lambda_k\le B_\lambda$. Hence
		\[
		\frac{\eta}{16}\cdot
		\frac{\lambda_k}{M_{H,\max}+\lambda_k}\,
		\|s_k^{\mathrm{ref}}\|
		\ge \theta\,\varepsilon^{3/2}.
		\]
		Using Lemma~\ref{lem:delta_control},
		$\bar\delta_k\le \Delta_k^H+L_2D(X_k)$, the estimate above is
		therefore implied by the explicit sufficient conditions
		\[
		\Delta_k^g\le a_g\varepsilon^2,\qquad
		\Delta_k^H\le a_H\varepsilon^{3/2},\qquad
		D(X_k)\le a_X\varepsilon^{3/2},
		\]
		where
		\[
		a_g:=\frac{\theta\sqrt M}{18},\qquad
		a_H:=\frac{\theta M}{18},\qquad
		a_X:=\frac{\theta M}{12\max\{L_2,1\}}.
		\]
		Indeed, these three inequalities bound
		\[
		\frac{6}{\sqrt{M\varepsilon}}\Delta_k^g
		+\frac{2}{M}\Delta_k^H
		+\frac{4}{M}\bar\delta_k
		\le
		\frac{6}{\sqrt{M\varepsilon}}\Delta_k^g
		+\frac{6}{M}\Delta_k^H
		+\frac{4L_2}{M}D(X_k)
		\le \theta\varepsilon^{3/2}.
		\]
		Define $K_1(\varepsilon)$ as any index such that, for every
		$k\ge K_1(\varepsilon)$,
		\[
		\sqrt N\,D(\tilde G_k)\le \frac{\varepsilon}{4},\qquad
		\Delta_k^g\le a_g\varepsilon^2,\qquad
		\Delta_k^H\le a_H\varepsilon^{3/2},\qquad
		D(X_k)\le a_X\varepsilon^{3/2}.
		\]
		Such an index exists by \eqref{eq:DX_post_decay_app} and
		\eqref{eq:Delta_decay_app}, and its order is still
		$\cO(\varepsilon^{-2/(p-1)})$ because $0<\varepsilon\le1$.
		Consequently, for all $k\ge K_1(\varepsilon)$ with
		$\|g_k\|\ge\varepsilon$,
		\[
		\|\bar s_k-s_k^{\mathrm{ref}}\|\le \frac{\eta}{16}\cdot \frac{\lambda_k}{M_{H,\max}+\lambda_k}\,\|s_k^{\mathrm{ref}}\|.
		\]
		This implies $\|s_k^{\mathrm{ref}}\|\le 2\|\bar s_k\|$ by the triangle inequality, and hence Item~1 of Proposition~\ref{prop:inexact-rn} holds.
		
		\smallskip
		\noindent\textbf{Step 5: ensure Items~2--3.}
		After the relative dispersion condition has been enforced with
		$\alpha_d=1/2$, Step~3 gives
		$\|\tilde g_{i,k}\|\ge \frac12\|\tilde{\bar g}_k\|\ge \varepsilon/4$
		for every~$i$.
		Hence
		\[
		\lambda_k=\frac1N\sum_{i=1}^N
		\bigl(\sqrt{M\|\tilde g_{i,k}\|}+\delta_{i,k}\bigr)
		\ge \frac12\sqrt{M\varepsilon}.
		\]
		Since $\|s_k^{\mathrm{ref}}\|\le 2\|\bar s_k\|$, it is enough to ensure
		\[
		L_1D(X_k)\le \frac{\eta}{16}\lambda_k\|s_k^{\mathrm{ref}}\|,
		\qquad
		L_2D(X_k)\le \frac{\eta}{8}\lambda_k.
		\]
		By \eqref{eq:bridge_targets_app} and $\|s_k^{\mathrm{ref}}\|\ge \underline s(\varepsilon)$, there exists $K_2(\varepsilon)$ such that both inequalities hold for all $k\ge K_2(\varepsilon)$.
		
		\smallskip
		\noindent\textbf{Step 6: enforce the auxiliary burn-in bounds.}
		By \eqref{eq:Delta_decay_app}, $\Delta_k^g\le \frac{C_G}{(k+2)^{p-1}}$.
		Define
		\[
		K_\Delta(\varepsilon):=\min\Big\{k\ge 0:\ 
		\frac{C_G}{(k+2)^{p-1}}\le \varepsilon^2,\ 
		\frac{C_H}{(k+2)^{p-1}}\le \varepsilon^{3/2}\Big\}.
		\]
		Then for all $k\ge K_\Delta(\varepsilon)$, we have
		$\Delta_k^g\le \varepsilon^2$ and
		$\Delta_k^H\le \varepsilon^{3/2}$.
		
		\smallskip
		\noindent\textbf{Step 7: enforce the post-mixing and stabilizer bounds.}
		Define
		\[
		K_\delta(\varepsilon):=\min\Big\{k\ge 1:\ 
		\frac{C_X^{+}}{(k+1)^p}\le \varepsilon^{3/2}\Big\}.
		\]
		Then for all $k\ge K_\delta(\varepsilon)$, we have
		$D(X_k)\le \varepsilon^{3/2}$. Lemma~\ref{lem:delta_control}
		and Step~6 further give
		$\bar\delta_k\le \Delta_k^H+L_2D(X_k)
		\le (1+L_2)\varepsilon^{3/2}$.
		
		\smallskip
		Finally set
		\[
		K_0(\varepsilon):=\max\{K_1(\varepsilon),K_2(\varepsilon),K_\Delta(\varepsilon),K_\delta(\varepsilon)\}.
		\]
		Then for every $k\ge K_0(\varepsilon)$ with $\|g_k\|\ge \varepsilon$, all three items of Proposition~\ref{prop:inexact-rn} hold with $\eta=1/12$,
		and \eqref{eq:burnin_aux_enforce} also holds.
	\end{proof}
	
	\begin{theorem}\label{thm:comm}
		If the hypotheses of Theorem~\ref{thm:main} hold and
		$0<\varepsilon\le1$, then
		\[
		\sum_{k=0}^{K(\varepsilon)-1}(\tau_k+2t_k)
		=\cO\!\Big((1-\rho)^{-1}
		(K_0(\varepsilon)+\varepsilon^{-1})
		\log(K_0(\varepsilon)+\varepsilon^{-1})\Big).
		\]
		In particular, since Theorem~\ref{thm:main} assumes $p\ge3$,
		\[
		\sum_{k=0}^{K(\varepsilon)-1}(\tau_k+2t_k)
		=\cO((1-\rho)^{-1}\varepsilon^{-1}\log(1/\varepsilon)).
		\]
		For a fixed connected network, this reduces to
		$\cO(\varepsilon^{-1}\log(1/\varepsilon))$.
	\end{theorem}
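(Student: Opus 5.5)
The bound follows by summing the per-iteration round counts of the logarithmic schedule up to the iteration horizon of Theorem~\ref{thm:main}. Each outer iteration uses $\tau_k+2t_k=3\tau_k$ neighbor rounds, since the schedule sets $\tau_k=t_k$. By \eqref{eq:log_schedule_main},
\[
\tau_k\le 1+\frac{p\log(k+2)+c_{\mathrm{mix}}}{-\log\rho}.
\]
Summing over $k=0,\dots,K-1$ with $K:=K(\varepsilon)$, and using $\log(k+2)\le\log(K+1)$, gives
\[
\sum_{k=0}^{K-1}(\tau_k+2t_k)\le 3K+\frac{3K\bigl(p\log(K+1)+c_{\mathrm{mix}}\bigr)}{-\log\rho}.
\]

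Next I convert the factor $(-\log\rho)^{-1}$ into $(1-\rho)^{-1}$. Since $-\log\rho\ge 1-\rho$ for $\rho\in(0,1)$, we get $(-\log\rho)^{-1}\le(1-\rho)^{-1}$. The case $\rho=0$ is trivial: then one round suffices and $\tau_k$ is bounded by a constant. Also, $3K\le 3K(1-\rho)^{-1}$, so the additive constant terms are absorbed. With $p$ and $c_{\mathrm{mix}}$ fixed, this yields
\[
\sum_{k=0}^{K-1}(\tau_k+2t_k)=\cO\bigl((1-\rho)^{-1}K\log(K+2)\bigr).
\]
The $+2$ inside the logarithm only matters for small $K$, where $\log$ must stay bounded below by a positive constant.

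Now I substitute $K(\varepsilon)\le K_0(\varepsilon)+C_K\varepsilon^{-1}$ from Theorem~\ref{thm:main}. The map $K\mapsto K\log(K+2)$ is monotone increasing, and $C_K$ is a constant independent of $\varepsilon$. This gives the first display, $\cO\bigl((1-\rho)^{-1}(K_0+\varepsilon^{-1})\log(K_0+\varepsilon^{-1})\bigr)$. Here I use $\varepsilon\le1$, so that $\varepsilon^{-1}\ge1$ and $\log(K_0+C_K\varepsilon^{-1}+2)\lesssim\log(K_0+\varepsilon^{-1})$ up to constants; the degenerate value $\varepsilon=1$ needs a $\max\{\cdot,\log 2\}$ convention.

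For the second display, Remark~\ref{rem:burnin_order} gives $K_0(\varepsilon)=\cO(\varepsilon^{-2/(p-1)})$. Since $p\ge3$, we have $2/(p-1)\le1$, so $K_0(\varepsilon)=\cO(\varepsilon^{-1})$ for $\varepsilon\le1$. Hence $K_0+\varepsilon^{-1}=\cO(\varepsilon^{-1})$ and $\log(K_0+\varepsilon^{-1})=\cO(\log(1/\varepsilon))$, which gives $\cO((1-\rho)^{-1}\varepsilon^{-1}\log(1/\varepsilon))$. For a fixed network, $\rho$ is a constant, which yields the last claim.

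The main obstacle is tracking which constants depend on $\rho$. The constants hidden in $K_0(\varepsilon)$, namely $C_X^{+}$, $C_G$, $C_H$ and $B_\lambda$, depend on $\rho$ only through $q=\rho^2$ in the uniform bounds $B_G$ and $B_H$. These could scale like $(1-\rho^2)^{-1}$, so they would not be absorbed into the displayed $(1-\rho)^{-1}$ factor. I would handle this in one of two ways. The first is to state explicitly that the $\cO(\cdot)$ treats $K_0$'s constants and $C_K$ as given, with only the schedule's per-round factor made explicit in $\rho$. The second is to note that, with $c_{\mathrm{mix}}\ge0$, the contraction factor satisfies $\rho^{\tau_k+t_{k-1}}\le e^{-2c_{\mathrm{mix}}}(k+2)^{-2p}\le 2^{-2p}$. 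Using this factor instead of $\rho^2$ makes $B_G$ and $B_H$ $\rho$-independent. I would adopt the second fix, since it keeps the asymptotic claim as $\rho\to1$ honest.
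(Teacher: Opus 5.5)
Your argument is the paper's proof: bound $\tau_k=t_k\le c_0+c_1(1-\rho)^{-1}\log(k+2)$, sum to get $\cO\bigl((1-\rho)^{-1}K(\varepsilon)\log(K(\varepsilon)+2)\bigr)$, and substitute $K(\varepsilon)\le K_0(\varepsilon)+C_K\varepsilon^{-1}$ with $K_0(\varepsilon)=\cO(\varepsilon^{-1})$ for $p\ge3$. Two of your additions go beyond the paper. First, your inequality $-\log\rho\ge1-\rho$ is a cleaner, non-asymptotic replacement for the paper's $\Theta$-comparison. Second, you observe that the constants inside $K_0(\varepsilon)$ must be made $\rho$-independent, which you do by replacing $q=\rho^2$ in $B_G$, $B_H$ and Lemma~\ref{lem:compare} with the schedule's bound $\rho^{\tau_{k+1}+t_k}\le 6^{-p}$; the paper's short proof passes over this point silently. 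One small slip: your factor $\rho^{\tau_k+t_{k-1}}$ is bounded by $e^{-2c_{\mathrm{mix}}}(k+2)^{-p}(k+1)^{-p}$, not $e^{-2c_{\mathrm{mix}}}(k+2)^{-2p}$, but the uniform bound $2^{-2p}$ you need still holds.
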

	
	\begin{remark}\label{rem:burnin_order_app}
		For $0<\varepsilon\le1$, the burn-in index
		$K_0(\varepsilon)$ in Theorem~\ref{thm:main} is determined by
		the direct verification of the three conditions in
		Proposition~\ref{prop:inexact-rn}.
		In Step~4 of Proposition~\ref{prop:burnin}, the estimate
		\[
		\|\bar s_k-s_k^{\mathrm{ref}}\|
		\le
		\frac{6}{\sqrt{M\varepsilon}}\Delta_k^g
		+\frac{2}{M}\Delta_k^H
		+\frac{4}{M}\bar\delta_k
		\]
		is compared with an Item~1 target of order
		$\lambda_k\|s_k^{\mathrm{ref}}\|=\Omega(\varepsilon^{3/2})$.
		A conservative sufficient condition is therefore
		\[
		\Delta_k^g=\cO(\varepsilon^2),\qquad
		\Delta_k^H=\cO(\varepsilon^{3/2}),\qquad
		D(X_k)=\cO(\varepsilon^{3/2}),
		\]
		up to constants, where
		$\bar\delta_k\le \Delta_k^H+L_2D(X_k)$.
		Under the logarithmic schedule~\eqref{eq:log_schedule} with
		$\tau_k,t_k=\lceil(p\log(k+2)+c_{\mathrm{mix}})/(-\log\rho)\rceil$,
		Proposition~\ref{prop:dispersion_decay} yields
		$D(\tilde G_k),D(\tilde{\mathcal H}_k)=\cO(k^{-(p-1)})$.
		It therefore suffices, conservatively, to choose $k$ so that
		$k^{-(p-1)}\lesssim\varepsilon^2$, giving
		\[
		K_0(\varepsilon)=\cO\!\big(\varepsilon^{-2/(p-1)}\big).
		\]
		If one keeps only $p>2$, this gives the more general total
		iteration bound
		\[
		K(\varepsilon)
		=\cO\!\left(\varepsilon^{-1}+\varepsilon^{-2/(p-1)}\right).
		\]
		In particular, for $p\ge3$, the burn-in estimate is no larger than
		the $\cO(\varepsilon^{-1})$ post-burn-in term. Hence the total
		iteration complexity in Theorem~\ref{thm:main} and the communication
		complexity in Theorem~\ref{thm:comm} become
		\[
		K(\varepsilon)=\cO(\varepsilon^{-1}),
		\qquad
		\sum_{k=0}^{K(\varepsilon)-1}(\tau_k+2t_k)
		=\cO\bigl((1-\rho)^{-1}\varepsilon^{-1}\log(1/\varepsilon)\bigr),
		\]
		with the latter reducing to
		$\cO(\varepsilon^{-1}\log(1/\varepsilon))$ for a fixed connected
		network.
	\end{remark}
	
	
	\section{Proofs}\label{app:proofs}
	
	\begin{proof}[Proof of Lemma~\ref{lem:taylor3}]
		Fix $x,s$ and define $\phi(t):=h(x+ts)$ for $t\in[0,1]$.
		Then $\phi'(t)=\ip{\nabla h(x+ts)}{s}$ and $\phi''(t)=s^\top\nabla^2 h(x+ts)s$.
		By $L_2$-Lipschitzness of $\nabla^2 h$,
		\begin{align*}
			|\phi''(t)\!-\!\phi''(0)|
			&=|s^\top(\nabla^2 h(x\!+\!ts)\!-\!\nabla^2 h(x))s|\\
			&\le \|\nabla^2 h(x\!+\!ts)\!-\!\nabla^2 h(x)\|_2\|s\|^2\\
			&\le L_2 t\|s\|^3.
		\end{align*}
		Hence $\phi''(t)\le \phi''(0)+L_2 t\|s\|^3$.
		Integrating twice,
		\begin{align*}
			\phi(1)&=\phi(0)+\phi'(0)+\int_0^1\!(1\!-\!t)\phi''(t)\,dt\\
			&\le \phi(0)+\phi'(0)+\!\int_0^1\!(1\!-\!t)\big(\phi''(0)+L_2 t\|s\|^3\big)dt.
		\end{align*}
		Compute $\int_0^1(1-t)\,dt=\tfrac12$ and $\int_0^1(1-t)t\,dt=\tfrac16$ to obtain
		\[
		\phi(1)\le \phi(0)+\phi'(0)+\tfrac12\phi''(0)+\tfrac{L_2}{6}\|s\|^3.
		\]
		Substitute $\phi(0)=h(x)$, $\phi'(0)=\ip{\nabla h(x)}{s}$, and $\phi''(0)=s^\top\nabla^2 h(x)s$.
	\end{proof}
	
	\begin{proof}[Proof of Lemma~\ref{lem:consensus}]
		Because $W$ is symmetric and doubly stochastic, it is diagonalizable with eigenvalues
		$1=\lambda_1>\lambda_2\ge\cdots\ge\lambda_N\ge -1$ and eigenvector $\one$ for $\lambda_1$.
		The projector $\J=\frac1N\one\one^\top$ is the orthogonal projector onto $\mathrm{span}\{\one\}$ and satisfies $W\J=\J W=\J$.
		We show $W^t-\J=(W-\J)^t$ by induction: for $t=1$ trivial. If true for $t$, then
		\begin{align*}
			(W\!-\!\J)^{t+1}&=(W\!-\!\J)^t(W\!-\!\J)\\
			&=(W^t\!-\!\J)(W\!-\!\J)\\
			&=W^{t+1}\!-\!W^t\J\!-\!\J W\!+\!\J^2\\
			&=W^{t+1}\!-\!\J,
		\end{align*}
		using $W^t\J=\J$, $\J W=\J$, and $\J^2=\J$.
		Thus $\|W^t-\J\|_2=\|(W-\J)^t\|_2\le \|W-\J\|_2^t=\rho^t$.
		For stacked vectors, $\|(A\otimes I_d)\|_2=\|A\|_2$.
	\end{proof}
	
	\begin{proof}[Proof of Lemma~\ref{lem:gap-grad}]
		By $L$-smoothness,
		\[
		h(y)\le h(x)+\ip{\nabla h(x)}{y-x}+\frac{L}{2}\|y-x\|^2.
		\]
		Choose $y=x-\frac1L\nabla h(x)$ to obtain
		\[
		h\Big(x-\frac1L\nabla h(x)\Big)\le h(x)-\frac{1}{2L}\|\nabla h(x)\|^2.
		\]
		Since $x_\star$ minimizes $h$, $h(x_\star)\le h(x-\frac1L\nabla h(x))$. Rearranging yields the claim.
	\end{proof}
	
	\begin{proof}[Proof of Lemma~\ref{lem:avg-pres}]
		Because $W^{\tau_k}$ is doubly stochastic,
		\[
		\tilde{\bar x}_k=\frac1N\sum_{i=1}^N\sum_{j=1}^N [W^{\tau_k}]_{ij}x_{j,k}
		=\frac1N\sum_{j=1}^N\Big(\sum_{i=1}^N [W^{\tau_k}]_{ij}\Big)x_{j,k}
		=\frac1N\sum_{j=1}^N x_{j,k}=\bar x_k.
		\]
		Similarly, $W^{t_k}$ is doubly stochastic and $y_{i,k+1}=\tilde x_{i,k}+s_{i,k}$, hence
		\begin{align*}
			\bar x_{k+1}
			&=\tfrac1N\textstyle\sum_{i=1}^N x_{i,k+1}
			=\tfrac1N\textstyle\sum_{i=1}^N\sum_{j=1}^N [W^{t_k}]_{ij}y_{j,k+1}\\
			&=\tfrac1N\textstyle\sum_{j=1}^N y_{j,k+1}
			=\tilde{\bar x}_k+\bar s_k=\bar x_k+\bar s_k.
		\end{align*}
	\end{proof}
	
	\begin{proof}[Proof of Lemma~\ref{lem:avg-track-g}]
		Average the tracker update in Algorithm~\ref{alg:disgrem}:
		\[
		\bar g_{k+1}
		=\frac1N\sum_{i=1}^N g_{i,k+1}
		=\frac1N\sum_{i=1}^N\sum_{j=1}^N [W^{t_k}]_{ij}\Big(\tilde g_{j,k}+\nabla f_j(x_{j,k+1})-\nabla f_j(x_{j,k})\Big).
		\]
		Swap sums and use column-stochasticity $\sum_{i=1}^N [W^{t_k}]_{ij}=1$:
		\begin{align*}
			\bar g_{k+1}
			&=\tfrac1N\textstyle\sum_{j=1}^N
			\Big(\tilde g_{j,k}
			+\nabla f_j(x_{j,k+1})-\nabla f_j(x_{j,k})\Big)\\
			&=\tilde{\bar g}_k
			+\tfrac1N\textstyle\sum_{j=1}^N
			\big(\nabla f_j(x_{j,k+1})-\nabla f_j(x_{j,k})\big).
		\end{align*}
		Pre-mixing preserves averages, hence $\tilde{\bar g}_k=\bar g_k$.
		Starting from
		\[
		\bar g_0=\frac1N\sum_{i=1}^N \nabla f_i(x_{i,0}),
		\]
		induction gives
		$\bar g_k=N^{-1}\sum_{i=1}^N \nabla f_i(x_{i,k})$ for all $k$,
		and thus $\tilde{\bar g}_k=\bar g_k$.
	\end{proof}
	
	\begin{proof}[Proof of Lemma~\ref{lem:avg-track-h}]
		Repeating the analysis in the proof of Lemma~\ref{lem:avg-track-g} with gradients replaced by Hessians yields the claim.
		Averaging the Hessian update gives
		\[
		\bar H_{k+1}=\tilde{\bar H}_k+\frac1N\sum_{i=1}^N\big(\nabla^2 f_i(x_{i,k+1})-\nabla^2 f_i(x_{i,k})\big),
		\]
		and $\tilde{\bar H}_k=\bar H_k$. Induction yields $\bar H_k=\frac1N\sum_{i=1}^N \nabla^2 f_i(x_{i,k})$.
	\end{proof}
	
	\begin{proof}[Proof of Lemma~\ref{lem:M-vs-s}]
		If $\tilde g_{i,k}=0$, Algorithm~\ref{alg:disgrem} sets
		$s_{i,k}=0$ and $\lambda_{i,k}=0$, so the claim is immediate.
		Assume henceforth that $\tilde g_{i,k}\ne0$, so
		$\lambda_{i,k}>0$.
		As in Appendix~\ref{app:dispersion}, the local system satisfies
		$(\tilde H_{i,k}+(\lambda_{i,k}+\delta_{i,k})I)\succeq \lambda_{i,k}I$.
		Thus $\lambda_{i,k}\|s_{i,k}\|\le \|\tilde g_{i,k}\|$.
		Since $\lambda_{i,k}^2=M\|\tilde g_{i,k}\|$, we have $\lambda_{i,k}\|s_{i,k}\|\le \lambda_{i,k}^2/M$,
		hence $M\|s_{i,k}\|\le \lambda_{i,k}$. Using $M\ge L_2$ gives $L_2\|s_{i,k}\|\le\lambda_{i,k}$.
	\end{proof}
	
	\begin{proof}[Proof of Lemma~\ref{lem:avg-regime}]
		From Lemma~\ref{lem:M-vs-s}, $M\|s_{i,k}\|\le \lambda_{i,k}$ for all $i$, hence
		\[
		\frac1N\sum_{i=1}^N\lambda_{i,k}\ge \frac{M}{N}\sum_{i=1}^N\|s_{i,k}\|\ge M\Big\|\frac1N\sum_{i=1}^N s_{i,k}\Big\|=M\|\bar s_k\|,
		\]
		using $\|\frac1N\sum_{i=1}^N s_i\|\le \frac1N\sum_{i=1}^N\|s_i\|$.
		Since $M\ge L_2$, $L_2\|\bar s_k\|\le \frac1N\sum_{i=1}^N\lambda_{i,k}$.
		Finally $\tilde{\bar\lambda}_k=\frac1N\sum_{i=1}^N(\lambda_{i,k}+\delta_{i,k})\ge \frac1N\sum_{i=1}^N\lambda_{i,k}$.
	\end{proof}
	
	\begin{proof}[Proof of Lemma~\ref{lem:bridge-grad}]
		By Lemma~\ref{lem:avg-track-g}, $\tilde{\bar g}_k=\frac1N\sum_{i=1}^N\nabla f_i(x_{i,k})$.
		Therefore
		\begin{align*}
			\|\nabla f(\bar x_k)\!-\!\tilde{\bar g}_k\|
			&=\Big\|\tfrac1N\textstyle\sum_{i=1}^N\!\big(\nabla f_i(\bar x_k)\!-\!\nabla f_i(x_{i,k})\big)\Big\|\\
			&\le \tfrac1N\textstyle\sum_{i=1}^N \|\nabla f_i(\bar x_k)\!-\!\nabla f_i(x_{i,k})\|.
		\end{align*}
		The $L_1$-Lipschitzness of $\nabla f_i$ implies
		$\|\nabla f_i(\bar x_k)-\nabla f_i(x_{i,k})\|\le L_1\|x_{i,k}-\bar x_k\|$.
		Averaging this inequality and applying Cauchy--Schwarz gives
		\[
		\frac1N\sum_{i=1}^N\|x_{i,k}-\bar x_k\|
		\le \sqrt{\frac1N\sum_{i=1}^N\|x_{i,k}-\bar x_k\|^2}=D(X_k).
		\]
		Combining this bound with the preceding display yields the claim.
	\end{proof}
	
	\begin{proof}[Proof of Lemma~\ref{lem:bridge-hess}]
		By Lemma~\ref{lem:avg-track-h}, $\tilde{\bar H}_k=\frac1N\sum_{i=1}^N \nabla^2 f_i(x_{i,k})$.
		Thus
		\[
		\nabla^2 f(\bar x_k)-\tilde{\bar H}_k
		=\frac1N\sum_{i=1}^N\big(\nabla^2 f_i(\bar x_k)-\nabla^2 f_i(x_{i,k})\big),
		\]
		and so
		\begin{align*}
			&\|\nabla^2 f(\bar x_k)\!-\!\tilde{\bar H}_k\|_2\\
			&\;\le \tfrac1N\textstyle\sum_{i=1}^N\|\nabla^2 f_i(\bar x_k)\!-\!\nabla^2 f_i(x_{i,k})\|_2\\
			&\;\le \tfrac{L_2}{N}\textstyle\sum_{i=1}^N\|x_{i,k}\!-\!\bar x_k\|.
		\end{align*}
		Cauchy--Schwarz applied to the last average gives
		$\|\nabla^2 f(\bar x_k)-\tilde{\bar H}_k\|_2 \le L_2 D(X_k)$.
	\end{proof}
	
	\begin{proof}[Proof of Lemma~\ref{lem:delta_control}]
		Let $A:=\tilde H_{i,k}$ and $B:=\nabla^2 f(\bar x_k)\succeq0$.
		By Weyl's inequality,
		\[
		\lambda_{\min}(A)\ge \lambda_{\min}(B)-\|A-B\|_2\ge -\|A-B\|_2.
		\]
		Therefore $-\lambda_{\min}(A)\le \|A-B\|_2$, and since $\delta_{i,k}=\max\{0,-\lambda_{\min}(A)\}$,
		we obtain $0\le \delta_{i,k}\le \|A-B\|_2$, proving the first claim.
		For the second,
		\[
		\|A-B\|_2\le \|A-\tilde{\bar H}_k\|_2+\|\tilde{\bar H}_k-B\|_2.
		\]
		Averaging over $i$ gives
		$\bar\delta_k\le \Delta_k^H+\|\tilde{\bar H}_k-\nabla^2 f(\bar x_k)\|_2$,
		and Lemma~\ref{lem:bridge-hess} gives the stated bound.
	\end{proof}
	
	\begin{proof}[Proof of Lemma~\ref{lem:step-disp}]
		Fix $k$ and abbreviate $A_i:=A_{i,k}$, $A:=A_k^{\mathrm{ref}}$, $\tilde g_i:=\tilde g_{i,k}$, $\tilde{\bar g}:=\tilde{\bar g}_k$.
		Then $s_i=-A_i^{-1}\tilde g_i$ and $s^{\mathrm{ref}}=-A^{-1}\tilde{\bar g}$.
		Decompose:
		\[
		s_i-s^{\mathrm{ref}}
		=-A_i^{-1}(\tilde g_i-\tilde{\bar g}) + (A^{-1}-A_i^{-1})\tilde{\bar g}.
		\]
		Since $A_i\succeq \tilde\lambda_{i,k}I\succeq \underline\lambda_k I$, we have $\|A_i^{-1}\|_2\le 1/\underline\lambda_k$.
		Thus
		\[
		\|s_i-s^{\mathrm{ref}}\|
		\le \frac1{\underline\lambda_k}\|\tilde g_i-\tilde{\bar g}\|+\|A^{-1}-A_i^{-1}\|_2\,\|\tilde{\bar g}\|.
		\]
		Use the resolvent identity $A^{-1}-A_i^{-1}=A_i^{-1}(A_i-A)A^{-1}$ to get
		\[
		\|A^{-1}-A_i^{-1}\|_2\le \|A_i^{-1}\|_2\,\|A_i-A\|_2\,\|A^{-1}\|_2
		\le \frac1{\underline\lambda_k}\cdot \frac1{\tilde{\bar\lambda}_k}\,\|A_i-A\|_2,
		\]
		since $A\succeq \tilde{\bar\lambda}_k I$ implies $\|A^{-1}\|_2\le 1/\tilde{\bar\lambda}_k$.
		Furthermore,
		\[
		A_i-A=\big(\tilde H_{i,k}-\tilde{\bar H}_k\big)+(\tilde\lambda_{i,k}-\tilde{\bar\lambda}_k)I,
		\]
		so $\|A_i-A\|_2\le \|\tilde H_{i,k}-\tilde{\bar H}_k\|_2+|\tilde\lambda_{i,k}-\tilde{\bar\lambda}_k|$.
		Combining the resolvent bound with the estimate on $\|A_i-A\|_2$,
		and then averaging over $i$ using
		$\|\bar s-s^{\mathrm{ref}}\|\le \frac1N\sum_{i=1}^N\|s_i-s^{\mathrm{ref}}\|$,
		gives the claim.
	\end{proof}
	
	\begin{proof}[Proof of Lemma~\ref{lem:lambda_dispersion}]
		Write $\tilde\lambda_{i,k}-\tilde{\bar\lambda}_k=(\lambda_{i,k}-\bar\lambda_k)+(\delta_{i,k}-\bar\delta_k)$,
		where $\bar\lambda_k:=\frac1N\sum_{i=1}^N\lambda_{i,k}$.
		Then
		\[
		\Delta_k^\lambda
		=\frac1N\sum_{i=1}^N|\tilde\lambda_{i,k}-\tilde{\bar\lambda}_k|
		\le \underbrace{\frac1N\sum_{i=1}^N|\lambda_{i,k}-\bar\lambda_k|}_{=:T_1}
		+\underbrace{\frac1N\sum_{i=1}^N|\delta_{i,k}-\bar\delta_k|}_{=:T_2}.
		\]
		\textbf{Step 1: bound $T_2$.}
		Since $\delta_{i,k}\ge0$, a standard counting argument yields
		\[
		\sum_{i=1}^N|\delta_{i,k}-\bar\delta_k|
		=2\sum_{i:\delta_{i,k}>\bar\delta_k}(\delta_{i,k}-\bar\delta_k)
		\le 2\sum_{i:\delta_{i,k}>\bar\delta_k}\delta_{i,k}
		\le 2\sum_{i=1}^N\delta_{i,k}=2N\bar\delta_k,
		\]
		so $T_2\le 2\bar\delta_k$.
		
		\textbf{Step 2: bound $T_1$.}
		For any scalars $\{u_i\}$ with $\bar u=\frac1N\sum_{i=1}^N u_i$,
		$|u_i-\bar u|=\big|\frac1N\sum_{j=1}^N(u_i-u_j)\big|\le \frac1N\sum_{j=1}^N|u_i-u_j|$.
		Averaging over $i$ gives
		\[
		T_1\le \frac{1}{N^2}\sum_{i=1}^N\sum_{j=1}^N|\lambda_{i,k}-\lambda_{j,k}|.
		\]
		Let $a_i:=\|\tilde g_{i,k}\|$. Then
		\[
		|\lambda_{i,k}-\lambda_{j,k}|
		=\sqrt{M}\,|\sqrt{a_i}-\sqrt{a_j}|
		=\sqrt{M}\,\frac{|a_i-a_j|}{\sqrt{a_i}+\sqrt{a_j}}.
		\]
		Moreover $|a_i-a_j|\le \|\tilde g_{i,k}-\tilde g_{j,k}\|$.
		By triangle inequality,
		$\|\tilde g_{i,k}-\tilde g_{j,k}\|\le \|\tilde g_{i,k}-\tilde{\bar g}_k\|+\|\tilde g_{j,k}-\tilde{\bar g}_k\|$,
		hence
		\[
		\frac1{N^2}\sum_{i=1}^N\sum_{j=1}^N\|\tilde g_{i,k}-\tilde g_{j,k}\|
		\le \frac1{N^2}\sum_{i=1}^N\sum_{j=1}^N\big(\|\tilde g_{i,k}-\tilde{\bar g}_k\|+\|\tilde g_{j,k}-\tilde{\bar g}_k\|\big)
		=2\Delta_k^g.
		\]
		Under \eqref{eq:rel_grad_disp}, $a_i\ge \|\tilde{\bar g}_k\|-\|\tilde g_{i,k}-\tilde{\bar g}_k\|\ge (1-\alpha_d)\|\tilde{\bar g}_k\|$ for all $i$,
		so $\sqrt{a_i}+\sqrt{a_j}\ge 2\sqrt{(1-\alpha_d)\|\tilde{\bar g}_k\|}$.
		Therefore
		\[
		T_1\le \sqrt{M}\cdot \frac{2\Delta_k^g}{2\sqrt{(1-\alpha_d)\|\tilde{\bar g}_k\|}}
		=\frac{\sqrt{M}}{\sqrt{1-\alpha_d}}\cdot \frac{\Delta_k^g}{\sqrt{\|\tilde{\bar g}_k\|}}.
		\]
		The displayed estimate for $T_1$ and the Step~1 bound
		$T_2\le 2\bar\delta_k$ give the stated bound on
		$\Delta_k^\lambda$.
	\end{proof}
	
	\begin{proof}[Proof of Proposition~\ref{prop:inexact-rn}]
		Write
		\[
		r_k=(\nabla^2 f(\bar x_k)+\lambda_k I)\bar s_k+g_k.
		\]
		Add and subtract the reference step:
		\[
		r_k=(\nabla^2 f(\bar x_k)+\lambda_k I)(\bar s_k-s_k^{\mathrm{ref}})
		+(\nabla^2 f(\bar x_k)+\lambda_k I)s_k^{\mathrm{ref}}+g_k.
		\]
		Using $(\tilde{\bar H}_k+\lambda_k I)s_k^{\mathrm{ref}}=-\tilde{\bar g}_k$, we have
		\[
		(\nabla^2 f(\bar x_k)+\lambda_k I)s_k^{\mathrm{ref}}+g_k
		=(\nabla^2 f(\bar x_k)-\tilde{\bar H}_k)s_k^{\mathrm{ref}}+(g_k-\tilde{\bar g}_k).
		\]
		Therefore
		\[
		r_k=(\nabla^2 f(\bar x_k)+\lambda_k I)(\bar s_k-s_k^{\mathrm{ref}})
		+(\nabla^2 f(\bar x_k)-\tilde{\bar H}_k)s_k^{\mathrm{ref}}+(g_k-\tilde{\bar g}_k).
		\]
		Take norms:
		\begin{align*}
			\|r_k\|
			&\le \|\nabla^2 f(\bar x_k)\!+\!\lambda_k I\|_2\|\bar s_k\!-\!s_k^{\mathrm{ref}}\|\\
			&\quad+\|\nabla^2 f(\bar x_k)\!-\!\tilde{\bar H}_k\|_2\|s_k^{\mathrm{ref}}\|
			+\|g_k\!-\!\tilde{\bar g}_k\|.
		\end{align*}
		By the global Hessian bound~\eqref{eq:MHmax_def},
		$\|\nabla^2 f(\bar x_k)\|_2\le M_{H,\max}$, hence
		$\|\nabla^2 f(\bar x_k)+\lambda_k I\|_2\le M_{H,\max}+\lambda_k$.
		Item~1 of Proposition~\ref{prop:inexact-rn} implies
		\begin{align*}
			&\|\nabla^2 f(\bar x_k)\!+\!\lambda_k I\|_2\,\|\bar s_k\!-\!s_k^{\mathrm{ref}}\|\\
			&\;\le (M_{H,\max}\!+\!\lambda_k)\!\cdot\!\tfrac{\eta}{8}\!\cdot\!\tfrac{\lambda_k}{M_{H,\max}+\lambda_k}\|\bar s_k\|
			=\tfrac{\eta}{8}\lambda_k\|\bar s_k\|.
		\end{align*}
		Next, item (3) and Lemma~\ref{lem:bridge-hess} imply
		$\|\nabla^2 f(\bar x_k)-\tilde{\bar H}_k\|_2\le \frac{\eta}{8}\lambda_k$.
		Moreover,
		\[
		\|s_k^{\mathrm{ref}}\|\le \|\bar s_k\|+\|\bar s_k-s_k^{\mathrm{ref}}\|
		\le \Big(1+\frac{\eta}{8}\Big)\|\bar s_k\|\le \frac98\|\bar s_k\|,
		\]
		since $\eta\le 1/12$ implies $1+\eta/8\le 9/8$.
		Therefore the second term is bounded by
		\[
		\|\nabla^2 f(\bar x_k)-\tilde{\bar H}_k\|_2\,\|s_k^{\mathrm{ref}}\|
		\le \frac{\eta}{8}\lambda_k\cdot \frac98\|\bar s_k\|=\frac{9\eta}{64}\lambda_k\|\bar s_k\|.
		\]
		Finally, item (2) and Lemma~\ref{lem:bridge-grad} imply $\|g_k-\tilde{\bar g}_k\|\le \frac{\eta}{8}\lambda_k\|\bar s_k\|$.
		Summing yields
		\[
		\|r_k\|\le \left(\frac{\eta}{8}+\frac{9\eta}{64}+\frac{\eta}{8}\right)\lambda_k\|\bar s_k\|
		=\frac{25\eta}{64}\lambda_k\|\bar s_k\|\le \eta\lambda_k\|\bar s_k\|.
		\]
		The inequality $L_2\|\bar s_k\|\le \lambda_k$ is Lemma~\ref{lem:avg-regime}.
	\end{proof}
	
	\begin{proof}[Proof of Lemma~\ref{lem:descent}]
		Apply Lemma~\ref{lem:taylor3} to $f$ at $(\bar x_k,\bar s_k)$:
		\[
		f(\bar x_k+\bar s_k)\le f(\bar x_k)+\ip{g_k}{\bar s_k}+\frac12\bar s_k^\top\nabla^2 f(\bar x_k)\bar s_k+\frac{L_2}{6}\|\bar s_k\|^3.
		\]
		Let $d_k:=(\nabla^2 f(\bar x_k)+\lambda_k I)\bar s_k+g_k$.
		Then $\ip{g_k}{\bar s_k}=-\bar s_k^\top\nabla^2 f(\bar x_k)\bar s_k-\lambda_k\|\bar s_k\|^2+\ip{d_k}{\bar s_k}$.
		Substitute:
		\[
		f(\bar x_{k+1})\le f(\bar x_k)-\frac12\bar s_k^\top\nabla^2 f(\bar x_k)\bar s_k-\lambda_k\|\bar s_k\|^2+\ip{d_k}{\bar s_k}+\frac{L_2}{6}\|\bar s_k\|^3.
		\]
		Since $\nabla^2 f(\bar x_k)\succeq 0$, drop the nonpositive term. Also
		$\ip{d_k}{\bar s_k}\le \|d_k\|\,\|\bar s_k\|\le \eta\lambda_k\|\bar s_k\|^2$
		and $L_2\|\bar s_k\|\le \lambda_k$ implies $\frac{L_2}{6}\|\bar s_k\|^3\le \frac16\lambda_k\|\bar s_k\|^2$.
		Combining the residual, cubic-remainder, and nonpositive-Hessian
		bounds yields the claim.
	\end{proof}
	
	\begin{proof}[Proof of Lemma~\ref{lem:grad_bound}]
		By the integral form of Taylor's theorem,
		\[
		\nabla f(\bar x_k+\bar s_k)=g_k+\nabla^2 f(\bar x_k)\bar s_k+e_k,
		\qquad \|e_k\|\le \frac{L_2}{2}\|\bar s_k\|^2.
		\]
		The definition $d_k=(\nabla^2 f(\bar x_k)+\lambda_k I)\bar s_k+g_k$ gives
		$g_k+\nabla^2 f(\bar x_k)\bar s_k=-\lambda_k\bar s_k+d_k$.
		Thus
		\[
		\|g_{k+1}\|\le \lambda_k\|\bar s_k\|+\|d_k\|+\|e_k\|
		\le \lambda_k\|\bar s_k\|+\eta\lambda_k\|\bar s_k\|+\frac{L_2}{2}\|\bar s_k\|^2.
		\]
		Finally $L_2\|\bar s_k\|\le \lambda_k$ implies $\frac{L_2}{2}\|\bar s_k\|^2\le \frac12\lambda_k\|\bar s_k\|$.
	\end{proof}
	
	\begin{lemma}\label{lem:lambda_scaling}
		If Assumptions~\ref{ass:problem} and~\ref{ass:bounded_iterates}
		hold and the schedule~\eqref{eq:log_schedule} is used, then the following is true.
		Fix any $0<\varepsilon\le1$ and let $K_0(\varepsilon)$ be as in Proposition~\ref{prop:burnin}.
		Then there exists a constant $C_\lambda>0$ such that for all $k\ge K_0(\varepsilon)$ with $\|g_k\|\ge \varepsilon$,
		\[
		\lambda_k\le C_\lambda\,\sqrt{\|g_k\|}.
		\]
	\end{lemma}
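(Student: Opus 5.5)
We split $\lambda_k=\tilde{\bar\lambda}_k=\bar\lambda_k+\bar\delta_k$, with $\bar\lambda_k:=\frac1N\sum_{i=1}^N\sqrt{M\|\tilde g_{i,k}\|}$, and bound each piece by a constant multiple of $\sqrt{\|g_k\|}$. We work only on indices $k\ge K_0(\varepsilon)$ with $\|g_k\|\ge\varepsilon$. There the auxiliary bounds \eqref{eq:burnin_aux_enforce} of Proposition~\ref{prop:burnin} are available, and so is the relative dispersion condition with $\alpha_d=1/2$ enforced in Step~4 of its proof.

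\emph{Regularizer part.} By concavity of the square root (Jensen),
\[
\bar\lambda_k\le\sqrt{M}\Bigl(\tfrac1N\sum_{i=1}^N\|\tilde g_{i,k}\|\Bigr)^{1/2}\le\sqrt{M\bigl(\|\tilde{\bar g}_k\|+\Delta_k^g\bigr)}.
\]
Lemma~\ref{lem:bridge-grad} gives $\|\tilde{\bar g}_k\|\le\|g_k\|+L_1D(X_k)$. On the burn-in tail, $D(X_k)\le\varepsilon^{3/2}\le\varepsilon\le\|g_k\|$ and $\Delta_k^g\le\varepsilon^2\le\|g_k\|$, using $\varepsilon\le1$. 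Hence $\|\tilde{\bar g}_k\|+\Delta_k^g\le(2+L_1)\|g_k\|$, and therefore $\bar\lambda_k\le\sqrt{M(2+L_1)}\,\sqrt{\|g_k\|}$.

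\emph{Stabilizer part.} By \eqref{eq:burnin_aux_enforce}, $\bar\delta_k\le(1+L_2)\varepsilon^{3/2}$. Since $\varepsilon\le1$ and $\varepsilon\le\|g_k\|$, we have $\varepsilon^{3/2}\le\varepsilon^{1/2}\le\sqrt{\|g_k\|}$, so $\bar\delta_k\le(1+L_2)\sqrt{\|g_k\|}$.

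Combining the two parts gives
\[
C_\lambda:=\sqrt{M(2+L_1)}+1+L_2 .
\]
This constant is independent of $\varepsilon$ and $k$, which is what Lemma~\ref{lem:steady} needs.

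The main obstacle is making sure the bridge and dispersion errors scale with $\|g_k\|$ rather than being merely bounded. A crude argument via $\tilde\lambda_{i,k}\le\sqrt{MG_g}+G_H$ would fail, because that bound does not vanish with $\|g_k\|$. The fix is to use the $\varepsilon$-dependent burn-in thresholds of Proposition~\ref{prop:burnin} together with the restriction $\|g_k\|\ge\varepsilon$, which converts every error term of order $\varepsilon^{3/2}$ or smaller into a multiple of $\sqrt{\|g_k\|}$. The one point to double-check is the exponent bookkeeping in the stabilizer part: $\bar\delta_k$ is only $\mathcal O(\varepsilon^{3/2})$, not $o(\|g_k\|)$, but since it only needs to be compared with $\sqrt{\|g_k\|}$, this is enough.
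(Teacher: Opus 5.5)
Your proof is correct and follows essentially the same route as the paper. Both use the Jensen split $\lambda_k\le\sqrt{M(\|\tilde{\bar g}_k\|+\Delta_k^g)}+\bar\delta_k$, the bound $\Delta_k^g\le\varepsilon^2\le\|g_k\|$, and $\bar\delta_k\le(1+L_2)\varepsilon^{3/2}\le(1+L_2)\sqrt{\|g_k\|}$ from \eqref{eq:burnin_aux_enforce}. The only difference is the bridge term: you bound $L_1D(X_k)\le L_1\varepsilon^{3/2}\le L_1\|g_k\|$ directly from the auxiliary burn-in bound, whereas the paper invokes Item~2 of Proposition~\ref{prop:inexact-rn} together with $(1-\eta)\lambda_k\|\bar s_k\|\le\|g_k\|$ to obtain $L_1D(X_k)\le\frac{\eta}{8(1-\eta)}\|g_k\|$. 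That is a slightly longer path which removes the $L_1$ factor from $C_\lambda$ but is otherwise equivalent.
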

	\begin{proof}
		Recall $\lambda_k=\tilde{\bar\lambda}_k=\frac1N\sum_{i=1}^N(\lambda_{i,k}+\delta_{i,k})$ with
		$\lambda_{i,k}=\sqrt{M\|\tilde g_{i,k}\|}$ and $\delta_{i,k}\ge 0$.
		By Jensen's inequality (concavity of $x\mapsto\sqrt{x}$),
		\[
		\frac1N\sum_{i=1}^N \sqrt{\|\tilde g_{i,k}\|}
		\le \sqrt{\frac1N\sum_{i=1}^N \|\tilde g_{i,k}\|}.
		\]
		Moreover, by the triangle inequality and the definition of $\Delta_k^g$,
		\[
		\frac1N\sum_{i=1}^N \|\tilde g_{i,k}\|
		\le \|\tilde{\bar g}_k\|+\Delta_k^g.
		\]
		By Lemma~\ref{lem:bridge-grad}, $\|\tilde{\bar g}_k-g_k\|\le L_1D(X_k)$.
		Using Item~2 of Proposition~\ref{prop:inexact-rn} (which holds for all $k\ge K_0(\varepsilon)$ with $\|g_k\|\ge \varepsilon$),
		together with
		$\|(\nabla^2 f(\bar x_k)+\lambda_k I)\bar s_k\|
		=\|g_k-r_k\|\ge \lambda_k\|\bar s_k\|$
		and $\|r_k\|\le \eta\lambda_k\|\bar s_k\|$,
		we obtain $(1-\eta)\lambda_k\|\bar s_k\|\le \|g_k\|$ and hence
		\[
		L_1D(X_k)\le \frac{\eta}{8}\lambda_k\|\bar s_k\|\le \frac{\eta}{8(1-\eta)}\|g_k\|.
		\]
		Therefore,
		\[
		\|\tilde{\bar g}_k\|\le \|g_k\|+L_1D(X_k)\le \Big(1+\frac{\eta}{8(1-\eta)}\Big)\|g_k\|.
		\]
		In addition, Proposition~\ref{prop:burnin} ensures
		$\Delta_k^g\le \varepsilon^2\le \varepsilon\le \|g_k\|$
		for all $k\ge K_0(\varepsilon)$ with $\|g_k\|\ge \varepsilon$.
		Thus,
		\[
		\frac1N\sum_{i=1}^N \|\tilde g_{i,k}\|
		\le \Big(2+\frac{\eta}{8(1-\eta)}\Big)\|g_k\|.
		\]
		Hence,
		\[
		\lambda_k
		= \frac1N\sum_{i=1}^N\big(\sqrt{M\|\tilde g_{i,k}\|}+\delta_{i,k}\big)
		\le \sqrt{M}\sqrt{\frac1N\sum_{i=1}^N \|\tilde g_{i,k}\|}+\bar\delta_k
		\le C_0\sqrt{\|g_k\|}+\bar\delta_k,
		\]
		with $C_0:=\sqrt{M\big(2+\frac{\eta}{8(1-\eta)}\big)}$.
		Finally, Proposition~\ref{prop:burnin} also ensures
		$\bar\delta_k\le (1+L_2)\varepsilon^{3/2}$ on the same index set.
		Since $0<\varepsilon\le1$ and $\|g_k\|\ge\varepsilon$, we have
		$\varepsilon^{3/2}\le \sqrt{\varepsilon}\le \sqrt{\|g_k\|}$.
		Absorbing this term into the constant yields
		$\lambda_k\le (C_0+1+L_2)\sqrt{\|g_k\|}$, completing the proof.
	\end{proof}
	
	\begin{proof}[Proof of Lemma~\ref{lem:steady}]
		Assume $\eta\le 1/12$, so $c_d:=1-\eta-\frac16\ge \frac34$ in Lemma~\ref{lem:descent}.
		Also from Lemma~\ref{lem:grad_bound}, $C_g:=1+\eta+\frac12\le 2$.
		For $k\in\mathcal I$, $\|g_{k+1}\|\ge \frac14\|g_k\|$ and $\|g_{k+1}\|\le C_g\lambda_k\|\bar s_k\|$ imply
		$\|\bar s_k\|\ge \|g_k\|/(4C_g\lambda_k)$.
		Then Lemma~\ref{lem:descent} yields
		\[
		\Phi_k-\Phi_{k+1}\ge c_d\lambda_k\|\bar s_k\|^2
		\ge c_d\lambda_k\left(\frac{\|g_k\|}{4C_g\lambda_k}\right)^2
		=\frac{c_d}{16C_g^2}\frac{\|g_k\|^2}{\lambda_k}.
		\]
		By the additional assumption in Lemma~\ref{lem:steady}, we have $\lambda_k\le C_\lambda\sqrt{\|g_k\|}$.
		Therefore,
		\[
		\frac{\|g_k\|^2}{\lambda_k}\ge \frac{1}{C_\lambda}\|g_k\|^{3/2}.
		\]
		Finally, convexity and the bounded level set imply $\Phi_k\le \ip{g_k}{\bar x_k-x_\star}\le D\|g_k\|$, hence
		$\|g_k\|^{3/2}\ge D^{-3/2}\Phi_k^{3/2}$.
		Collect constants into $\nu$.
	\end{proof}
	
	\begin{proof}[Proof of Lemma~\ref{lem:solve32}]
		If $\Phi_k=0$ for some $k$, then $\Phi_{k'}=0$ for all $k'\ge k$ and the claim is trivial.
		Assume $\Phi_k>0$ for all $k$.
		
		From $\Phi_{k+1}\le \Phi_k-\nu \Phi_k^{3/2}=\Phi_k(1-\nu\sqrt{\Phi_k})$, we consider two cases.
		
		Case 1: $\nu\sqrt{\Phi_k}\ge 1$.
		Then $\Phi_{k+1}\le 0$. Since $\Phi_{k+1}\ge 0$, we must have $\Phi_{k+1}=0$,
		contradicting $\Phi_{k+1}>0$.
		Hence this case cannot occur under the assumption $\Phi_k>0$.
		
		Therefore, necessarily $\nu\sqrt{\Phi_k}\in(0,1)$ for all $k$.
		
		Let $\psi_k:=\Phi_k^{-1/2}$. Then
		\begin{align*}
			\psi_{k+1}&=\Phi_{k+1}^{-1/2}\ge \Phi_k^{-1/2}(1\!-\!\nu\sqrt{\Phi_k})^{-1/2}\\
			&=\psi_k(1\!-\!u_k)^{-1/2},\quad u_k:=\nu\sqrt{\Phi_k}\in(0,1).
		\end{align*}
		Using convexity of $(1-u)^{-1/2}$ on $[0,1)$ and $(1-u)^{-1/2}\ge 1+\frac12u$, we obtain
		\[
		\psi_{k+1}\ge \psi_k\Big(1+\frac12 u_k\Big)=\psi_k+\frac{\nu}{2}.
		\]
		The positive-sequence case also implies
		$\nu\sqrt{\Phi_0}<1$, hence $\psi_0=\Phi_0^{-1/2}>\nu$.
		Therefore,
		\[
		\psi_k\ge \psi_0+\frac{\nu}{2}k
		> \nu+\frac{\nu}{2}k
		= \frac{\nu}{2}(k+2),
		\]
		and hence
		\[
		\Phi_k=\psi_k^{-2}\le \frac{4}{\nu^2(k+2)^2}.
		\]
	\end{proof}
	
	\begin{proof}[Proof of Theorem~\ref{thm:main}]
		We establish the $\cO(\varepsilon^{-1})$ post-burn-in rate
		by deriving the $3/2$-recursion only along the pre-hitting tail.
		Concretely, fix $\eta\le 1/12$ and let $K_0=K_0(\varepsilon)$ be
		the burn-in index from Proposition~\ref{prop:burnin}, so that
		Proposition~\ref{prop:inexact-rn} holds for every shifted index
		before the first hitting time.
		Define $\Phi_k:=f(\bar x_k)-f_\star$ and $g_k:=\nabla f(\bar x_k)$.
		Set
		\[
		\hat x_j:=\bar x_{K_0+j},\qquad
		\hat g_j:=g_{K_0+j},\qquad
		\hat\Phi_j:=\Phi_{K_0+j},\qquad j\ge0.
		\]
		Let
		\[
		J_\varepsilon:=\inf\{j\ge0:\ \|\hat g_j\|\le\varepsilon\},
		\]
		with $J_\varepsilon=\infty$ if the set is empty.  All
		post-burn-in estimates below are applied only for shifted indices
		$0\le j<J_\varepsilon$.  For notational economy, the shifted
		sequence is relabeled as $(\bar x_k,g_k,\Phi_k)$.
		
		\smallskip
		\noindent\textbf{Step 0: descent along the shifted sequence.}
		By Proposition~\ref{prop:burnin}, the hypotheses of
		Lemma~\ref{lem:descent} hold at every shifted index before the
		first hitting time.
		Hence, with $c_d:=1-\eta-\frac16>0$,
		\begin{equation}\label{eq:Phi_monotone}
			\hat\Phi_{j+1}\le \hat\Phi_j
			- c_d\,\lambda_{K_0+j}\|\bar s_{K_0+j}\|^2
			\le \hat\Phi_j,
			\qquad 0\le j<J_\varepsilon.
		\end{equation}
		Since we seek the first shifted index $j$ with
		$\|\hat g_j\|\le\varepsilon$, this is exactly the range needed for
		the complexity bound.
		
		\smallskip
		\noindent\textbf{Step 1: a uniform gradient growth bound.}
		Let $r_k:=(\nabla^2 f(\bar x_k)+\lambda_k I)\bar s_k+g_k$ as defined in the paper.
		For every shifted index before the first hitting time,
		Proposition~\ref{prop:inexact-rn} gives
		$\|r_k\|\le \eta\lambda_k\|\bar s_k\|$.
		Moreover,
		\[
		\lambda_k\|\bar s_k\|
		\le \|(\nabla^2 f(\bar x_k)+\lambda_k I)\bar s_k\|
		=\|g_k-r_k\|
		\le \|g_k\|+\|r_k\|
		\le \|g_k\|+\eta\lambda_k\|\bar s_k\|.
		\]
		Rearranging yields
		\begin{equation}\label{eq:lambda_s_upper}
			\lambda_k\|\bar s_k\|\le \frac{1}{1-\eta}\|g_k\|,\qquad 0\le k<J_\varepsilon.
		\end{equation}
		By Lemma~\ref{lem:grad_bound}, again only at shifted indices
		before the first hitting time,
		\[
		\|g_{k+1}\|\le C_g\,\lambda_k\|\bar s_k\|,
		\qquad
		C_g:=1+\eta+\frac12.
		\]
		Combining with \eqref{eq:lambda_s_upper} gives the uniform growth bound
		\begin{equation}\label{eq:grad_growth}
			\|g_{k+1}\|\le \kappa_{\mathrm{inc}}\|g_k\|,
			\qquad
			\kappa_{\mathrm{inc}}:=\frac{C_g}{1-\eta}.
		\end{equation}
		Since $\eta\le 1/12$, we have $C_g\le 2$ and thus $\kappa_{\mathrm{inc}}\le 24/11<4$.
		
		\smallskip
		\noindent\textbf{Step 2: decay along the steady-step subsequence.}
		Recall the index sets
		\[
		\mathcal I:=\{k\ge 0:\ \|g_{k+1}\|\ge \tfrac14\|g_k\|\},
		\qquad
		\mathcal S:=\{k\ge 0:\ \|g_{k+1}\|< \tfrac14\|g_k\|\}.
		\]
		Fix $k<J_\varepsilon$.
		Let $n_k:=|\mathcal I\cap\{0,1,\dots,k-1\}|$ be the number of steady indices among the first $k$ iterations.
		List these indices increasingly as $0\le i_0<i_1<\cdots<i_{n_k-1}\le k-1$ (if $n_k=0$ skip this step).
		Define the subsequence $u_j:=\Phi_{i_j}$.
		By Lemma~\ref{lem:steady}, for each steady index $i_j\in\mathcal I$,
		\[
		\Phi_{i_j}-\Phi_{i_j+1}\ge \nu\,\Phi_{i_j}^{3/2}=\nu\,u_j^{3/2}.
		\]
		Since \(i_{j+1}\ge i_j+1\) and \(\Phi_k\) is nonincreasing
		along the indices before termination, we have
		\[
		u_{j+1}
		=\Phi_{i_{j+1}}
		\le \Phi_{i_j+1}
		\le \Phi_{i_j}-\nu\,\Phi_{i_j}^{3/2}
		= u_j-\nu\,u_j^{3/2}.
		\]
		Applying Lemma~\ref{lem:solve32} to this pre-hitting subsequence yields
		\begin{equation}\label{eq:u_bound}
			u_j\le \frac{4}{\nu^2(j+2)^2}
			\quad\text{whenever } i_j< J_\varepsilon.
		\end{equation}
		\smallskip
		\noindent\textbf{Step 3: two-case bound yielding $\cO(\varepsilon^{-1})$ iteration complexity.}
		Let $m_k:=|\mathcal S\cap\{0,1,\dots,k-1\}|=k-n_k$.
		We consider two cases.
		
		Case 1: $n_k\ge k/2$.
		For $k\ge2$, this implies $n_k\ge1$, so the last steady index
		$i_{n_k-1}$ is well defined. By monotonicity,
		$\Phi_k\le \Phi_{i_{n_k-1}}=u_{n_k-1}$, and \eqref{eq:u_bound} gives
		\[
		\Phi_k\le \frac{4}{\nu^2(n_k+1)^2}\le \frac{16}{\nu^2(k+2)^2}.
		\]
		The finitely many cases $k<2$ are absorbed into the final constant.
		
		Case 2: $n_k< k/2$ (hence $m_k>k/2$).
		For every sharp index $\ell\in\mathcal S$, we have $\|g_{\ell+1}\|\le \frac14\|g_\ell\|$ by definition.
		For every other index, we have the growth bound \eqref{eq:grad_growth}.
		Therefore, after $k$ iterations,
		\[
		\|g_k\|
		\le \kappa_{\mathrm{inc}}^{\,n_k}\left(\frac14\right)^{m_k}\|g_0\|
		\le \kappa_{\mathrm{inc}}^{\,k/2}\left(\frac14\right)^{k/2}\|g_0\|
		=\left(\frac{\kappa_{\mathrm{inc}}}{4}\right)^{k/2}\|g_0\|.
		\]
		Since $\kappa_{\mathrm{inc}}/4<1$, the bounded level set and convexity imply
		\[
		\Phi_k=f(\bar x_k)-f_\star
		\le \ip{g_k}{\bar x_k-x_\star}
		\le \|\bar x_k-x_\star\|\,\|g_k\|
		\le D\,\|g_k\|.
		\]
		Therefore,
		\[
		\Phi_k\le D\,G_\star\left(\frac{\kappa_{\mathrm{inc}}}{4}\right)^{k/2},
		\qquad
		G_\star:=\sup_{\ell\ge0}\|g_\ell\|<\infty.
		\]
		The finiteness of $G_\star$ follows from
		Assumption~\ref{ass:bounded_iterates} and the Lipschitz continuity
		of $\nabla f$ on the bounded trajectory.
		Because $\sup_{k\ge 0}(k+2)^2\left(\frac{\kappa_{\mathrm{inc}}}{4}\right)^{k/2}<\infty$, there exists a finite constant $C_{\exp}>0$ such that
		\[
		\Phi_k\le \frac{C_{\exp}}{(k+2)^2}\qquad 0\le k<J_\varepsilon.
		\]
		
		Combining both cases, there exists $C_\Phi>0$ such that
		\[
		\Phi_k\le \frac{C_\Phi}{(k+2)^2}\qquad 0\le k<J_\varepsilon,
		\]
		which proves the pre-hitting $\Phi_k=\cO(1/k^2)$ estimate.
		
		\smallskip
		\noindent\textbf{Step 4: gradient rate and $\varepsilon$-complexity.}
		By Lemma~\ref{lem:gap-grad} with $h=f$ and $L=L_1$,
		\[
		\|g_k\|^2\le 2L_1\Phi_k\le \frac{2L_1 C_\Phi}{(k+2)^2},
		\]
		hence $\|g_k\|\le \sqrt{2L_1 C_\Phi}/(k+2)=\cO(1/k)$.
		For every $j<J_\varepsilon$,
		$\|g_j\|\le \sqrt{2L_1 C_\Phi}/(j+2)$.
		Consequently, if
		\[
		j\ \ge\ \frac{\sqrt{2L_1 C_\Phi}}{\varepsilon}-2
		=\cO(\varepsilon^{-1}).
		\]
		then $j$ cannot remain before the first hitting time. Thus the
		first shifted hitting time satisfies this same $\cO(\varepsilon^{-1})$
		bound.
		
		\smallskip
		\noindent\textbf{Step 5: total complexity with burn-in.}
		Let $K(\varepsilon)$ be the first original iteration where
		$\|g_k\|\le \varepsilon$.
		Translating the shifted-index bound back to the original index,
		it takes at most $\cO(\varepsilon^{-1})$ additional steps after
		$K_0(\varepsilon)$. Thus, the total iteration complexity is bounded by:
		\[
		K(\varepsilon)\le K_0(\varepsilon)+C_K\,\varepsilon^{-1},
		\]
		for some constant $C_K>0$. This completes the proof of the global rate.
	\end{proof}
	
	\begin{proof}[Proof of Lemma~\ref{lem:delta_o_g}]
		From Lemma~\ref{lem:delta_control}, $\bar\delta_k\le \Delta_k^H+L_2D(X_k)$.
		
		\smallskip
		\noindent\textbf{Bound $\Delta_k^H$.}
		By Jensen's inequality and $\|\cdot\|_2\le \|\cdot\|_F$,
		\[
		\Delta_k^H\le D(\tilde{\mathcal H}_k).
		\]
		We apply the Hessian-tracker dispersion recursion (Lemma~\ref{lem:DH_rec}),
		noting that $\mathcal H_k$ is obtained from $\tilde{\mathcal H}_{k-1}$
		via the post-mixing update at iteration $k{-}1$:
		\begin{align*}
			D(\tilde{\mathcal H}_k)
			&\le \rho^{\tau_k}D(\mathcal H_k)\\
			&\le \rho^{\tau_k}\rho^{t_{k-1}}\bigl(D(\tilde{\mathcal H}_{k-1})
			\!+\!2\sqrt{d}L_2D\bigr)\\
			&\le \rho^{\tau_k}(B_H\!+\!2\sqrt{d}L_2D),
		\end{align*}
		where $B_H:=\sup_{j\ge 0}D(\tilde{\mathcal H}_j)<\infty$ is the uniform bound
		established in Lemma~\ref{lem:tilde_hess_bounded}.
		In particular, since $B_H$ is finite, we obtain
		\[
		\Delta_k^H\le \rho^{\tau_k}\big(B_H+2\sqrt{d}\,L_2D\big)
		=:\hat C_H\,\rho^{\tau_k}.
		\]
		
		\smallskip
		\noindent\textbf{Bound $D(X_k)$.}
		By Lemma~\ref{lem:DX_post},
		\[
		D(X_k)\le \rho^{t_{k-1}}\Big(D(\tilde X_{k-1})+\frac{\|S_{k-1}\|}{\sqrt N}\Big).
		\]
		Moreover, $D(\tilde X_{k-1})\le 2D$ and Lemma~\ref{lem:step_bound_app} gives $\|S_{k-1}\|/\sqrt N\le \sqrt{G_g/M}$.
		Thus
		\[
		D(X_k)\le \rho^{t_{k-1}}\Big(2D+\sqrt{\frac{G_g}{M}}\Big).
		\]
		
		\smallskip
		Therefore,
		\[
		\bar\delta_k\le \hat C_H\,\rho^{\tau_k}
		+L_2\rho^{t_{k-1}}\Big(2D+\sqrt{\frac{G_g}{M}}\Big)
		\le C_\delta\max\{\rho^{\tau_k},\rho^{t_{k-1}}\}.
		\]
		Using \eqref{eq:delta_schedule}, we obtain $\bar\delta_k\le \|g_k\|^{1+\gamma}$ for all sufficiently large $k$.
	\end{proof}
	
	\begin{proof}[Proof of Theorem~\ref{thm:superlinear}]
		Let $x_k:=\bar x_k$, $g_k:=\nabla f(x_k)$, and $H_k:=\nabla^2 f(x_k)$.
		Under Assumption~\ref{ass:strong}, for all $k$ we have $H_k\succeq \mu I$ on the level set.
		
		By definition, $r_k:=(H_k+\lambda_k I)\bar s_k+g_k$ and Proposition~\ref{prop:inexact-rn} gives for all large $k$
		\[
		(H_k+\lambda_k I)\bar s_k=-g_k+r_k,\qquad \|r_k\|\le \eta\lambda_k\|\bar s_k\|,\qquad L_2\|\bar s_k\|\le \lambda_k,
		\]
		with $\eta\in(0,1/12]$.
		Under \eqref{eq:local_accuracy} and the bound
		$D(X_k)\le \rho^{t_{k-1}}\bigl(2D+\sqrt{G_g/M}\bigr)$
		(see Lemma~\ref{lem:DX_post} and Lemma~\ref{lem:step_bound_app}),
		we have $D(X_k)=\cO(\|g_k\|^{1+\gamma})$.
		Moreover, Lemma~\ref{lem:delta_o_g} gives
		$\bar\delta_k=\cO(\|g_k\|^{1+\gamma})$.
		By the same Jensen argument as in Lemma~\ref{lem:lambda_scaling},
		\[
		\lambda_k
		\le \sqrt{M}\sqrt{\|\tilde{\bar g}_k\|+\Delta_k^g}
		+\bar\delta_k .
		\]
		The bridge bound gives
		$\|\tilde{\bar g}_k\|\le \|g_k\|+L_1D(X_k)$.
		The second condition in \eqref{eq:local_accuracy} gives
		$\Delta_k^g=\cO(\|g_k\|^{1+\gamma})$.
		Since the theorem assumes $\|g_k\|\to0$ along the local
		post-burn-in tail, the preceding estimates imply
		\[
		\lambda_k
		\le C\sqrt{\|g_k\|+\cO(\|g_k\|^{1+\gamma})}
		+\cO(\|g_k\|^{1+\gamma})
		=\cO(\sqrt{\|g_k\|}).
		\]
		Therefore $\lambda_k\to0$, and hence
		$\eta\lambda_k\le \mu/2$ for all sufficiently large $k$.
		
		Since $H_k+\lambda_k I\succeq \mu I$,
		\[
		\mu\|\bar s_k\|\le \|(H_k+\lambda_k I)\bar s_k\|=\|g_k-r_k\|\le \|g_k\|+\eta\lambda_k\|\bar s_k\|.
		\]
		Therefore $\|\bar s_k\|\le \frac{2}{\mu}\|g_k\|$ for all sufficiently large $k$.
		
		Using $g_{k+1}=g_k+H_k\bar s_k+e_k$ with $\|e_k\|\le \frac{L_2}{2}\|\bar s_k\|^2$ and $g_k+H_k\bar s_k=-\lambda_k\bar s_k+r_k$,
		\[
		\|g_{k+1}\|\le (1+\eta)\lambda_k\|\bar s_k\|+\frac{L_2}{2}\|\bar s_k\|^2
		\le \cO(\|g_k\|^{3/2})+\cO(\|g_k\|^2)=\cO(\|g_k\|^{3/2}),
		\]
		which proves the claim.
	\end{proof}

	\begin{proof}[Proof of Theorem~\ref{thm:comm}]
		Under \eqref{eq:log_schedule},
		$\tau_k=t_k=\lceil(p\log(k+2)+c_{\mathrm{mix}})/(-\log\rho)\rceil
		\le c_0+c_1(1-\rho)^{-1}\log(k+2)$,
		using $-1/\log\rho=\Theta((1-\rho)^{-1})$ as $\rho\to1$.
		Hence
		\[
		\sum_{k=0}^{K(\varepsilon)-1}(\tau_k+2t_k)
		=\cO\!\left((1-\rho)^{-1}K(\varepsilon)\log(K(\varepsilon)+2)\right).
		\]
		Combining with Theorem~\ref{thm:main} yields the claim.
	\end{proof}
	
	\section{Supplementary experiments}\label{app:supp_exp}
	
	This appendix collects additional experimental figures that
	support the main results in Section~\ref{sec:experiments}.
	All settings are identical to those described in
	Section~\ref{ssec:setup} unless stated otherwise.
	
	\subsection{Additional per-problem convergence curves}
	
	The relF-vs.-iteration panel for all nine functions is already shown
	in Figure~\ref{fig:relF_steps_all} of the main text.
	Below we collect the remaining metrics.
	
	\begin{figure}[htbp]
		\centering
		\includegraphics[width=\textwidth]{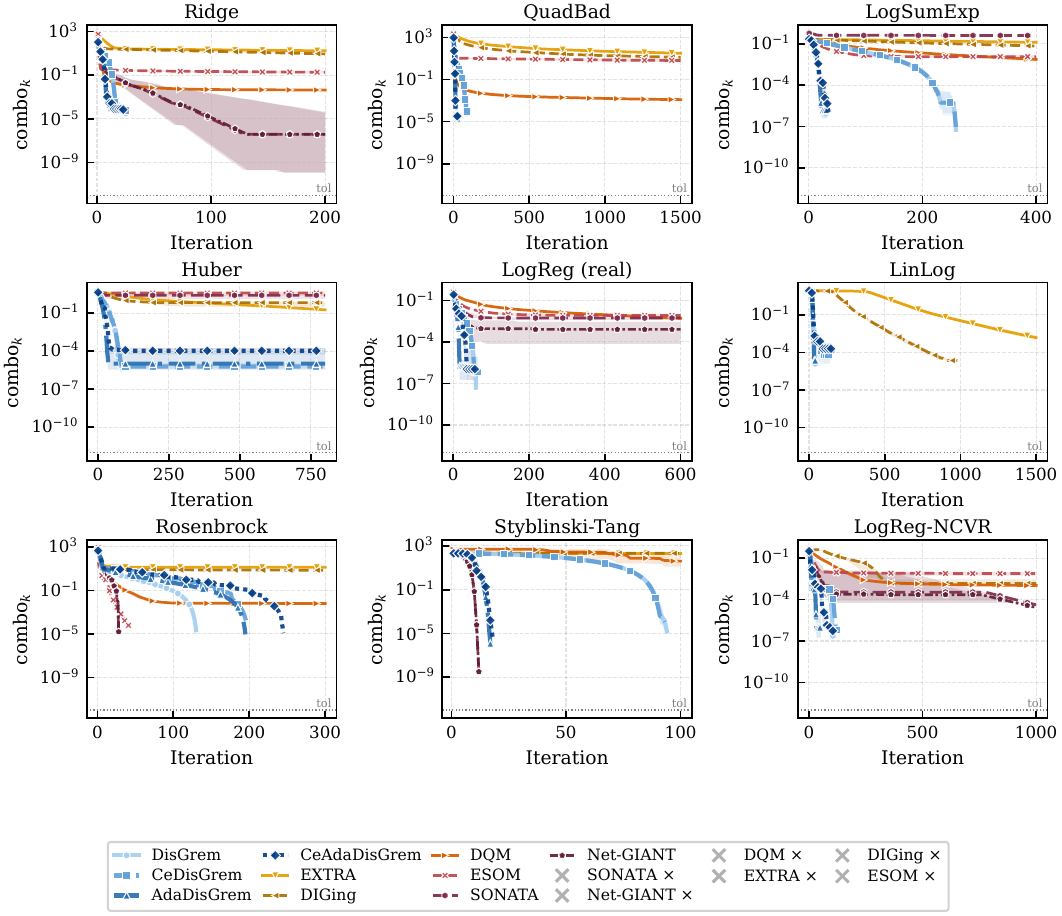}
		\caption{Composite optimality (combo $=\|\nabla f(\bar x_k)\|+\mathrm{cons}_k$)
			vs.\ iteration for all nine functions.
			Same setting as Figure~\ref{fig:relF_steps_all}; $\times$
			legend entries indicate runs terminated by NaN, overflow, or divergence.}
		\label{fig:app_combo_steps}
	\end{figure}
	
	\begin{figure}[htbp]
		\centering
		\includegraphics[width=\textwidth]{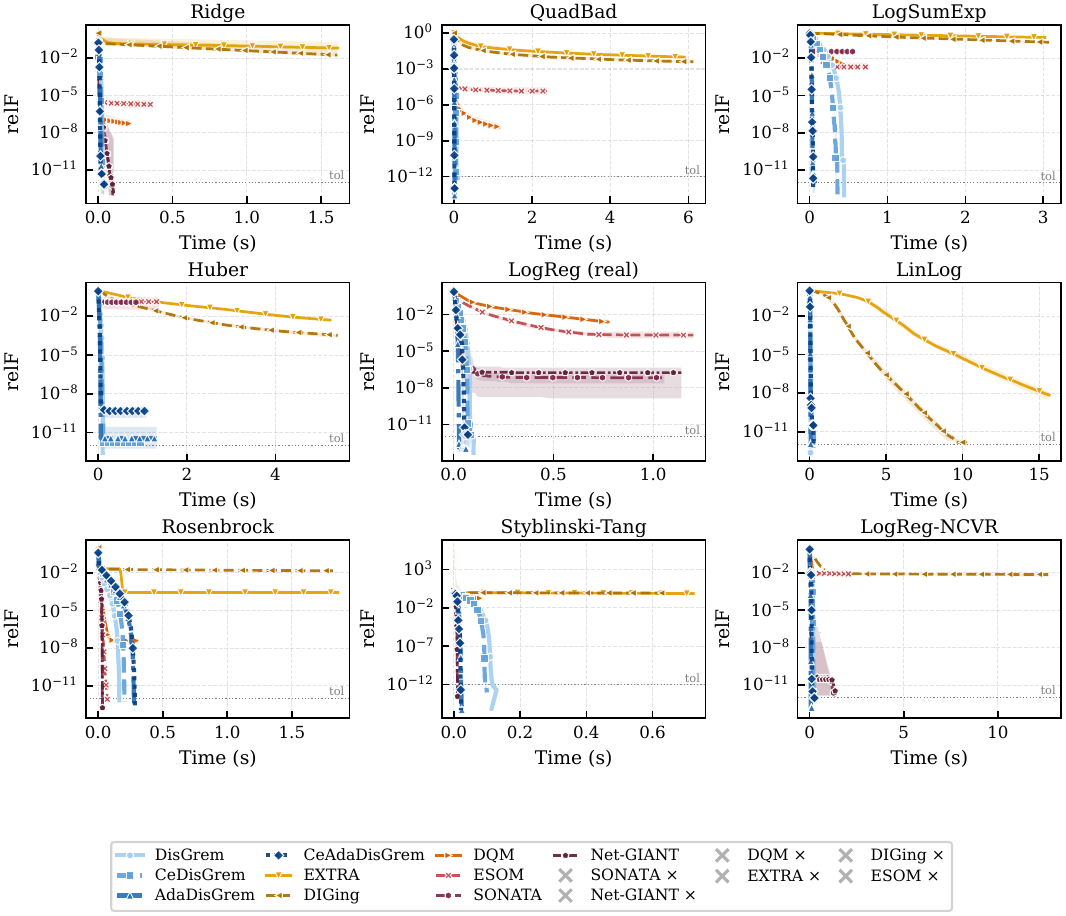}
		\caption{$\mathrm{relF}$ vs.\ wall-clock time (seconds) for all nine
			functions.}
		\label{fig:app_relF_time}
	\end{figure}
	
	\begin{figure}[htbp]
		\centering
		\includegraphics[width=\textwidth]{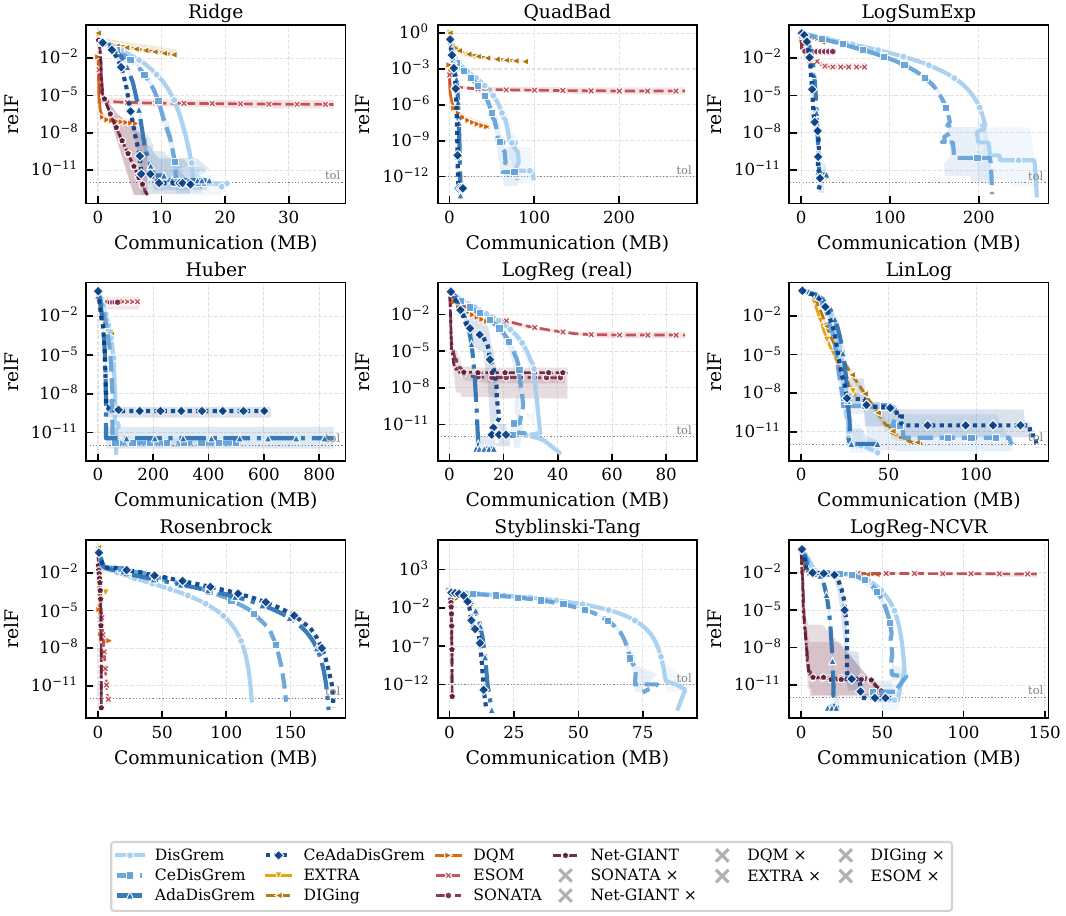}
		\caption{$\mathrm{relF}$ vs.\ cumulative communication cost (MB) for
			all nine functions.}
		\label{fig:app_relF_comm}
	\end{figure}
	
	\subsection{Communication ablation details}
	
	\begin{figure}[htbp]
		\centering
		\includegraphics[width=\textwidth]{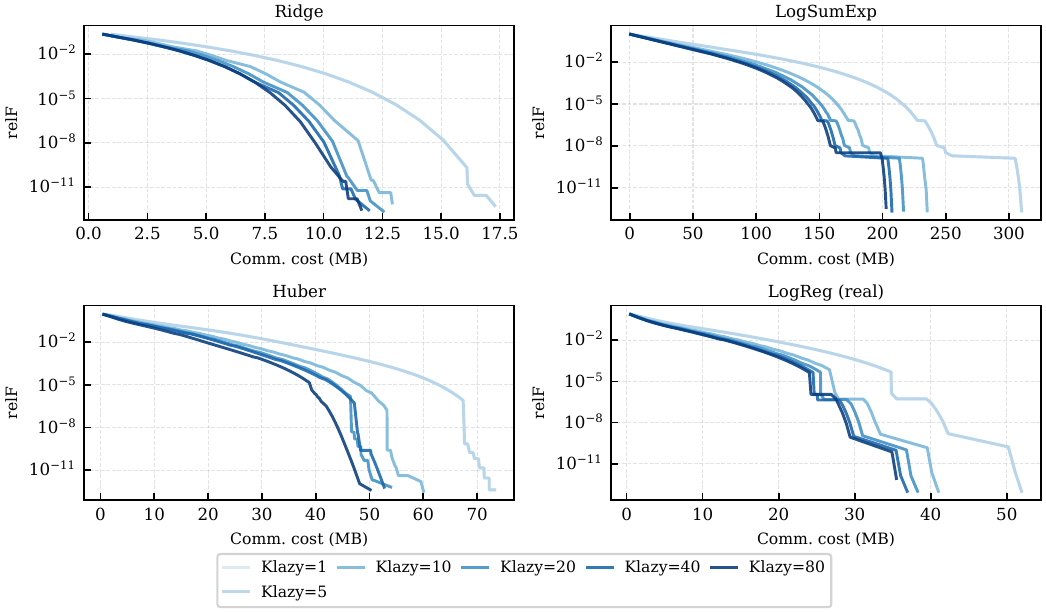}
		\caption{Effect of $K_{\mathrm{lazy}}$ on the
			communication--precision trade-off for \textsc{CeDisGrem}.
			Gradient-coloured curves from light to dark:
			$K_{\mathrm{lazy}}\in\{1,5,10,20,40,80\}$.}
		\label{fig:klazy}
	\end{figure}
	
	\begin{figure}[htbp]
		\centering
		\includegraphics[width=\textwidth]{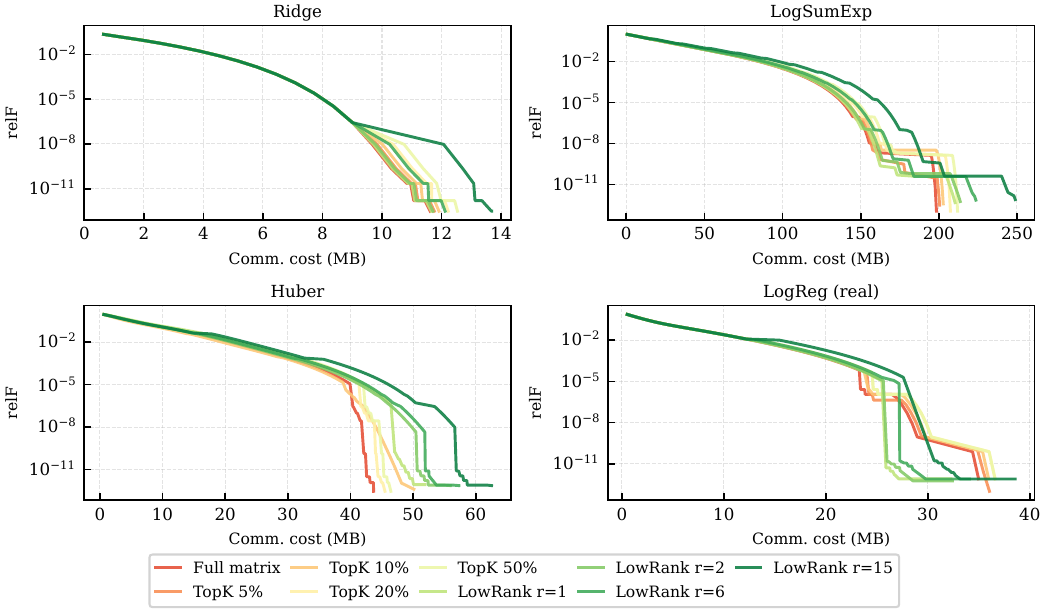}
		\caption{Effect of compression method and rank on the
			communication--precision trade-off for \textsc{CeDisGrem}.
			Configurations: full (no compression),
			Top-$k$ ($k\in\{5\%,10\%,20\%,50\%\}$),
			Low-Rank ($r\in\{1,2,d/5,d/2\}$).}
		\label{fig:compress}
	\end{figure}
	
	\subsection{Adaptive mechanism details}
	
	\begin{figure}[htbp]
		\centering
		\includegraphics[width=\textwidth]{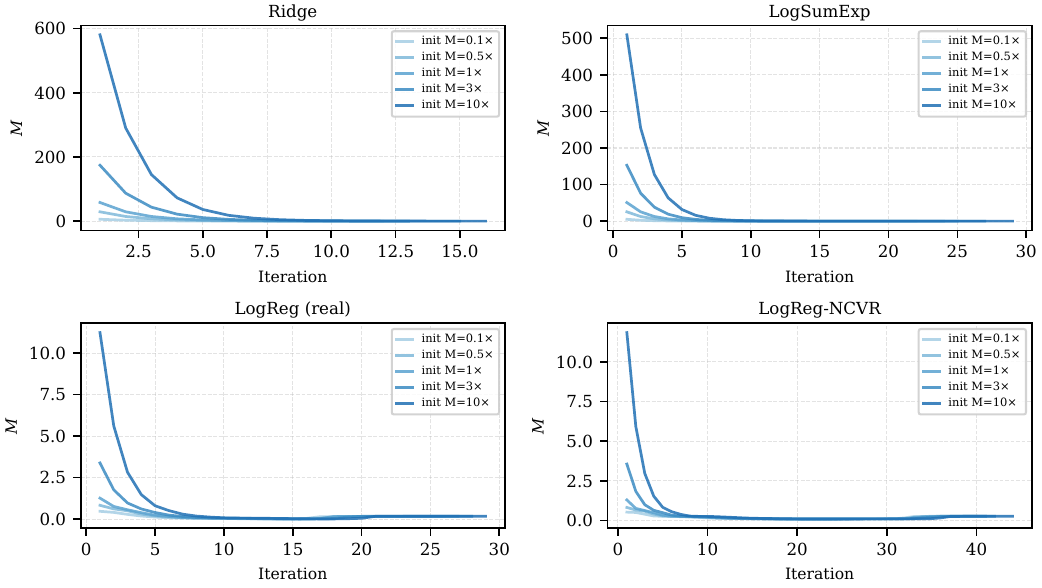}
		\caption{Robustness of \textsc{AdaDisGrem} to initial $M$.
			Initial $M_0\in\{0.1M^*,0.5M^*,M^*,3M^*,10M^*\}$.
			All initializations converge to similar trajectories within
			50--100 iterations.}
		\label{fig:ada_init_m}
	\end{figure}
	
	\subsection{Parameter sensitivity sweeps}
	
	\begin{figure}[htbp]
		\centering
		\includegraphics[width=\textwidth]{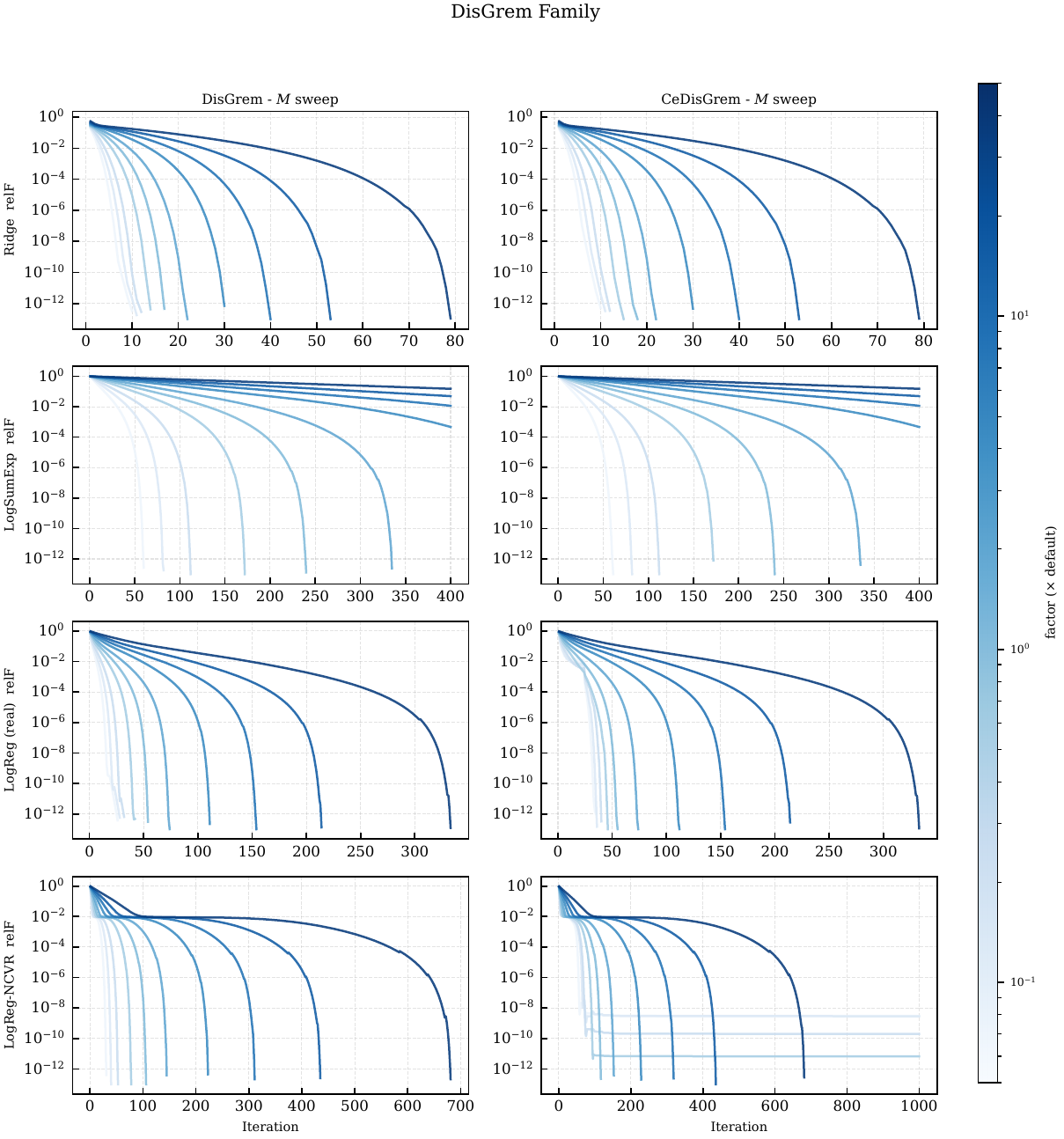}
		\caption{Parameter sweep for the \textsc{DisGrem} family
			on four functions.
			10 values of $M_{\mathrm{fac}}$ (light-to-dark for increasing
			$M_{\mathrm{fac}}$).
			Rows: functions; columns: algorithms.}
		\label{fig:sweep_disgrem}
	\end{figure}
	
	\begin{figure}[htbp]
		\centering
		\includegraphics[width=\textwidth]{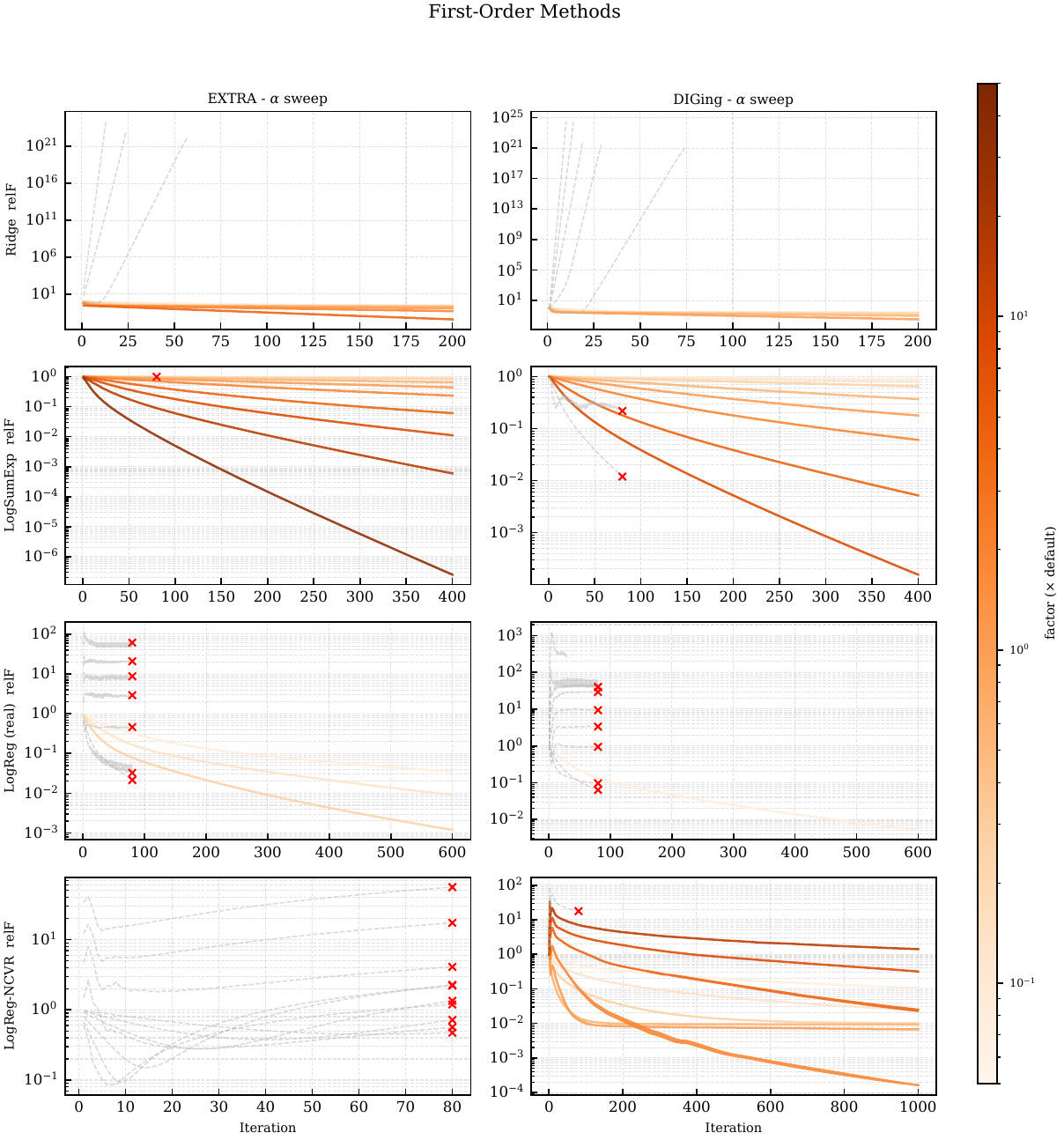}
		\caption{Stepsize sensitivity for first-order baselines
			(EXTRA and DIGing) on four functions.
			10 values of $\alpha$ (light-to-dark).
			Divergent runs are clipped at $\mathrm{relF}=10^4$.}
		\label{fig:sweep_first}
	\end{figure}
	
	\begin{figure}[htbp]
		\centering
		\includegraphics[width=\textwidth]{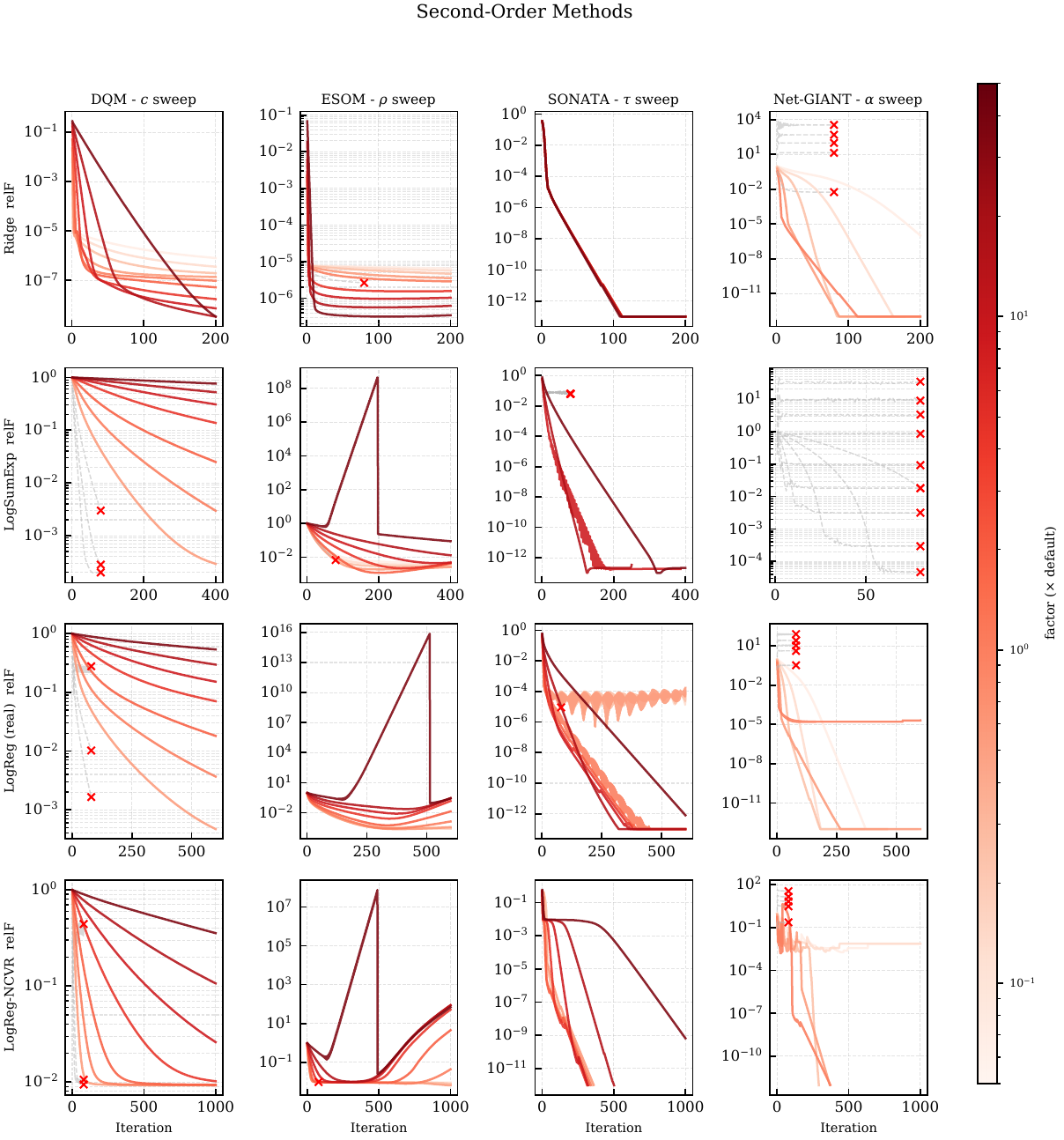}
		\caption{Key-parameter sensitivity for second-order baselines
			on four functions: penalty~$c$ (DQM), penalty~$\rho_{\mathrm{E}}$ (ESOM),
			regularization~$\tau$ (SONATA), stepsize~$\alpha$ (Net-GIANT).
			Entries marked by $\times$ indicate parameter settings with
			NaN, overflow, or divergence.}
		\label{fig:sweep_second}
	\end{figure}
	
\end{document}

%% file: preamble.tex
\usepackage[margin=1in]{geometry}
\usepackage{amsmath,amsthm}
\usepackage{amsfonts,amssymb}
\usepackage{booktabs}
\usepackage{float}
\usepackage{graphicx}
\usepackage{subcaption}
\usepackage{algorithm}
\usepackage{algpseudocode}
\usepackage[utf8]{inputenc}
\usepackage[T1]{fontenc}
\usepackage{lmodern}
\usepackage{microtype}
\usepackage{enumitem}
\usepackage{multirow}
\usepackage[section]{placeins}
\usepackage{xcolor}
\usepackage{ifpdf}
\usepackage[numbers,sort&compress]{natbib}
\usepackage[colorlinks=true,linkcolor=blue!70!black,citecolor=blue!70!black,urlcolor=blue!70!black]{hyperref}
\usepackage[nameinlink]{cleveref}

\usepackage{fancyhdr}
\fancypagestyle{plain}{\fancyhf{}\fancyfoot[C]{\thepage}}

\ifpdf
  \DeclareGraphicsExtensions{.pdf,.png,.jpg,.eps}
\else
  \DeclareGraphicsExtensions{.eps}
\fi

\graphicspath{
	{./}
	{figures/main/}
	{figures/appendix/}
}

\theoremstyle{plain}
\newtheorem{theorem}{Theorem}[section]
\newtheorem{lemma}[theorem]{Lemma}
\newtheorem{proposition}[theorem]{Proposition}

\newtheorem{assumption}[theorem]{Assumption}

\theoremstyle{definition}

\theoremstyle{remark}
\newtheorem{remark}[theorem]{Remark}

\crefname{assumption}{Assumption}{Assumptions}
\Crefname{assumption}{Assumption}{Assumptions}
\crefname{remark}{Remark}{Remarks}
\Crefname{remark}{Remark}{Remarks}

\newcommand{\email}[1]{\href{mailto:#1}{\texttt{#1}}}
\newenvironment{keywords}{\paragraph{Keywords.}}{}
\newenvironment{AMS}{\paragraph{AMS subject classifications.}}{}
\numberwithin{equation}{section}

\title{Distributed Gradient-Regularized Newton Method:
	Scheduled Consensus and \texorpdfstring{$\mathcal{O}(\varepsilon^{-1})$}{O(eps\^{}-1)} Global Iteration Complexity%
	\thanks{Submitted to the editors March~3, 2026.}}

\author{
	Wei Hu\thanks{
		LSEC, ICMSEC, Academy of Mathematics and Systems Science,
		Chinese Academy of Sciences, Beijing 100190, China;
		and University of Chinese Academy of Sciences, Beijing 100049, China
		(\email{huwei@amss.ac.cn}).
		Corresponding author.
	}
	\and
	Pengcheng Xie\thanks{
		Lawrence Berkeley National Laboratory,
		Berkeley, CA 94720, USA
		(\email{pxie@lbl.gov}, \email{pxie98@gmail.com}).
	}
	\and
	Ya-Xiang Yuan\thanks{
		LSEC, ICMSEC, Academy of Mathematics and Systems Science,
		Chinese Academy of Sciences, Beijing 100190, China
		(\email{yyx@lsec.cc.ac.cn}).
	}
	\and
	Li Zhang\thanks{
		LSEC, ICMSEC, Academy of Mathematics and Systems Science,
		Chinese Academy of Sciences, Beijing 100190, China
		(\email{zhangli2022@lsec.cc.ac.cn}).
	}
}

\newcommand{\R}{\mathbb{R}}
\newcommand{\one}{\mathbf{1}}
\newcommand{\ip}[2]{\left\langle #1,#2\right\rangle}
\newcommand{\norm}[1]{\left\|#1\right\|}
\newcommand{\normtwo}[1]{\left\|#1\right\|_2}
\newcommand{\normF}[1]{\left\|#1\right\|_F}
\newcommand{\cO}{\mathcal{O}}
\newcommand{\J}{\left(\frac{1}{N}\one\one^\top\right)}
\newcommand{\Pmat}{I-\J}
\DeclareMathOperator{\blkdiag}{blkdiag}

\allowdisplaybreaks

%% file: main.bbl
\begin{thebibliography}{99}
		
		\bibitem[Cartis et~al.(2011)Cartis, Gould, and Toint]{cartis2011arc}
		C.~Cartis, N.~I.~M. Gould, and Ph.~L. Toint.
		\newblock Adaptive cubic regularisation methods for unconstrained optimization. Part~I: motivation, convergence and numerical results.
		\newblock Mathematical Programming, 127:245--295, 2011.
		
		\bibitem[Chang and Lin(2011)]{chang2011libsvm}
		C.-C.~Chang and C.-J.~Lin.
		\newblock {LIBSVM}: a library for support vector machines.
		\newblock ACM Transactions on Intelligent Systems and Technology, 2(3):27:1--27:27, 2011.
		\newblock Software available at \url{https://www.csie.ntu.edu.tw/~cjlin/libsvmtools/datasets/}.
		
		\bibitem[Eisen et~al.(2017)Eisen, Mokhtari, and Ribeiro]{eisen2017dqn}
		M.~Eisen, A.~Mokhtari, and A.~Ribeiro.
		\newblock Decentralized quasi-{N}ewton methods.
		\newblock IEEE Transactions on Signal Processing, 65(10):2613--2628, 2017.
		
		\bibitem[Jakoveti{\'c} et~al.(2014)Jakoveti{\'c}, Xavier, and Moura]{jakovetic2014fast}
		D.~Jakoveti{\'c}, J.~Xavier, and J.~M.~F. Moura.
		\newblock Fast distributed gradient methods.
		\newblock IEEE Transactions on Automatic Control, 59(5):1131--1146, 2014.
		
		\bibitem[Mishchenko(2023)]{mishchenko2023rn}
		K.~Mishchenko.
		\newblock Regularized {N}ewton method with global $O(1/k^2)$ convergence.
		\newblock SIAM Journal on Optimization, 33(3):1440--1462, 2023.
		
		\bibitem[Mokhtari et~al.(2017)Mokhtari, Ling, and Ribeiro]{mokhtari2017networknewton}
		A.~Mokhtari, Q.~Ling, and A.~Ribeiro.
		\newblock Network {N}ewton distributed optimization methods.
		\newblock IEEE Transactions on Signal Processing, 65(1):146--161, 2017.
		
		\bibitem[Zhang et~al.(2021b)Zhang, Ling, and So]{zhang2021newtontracking}
		J.~Zhang, Q.~Ling, and A.~M.-C. So.
		\newblock A {N}ewton tracking algorithm with exact linear convergence for decentralized consensus optimization.
		\newblock IEEE Transactions on Signal and Information Processing over Networks, 7:346--358, 2021.
		
		\bibitem[Nedi{\'c} and Ozdaglar(2009)]{nedic2009subgradient}
		A.~Nedi{\'c} and A.~Ozdaglar.
		\newblock Distributed subgradient methods for multi-agent optimization.
		\newblock IEEE Transactions on Automatic Control, 54(1):48--61, 2009.
		
		\bibitem[Nedi{\'c} et~al.(2017)Nedi{\'c}, Olshevsky, and Shi]{nedic2017diging}
		A.~Nedi{\'c}, A.~Olshevsky, and W.~Shi.
		\newblock Achieving geometric convergence for distributed optimization over time-varying graphs.
		\newblock SIAM Journal on Optimization, 27(4):2597--2633, 2017.
		
		\bibitem[Nesterov and Polyak(2006)]{nesterov2006cubic}
		Y.~Nesterov and B.~T. Polyak.
		\newblock Cubic regularization of {N}ewton method and its global performance.
		\newblock Mathematical Programming, 108:177--205, 2006.
		
		\bibitem[Shi et~al.(2015)Shi, Ling, Wu, and Yin]{shi2015extra}
		W.~Shi, Q.~Ling, G.~Wu, and W.~Yin.
		\newblock {EXTRA}: an exact first-order algorithm for decentralized consensus optimization.
		\newblock SIAM Journal on Optimization, 25(2):944--966, 2015.
		
		\bibitem[Sun et~al.(2022)Sun, Scutari, and Palomar]{sun2022sonata}
		Y.~Sun, G.~Scutari, and D.~P. Palomar.
		\newblock Distributed nonconvex optimization and learning based on successive convex approximation.
		\newblock IEEE Transactions on Signal Processing, 70:5900--5915, 2022.
		
		\bibitem[Maritan et~al.(2023)Maritan, Sharma, Schenato, and Dey]{maritan2023networkgiant}
		A.~Maritan, G.~Sharma, L.~Schenato, and S.~Dey.
		\newblock Network-{GIANT}: fully distributed {N}ewton-type optimization via harmonic {H}essian consensus.
		\newblock arXiv preprint arXiv:2305.07898, 2023.
		
		\bibitem[Yuan et~al.(2019)Yuan, Ying, and Sayed]{yuan2019exactdiffusion}
		K.~Yuan, B.~Ying, and A.~H. Sayed.
		\newblock Exact diffusion for distributed optimization and learning---Part {I}: algorithm development.
		\newblock IEEE Transactions on Signal Processing, 67(3):708--723, 2019.
		
		
		\bibitem[Jakoveti{\'c} et~al.(2025)Jakoveti{\'c}, Kreji{\'c}, and Malaspina]{jakovetic2025dinas}
		D.~Jakoveti{\'c}, N.~Kreji{\'c}, and G.~Malaspina.
		\newblock {DINAS}: Distributed inexact {N}ewton method with adaptive step sizes.
		\newblock Computational Optimization and Applications, 91:683--715, 2025.
		
		\bibitem[Daneshmand et~al.(2021)Daneshmand, Scutari, Dvurechensky, and Gasnikov]{daneshmand2021newton}
		A.~Daneshmand, G.~Scutari, P.~Dvurechensky, and A.~Gasnikov.
		\newblock Newton method over networks is fast up to the statistical precision.
		\newblock In Proceedings of the 38th International Conference on Machine Learning (ICML), volume 139 of PMLR, pp.~2398--2409, 2021.
		
		\bibitem[Gratton et~al.(2023)Gratton, Jerad, and Toint]{gratton2023nonconvex}
		S.~Gratton, S.~Jerad, and Ph.~L. Toint.
		\newblock Yet another fast variant of {N}ewton's method for nonconvex optimization.
		\newblock arXiv preprint arXiv:2302.10065, 2023.
		
		\bibitem[Doikov and Nesterov(2024)]{doikov2024gradient}
		N.~Doikov and Y.~Nesterov.
		\newblock Gradient regularization of {N}ewton method with {B}regman distances.
		\newblock Mathematical Programming, 204:1--25, 2024.
		
		\bibitem[Yuan et~al.(2023b)Yuan, Li, and Ling]{yuan2023indo}
		G.~Yuan, X.~Li, and Q.~Ling.
		\newblock {INDO}: {IN}version-free {D}istributed second-{O}rder method for consensus optimization.
		\newblock Optimization Online preprint, 2022.
		
		\bibitem[Zhang et~al.(2024)Zhang, Che, Yang, and others]{zhang2024cubic}
		Z.~Zhang, K.~Che, S.~Yang, et~al.
		\newblock Communication-efficient distributed cubic {N}ewton with compressed lazy {H}essian.
		\newblock Neural Networks, 174:106212, 2024.
		
		\bibitem[Alghunaim et~al.(2021)Alghunaim, Ryu, Yuan, and Sayed]{alghunaim2021decentralized}
		S.~A. Alghunaim, E.~K. Ryu, K.~Yuan, and A.~H. Sayed.
		\newblock Decentralized proximal gradient algorithms with linear convergence rates.
		\newblock IEEE Transactions on Automatic Control, 66(6):2787--2794, 2021.
		
		\bibitem[Bajovi{\'c} et~al.(2017)Bajovi{\'c}, Jakoveti{\'c}, Kre\-ji{\'c}, and Krklec~Jerinki{\'c}]{bajovic2017newton}
		D.~Bajovi{\'c}, D.~Jakoveti{\'c}, N.~Kre\-ji{\'c}, and N.~Krklec~Jerinki{\'c}.
		\newblock Newton-like method with diagonal correction for distributed optimization.
		\newblock SIAM Journal on Optimization, 27(2):1171--1203, 2017.
		
		\bibitem[Beznosikov et~al.(2022)Beznosikov, Richt{\\'a}rik, Diskin, and others]{beznosikov2022biased}
		A.~Beznosikov, P.~Richt{\'a}rik, M.~Diskin, et~al.
		\newblock Distributed methods with compressed communication for solving variational inequalities, with theoretical guarantees.
		\newblock In Advances in Neural Information Processing Systems (NeurIPS), 35:14013--14029, 2022.
		
		\bibitem[Doikov and Nesterov(2022)]{doikov2022tensor}
		N.~Doikov and Y.~Nesterov.
		\newblock Local convergence of tensor methods.
		\newblock Mathematical Programming, 193:315--336, 2022.
		
		\bibitem[Doikov et~al.(2023)Doikov, Chayti, and Jaggi]{doikov2023lazy}
		N.~Doikov, E.~M.~Chayti, and M.~Jaggi.
		\newblock Second-order optimization with lazy {H}essians.
		\newblock In Proceedings of the International Conference on Machine Learning (ICML), 2023.
		
		\bibitem[Koloskova et~al.(2019)Koloskova, Stich, and Jaggi]{koloskova2019decentralized}
		A.~Koloskova, S.~U. Stich, and M.~Jaggi.
		\newblock Decentralized stochastic optimization and gossip algorithms with compressed communication.
		\newblock In Proceedings of the International Conference on Machine Learning (ICML), pp.~3478--3487, 2019.
		
		\bibitem[Li and Lin(2024)]{li2024decentralized}
		H.~Li and Z.~Lin.
		\newblock Accelerated gradient tracking over time-varying graphs for decentralized optimization.
		\newblock Journal of Machine Learning Research, 25(274):1--52, 2024.
		
		\bibitem[Mokhtari et~al.(2016)Mokhtari, Shi, Ling, and Ribeiro]{mokhtari2016esom}
		A.~Mokhtari, W.~Shi, Q.~Ling, and A.~Ribeiro.
		\newblock {ESOM}: A second-order method for exact decentralized optimization over networks.
		\newblock IEEE Transactions on Signal and Information Processing over Networks, 2(4):507--522, 2016.
		
		\bibitem[Pu and Nedi{\'c}(2021)]{pu2021distributed}
		S.~Pu and A.~Nedi{\'c}.
		\newblock Distributed stochastic gradient tracking methods.
		\newblock Mathematical Programming, 187:409--457, 2021.
		
		\bibitem[Xin and Khan(2020)]{xin2020distributed}
		R.~Xin and U.~A. Khan.
		\newblock Distributed heavy-ball: a generalization and acceleration of first-order methods with gradient tracking.
		\newblock IEEE Transactions on Automatic Control, 65(6):2627--2633, 2020.
		
		\bibitem[Li et~al.(2020)Li, Cen, Chen, and others]{li2020communication}
		B.~Li, S.~Cen, Y.~Chen, et~al.
		\newblock Communication-efficient distributed optimization in networks with gradient tracking and variance reduction.
		\newblock Journal of Machine Learning Research, 21(180):1--51, 2020.
		
		\bibitem[Nesterov(2005)]{nesterov2005smooth}
		Y.~Nesterov.
		\newblock Smooth minimization of non-smooth functions.
		\newblock Mathematical Programming, 103(1):127--152, 2005.
		
		\bibitem[Charbonnier et~al.(1997)Charbonnier, Blanc-F{\'e}raud, Aubert, and Barlaud]{charbonnier1997huber}
		P.~Charbonnier, L.~Blanc-F{\'e}raud, G.~Aubert, and M.~Barlaud.
		\newblock Deterministic edge-preserving regularization in computed imaging.
		\newblock IEEE Transactions on Image Processing, 6(2):298--311, 1997.
		
		\bibitem[Rosenbrock(1960)]{rosenbrock1960automatic}
		H.~H.~Rosenbrock.
		\newblock An automatic method for finding the greatest or least value of a function.
		\newblock The Computer Journal, 3(3):175--184, 1960.
		
		\bibitem[Styblinski and Tang(1990)]{styblinski1990experiments}
		M.~A.~Styblinski and T.~S.~Tang.
		\newblock Experiments in nonconvex optimization: Stochastic approximation with function smoothing and simulated annealing.
		\newblock Neural Networks, 3(4):467--483, 1990.
		
		\bibitem[Geman and Yang(1995)]{geman1995nonlinear}
		D.~Geman and C.~Yang.
		\newblock Nonlinear image recovery with half-quadratic regularization.
		\newblock IEEE Transactions on Image Processing, 4(7):932--946, 1995.

        \bibitem[Xie and Yuan(2023)]{xie2023linesearch}
P.~Xie and Y.~Yuan.
\newblock A derivative-free optimization algorithm combining line-search and trust-region techniques.
\newblock {\em Chinese Annals of Mathematics, Series B}, 44(5):719--734, 2023.

\bibitem[Xie and Yuan(2025)]{xie2023dfoto}
P.~Xie and Y.~Yuan.
\newblock Derivative-free optimization with transformed objective functions {(DFOTO)} and the algorithm based on the least {Frobenius} norm updating quadratic model.
\newblock {\em Journal of the Operations Research Society of China}, 13:327--363, 2025.

\bibitem[Xie and Yuan(2025)]{xieyuannew}
P.~Xie and Y.~Yuan.
\newblock A derivative-free method using a new underdetermined quadratic interpolation model.
\newblock {\em SIAM Journal on Optimization}, 35(2):1110--1133, 2025.

\bibitem[Xie and Yuan(2026)]{10.1093/imanum/drae106}
P.~Xie and Y.~Yuan.
\newblock Least {$H^2$} norm updating of quadratic interpolation models for derivative-free trust-region algorithms.
\newblock {\em IMA Journal of Numerical Analysis}, 46(1):21--50, 2026.

\bibitem[Xie and Yuan(2026)]{xie2023twodimensional}
P.~Xie and Y.~Yuan.
\newblock A new two-dimensional model-based subspace method for large-scale unconstrained derivative-free optimization: {2D-MoSub}.
\newblock {\em Optimization Methods and Software}, 41(1):118--150, 2026.

\bibitem[Xie and Wild(2025)]{xie2025remuregionalminimalupdating}
P.~Xie and S.~M. Wild.
\newblock {ReMU}: Regional minimal updating for model-based derivative-free optimization.
\newblock arXiv:2504.03606, 2025.

\bibitem[He and Xie(2025)]{he2025modeldrivensubspaceslargescaleoptimization}
Y.~He and P.~Xie.
\newblock Model-driven subspaces for large-scale optimization with local approximation strategy.
\newblock arXiv:2509.08256, 2025.

\bibitem[Xie(2023)]{xie2023ellipsoid}
P.~Xie.
\newblock A derivative-free trust-region method for optimization on the ellipsoid.
\newblock {\em Journal of Physics: Conference Series}, 2620:012007, 2023.

\bibitem[Xie(2025)]{XIE2025116146}
P.~Xie.
\newblock Sufficient conditions for error distance reduction in the $\ell^2$-norm trust region between minimizers of local nonconvex multivariate quadratic approximates.
\newblock {\em Journal of Computational and Applied Mathematics}, 453:116146, 2025.


\bibitem[Xie et~al.(2025)]{xie2025objectivevaluechangeshapebased}
P.~Xie, Z.~Zhou, and Z.~Zhou.
\newblock Objective value change and shape-based accelerated optimization for the neural network approximation.
\newblock arXiv:2508.20290, 2025.

\bibitem[Xie and Wild(2024)]{xie2024bary}
P.~Xie and S.~M. Wild.
\newblock Barycenter of weight coefficient region of least weighted {$H^2$} norm updating quadratic models with vanishing trust-region radius.
\newblock {\em SIAM NCC 2024, Early Career Travel Award}, 2024.

\bibitem[Xie(2024)]{zhang2024relationshiplambdapoisednessderivativefreeoptimization}
P.~Xie.
\newblock On the relationship between $\Lambda$-poisedness in derivative-free optimization and outliers in local outlier factor.
\newblock arXiv:2407.17529, 2024.

\bibitem[Li et~al.(2025)]{li2025novelnumericalmethodtailored}
L.~Li, P.~Xie, and L.~Zhang.
\newblock A novel numerical method tailored for unconstrained optimization problems.
\newblock arXiv:2504.02832, 2025.

\bibitem[Xie(2024)]{2023multiple}
P.~Xie.
\newblock An efficient derivative-free method for finding multiple solutions.
\newblock To be posted on arXiv, 2024.

\bibitem[Li et~al.(2025)]{li2025spectrallevenbergmarquardtdeflationmethodmultiple}
L.~Li, Y.~Zhou, P.~Xie, and H.~Li.
\newblock A spectral {Levenberg--Marquardt--Deflation} method for multiple solutions of semilinear elliptic systems.
\newblock {\em Journal of Computational and Applied Mathematics}, 2025.

\bibitem[Ye et~al.(2025)]{ye2025improvedadaptiveorthogonalbasis}
Y.~Ye, L.~Li, P.~Xie, and H.~Yu.
\newblock An improved adaptive orthogonal basis deflation method for multiple solutions with applications to nonlinear elliptic equations in varying domains.
\newblock {\em Journal of Computational Mathematics}, 2025.

\bibitem[Xie(2025)]{xie2025privacypreservingblackboxoptimizationpbbo}
P.~Xie.
\newblock Privacy-preserving black-box optimization {(PBBO)}: Theory and the model-based algorithm {DFOp}.
\newblock arXiv:2601.11570, 2025.

\bibitem[Xie et~al.(2024)]{xie2024lchange}
P.~Xie et~al.
\newblock A novel local analysis of objectives approximated by neural network: {L-Change}.
\newblock International Conference on Mathematical Theory of Deep Learning {(MTDL)}, 2024.

\bibitem[Dzahini et~al.(2025)]{postionpaper2025optimization}
K.~J. Dzahini, S.~M. Wild, and P.~Xie.
\newblock Optimization approaches for solving inverse problems must account for uncertainty in both data and downstream decisions.
\newblock Position paper, Inverse Methods for Complex Systems under Uncertainty Workshop, Sponsored by the U.S. Department of Energy, Office of Science, Advanced Scientific Computing Research, 2025.

\bibitem[Xie and Tao(2019)]{xie2019acc}
P.~Xie and M.~Tao.
\newblock Parametric resonant control of macroscopic behaviors of multiple oscillators.
\newblock In {\em 2019 American Control Conference (ACC)}, pages 1898--1905, 2019.

\bibitem[Xie(2024)]{xie2023invariant}
P.~Xie.
\newblock A note on the invariant distribution of a stochastic dynamical system.
\newblock 2024.

\bibitem[Xie(2024)]{10.11648/j.acm.20241305.13}
P.~Xie.
\newblock The modeling and optimization of a multi-dam system.
\newblock {\em Applied and Computational Mathematics}, 13(5):140--152, 2024.


		
	\end{thebibliography}
